\numberwithin{equation}{section}
\newcommand\myurl[1]{\url{#1}}
\newtheorem{thm}{Theorem}[section]
\newtheorem{prop}[thm]{Proposition}
\newtheorem{lem}[thm]{Lemma}
\theoremstyle{definition}
\newtheorem{define}[thm]{Definition}
\theoremstyle{remark}
\newtheorem{rem}[thm]{Remark}
\newtheorem{example}[thm]{Example}
\newcommand{\ve}[1]{\boldsymbol{\mathbf{#1}}}
\newcommand{\R}{\mathbb{R}}
\newcommand{\C}{\mathbb{C}}
\newcommand{\T}{\mathbb{T}}
\renewcommand{\d}{\partial}
\renewcommand{\subset}{\subseteq}
\renewcommand{\tilde}{\widetilde}
\renewcommand{\hat}{\widehat}
\newcommand{\iso}{\cong}
\DeclareMathOperator{\cl}{{cl}}
\DeclareMathOperator{\cotr}{{cotr}}
\DeclareMathOperator{\Diff}{{Diff}}
\DeclareMathOperator{\ext}{{ext}}
\DeclareMathOperator{\gr}{{gr}}
\DeclareMathOperator{\Hom}{{Hom}}
\DeclareMathOperator{\id}{{id}}
\DeclareMathOperator{\ind}{{ind}}
\DeclareMathOperator{\Int}{{int}}
\DeclareMathOperator{\rank}{{rk}}
\DeclareMathOperator{\Spin}{{Spin}}
\DeclareMathOperator{\Sym}{{Sym}}
\DeclareMathOperator{\tr}{{tr}}
\newcommand{\bD}{\mathbb{D}}
\newcommand{\bF}{\mathbb{F}}
\newcommand{\bL}{\mathbb{L}}
\newcommand{\bT}{\mathbb{T}}
\newcommand{\bU}{\mathbb{U}}
\newcommand{\cA}{\mathcal{A}}
\newcommand{\cC}{\mathcal{C}}
\newcommand{\cD}{\mathcal{D}}
\newcommand{\cH}{\mathcal{H}}
\newcommand{\cI}{\mathcal{I}}
\newcommand{\cM}{\mathcal{M}}
\newcommand{\cS}{\mathcal{S}}
\newcommand{\cT}{\mathcal{T}}
\newcommand{\cW}{\mathcal{W}}
\newcommand{\cX}{\mathcal{X}}
\newcommand{\frs}{\mathfrak{s}}
\newcommand{\as}{\ve{\alpha}}
\newcommand{\bs}{\ve{\beta}}
\newcommand{\gs}{\ve{\gamma}}
\newcommand{\ds}{\ve{\delta}}
\newcommand{\es}{\ve{\epsilon}}
\newcommand{\Ds}{\ve{\Delta}}
\newcommand{\xs}{\ve{x}}
\newcommand{\ys}{\ve{y}}
\newcommand{\ws}{\ve{w}}
\newcommand{\zs}{\ve{z}}
\newcommand{\ts}{\boldsymbol{\tau}}
\newcommand{\sigmas}{\ve{\sigma}}
\newcommand{\taus}{\ve{\tau}}
\newcommand{\etas}{\ve{\eta}}
\newcommand{\xis}{\ve{\xi}}
\newcommand{\zetas}{\ve{\zeta}}
\newcommand{\SFH}{\mathit{SFH}}
\newcommand{\HKM}{\mathit{HKM}}
\newcommand{\CF}{\mathit{CF}}
\newcommand{\HF}{\mathit{HF}}
\newcommand{\CFL}{\mathit{CFL}}
\newcommand{\HFLh}{\widehat{\mathit{HFL}}}
\newcommand{\HFL}{\mathit{HFL}}
\newcommand{\EH}{\mathit{EH}}
\renewcommand{\a}{\alpha}
\renewcommand{\b}{\beta}
\newcommand{\g}{\gamma}
\renewcommand{\S}{\Sigma}
\renewcommand{\H}{\mathcal{H}}
\newcommand{\x}{\mathbf{x}}
\newcommand{\y}{\mathbf{y}}
\DeclareMathOperator{\tb}{{tb}}
\newcommand{\flip}[1]{\rotatebox{180}{\reflectbox{#1}}}
\title{Contact handles, duality, and sutured Floer homology}
\author{Andr\'as Juh\'asz}
\address{Mathematical Institute, University of Oxford, Andrew Wiles Building,
Radcliffe Observatory Quarter, Woodstock Road, Oxford, OX2 6GG, UK}
\email{juhasza@maths.ox.ac.uk}
\author{Ian Zemke}
\address{Department of Mathematics\\Princeton University\\  Princeton, NJ 08544, USA}
\email{izemke@math.princeton.edu}
\begin{document}

\subjclass[2010]{57R58; 57M27; 57R17}
\keywords{Heegaard Floer homology, Cobordism, TQFT}

\begin{abstract}
  We give an explicit construction of the Honda--Kazez--Mati\'c gluing maps in terms of contact handles.
  We use this to prove a duality result for turning a sutured manifold cobordism around,
  and to compute the trace in the sutured Floer TQFT.
  We also show that the decorated link cobordism maps on the hat version of link Floer homology
  defined by the first author via sutured manifold cobordisms and by the second author via elementary cobordisms agree.
\end{abstract}

\maketitle

\tableofcontents

\section{Introduction}

The purpose of this paper is to provide an explicit construction of the Honda--Kazez--Mati\'c gluing map~\cite{HKMTQFT} in terms of contact handles, and use this to prove several results about the sutured Floer TQFT defined by the first author~\cite{JCob}.
Additionally, we show that the decorated link cobordism maps on the hat version of link Floer homology
defined via sutured manifold cobordisms by the first author~\cite{JCob}, and the
maps defined using elementary link cobordisms by the second author~\cite{ZemCFLTQFT} agree.

\subsection{The contact gluing map}
Sutured manifolds were introduced by Gabai~\cite{Gabai} to construct taut foliations on 3-manifolds,
and are also ubiquitous in contact topology.
In this paper, a sutured manifold is a pair $(M,\g)$, where $M$ is a compact
oriented 3-manifold with boundary, and the set of sutures $\g \subset \d M$ is an oriented 1-manifold
that divides $\d M$ into subsurfaces $R_+(\g)$ and $R_-(\g)$ that meet along $\g$.
For example, if $M$ carries a contact structure such that $\d M$ is convex with dividing set $\g$,
then $(M,\g)$ is a sutured manifold.

We say that $(M,\g)$ is balanced if $M$ has no closed components, each component of
$M$ contains a suture, and $\chi(R_+(\g)) = \chi(R_-(\g))$.
Sutured Floer homology, defined by the first author~\cite{JDisks}, assigns an $\bF_2$-vector space $\SFH(M,\g)$ to
a balanced sutured manifold $(M,\g)$. It is a common extension of the hat version of
Heegaard Floer homology of closed 3-manifolds and link Floer homology, both due to
Ozsv\'ath and Szab\'o~\cites{OSDisks, OSProperties, OSLinks}, to 3-manifolds with boundary.

Let $(M,\g)$ and $(M',\g')$ be sutured manifolds such that $M \subset \Int(M')$.
Given a contact structure $\xi$ on $M' \setminus \Int(M)$ such that
$\d M \cup \d M'$ is convex with dividing set $\g \cup \g'$,
Honda--Kazez--Mati\'c~\cite{HKMTQFT} define a gluing map
\[
\Phi_\xi \colon \SFH(-M,-\g) \to \SFH(-M',-\g')
\]
using partial open book decompositions that satisfy a ``contact compatibility'' condition
near the boundary. However, the contact compatibility condition makes working with and computing
the gluing map impractical. In the first part of this paper, we give a new definition of the
contact gluing map based on contact handle attachments, prove invariance via contact cell decompositions,
and show that our map agrees with the Honda--Kazez--Mati\'c gluing map.
In particular, this allows us to give a simple diagrammatic description of the
gluing map for a single contact handle attachment.
For a precise statement about the gluing map associated to a contact handle attachment, see Proposition~\ref{prop:computemorsetypehandlemap}.
Contact handles were introduced by Giroux~\cite{Giroux}; see Definition~\ref{def:Morsetypecontacthandle}.

\begin{figure}[ht!]
    \centering
    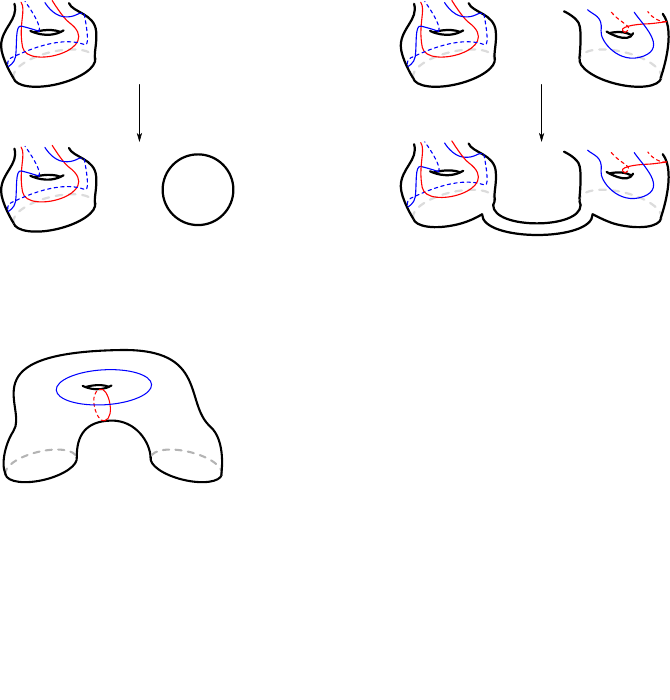
    \caption{The diagrams used in the definition of the contact handle maps. \label{fig::handles}}
\end{figure}

We now describe the map $C_{h^i}$ that we assign to attaching a contact $i$-handle~$h^i$
for $i \in \{0,1,2,3\}$. Let $(\S, \as, \bs)$ be a diagram of $(M,\g)$;
then $(\bar{\S},\as,\bs)$ is a diagram of $(-M,-\g)$.
Attaching a contact 0-handle corresponds to taking the disjoint union of
$\S$ with a disk. A contact 1-handle corresponds to attaching a 1-handle
to $\d \S$; see Figure~\ref{fig::handles}.
Adding a disk or a 1-handle to $\d \Sigma$ does not change the sutured Floer complex,
and we define $C_{h^i}$ to be the tautological map on intersection points.
A contact 2-handle is attached to $\d M$
along a curve $l$ that intersects $\g$ in two points.
Let $\lambda_\pm \subset \S$ be the properly embedded arc corresponding
to $l \cap R_\pm(\g)$.
As in Figure~\ref{fig::handles}, we glue a 1-handle $H$ to $\S$ along $\d \S$,
and add a curve $\a$ to $\as$ and a curve $\b$ to $\bs$ that intersect in $H$ in
a single point $c$, and such that $\a \cap \S = \lambda_-$
and $\b \cap \S = \lambda_+$. Then, given a generator $\x \in \T_\a \cap \T_\b$,
we let $C_{h^2}(\x) = \x \times \{c\}$.
Finally, suppose that we attach a contact 3-handle $h^3$ along an $S^2$ component $S$ of $\d M$
containing the suture $\g_S = \g \cap S$, giving rise to the sutured manifold $(M',\g')$.
We then choose a diagram where $\g_S$ is encircled
by a curve $\a \in \as$ and a curve $\b \in \bs$ such that $\a \cap \b = \{x, y\}$,
and such that there are no other $\as$ or $\bs$ curves between $\a$ and $\g_S$
or $\b$ and $\g_S$. Let $\S'$ be the result of gluing a disk
to $\S$ along $\g_S$. Then $(\S', \as \setminus \{\a\}, \bs \setminus \{\b\})$
is a diagram of $(M',\g')$; see Figure~\ref{fig::handles}.
We let $C_{h^3}(\x \times \{x\}) = 0$ and $C_{h^3}(\x \times \{y\}) = \x$,
where $\mu(\x \times \{x\}, \x \times \{y\}) = 1$ in $(\bar{\S}, \as, \bs)$.

Note that Zarev~\cite{ZarevGluing} has also defined a type of gluing map in sutured Floer homology,
corresponding to a convex decomposition. Combining this with the $\EH$ invariant~\cite{HKMSutured},
one can define a map for gluing a contact structure to a sutured manifold.
Zarev conjectured that this map agrees with the Honda--Kazez--Mati\'{c} gluing map, though we will
not address Zarev's construction in this paper.

\subsection{The sutured Floer TQFT}
The first author~\cite{JCob} defined the category of balanced sutured manifolds and sutured manifold
cobordisms, and extended $\SFH$ to a functor on this category.
A sutured manifold cobordism from $(M_0,\g_0)$ to $(M_1,\g_1)$ is a triple $\cW = (W,Z,[\xi])$,
where $W$ is a 4-manifold with boundary and corners, $Z$ is a codimension-0 compact submanifold of $\d W$
such that $\d W \setminus \Int(Z) = -M_0 \sqcup M_1$, and $[\xi]$ is a
certain equivalence class of a contact structure $\xi$ on $Z$ such that
$\d Z$ is convex with dividing set $\g_0 \sqcup \g_1$. The sutured cobordism map
\[
F_\cW \colon \SFH(M_0,\g_0) \to \SFH(M_1,\g_1)
\]
is the composition of the contact gluing map for $-\xi$ and 4-dimensional handle maps.

Let $\xi_{I \times \d M}$ denote the $I$-invariant contact structure
on $-I\times \d M $ that induces the dividing set $\g$ on~$\d M$.
Consider the trace cobordism
\[
\Lambda_{(M,\g)} = (I \times M, \xi_{I \times \d M})
\]
from $(M,\g) \sqcup( -M,\g)$ to $\emptyset$, and the cotrace cobordism
\[
V_{(M,\g)} = (I \times M, \xi_{I \times \d M})
\]
from $\emptyset \to (-M,\g) \sqcup(M,\g)$.
In Theorem~\ref{thm:traceandcotracecobordismmaps},
we positively answer \cite{JCob}*{Conjecture 11.13}:

\begin{thm}\label{thm:traceformula}
The trace cobordism $\Lambda_{(M,\g)}$ induces the canonical trace map
\[
\tr \colon \SFH(M,\g) \otimes \SFH(-M,\g)\to \bF_2,
\]
obtained by evaluating cohomology on homology.
The cotrace cobordism $V_{(M,\g)}$ induces the canonical cotrace map
\[
\cotr \colon \bF_2 \to \SFH(-M,\g) \otimes \SFH(M,\g).
\]
\end{thm}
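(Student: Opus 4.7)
The plan is to reduce the computation of $F_{\Lambda_{(M,\g)}}$ (and symmetrically $F_{V_{(M,\g)}}$) to the explicit contact-handle description of the gluing map that we developed in the first part of the paper. Since the underlying 4-manifold of both $\Lambda_{(M,\g)}$ and $V_{(M,\g)}$ is the cylinder $I \times M$, the 4-dimensional cobordism contribution to $F_\cW$ is trivial, and the cobordism map coincides, up to the natural identifications of the boundary, with the contact gluing map $\Phi_{-\xi_{I \times \d M}}$ for the $I$-invariant contact structure on $I \times \d M$.

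Next, I would fix a sutured Heegaard diagram $(\S, \as, \bs)$ for $(M, \g)$, so that $(\S \sqcup \bar{\S}, \as \sqcup \as, \bs \sqcup \bs)$ is a diagram for $(M, \g) \sqcup (-M, \g)$ with generators $\x \otimes \y$ for $\x, \y \in \T_\a \cap \T_\b$. Choose a contact cell decomposition of $\d M$ adapted to $\g$; this induces a decomposition of $I \times \d M$ into contact $0$-, $1$-, $2$-, and $3$-handles, so by Proposition~\ref{prop:computemorsetypehandlemap}, the map $\Phi_{-\xi_{I \times \d M}}$ factors as the composition of the corresponding handle maps $C_{h^i}$. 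The $0$- and $1$-handle maps assemble $\S$ and $\bar\S$ into the closed doubled surface $D\S = \S \cup_{\d\S} \bar\S$. The contact $2$-handles introduce matched pairs of $\a$- and $\b$-curves on $D\S$, each intersecting transversely in a single point $c_i$; I would arrange them so that each new $\a$-curve (resp.\ $\b$-curve) pairs $\a_i \in \as$ with its mirror $\a_i$ in the opposite copy of $\as$. Finally, the contact $3$-handles cap off the remaining $S^2$ components via the rule $\x \times \{x\} \mapsto 0$, $\x \times \{y\} \mapsto \x$.

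The combinatorial payoff is that the full composition sends a generator $\x \otimes \y$ to $\delta_{\x,\y}$, which is precisely the canonical trace pairing. The hard part is the bookkeeping: one must verify that, after all $2$-handle attachments, each new intersection point $c_i$ contributes a factor that vanishes unless the corresponding $\a$- and $\b$-coordinates of $\x$ and $\y$ agree, and that the $3$-handles pick off the diagonal cleanly (both through the intersection-point formula and through the Maslov-grading condition on $x$ versus $y$). A convenient organization is to pair each contact $2$-handle with a contact $3$-handle so that together they implement a destabilization-type move identifying one coordinate of $\x$ with the corresponding coordinate of $\y$; this reduces the full computation to the case of a single pair of points, which is straightforward.

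For the cotrace cobordism $V_{(M,\g)}$, the statement follows from the trace formula by invoking the duality theorem for turning sutured manifold cobordisms around, established earlier in the paper: $V_{(M,\g)}$ is $\Lambda_{(M,\g)}$ run backwards, so the corresponding cobordism maps are dual under the pairing $\SFH(M,\g) \otimes \SFH(-M,\g) \to \bF_2$. Since the canonical cotrace is the dual of the canonical trace, the formula for $F_{V_{(M,\g)}}$ follows.
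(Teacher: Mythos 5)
There is a genuine gap at the very first step. You assert that because the underlying $4$-manifold of $\Lambda_{(M,\g)}$ is $I\times M$, ``the $4$-dimensional cobordism contribution to $F_{\cW}$ is trivial,'' so that $F_{\Lambda_{(M,\g)}}$ reduces to the contact gluing map $\Phi_{-\xi_{I\times\d M}}$. This is false. The trace cobordism is \emph{not} a boundary cobordism in the sense of Definition~\ref{def:boundarycobordism} (its outgoing end is $\emptyset$, not a sutured manifold containing $(M,\g)\sqcup(-M,\g)$), so its map is not simply a gluing map. In the canonical decomposition $\cW=\cW^s\circ\cW^b$, the boundary part glues $I\times\d M$ onto $M\sqcup -M$ and produces (up to removing balls) the closed double $M\cup_{\d M}-M$; the special part is then $I\times M$, viewed as a null-cobordism of this double, and it carries genuinely nontrivial $4$-dimensional handles. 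Already for $M=B^3$ one has $I\times B^3\cong B^4$, a single $4$-handle filling $S^3=D(B^3)$. More generally, the Morse-theoretic analysis (Lemmas~\ref{lem:Morsetheoryturningaroundsuturedcobordism}, \ref{lem:somemoreMorsetheory2}, and Lemma~\ref{lem:Wabgspecialcobordism2handle}) shows the special part contains index $2$, $3$, and $4$ critical points. Consequently the gluing map alone lands in $\SFH$ of (a punctured copy of) the double of $M$, not in $\bF_2$, and your subsequent ``bookkeeping'' with contact $2$- and $3$-handles cannot close the argument: contact $3$-handles only cap $S^2$ boundary components of a sutured $3$-manifold, and they cannot substitute for the $4$-dimensional $2$-, $3$-, and $4$-handles needed to reach $\SFH(\emptyset)=\bF_2$.

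The paper's proof confronts exactly this point by splitting $\Lambda_{(M,\g)}$ as $\cW_{\as}\circ\cW_{\as,\bs,\as}$ along the level $M_{\as,\as}$: the triangle piece is computed by the key Theorem~\ref{thm:trianglemapiscobmap} (a holomorphic triangle count), and the capping piece $\cW_{\as}$ is computed in Proposition~\ref{prop:Xascobordismmap} by an explicit interplay of contact $2$-handle maps with $4$-dimensional $3$- and $4$-handle maps; the trace formula then drops out of the identity $F_{\as',\as,\bs}(\Theta^+_{\as',\as},-)=\Phi^{\bs}_{\as\to\as'}$ and naturality. If you want to avoid the triangle machinery you would still need, at minimum, a handle decomposition of the special part of $\Lambda_{(M,\g)}$ and model computations for those $4$-dimensional handles; as written, your argument omits them entirely. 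Your final paragraph, deducing the cotrace formula from the trace formula via the duality theorem for turned-around cobordisms, is correct and is exactly what the paper does.
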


The proof relies on the following deep technical result, which is Theorem~\ref{thm:trianglemapiscobmap}.
Before stating it, recall that, given a sutured triple diagram $(\S,\as,\bs,\gs)$,
we can associate to it a sutured manifold cobordism $\cW_{\as,\bs,\gs}$
from $(M_{\as,\bs}, \g_{\as,\bs}) \sqcup (M_{\bs,\gs}, \g_{\bs,\gs})$ to $(M_{\as,\gs}, \g_{\as,\gs})$.

\begin{thm}
Let $\cT = (\S,\as,\bs,\gs)$ be an admissible balanced sutured triple diagram.
Then the cobordism map
\[
F_{\cW_{\as,\bs,\gs}} \colon \CF(\S,\as,\bs) \otimes \CF(\S,\bs,\gs) \to \CF(\S,\as,\gs)
\]
is chain homotopic to the map $F_{\as,\bs,\gs}$ defined in \cite{JCob}*{Definition~5.13}
that counts holomorphic triangles on the triple diagram~$\cT$.
\end{thm}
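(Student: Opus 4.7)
My plan is to decompose the cobordism $\cW_{\as,\bs,\gs}$ into elementary contact handles along the boundary and four-dimensional handles in the interior, compute the composed cobordism map using the explicit contact handle formulas established in Proposition~\ref{prop:computemorsetypehandlemap}, and identify the result with the holomorphic triangle count $F_{\as,\bs,\gs}$. Since \cite{JCob} guarantees that $F_\cW$ is well-defined and independent of the handle decomposition up to chain homotopy, it is enough to fix a single convenient decomposition adapted to $\cT$.

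Concretely, I would build $\cW_{\as,\bs,\gs}$ in three stages, each of which contributes an explicitly computable factor. First, a sequence of contact 1- and 2-handles attached at the incoming boundary glues $(M_{\as,\bs},\g_{\as,\bs})$ and $(M_{\bs,\gs},\g_{\bs,\gs})$ along a stabilizing region into a single sutured manifold whose diagram naturally contains all three of $\as$, $\bs$, $\gs$ (after Hamiltonian perturbation to ensure admissibility). Second, a collection of four-dimensional 1- and 2-handles realizes the internal topology of the triangle cobordism, the 2-handle attaching data encoding the passage from $\bs$ to $\gs$. Third, contact 3-handle cancellations at the outgoing boundary destabilize the diagram to $(\S,\as,\gs)$. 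The topology of each piece is sufficiently local that the resulting composition of handle maps is forced by the diagrammatic rules of the preceding sections.

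Composing these maps, the contact handle formulas (tautological on intersection points for 0- and 1-handles, prescribed insertion of the distinguished points $c$ or $\{x,y\}$ for 2- and 3-handles) reduce the output on $\x \otimes \y$ to a count of holomorphic polygons on an enlarged diagram whose boundary is constrained to pass through the marked intersection points forced by the contact 2-handles. A cylindrical degeneration argument in the style of Ozsv\'ath--Szab\'o's polygon associativity, or equivalently a neck-stretching along the arcs separating the contact handle regions from the main part of $\S$, then identifies these polygons, in the long-neck limit, with the triangles counted by $F_{\as,\bs,\gs}$ on the original diagram $(\S,\as,\bs,\gs)$.

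The main obstacle will be this last identification of moduli spaces. It requires a gluing theorem for broken holomorphic polygons that respects the rigid intersection constraints at the contact-handle marked points, together with the requisite transversality and admissibility arguments to ensure that the count is preserved in the stretching limit and that no new boundary configurations appear. One must also track the chain homotopies arising from intermediate choices of almost complex structure, stabilization parameters, and Heegaard moves, and verify that they combine coherently to yield the ``chain homotopic'' (rather than strictly equal) conclusion asserted in the theorem.
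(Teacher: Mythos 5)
Your high-level strategy --- decompose $\cW_{\as,\bs,\gs}$ into contact handles at the boundary and four-dimensional handles in the interior, compute each factor via the explicit handle formulas, and identify the composite with $F_{\as,\bs,\gs}$ --- is the right one, and it is broadly the strategy of the paper. However, your proposal leaves the actual crux of the theorem, namely the identification of the composed handle maps with the count of triangles on the original triple, as an acknowledged ``main obstacle,'' to be resolved by a neck-stretching/gluing theorem for broken holomorphic polygons with point constraints. That is precisely the analytic argument the paper is structured to avoid (it says so explicitly when introducing compound stabilizations), and as written your proof is incomplete at exactly the step where all the content lies.

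Concretely, the paper's route is: (i) identify $(Z_{\as,\bs,\gs},\xi_{\as,\bs,\gs})$ explicitly as two $I$-invariant pieces glued along $R_{\bs}$ (Lemma~\ref{lem:xi-abg-istwoIinvariantgluedtogether}), so that the boundary part of the cobordism is a gluing of $\bar{R}_{\bs}\subset \d M_{\as,\bs}$ to $R_{\bs}\subset \d M_{\bs,\gs}$ realized by contact 1- and 2-handles indexed by a basis of arcs on $R_{\bs}$; (ii) show that the resulting gluing map is the amalgamation map $\Psi$, a composition of 1-handle maps and a holomorphic triangle map on an auxiliary diagram (Proposition~\ref{prop:gluingmapformula}) --- this step rests on the compound stabilization formula (Proposition~\ref{prop:compoundstabilizationmap}) and functoriality of the gluing map, not on neck-stretching; (iii) identify the special part as $|\bs|$ four-dimensional 2-handles whose induced map is a triangle map with a canonical top-graded input (Lemma~\ref{lem:Wabgspecialcobordism2handle}); and (iv) collapse the resulting composition of triangle maps to a single triangle count using associativity, the model computations of Proposition~\ref{prop:1handletrianglecount}, and the explicit change-of-diagram formulas for doubled and weakly conjugated diagrams (Lemmas~\ref{lem:changeofdiagramsdoubleddiagram} and~\ref{lem:changeofdiagramsdoubletoconj}). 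Every holomorphic-curve input in this chain is either the standard associativity theorem or a model computation localized near $\d \S$, where multiplicities are forced to vanish, so no new gluing theorem is required. To complete your proof you would either have to carry out the degeneration and gluing analysis you defer (a substantial analytic undertaking, not obviously easier than the algebraic route), or restructure the argument around associativity and the doubled/amalgamated diagrams as above. Two smaller inaccuracies: the interior handles of the special part are all 2-handles, and the passage back to $(\S,\as,\gs)$ at the outgoing end is effected by four-dimensional 3-handle (destabilization) maps on a doubled diagram, not by contact 3-handle cancellations.
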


One can obtain from Theorem~\ref{thm:traceformula} a positive answer to \cite{JCob}*{Question~11.9}:

\begin{thm}\label{thm:turningaroundcobordism}
If $\cW \colon (M,\g) \to (M',\g')$ is a balanced cobordism of sutured manifolds, and
$\cW^\vee$ is the cobordism obtained by turning around $\cW$, then
\[
F_{\cW^\vee} = (F_{\cW})^\vee,
\]
with respect to the trace pairing.
\end{thm}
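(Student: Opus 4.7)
The plan is to deduce this from Theorem~\ref{thm:traceformula} together with the functoriality of the sutured cobordism maps established in \cite{JCob}. Since each $\SFH$ is a finite-dimensional $\bF_2$-vector space and the trace pairing
\[
\tr\colon \SFH(M,\g)\otimes \SFH(-M,\g)\to\bF_2
\]
is the canonical evaluation pairing by Theorem~\ref{thm:traceformula}, it is perfect. Thus $(F_{\cW})^\vee\colon\SFH(-M',\g')\to\SFH(-M,\g)$ is uniquely characterized by the adjunction identity
\[
\tr\bigl(F_{\cW}(\x)\otimes \y\bigr)=\tr\bigl(\x\otimes(F_{\cW})^\vee(\y)\bigr)
\]
for all $\x\in\SFH(M,\g)$ and $\y\in\SFH(-M',\g')$, and the theorem reduces to proving this same identity with $(F_{\cW})^\vee$ replaced by $F_{\cW^\vee}$.

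To do this, I would realize each side as a single cobordism map. Consider the two composite sutured cobordisms from $(M,\g)\sqcup(-M',\g')$ to $\emptyset$ defined by
\[
\cA = \Lambda_{(M',\g')}\circ\bigl(\cW\sqcup \id_{(-M',\g')}\bigr),\qquad \cB = \Lambda_{(M,\g)}\circ\bigl(\id_{(M,\g)}\sqcup\cW^\vee\bigr).
\]
Applying the composition law $F_{\cW_1\circ\cW_2}=F_{\cW_1}\circ F_{\cW_2}$ together with Theorem~\ref{thm:traceformula} gives
\[
F_{\cA}(\x\otimes\y)=\tr\bigl(F_{\cW}(\x)\otimes\y\bigr),\qquad F_{\cB}(\x\otimes\y)=\tr\bigl(\x\otimes F_{\cW^\vee}(\y)\bigr).
\]
The desired adjunction identity is therefore equivalent to the equality $F_{\cA}=F_{\cB}$ of cobordism maps.

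The final and main step is to show that $\cA$ and $\cB$ are equivalent in the sutured cobordism category. Smoothly, this is the standard \emph{folding} move in a rigid TQFT: capping the outgoing $(M',\g')$ end of $\cW$ with $\Lambda_{(M',\g')}$ bends the output of $\cW$ back onto the disjoint cylinder $\id_{(-M',\g')}$, producing a $4$-manifold-with-corners canonically diffeomorphic to the one obtained from $\cW^\vee$ by capping its outgoing $(-M,\g)$ end with $\Lambda_{(M,\g)}$. The genuine content of the argument, and the main obstacle, lies in verifying that under this diffeomorphism the equivalence classes of vertical contact structure on $\cA$ and on $\cB$ required by \cite{JCob} coincide. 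Because both trace caps carry the $I$-invariant contact structure $\xi_{I\times\d M}$ and $\cW^\vee$ inherits by definition the contact datum of $\cW$ under orientation reversal, this reduces to a local model check near the corners where the trace cap is glued on, which is handled by an explicit isotopy using $I$-invariance. Once $\cA\simeq\cB$ is established, applying $F$ yields $F_{\cA}=F_{\cB}$, and perfectness of $\tr$ gives $F_{\cW^\vee}=(F_{\cW})^\vee$.
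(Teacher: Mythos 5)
Your argument is correct, but it is not the proof the paper gives. The paper proves this statement as Theorem~\ref{thm:firstduality} by a self-contained, handle-by-handle argument: $\cW$ is factored into its special and boundary parts, the special case is quoted from \cite{JCob}*{Theorem~11.8}, and the boundary part is reduced (via the composition law for the gluing map) to a single contact $k$-handle attachment, for which Lemma~\ref{lem:Morsetheoryturningaroundsuturedcobordism} identifies the turned-around cobordism as a contact $(3-k)$-handle followed by a $4$-dimensional $(4-k)$-handle; explicit model computations on diagrams then show each such composition is dual to the original contact handle map. Your route --- characterizing $(F_{\cW})^\vee$ by adjunction against the perfect trace pairing, realizing both sides of the adjunction identity as cobordism maps of the composites $\cA$ and $\cB$, and identifying $\cA \simeq \cB$ by the folding move --- is precisely the deduction from Theorem~\ref{thm:traceformula} that the introduction alludes to but does not write out. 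Two remarks. First, your argument is not circular, but only barely: the paper deduces the \emph{cotrace} half of Theorem~\ref{thm:traceandcotracecobordismmaps} from the duality theorem itself, so it is essential that you invoke only trace cobordisms, as you do. Second, what your proof saves in case analysis it pays for in machinery: the trace formula rests on the deep Theorem~\ref{thm:trianglemapiscobmap}, whereas the paper's proof of duality is independent of the triangle-cobordism computation and moreover yields explicit diagrammatic formulas for the turned-around handle maps, which are useful in their own right. The one step you should flesh out is the equivalence $\cA \simeq \cB$: the underlying diffeomorphism is clear (both composites are $W$ with product collars absorbed, viewed as a cobordism from $(M,\g) \sqcup (-M',\g')$ to $\emptyset$), and the vertical contact structures on both sides are $\xi$ extended by $I$-invariant collar pieces, so the verification is as routine as you claim, but it does need to be said at the level of equivalence classes in the sense of \cite{JCob}.
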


We also give a self-contained proof of this result, without invoking Theorem~\ref{thm:traceformula}.
As a special case, we obtain that the decorated link cobordism maps $F_{\cX}^J$ of the first author satisfy
an analogous duality property when we turn a decorated link cobordism $\cX$ around;
see \cite{JMComputeCobordismMaps}*{Section~5.7}.
Indeed, these maps are defined by assigning a sutured manifold cobordism to a decorated link cobordism,
and applying the $\SFH$ functor.

The second author~\cite{ZemCFLTQFT} later gave a different construction of link Floer cobordism maps~$F_{\cX}^Z$
by composing maps defined for elementary link cobordisms, and showing independence of the decomposition.
Note that this construction makes sense for all versions of link Floer homology, not just the hat version.
In the last section, we prove that the two maps agree:

\begin{thm}
Given a decorated link cobordism $\cX$, we have $F_{\cX}^J = \hat{F}_{\cX}^Z$.
\end{thm}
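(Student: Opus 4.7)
The plan is to reduce the equality $F_{\cX}^J = \hat F_{\cX}^Z$ to a check on elementary pieces, using the fact that both constructions are functorial under composition of decorated link cobordisms. Concretely, I would decompose $\cX$ into a sequence of elementary cobordisms of the types appearing in Zemke's construction: decorated cylinders (including dividing-set stabilizations and destabilizations), births and deaths of unknots, saddle (band) cobordisms, and 4-dimensional $i$-handle attachments for $i\in\{1,2,3\}$ in the complement of the surface. Functoriality of $F^Z$ is built into its definition, while functoriality of $F^J$ follows from \cite{JCob}. Hence it suffices to prove $F_{\cX_0}^J = \hat F_{\cX_0}^Z$ for each elementary $\cX_0$.

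For each elementary piece I would translate $F_{\cX_0}^J$ into a computation with contact handles using the first half of the paper. Recall that $F_{\cX_0}^J$ is, by definition, the composition of a Honda--Kazez--Mati\'c gluing map with Ozsv\'ath--Szab\'o 4-dimensional handle maps, and by the main theorem of the first part this gluing map factors into contact $0$--, $1$--, $2$--, and $3$--handle maps, each of which is given by the simple diagrammatic formulas of Figure~\ref{fig::handles} and Proposition~\ref{prop:computemorsetypehandlemap}. For the 4-dimensional handle attachments this reduces immediately to the standard cobordism maps, which coincide with Zemke's definition. For a birth, a small contact $0$-handle attachment introduces the $\alpha\beta$ disk carrying the new generator, matching Zemke's unit chain. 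For a death, a contact $3$-handle attachment picks out the top Maslov generator of the disappearing component, matching Zemke's counit. For a decoration stabilization (adding a pair of dividing curves on a cylindrical piece), a contact $1$-handle followed by a contact $2$-handle gives the identity on a larger diagram in a way that matches the quasi-stabilization maps $S^\pm_w$, $T^\pm_z$ of \cite{ZemCFLTQFT}. For a saddle cobordism, attaching the appropriate contact $2$-handle along the band realizes the band surgery triangle map used by Zemke.

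The matching in each elementary case is essentially a diagrammatic identification: on both sides one exhibits an explicit Heegaard diagram for the elementary sutured cobordism, reads off $F_{\cX_0}^J$ via the contact handle formulas combined with the appropriate 4-dimensional handle map or (in the saddle case) the triangle map produced by Theorem~\ref{thm:trianglemapiscobmap}, and compares with the combinatorial formula defining $\hat F_{\cX_0}^Z$. Theorem~\ref{thm:trianglemapiscobmap} is the essential ingredient that lets us replace the 4-dimensional $2$-handle/triangle parts of $F^J$ with the same holomorphic triangle counts used by $F^Z$, so the comparison at the level of diagrams is algebraic rather than analytic.

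The main obstacle will be handling the decoration data and the saddle cobordisms simultaneously. Zemke's elementary saddle map is defined after fixing basepoints in the complement of the dividing set and involves a precise combinatorial choice of triangle map, while $F^J$ realizes the same geometric move through a contact $2$-handle followed by a triple-diagram triangle map, and the positions of the basepoints enter only through the dividing set on the cobordism surface. Reconciling these two bookkeeping systems, together with verifying that the quasi-stabilization maps $S^\pm$, $T^\pm$ are produced by the corresponding contact $1$-- and $2$--handle pairs on the nose rather than merely up to homotopy, is where the real work lies; once these elementary identifications are established, functoriality assembles them into the desired equality $F_{\cX}^J = \hat F_{\cX}^Z$.
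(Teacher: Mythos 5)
Your global strategy is the same as the paper's: decompose $\cX$ into elementary decorated link cobordisms, invoke the composition law on both sides, and reduce to a case-by-case check in which Theorem~\ref{thm:trianglemapiscobmap} supplies the bridge between the sutured-cobordism definition of $F^J$ and the holomorphic triangle counts underlying $F^Z$. However, two of your elementary-case verifications have genuine gaps as sketched. First, to apply Theorem~\ref{thm:trianglemapiscobmap} to a saddle cobordism you must know that the sutured cobordism $\cW(\cX)$ associated to the decorated saddle --- whose contact structure on $Z$ is by definition the $S^1$-invariant one determined by the dividing set $\cA$ on $S(B)$ --- actually coincides with the triangle cobordism $\cW_{\as,\bs,\bs'}$, whose contact structure $\xi_{\as,\bs,\bs'}$ is built by an entirely different convex-gluing recipe. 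This identification is Proposition~\ref{prop:contactstructuresagree} in the paper, proved by a nontrivial convex decomposition of $S_\cT\times S^1$ along annuli; your proposal tacitly assumes it. Second, even granting that, the saddle case requires knowing \emph{which} top-graded generator ($\Theta^{\ws}_{\bs,\bs'}$ versus $\Theta^{\zs}_{\bs,\bs'}$) is produced by each choice of dividing set $\cA_{\ws}$, $\cA_{\zs}$; the paper settles this (Lemma~\ref{lem:computesimplecobordismmaps}) by capping off the $(\bs,\bs')$-end with a standard disk cobordism and pinning down the image by a nonvanishing-plus-grading argument in a two-dimensional Floer group, citing the grading formula of Juh\'asz--Marengon. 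Your sketch identifies this bookkeeping as ``where the real work lies'' but does not supply the mechanism.

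Where you genuinely diverge from the paper is in the stabilization case: you propose to match the quasi-stabilization maps $S^{\pm}$, $T^{\pm}$ directly against a contact $1$-handle/$2$-handle pair ``on the nose.'' That is a harder computation than necessary, and the paper avoids it entirely by observing (Remark~\ref{rem:stabilization=composition}) that a stabilization cobordism factors as a birth plus a $1$-handle plus a saddle (or the dual), so its case follows from the others; the only quasi-stabilization comparison ever needed is the one buried in Lemma~\ref{lem:computesimplecobordismmaps}, where the grading argument does the work. If you insist on the direct comparison you would need a model computation analogous to Proposition~\ref{prop:computemorsetypehandlemap} adapted to the quasi-stabilized diagram, which is feasible but is not free. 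As written, your proposal is a correct outline with the two substantive ingredients above left unproved.
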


A key technical lemma that we use throughout the paper gives a simple formula
for the naturality map for a compound stabilization operation on a sutured diagram
(called a $(k,0)$- or $(0,l)$-stabilization in~\cite{JTNaturality}), which consists of
a simple stabilization, followed by handle sliding some $\a$-curves over the new $\a$-curve, or some
$\b$-curves over the new $\b$-curve; see Proposition~\ref{prop:compoundstabilizationmap}.

In an upcoming paper, we will use Theorem~\ref{thm:traceformula} to
compute the effect of a generalization of the Fintushel--Stern knot surgery
operation using a self-concordance of a knot, called concordance surgery~\cite{Akbulut}*{Section~2},
on the Ozsv\'ath--Szab\'o 4-manifold invariant.
The formula involves the graded Lefschetz number of the concordance map on knot Floer homology.
In another work, we will apply Theorem~\ref{thm:traceformula} to compute the invariant
due to Marengon and the first author~\cite{JMConcordance}
of a slice disk obtained by the deform-spinning construction of Litherland~\cite{Litherland}.
Hence, we will show that this invariant can effectively distinguish different slice disks of a knot,
answering \cite{JMConcordance}*{Question~1.4}.

\subsection{Notation and conventions}
Throughout this paper, if $A$ and $B$ are smooth manifolds,
then we write $A \cong B$ if $A$ and $B$ are diffeomorphic.
Given a submanifold $A$ of $C$, we write $N(A)$ for a regular neighborhood of~$A$ in~$C$. We denote Heegaard diagrams by $\cH$,
and handle decompositions by $\ve{H}$. If $M$ is an oriented $n$-manifold, then we will
denote the same manifold with its orientation reversed by $\bar{M}$ when $n$ is even,
and by $-M$ when $n$ is odd. The closure of a set $X$ is $\cl(X)$. If $\xi$ is a co-oriented 2-plane field on $M$, we will write $-\xi$ for the co-orientated 2-plane field obtained by reversing the co-orientation of $\xi$.

We orient the boundary of a manifold using the ``outward normal first'' convention.
To be consistent with this convention, all our cobordisms go from left to right.

\subsection{Acknowledgements}
We would like to thank Ko Honda and Jacob Rasmussen for helpful discussions on the contact gluing maps.

The first author was supported by a Royal Society Research Fellowship,
and the second author by an NSF Postdoctoral Research Fellowship (DMS-1703685).
This project has received funding from the European Research Council (ERC)
under the European Union's Horizon 2020 research and innovation programme
(grant agreement No 674978).
\maketitle

\tableofcontents

\section{1-handle and 3-handle maps and triangle maps; compound stabilizations}
\label{sec:compoundstab}

In this section, we describe several results about the interactions between holomorphic triangles and 1-handle, 3-handle, and stabilization maps. These will be used in later sections.

\subsection{1-handle and 3-handle maps}

Let $(\S,\as,\bs)$ be an admissible sutured diagram,
and let $p_1$, $p_2 \in \S$ be a pair of points that are both in components of $\S\setminus (\as\cup \bs)$ that intersect $\d \S$.
We construct the admissible sutured diagram $(\S',\as\cup \{\a_0\},\bs\cup \{\b_0\})$
by removing disks centered at $p_1$ and $p_2$, and adding an annulus~$A$ connecting the
boundaries of the disks.
Furthermore, $\a_0$ and $\b_0$ are homologically nontrivial curves in $A$
that intersect transversely at two points $\theta_{\a_0,\b_0}^+$ and $\theta_{\a_0,\b_0}^-$, such that $\theta_{\a_0,\b_0}^+$
has the larger relative Maslov grading.
The first author~\cite{JCob}*{Section~7} defined the 1-handle map
\[
\begin{split}
F_1^{\a_0,\b_0} \colon \CF(\S,\as,\bs) &\to \CF(\S',\as\cup \{\a_0\},\bs\cup \{\b_0\}) \\
\ve{x} &\mapsto \ve{x} \times \theta^+_{\a_0,\b_0}
\end{split}
\]
and the 3-handle map
\[
\begin{split}
F_3^{\a_0,\b_0} \colon  \CF(\S',\as \cup \{\a_0\}, \bs \cup \{\b_0\}) &\to  \CF(\S,\as,\bs) \\
\x \times \theta^+_{\a_0,\b_0} &\mapsto 0 \\
\x \times \theta^-_{\a_0,\b_0} &\mapsto \x.
\end{split}
\]

If $\cT = (\S,\as,\bs,\gs)$ is an admissible sutured triple,
then it induces a holomorphic triangle map
\[
F_\cT \colon \CF(\S,\as,\bs) \otimes \CF(\S,\bs,\gs) \to \CF(\S,\as,\gs).
\]
If $p_1$, $p_2 \in \S \setminus (\as\cup \bs\cup\gs)$
are distinct points such that they are in components of
$\S\setminus (\as\cup \bs\cup \gs)$ that intersect $\d \S$,
then we can similarly form the admissible Heegaard triple
\[
\cT' := (\S', \as' = \as \cup \{\a_0\}, \bs' = \bs \cup \{\b_0\}, \gs' = \gs \cup \{\g_0\}),
\]
where $\S'$ is obtained by adding a 1-handle $A$ with feet at $p_1$ and $p_2$,
and three new curves, $\a_0$, $\b_0$, and $\g_0$,
that are homologically nontrivial in $A$ and pairwise intersect in two points; see Figure~\ref{fig::one-handle}.
If $x$, $y$ are sets of attaching curves on some Heegaard surface $S$, then
we denote the diagram $(S, x, y)$ by $\H_{x, y}$.

\begin{figure}
    \centering
    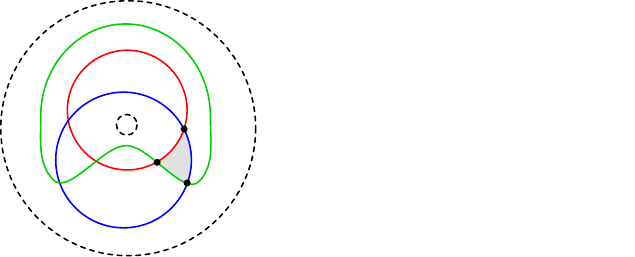
    \caption{The annulus $A$ is bounded by the two dashed circles.
    The intersection points $\theta^+_{\a_0,\b_0}$, $\theta^+_{\b_0,\g_0}$, and $\theta^+_{\a_0,\g_0}$
    are marked by solid circles, and $\theta^-_{\a_0,\b_0}$, $\theta^-_{\b_0,\g_0}$, and $\theta^-_{\a_0,\g_0}$
    by empty circles. On the left, the only index 0 triangle connecting $\theta^+_{\a_0,\b_0}$, $\theta^+_{\b_0,\g_0}$, and $\theta^+_{\a_0,\g_0}$ is shaded. On the right, the only index 0 triangle connecting
    $\theta^+_{\a_0,\b_0}$, $\theta^-_{\b_0,\g_0}$, and $\theta^-_{\a_0,\g_0}$ is shaded.
    \label{fig::one-handle}}
\end{figure}

\begin{prop}\label{prop:1handletrianglecount}
With the above notation, the following diagrams are commutative:
\[
  \xymatrix{
  \CF(\H_{\as,\bs}) \otimes \CF(\H_{\bs,\gs}) \ar[r]^-{F_{\cT}}
  \ar[d]|-{F_1^{\a_0,\b_0} \otimes  F_1^{\b_0,\g_0}} &
  \CF(\H_{\as,\gs}) \ar[d]^{F_1^{\a_0,\g_0}} \\
  \CF(\H_{\as',\bs'}) \otimes \CF(\H_{\bs',\gs'})  \ar[r]^-{F_{\cT'}} &
  \CF(\H_{\as',\gs'}),}
\]

\[\xymatrixcolsep{6pc}
  \xymatrix{
  \CF(\H_{\as,\bs}) \otimes \CF(\H_{\bs',\gs'}) \ar[r]^-{\id_{\CF(\H_{\as,\bs})} \otimes F_3^{\b_0,\g_0}}
  \ar[d]|-{F_1^{\a_0,\b_0} \otimes  \id_{\CF(\H_{\bs',\gs'})}} &
  \CF(\H_{\as,\bs}) \otimes \CF(\H_{\bs,\gs}) \ar[r]^-{F_{\cT}} &
  \CF(\H_{\as,\gs}), \\
  \CF(\H_{\as',\bs'}) \otimes \CF(\H_{\bs',\gs'})  \ar[r]^-{F_{\cT'}} &
  \CF(\H_{\as',\gs'}) \ar[ur]_-{F_3^{\a_0,\g_0}}}
\]

\[\xymatrixcolsep{6pc}
  \xymatrix{
  \CF(\H_{\as',\bs'}) \otimes \CF(\H_{\bs,\gs}) \ar[r]^-{F_3^{\a_0,\b_0} \otimes \id_{\CF(\H_{\bs,\gs})}}
  \ar[d]|-{\id_{\CF(\H_{\as',\bs'})} \otimes F_1^{\b_0,\g_0}} &
  \CF(\H_{\as,\bs}) \otimes \CF(\H_{\bs,\gs}) \ar[r]^-{F_{\cT}} &
  \CF(\H_{\as,\gs}). \\
  \CF(\H_{\as',\bs'}) \otimes \CF(\H_{\bs',\gs'})  \ar[r]^-{F_{\cT'}} &
  \CF(\H_{\as',\gs'}) \ar[ur]_-{F_3^{\a_0,\g_0}}}
\]
\end{prop}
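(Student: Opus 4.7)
The strategy is a neck-stretching argument along a separating simple closed curve $c \subset \S'$ that encloses the added annulus $A$ and is disjoint from all attaching curves. As the neck length along $c$ tends to infinity, any holomorphic triangle on $\cT'$ converges to a broken configuration consisting of a holomorphic triangle on $\cT$ on the outer side, paired with a holomorphic triangle on the planar subdiagram inside $A$ on the inner side, with matching asymptotics at the degenerating circle. Since the new curves $\a_0, \b_0, \g_0$ are supported in $A$ while the old attaching curves are supported in $\S \setminus A$, the Maslov indices add and the two pieces decouple: every index-$0$ triangle on $\cT'$ splits uniquely into an index-$0$ triangle on $\cT$ together with an index-$0$ triangle on $A$. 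The standard gluing theorem then reassembles each such broken configuration into a unique honest triangle on $\cT'$ for sufficiently long neck, so that the triangle count on $\cT'$ is the product of the triangle count on $\cT$ and the triangle count on $A$.

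The next step is a combinatorial classification of index-$0$ triangle domains on $A$ with corners in $\{\theta^\pm_{\a_0,\b_0}\} \cup \{\theta^\pm_{\b_0,\g_0}\} \cup \{\theta^\pm_{\g_0,\a_0}\}$. The three curves cut $A$ into bigons together with four small triangular regions; two of these are shaded in Figure~\ref{fig::one-handle}, and the other two are obtained by the evident symmetry of the picture. Their corner triples $(\epsilon_{\a\b}, \epsilon_{\b\g}, \epsilon_{\g\a}) \in \{+,-\}^3$ are precisely those containing an even number of $-$ signs. Each small triangular domain carries a unique holomorphic representative by the Riemann mapping theorem, and any other index-$0$ domain on $A$ differs from a small triangle by a nontrivial periodic domain of $A$, which is excluded by admissibility of $\cT'$. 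Hence, for every pair of input signs there is a unique output sign $\epsilon_{\a\g}$ making the total parity even, and
\[
F_{\cT'}\bigl(\x \times \theta^{\epsilon}_{\a_0,\b_0} \otimes \y \times \theta^{\epsilon'}_{\b_0,\g_0}\bigr) = F_{\cT}(\x \otimes \y) \times \theta^{\epsilon \epsilon'}_{\a_0,\g_0},
\]
with the convention $(+)(+) = (-)(-) = +$ and $(+)(-) = (-)(+) = -$.

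The three commutative diagrams now follow by direct substitution. For the first diagram, both applications of $F_1$ insert $\theta^+$, so the formula outputs $\theta^+_{\a_0,\g_0}$, which is exactly $F_1^{\a_0,\g_0}$ applied to $F_{\cT}(\x \otimes \y)$. For the second diagram, evaluation on $\x \otimes (\y \times \theta^+_{\b_0,\g_0})$ yields $0$ on both sides (the upper route kills $\theta^+$ via $F_3^{\b_0,\g_0}$; the lower route produces $\theta^+_{\a_0,\g_0}$, which is killed by $F_3^{\a_0,\g_0}$), and evaluation on $\x \otimes (\y \times \theta^-_{\b_0,\g_0})$ yields $F_{\cT}(\x \otimes \y)$ on both sides (the lower route produces $\theta^-_{\a_0,\g_0}$, which $F_3^{\a_0,\g_0}$ sends to $F_{\cT}(\x \otimes \y)$). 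The third diagram is verified identically after interchanging the roles of the two input factors; the relevant small triangle here has corner data $(\theta^-_{\a_0,\b_0}, \theta^+_{\b_0,\g_0}, \theta^-_{\g_0,\a_0})$, which is one of the two symmetric triangles noted above.

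The main technical obstacle is the neck-stretching and gluing step in the sutured setting, together with the admissibility analysis ruling out triangle domains on $A$ beyond the four small ones. Both steps essentially import Ozsv\'ath--Szab\'o's stabilization-invariance scheme adapted to sutured diagrams, after which the three commutativity statements reduce to the elementary case analysis above.
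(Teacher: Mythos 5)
Your overall strategy coincides with the paper's: reduce everything to a model count of index-$0$ triangles supported in the annulus $A$, then verify the three squares by plugging in the relevant $\theta^{\pm}$'s. The case analysis you carry out for the three diagrams is correct and matches the paper's. Two points of divergence are worth noting. First, the reduction mechanism: the paper does not neck-stretch. It uses the hypothesis that $p_1$ and $p_2$ lie in components of $\S\setminus(\as\cup\bs\cup\gs)$ meeting $\d\S$, so every region adjacent to the feet of the tube has zero multiplicity in any positive domain, and domains of holomorphic triangles split combinatorially as a class on $(\S,\as,\bs,\gs)$ disjoint union a class on $A$. Your neck-stretching/gluing argument can be made to work, but your stated justification for the decoupling (that the new curves lie in $A$ and the old ones in $\S\setminus A$) is not by itself enough: you must rule out domains with nonzero multiplicity across the neck circle, and the only thing that does this is precisely the boundary-adjacency hypothesis you never invoke. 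As written, the heavier analytic machinery hides the one hypothesis that actually makes the argument go.

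Second, your ``parity rule'' is wrong in the one case you never use. By the relative grading (equivalently, splicing two $\mu=-1$ disk classes into the $(+,+,+)$ triangle), every class in $\pi_2(\theta^-_{\a_0,\b_0},\theta^-_{\b_0,\g_0},\theta^+_{\a_0,\g_0})$ has Maslov index $-2$ modulo periodic domains of index $0$, so $F_{\a_0,\b_0,\g_0}(\theta^-_{\a_0,\b_0}\otimes\theta^-_{\b_0,\g_0})=0$, not $\theta^+_{\a_0,\g_0}$; correspondingly there is no index-$0$ small triangle with corner pattern $(-,-,+)$, and your claim that the four interior regions realize exactly the even-parity sign triples is false. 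Since each of the three diagrams feeds at least one $\theta^+$ into the model triple, only the patterns $(+,+,+)$, $(+,-,-)$, and $(-,+,-)$ are ever needed, and for those your formula agrees with the paper's computations; so the proposition is unaffected, but the general formula as stated should be corrected or restricted to the cases actually used.
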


\begin{proof}
Consider the first diagram.
The assumption that the points $p_1$ and $p_2$ are in components of
$\S\setminus (\as\cup \bs\cup \gs)$
that intersect $\d \S$ allows one to reduce the claim to the model computation
\begin{equation}\label{eqn:triangle}
  F_{\a_0,\b_0,\g_0}(\theta^+_{\a_0,\b_0} \otimes \theta^+_{\b_0,\g_0}) = \theta^+_{\a_0,\g_0}
\end{equation}
in the annulus $A$, which was established in the proof of \cite{JCob}*{Theorem~7.6};
see the left-hand side of Figure~\ref{fig::one-handle}.

We now show commutativity of the second diagram. Let $\x \in \T_\a \cap \T_\b$ and $\y \in \T_\b \cap \T_\g$.
Then
\[
F_{\cT} \circ (\id_{\CF(\H_{\as,\bs})} \otimes F_3^{\b_0,\g_0})
(\x \otimes (\y \times \theta^-_{\b_0,\g_0})) = F_\cT(\x \otimes \y).
\]
On the other hand,
\[
\begin{split}
&F_3^{\a_0,\g_0} \circ F_{\cT'} \circ \left(F_1^{\a_0,\b_0} \otimes \id_{\CF(\H_{\bs',\gs'})}\right)
\left(\x \otimes (\y \times \theta^-_{\b_0,\g_0})\right) = \\
&F_3^{\a_0,\g_0}\left( F_\cT(\x,\y) \times F_{\a_0,\b_0,\g_0}(\theta^+_{\a_0,\b_0} \otimes \theta^-_{\b_0,\g_0})\right).
\end{split}
\]
Hence, commutativity for the generator $\x \otimes (\y \times \theta^-_{\b_0,\g_0})$ follows from
\[
F_{\a_0,\b_0,\g_0}(\theta^+_{\a_0,\b_0} \otimes \theta^-_{\b_0,\g_0}) = \theta^-_{\a_0,\g_0},
\]
which can be shown similarly to equation~\eqref{eqn:triangle}; see the right-hand side of
Figure~\ref{fig::one-handle}. Note that there is a unique index 0 pseudo-holomorphic
triangle in $A$ connecting $\theta^+_{\a_0,\b_0}$, $\theta^-_{\b_0,\g_0}$, and $\theta^-_{\a_0,\g_0}$,
and there is none connecting $\theta^+_{\a_0,\b_0}$, $\theta^-_{\b_0,\g_0}$, and $\theta^+_{\a_0,\g_0}$.

On a generator of the form $\x \otimes (\y \times \theta^+_{\b_0,\g_0})$, we have
\[
F_{\cT} \circ (\id_{\CF(\H_{\as,\bs})} \otimes F_3^{\b_0,\g_0})
(\x \otimes (\y \times \theta^+_{\b_0,\g_0})) = 0,
\]
and, using equation~\eqref{eqn:triangle},
\[
\begin{split}
&F_3^{\a_0,\g_0} \circ F_{\cT'} \circ \left(F_1^{\a_0,\b_0} \otimes \id_{\CF(\H_{\bs',\gs'})}\right)
\left(\x \otimes (\y \times \theta^+_{\b_0,\g_0})\right) = \\
&F_3^{\a_0,\g_0}\left( F_\cT(\x,\y) \times F_{\a_0,\b_0,\g_0}(\theta^+_{\a_0,\b_0} \otimes \theta^+_{\b_0,\g_0})\right) =
F_3^{\a_0,\g_0}\left( F_\cT(\x,\y) \times \theta^+_{\a_0,\g_0}\right) = 0.
\end{split}
\]
This establishes commutativity of the second diagram. Commutativity of the third diagram is analogous.
\end{proof}

\subsection{Compound stabilization}
\label{sec:compoundstabilization}
In this section, we describe an elaboration of the usual stabilization operation on Heegaard diagrams.
Suppose that $\cH=(\S,\as,\bs)$ is an admissible sutured diagram,
and that $\lambda$ is an embedded path on $\S$ between two distinct points on $\d \S$
that avoids the $\as$ curves.
We define the \emph{compound stabilization} of $\cH$ along $\lambda$, as follows.

First, construct a surface $\S'$ by pushing $\lambda$ into the sutured compression body $U_{\as}$,
and add a tube that is the boundary of a regular neighborhood of $\lambda$.
Let $\a_0$ be a longitude of the tube,
concatenated with a portion of the curve $\lambda$ on $\S$.
Furthermore, let $\b_0$ be a meridian of the tube.
The curve $\a_0$ may intersect other $\bs$ curves;
however, $\b_0$ intersects only $\a_0$.
The construction is shown in Figure~\ref{fig::25}.
Let us denote by $\cH'$ the Heegaard diagram $(\S',\as \cup \{\a_0\},\bs \cup \{\b_0\})$.
This is an instance of a $(k,0)$-stabilization using the terminology of \cite{JTNaturality}*{Definition~6.26},
where $k = |\a_0 \cap \bs|$.
If $\lambda$ avoids $\bs$, then we can perform an analogous operation, with
the roles of $\a$ and $\b$ swapped, which is an instance of a $(0,l)$-stabilization.
We also call this a compound stabilization.
In the opposite direction, we say that $\H$ is obtained from $\H'$ by a \emph{compound
destabilization}.

We denote the unique intersection point of $\a_0$ and $\b_0$ by $c_{\a_0, \b_0}$.
There is a map
\[
\sigma^{\a_0,\b_0} \colon \SFH(\cH)\to \SFH(\cH'),
\]
defined by
\[
\sigma^{\a_0,\b_0}(\ve{x})=\ve{x}\times c_{\a_0, \b_0},
\]
which is a chain isomorphism since the tube is added near $\d\S$.

   \begin{figure}[ht!]
    \centering
    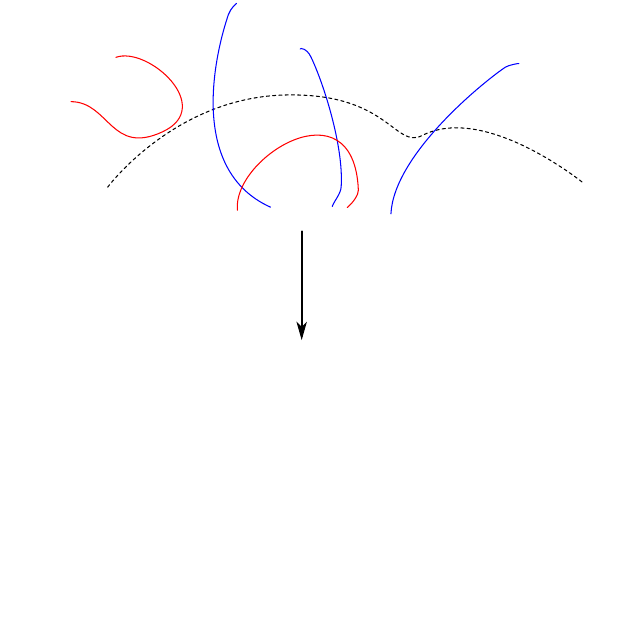
    \caption{An example of the compound stabilization operation along a path $\lambda$. \label{fig::25}}
    \end{figure}

On the other hand, there is also a naturality map $\Psi_{\cH \to \cH'}$.
One would expect these to be equal. Indeed, we prove the following (compare~\cite{HKMTQFT}*{Proposition~3.7}):

\begin{prop}\label{prop:compoundstabilizationmap}
The compound stabilization map $\sigma^{\a_0,\b_0}$ is chain homotopic
to the naturality map $\Psi_{\cH\to \cH'}$.
\end{prop}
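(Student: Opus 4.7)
The plan is to factor the compound stabilization as a simple stabilization followed by a pure $\as$-isotopy, and to compute each factor separately. I would introduce the intermediate diagram
\[
\cH_0 = (\S',\ \as\cup\{\a_0'\},\ \bs\cup\{\b_0\}),
\]
where $\a_0'$ is a ``short'' longitude of the new tube, obtained by concatenating the tube's longitudinal arc with a short arc on $\S$ near $\d\S$ chosen to avoid $\bs$. Then $\a_0'$ meets $\bs$ only at $c_{\a_0',\b_0}$, and $\a_0'$ is isotopic to $\a_0$ on $\S'$ via an isotopy supported in a neighborhood of $\lambda$, which sweeps $\a_0'$ across each $\b$-curve that $\lambda$ crosses. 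Functoriality of the naturality maps~\cite{JTNaturality} then gives
\[
\Phi_{\cH\to\cH'}\ \simeq\ \Phi_{\cH_0\to\cH'}\circ\Phi_{\cH\to\cH_0}.
\]

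The first factor $\Phi_{\cH\to\cH_0}$ is the naturality map for a simple stabilization, and is chain homotopic to the insertion $\ve{x}\mapsto \ve{x}\times c_{\a_0',\b_0}$ by the standard formula. For the second factor, which is a pure $\as$-isotopy naturality map, I would use the fact that such a map is chain homotopic to the triangle map on the triple diagram
\[
\cT_0 = (\S',\ \as\cup\{\a_0'\},\ \as'\cup\{\a_0\},\ \bs\cup\{\b_0\}),
\]
where $\as'$ is a small Hamiltonian perturbation of $\as$, applied to $(\,\cdot\,)\otimes \Theta^+$ with $\Theta^+$ the top-graded generator of the $(\as\cup\{\a_0'\})$--$(\as'\cup\{\a_0\})$ Floer complex. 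Writing $\Theta^+ = \ve{\theta}^+_{\as,\as'}\times \theta^+_{\a_0',\a_0}$, the contribution of the unchanged $\as$-curves splits off via the small-triangle technology of \cite{JCob}*{Section~7}, reducing the computation to a local triangle count near the tube together with $\lambda$.

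The local count asks for pseudo-holomorphic triangles with corners $c_{\a_0',\b_0}$, $\theta^+_{\a_0',\a_0}$, and some point of $\a_0\cap\b_0$. Since $\a_0$ meets $\b_0$ only at $c_{\a_0,\b_0}$, the output is forced. Moreover, because the isotopy $\a_0'\to\a_0$ is supported in a neighborhood of $\lambda$, the two longitudes cobound a thin embedded strip that closes up into a small triangular domain at each of $c_{\a_0',\b_0}$ and $c_{\a_0,\b_0}$. By the Riemann mapping theorem, in the same spirit as the index-$0$ triangle count on the left of Figure~\ref{fig::one-handle}, this domain admits a unique holomorphic representative, giving
\[
\Phi_{\cH_0\to\cH'}(\ve{x}\times c_{\a_0',\b_0})\ \simeq\ \ve{x}\times c_{\a_0,\b_0}.
\]
Composing with the first factor then proves the proposition.

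The main obstacle is ruling out non-local triangle contributions, i.e.\ index-$0$ triangles whose domains have nonzero multiplicity away from the tube and from $\lambda$. I would address this by first arranging admissibility of $\cT_0$ (winding the $\as'$-curves if necessary), and then using a positivity-of-domains argument: any such stray multiplicity can be peeled off by subtracting the small triangles on the $\as$-$\as'$ side already accounted for by Proposition~\ref{prop:1handletrianglecount}, reducing the question to the purely local count described above. A minor technical point is that $\a_0$ may cross a single $\b$-curve of $\bs$ multiply if $\lambda$ does; but the local picture near each such crossing consists of cancelling pairs of small bigons and does not affect the triangle count.
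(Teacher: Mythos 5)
Your factorization breaks down at the first step. The intermediate diagram $\cH_0=(\S',\as\cup\{\a_0'\},\bs\cup\{\b_0\})$ is not, in general, a Heegaard diagram for $(M,\g)$, and $\a_0'$ is not isotopic to $\a_0$ on $\S'$. Writing $\a_0=\ell\cup\lambda$ and $\a_0'=\ell\cup\lambda'$, where $\ell$ is the longitudinal arc of the tube, the two curves are isotopic only if $\lambda$ and the boundary-parallel arc $\lambda'$ are isotopic rel endpoints in $\S$, i.e.\ only if they cobound the ``thin embedded strip'' your local count relies on. For a general $\lambda$ --- say one running over a handle of $\S$ that is not compressed by $\as$ --- no such strip exists, $\a_0'$ does not bound a disk in the new compression body, and $\cH_0$ represents a different sutured manifold, so $\Phi_{\cH_0\to\cH'}$ is not even defined. (If the endpoints of $\lambda$ lie on different components of $\d\S$, there is no ``short arc near $\d\S$'' joining the feet at all.) The actual relationship between $\cH'$ and a simple stabilization is not an isotopy of the new $\a$-curve but a sequence of handleslides of the $\bs$-curves meeting $\a_0$ across the meridian $\b_0$ --- this is the $(k,0)$-stabilization structure --- and computing that sequence directly is precisely what the proposition is trying to avoid.

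Even where your $\cT_0$ does make sense, the step ruling out non-local triangles is not a proof. The move from $\a_0'$ to $\a_0$ is supported along $\lambda$, in the interior of $\S$, so positivity of domains gives no vanishing constraint there: a contributing triangle class may have nonzero multiplicity in every region adjacent to $\lambda$, and subtracting the small $\as$--$\as'$ triangles does not force such a class down to the local one. Controlling these counts is exactly the neck-stretching analysis the paper's argument is engineered to sidestep: it introduces a second compound stabilization $\hat{\cH}$ along a parallel path, arranges that every curve move in the comparison sequence from $\hat{\cH}'$ to $\hat{\cH}$ takes place in tubes whose feet lie near $\d\S$ (so that the boundary constraint and Lemmas~\ref{lem:compoundtrianglemap1} and~\ref{lem:compoundtrianglemap2} pin down the counts), and then closes the loop using functoriality of the transition maps. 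Your outline needs either that indirect device or an honest degeneration argument to be complete.
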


One strategy to prove the above theorem would be to handle slide
all the $\bs$ curves that intersect $\a_0$ across $\b_0$.
The map from naturality induced by these handle slides can be computed by counting holomorphic triangles.
To prove Proposition~\ref{prop:compoundstabilizationmap}, one could analyze how holomorphic
triangles degenerate as one stretches two necks
(one on each end of the tube we are adding). While this can be done,
we will give a somewhat indirect argument that avoids performing a neck-stretching argument.

Suppose that $\cT = (\S,\as,\bs,\bs')$ is an admissible sutured triple
with a path $\lambda$ from $\d \S$ to itself that does not intersect any $\as$ curves.
Then we can perform the compound stabilization procedure on $(\S,\as,\bs,\bs')$
to obtain a Heegaard triple
\[
\cT' = (\S', \as \cup \{\a_0\}, \bs \cup \{\b_0\}, \bs' \cup \{\b_0'\}),
\]
where $(\S', \as \cup \{\a_0\}, \bs \cup \{\b_0\})$ is the compound stabilization
of $(\S, \as, \bs)$ along $\lambda$. Furthermore, the curve
$\b_0'$ is isotopic to $\b_0$ and $|\b_0 \cap \b_0'|=2$, while
$|\a_0\cap \b_0| = |\a_0\cap \b_0'| = 1$. An example is shown in Figure~\ref{fig::29}.
Let $\theta^+_{\b_0, \b_0'}$ be the point of $\b_0 \cap \b_0'$ with the higher relative
Maslov grading, and write $\a_0 \cap \b_0 = \{c_{\a_0,\b_0}\}$ and
$\a_0 \cap \b_0' = \{c_{\a_0,\b_0'}\}$.

\begin{lem}\label{lem:compoundtrianglemap1}
If $\cT=(\S,\as,\bs,\bs')$ is an admissible sutured triple and
\[
\cT'=(\S',\as\cup \{\a_0\}, \bs\cup \{\b_0\}, \bs'\cup \{\b_0'\})
\]
is a compound stabilization of $\cT$, as described in the previous paragraph, then
\[
F_{\cT'}(\ve{x} \times c_{\a_0,\b_0}, \ve{y}\times \theta^+_{\b_0,\b_0'}) =
F_{\cT}(\ve{x},\ve{y})\times c_{\a_0,\b_0'}.
\]
\end{lem}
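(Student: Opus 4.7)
The plan is to reduce the identity to a local computation in the stabilization region containing the tube, following the strategy of the proof of Proposition~\ref{prop:1handletrianglecount}. First, I would observe that $\a_0 \cap \b_0' = \{c_{\a_0,\b_0'}\}$, so every generator of $\CF(\H_{\as',\gs'})$ extending $\ve{z}$ on the original diagram necessarily has the form $\ve{z} \times c_{\a_0,\b_0'}$, and the output is forced to be a combination of such terms. Next, I would pin down the combinatorics in the tube region: because $\b_0$ and $\b_0'$ are parallel copies with $|\b_0 \cap \b_0'|=2$, they bound two small embedded bigons; since $\a_0$ meets each of $\b_0, \b_0'$ exactly once, the points $c_{\a_0,\b_0}$ and $c_{\a_0,\b_0'}$ must be adjacent to a common bigon, and $\a_0$ divides that bigon into a pair of small triangles, one of which, call it $T_1$, has corners $c_{\a_0,\b_0}$, $\theta^+_{\b_0,\b_0'}$, $c_{\a_0,\b_0'}$.

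Second, I would show that triangle classes correspond bijectively. For any $\psi' \in \pi_2(\ve{x} \times c_{\a_0,\b_0},\, \ve{y} \times \theta^+_{\b_0,\b_0'},\, \ve{z} \times c_{\a_0,\b_0'})$ on $\cT'$, the boundary of $\cD(\psi')$ on $\b_0$ and on $\b_0'$ can only change corner type at the three specified intersection points, and $\a_0$ meets $\b_0$ and $\b_0'$ in single points; thus the multiplicities of $\cD(\psi')$ along the $\b_0$- and $\b_0'$-sides of the stabilization region are forced. Modulo periodic domains, this yields $\cD(\psi') = \cD(\psi) + T_1$ for a unique triangle class $\psi \in \pi_2(\ve{x},\ve{y},\ve{z})$ on $\cT$, and one checks that $\mu(\psi') = \mu(\psi)$ since $\mu(T_1)=0$.

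Third, I would match the holomorphic counts by neck-stretching along a circle enclosing the stabilization region (the same localization technique available in the proof of Proposition~\ref{prop:1handletrianglecount}, since the tube sits in a collar of $\d \Sigma$). In the limit, a pseudo-holomorphic representative of $\psi+T_1$ on $\cT'$ splits into a holomorphic triangle on $(\Sigma,\as,\bs,\bs')$ representing $\psi$, together with a holomorphic triangle in the stabilization region representing $T_1$. The local piece is a topological disk with three convex corners and admits a unique holomorphic representative up to reparametrization by the Riemann mapping theorem, so it contributes $1$. A standard gluing argument promotes this to a bijection of moduli spaces, yielding $\#\cM(\psi+T_1) = \#\cM(\psi)$ and hence the claimed identity.

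The main obstacle is that $\a_0$ can wind across $\Sigma$ and cross many $\bs$ and $\bs'$ curves, so the global shape of $\cD(\psi')$ need not be concentrated near the tube; one has to argue that nonetheless the portion of $\cD(\psi')$ inside the stabilization region is exactly $T_1$ (modulo periodic classes) and that neck-stretching respects this decomposition. The key point enabling both is that $\lambda$ is a boundary-to-boundary arc, so the stabilization region is isolated in a collar of $\d \Sigma$, allowing the domain analysis and neck-stretching to be carried out without interference from the rest of the diagram.
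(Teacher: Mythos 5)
Your proposal is correct and follows essentially the same route as the paper: the proof given there is exactly the observation that, because the tube is attached near $\d \S$, the boundary-adjacent regions at its feet have multiplicity zero, which forces the domain of any contributing triangle class to split as the disjoint union of a class on $(\S,\as,\bs,\bs')$ with the small index-$0$ triangle in the tube whose unique holomorphic representative gives the count (the model computation of Figure~\ref{fig::29}). The only difference is that the paper obtains the factorization of moduli spaces directly from this genuine disjointness of supports, whereas you invoke neck-stretching and gluing, which is sound but heavier than necessary here (and is the kind of argument the authors explicitly set out to avoid in the discussion preceding Proposition~\ref{prop:compoundstabilizationmap}).
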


\begin{proof}
Since the tube is added near $\d \S$, the result is obtained by a model computation inside
the tube. This is shown in Figure~\ref{fig::29}.
\end{proof}

\begin{figure}[ht!]
 \centering
 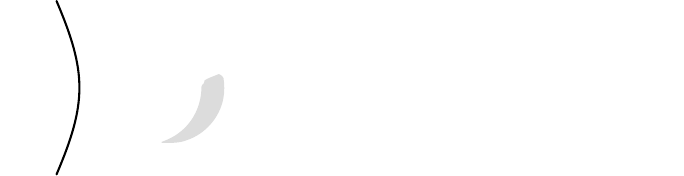
 \caption{A compound stabilization of a sutured triple, and the model
 computation of Lemma~\ref{lem:compoundtrianglemap1}. The $\as$ curves are
 shown as red solid lines, the $\bs$ curves are shown as blue solid lines, and
 the $\bs'$ curves are shown as blue dashed lines. The circles marked $B$ are
 identified. \label{fig::29}}
\end{figure}

\begin{rem}
Despite the notation, the triangle map computation of Lemma~\ref{lem:compoundtrianglemap1}
does not assume that the curves $\bs$ and $\bs'$ appearing in the triple $\cT$
are related by a sequence of handle slides or isotopies.
However, we will only need the result for examples where that is the case.
\end{rem}

Analogously, we need to consider moves of the $\a_0$ curve appearing in a compound stabilization.
To this end, suppose that $\cT = (\S,\as',\as,\bs)$ is a sutured triple with two paths,
$\lambda$ and $\lambda'$, from $\d \S$ to itself, such that $\lambda$ and $\lambda'$ have the same endpoints
and disjoint interiors.
Furthermore, suppose that $\lambda$ avoids $\as$ and $\lambda'$ avoids $\as'$.
We can construct a compound stabilization of the triple $(\S,\as',\as,\bs)$ to obtain
\[
\cT' = (\S',\as'\cup \{\a_0'\},\as\cup \{\a_0\},\bs\cup \{\b_0\}),
\]
where $(\S', \as \cup \{\a_0\}, \bs \cup \{\b_0\})$ is the compound stabilization
of $(\S, \as, \bs)$ along $\lambda$.
Furthermore, the curve $\a_0'$ is a concatenation of a portion of the curve $\lambda'$ on $\S$
with a longitude of the tube $\S' \setminus \S$, such that
$|\a_0 \cap \a_0'| = 2$ and $|\a_0 \cap \b_0| = |\a_0' \cap \b_0| = 1$.
In the tube, $\a_0$, $\a_0'$, and $\b_0$ are configured as in Figure~\ref{fig::30}.
Let $\theta^+_{\a_0',\a_0}$ be the point of $\a_0' \cap \a_0$ with the larger
relative Maslov grading, and write $\a_0 \cap \b_0 = \{c_{\a_0,\b_0}\}$
and $\a_0' \cap \b_0 = \{c_{\a_0',\b_0}\}$.

\begin{lem}\label{lem:compoundtrianglemap2}
If $\cT = (\S, \as', \as, \bs)$ is an admissible sutured triple and
\[
\cT' = (\S',\as'\cup \{\a_0'\},\as\cup \{\a_0\},\bs\cup \{\b_0\})
\]
is a compound stabilization, as described in the previous paragraph, then
\[
F_{\cT'}(\ve{x} \times \theta^+_{\a_0',\a_0},\ve{y} \times c_{\a_0,\b_0}) =
F_{\cT}(\ve{x},\ve{y})\times c_{\a'_0,\b_0}.
\]
\end{lem}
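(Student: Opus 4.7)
The plan is to mimic the proof strategy of Lemma~\ref{lem:compoundtrianglemap1}: since the tube along which the compound stabilization is performed sits near $\d\S$, all the relevant holomorphic triangle analysis reduces to a model computation in a neighborhood of the tube, with the rest of the surface playing no role. Concretely, I would argue that any class $\psi \in \pi_2(\ve{x} \times \theta^+_{\a_0',\a_0},\ve{y} \times c_{\a_0,\b_0},\ve{z}\times p)$ of Maslov index zero with non-negative domain decomposes as $\psi = \psi_0 \# \psi_{\text{tube}}$, where $\psi_0$ is a class on the original triple $\cT$ contributing to $F_\cT(\ve{x},\ve{y}) = \ve{z}$, and $\psi_{\text{tube}}$ is a class on the local model in the tube whose corners are $\theta^+_{\a_0',\a_0}$, $c_{\a_0,\b_0}$, and $p$. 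Because $\b_0$ meets $\a_0$ and $\a_0'$ each in a single point, and $p$ must lie in $\a_0' \cap \b_0$, we have $p = c_{\a_0',\b_0}$.

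The core of the argument is then the model count inside the tube, depicted in Figure~\ref{fig::30}: I would identify the unique embedded index-zero triangle with corners $\theta^+_{\a_0',\a_0}$, $c_{\a_0,\b_0}$, $c_{\a_0',\b_0}$ (the small triangle cut out between the arcs of $\a_0$, $\a_0'$, $\b_0$ in the obvious configuration), verify it has a unique holomorphic representative by an application of the Riemann mapping theorem, and check that it is the unique such class with non-negative multiplicities. For the other intersection point $\theta^-_{\a_0',\a_0}$, the same local picture shows there is no index-zero non-negative triangle with two of its corners at $\theta^-_{\a_0',\a_0}$ and $c_{\a_0,\b_0}$, so no additional terms can appear.

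To justify the decomposition $\psi = \psi_0\#\psi_{\text{tube}}$, I would use a standard ``gluing near the boundary'' argument, as in \cite{JCob}*{Theorem~7.6} and in the proof of Proposition~\ref{prop:1handletrianglecount}: the multiplicities of $\psi$ on the boundaries of the tube determine the local class $\psi_{\text{tube}}$ uniquely, and admissibility on $\cT$ transfers to admissibility on $\cT'$, so one gets a bijection of moduli spaces compatible with fiber product decompositions. This yields the equality on the chain level.

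The main (and essentially only) obstacle is the model computation in the tube itself: one must verify that among all classes compatible with the prescribed corners, there is exactly one index-zero class with a non-negative domain and a unique holomorphic representative, and that no other output generator $p \in \a_0'\cap\b_0$ contributes. Once this local count is pinned down, the rest of the proof is formal, exactly parallel to Lemma~\ref{lem:compoundtrianglemap1}.
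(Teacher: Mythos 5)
Your proposal is correct and follows essentially the same route as the paper: since the ends of the tube lie near $\d\S$, multiplicity constraints force any contributing class to split as a class on the original triple $\cT$ together with a class supported entirely on the tube, and the result then reduces to the model triangle count in the tube shown in Figure~\ref{fig::30}. The extra remarks about $\theta^-_{\a_0',\a_0}$ are harmless but unnecessary, since the input generator already fixes the corner at $\theta^+_{\a_0',\a_0}$.
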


\begin{proof}
As before, since the ends of the tube $\S' \setminus \S$ are near $\d \S$, we obtain constraints on the multiplicities of any homology class of triangles which has holomorphic representatives.
An easy model computation shows that triangles with representatives have homology class
$\psi \sqcup \psi_0$, where $\psi$ is a homology class on $(\S,\as',\as,\bs)$
and $\psi_0$ is a homology class supported entirely on the tube.
The appropriate model computation is shown in Figure~\ref{fig::30}.
\end{proof}

\begin{figure}[ht!]
 \centering
 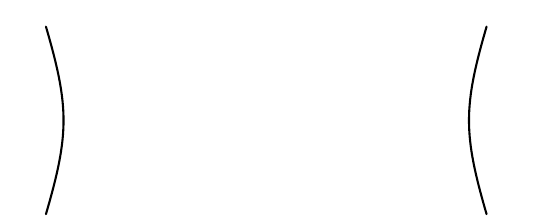
 \caption{The model computation of Lemma \ref{lem:compoundtrianglemap2}.
 The $\as'$ curves are shown as dashed red, the $\as$ curves are shown as solid red,
 and the $\bs$ curves are shown as solid blue. \label{fig::30}}
\end{figure}

Using the above two lemmas, we now prove Proposition~\ref{prop:compoundstabilizationmap}.

\begin{proof}[Proof of Proposition~\ref{prop:compoundstabilizationmap}]
Let $\cH'=(\S', \as\cup \{\a_0\},\bs\cup \{\b_0\})$
denote a compound stabilization of $(\S, \as, \bs)$ using a path $\lambda$ with ends on $\d \S$,
and let $D$ be the tube attached. Let $\hat{\cH}=(\hat{\S}, \as\cup \{\a_1\},\bs\cup \{\b_1\})$
be another compound stabilization of $\cH$, along a path that is parallel to $\lambda$.
Write $B$ for the attached tube. Let
$\hat{\cH}' = (\hat{\S}',\as,\cup \{\a_0,\a_1\}, \bs \cup \{\b_0,\b_1\})$
denote the two-fold compound stabilization of $\S$ along both paths.
Write $\{c_0\} = \a_0 \cap \b_0$ and $\{c_1\}=\a_1 \cap \b_1$.

We claim that
\begin{equation}\label{eq:transitionmaps1}
\Psi_{\cH' \to \hat{\cH}'}(\ve{x} \times c_0) = \Psi_{\cH \to \hat{\cH}}(\ve{x}) \times c_0.
\end{equation}
To see this, we note that a sequence of diagrams from~$\cH'$ to~$\hat{\cH}'$ can be constructed
by starting with~$\cH'$, performing a simple stabilization near the boundary,
and then moving one foot of the new tube along $\S$, parallel to~$\a_0$.
At various points, we will have to handle slide a $\bs$ curve across~$\b_1$.
It is not obvious what the holomorphic triangle count will be for each handle slide.
However, by the holomorphic triangle count from Lemma~\ref{lem:compoundtrianglemap1},
it is unchanged by the presence of the compound stabilization along $\a_0$.
In particular, the triangles counted by going from $\cH'$ to $\hat{\cH}'$
are the same as the ones counted in the analogous sequence of
diagrams from $\cH$ to $\hat{\cH}$, so equation~\eqref{eq:transitionmaps1} follows.

We now consider the path of Heegaard diagrams from $\hat{\cH}'$ to $\hat{\cH}$
shown in Figure~\ref{fig::27}. The diagram $\hat{\cH}''$ is obtained by handle sliding
$\b_1$ over $\b_0$. We let $\b_1'$ denote the curve resulting from this handle slide.
The diagram $\hat{\cH}'''$ is obtained by isotopying the Heegaard surface by sliding
the foot of the tube marked  $D$ inside $\b_1'$ over the tube marked $B$,
carrying $\a_0$ and $\b_0$ along,
and then handle sliding $\a_0$ over $\a_1$, giving rise to a new curve $\a_0'$.
Note that $\hat{\cH}'''$ is a simple stabilization of $\hat{\cH}$ (in \cite{JTNaturality},
this type of stabilization was also referred to as a $(0,0)$-stabilization),
and hence there is a destabilization from $\hat{\cH}'''$ to $\hat{\cH}$.

\begin{figure}[ht!]
 \centering
 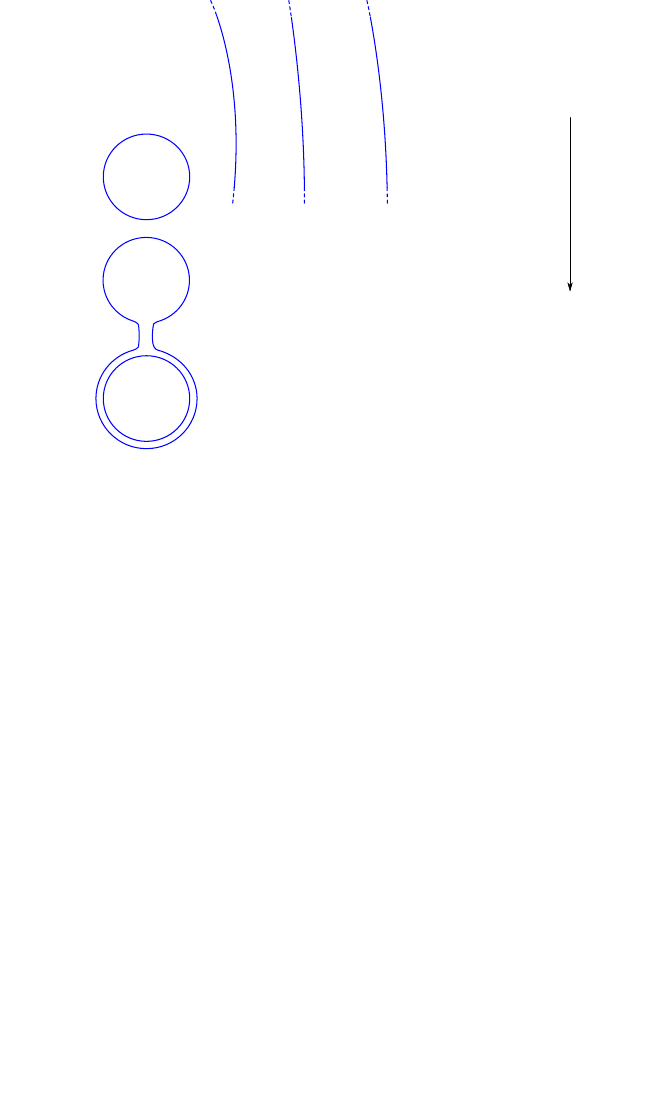
 \caption{A sequence of Heegaard diagrams from $\hat{\cH}'$ to $\hat{\cH}$. \label{fig::27}}
\end{figure}

Using the presence of the boundary $\d \S$ to simplify the computation, one can see that
the only holomorphic triangles contributing to the change of diagrams map $\Psi_{\hat{\cH}'\to\hat{\cH}''}$
have homology class $\psi\sqcup \psi_0$, where $\psi$ is a holomorphic triangle on an unstabilized
Heegaard triple $(\S,\as,\bs,\bs')$, where $\bs'$ is a small isotopy of $\bs$, and $\psi_0$
is the homology class shown in Figure~\ref{fig::28} .
Using an additional triangle map to move the $\bs'$ back to $\bs$ (and only isotoping $\b_0$
and $\b_1'$ a small amount) that can be analyzed similarly, we have that
\begin{equation}
\Psi_{\hat{\cH}'\to \hat{\cH}''}(\ve{x}\times c_0\times c_1)=\ve{x}\times c_0\times c_1',\label{eq:transitionmaps2}
\end{equation}
where $c_1' = \a_1 \cap \b_1'$.

\begin{figure}[ht!]
 \centering
 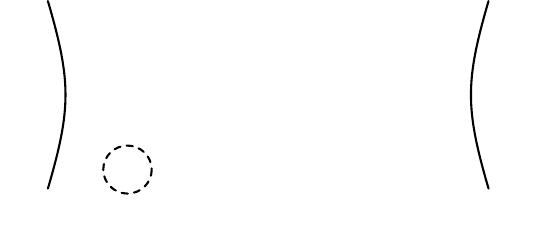
 \caption{Computing $\Psi_{\hat{\cH}'\to \hat{\cH}''}$. \label{fig::28}}
\end{figure}

By Lemma~\ref{lem:compoundtrianglemap2}, we have that
\begin{equation}
\Psi_{\hat{\cH}''\to \hat{\cH}'''}(\ve{x}\times c_0\times c_1')=\ve{x}\times c_0'\times c_1',\label{eq:transitionmaps3}
\end{equation}
where $c_0' = \a_0' \cap \b_0$. Equations \eqref{eq:transitionmaps2} and \eqref{eq:transitionmaps3} imply that
\begin{equation}\Psi_{\hat{\cH}'\to \hat{\cH}'''}(\ve{x}\times c_0\times c_1)=\ve{x}\times c'_0\times c_1'.\label{eq:transitionmaps5}\end{equation} The diagrams $\hat{\cH}'''$ and $\hat{\cH}$ are related
by a destabilization of the curves $\a_0'$ and $\b_0$. Hence
\begin{equation}
\Psi_{\hat{\cH}'''\to \hat{\cH}}(\ve{x}\times c_0'\times c_1') = \ve{x}\times c_1.\label{eq:transitionmaps4}
\end{equation}

In $\hat{\cH}$, the only $\a$-curve that intersects $\b_1$ is $\a_1$,
and $\a_1 \cap \b_1 = \{c_1\}$, hence every generator in $\hat{\cH}$ is
of the form $\ys \times c_1$ for some $\y \in \T_{\a} \cap \T_{\b}$.
So, there are constants $c_{\xs,\ys}\in \bF_2$ such that
\[
\Psi_{\cH \to \hat{\cH}}(\xs) = \sum_{\ys \in \T_{\a} \cap \T_{\b}} c_{\xs,\ys} (\ys \times c_1).
\]
If we substitute this into equation~\eqref{eq:transitionmaps1}, we get that
\[
\Psi_{\cH'\to \hat{\cH}'}(\ve{x} \times c_0) =
\sum_{\ys \in \T_\a \cap \T_\b} c_{\xs,\ys} (\ys \times c_0 \times c_1).
\]
Together with equations~\eqref{eq:transitionmaps5} and~\eqref{eq:transitionmaps4},
we arrive at the equality
\begin{equation*}
 \begin{split}
 \Psi_{\cH'\to \hat{\cH}}(\xs \times c_0) &=
 (\Psi_{\hat{\cH}'''\to \hat{\cH}}\circ \Psi_{\hat{\cH}'\to \hat{\cH}'''}
 \circ \Psi_{\cH'\to \hat{\cH}'})(\xs \times c_0) = \\
 \sum_{\ys \in \T_\a \cap \T_\b} c_{\xs,\ys} (\ys \times c_1) &= \Psi_{\cH\to \hat{\cH}}(\xs).
 \end{split}
\end{equation*}
Hence, we obtain that
\[
\Psi_{\cH'\to \cH}(\xs\times c_0) = (\Psi_{\hat{\cH}\to \cH} \circ \Psi_{\cH'\to \hat{\cH}})(\xs\times c_0) =
(\Psi_{\hat{\cH}\to \cH}\circ \Psi_{\cH\to \hat{\cH}})(\xs)=\xs.
\]
We note that the last equality follows from naturality.
Hence $\Psi_{\cH'\to \cH}(\xs\times c_0) = \ve{x}$, completing the proof
of Proposition~\ref{prop:compoundstabilizationmap}.
\end{proof}

\section{Contact cell decompositions and the gluing map}
\label{sec:definingthegluingmap}
In this section, we give a definition of the contact gluing map using contact cell decompositions, and prove invariance.
The construction is similar to the one due to Honda, Kazez, and Mati\'c~\cite{HKMTQFT}.
On a formal level, the gluing map is described as follows. Suppose that $(M,\g)$
is a sutured submanifold of $(M',\g')$ and that $\xi$ is a cooriented contact structure
on $M' \setminus \Int(M)$ such that $\d M$ is a convex surface with dividing set $\g$ and
$\d M'$ is a convex surface with dividing set $\g'$. Note that this implies that
a contact vector field positively transverse to $\d M$ lies on the positive side of $\xi$
along $R_-(M,\g)$, and on the negative side of $\xi$ along $R_+(M,\g)$.
In this situation, there is an induced map
\[
\Phi_{\xi} \colon  \SFH(-M,-\g) \to \SFH(-M',-\g'),
\]
called the gluing map.

\subsection{Sutured cell decompositions}

In order to discuss cell decompositions of contact 3-manifolds,
we need the following notion of cell decomposition for surfaces with divides:

\begin{define}\label{def:almostcelldecomp}
Let $F$ be a closed, orientable surface, and  $\g \subset F$ a dividing set.
A \emph{sutured cell decomposition} of $(F,\g)$ consists of the following:
\begin{itemize}

\item (\emph{Fattened 0-cells}) A collection of pairwise disjoint disks $B_1, \dots, B_n \subset F$
  such that each $B_i \cap \g$ is an arc and each component of $\g$ intersects some $B_i$.

\item (\emph{1-cells}) A collection of pairwise disjoint, properly embedded arcs
\[
\lambda_1,\dots, \lambda_m \subset F \setminus \bigcup_{i = 1}^n \Int(B_i)
\]
disjoint from $\g$. Furthermore, each component of
$F \setminus (B_1 \cup \dots \cup B_n \cup \lambda_1 \cup \dots \cup \lambda_m)$
intersects $\gamma$ non-trivially and is homeomorphic to an open disk.
\end{itemize}
\end{define}

A sutured cell decomposition of a torus with two parallel divides
is illustrated in Figure~\ref{fig::77}.

\begin{figure}[ht!]
 \centering
 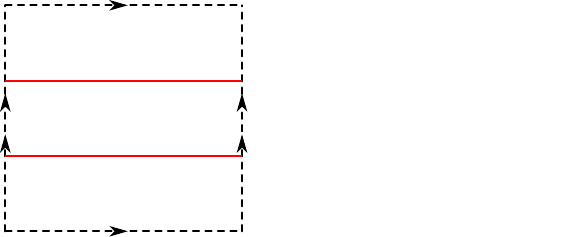
 \caption{A torus $F$ with two parallel divides $\gamma$ (left),
 as well as a sutured cell decomposition (right).\label{fig::77}}
\end{figure}

\begin{rem} \label{rem:disk}
  Let $F_0\subset F$ be the closure of a connected component of $F \setminus
  (B_1 \cup \dots \cup B_n \cup \lambda_1 \cup \dots \cup \lambda_m)$. Since
  each component of $\g$ intersects some $B_i$, the dividing set $\g \cap
  F_0$ contains no closed curves.  Hence, according to Giroux's
  criterion~\cite{HondaClassI}*{Theorem~3.5}, if $F$ is a convex surface in
  the contact manifold $(M,\xi)$, and $\d B_1\cup \dots \cup \d B_n\cup
  \lambda_1\cup \cdots \cup \lambda_m$ is a Legendrian graph, then $F_0$ has
  a tight neighborhood in $M$.
\end{rem}

We now describe two moves between sutured cell decompositions $\cD$ and $\cD'$
of a surface $F$ with dividing set $\gamma$:
\begin{enumerate}[label=($\cM$-\arabic*)]
\item\label{submove:1} $\cD'$ is obtained from $\cD$ by adding or removing
  a 1-cell $\lambda \subset F\setminus \gamma$ that has both
  ends on boundaries of fattened 0-cells in $\cD$, but is
  otherwise disjoint from the 0- and 1-cells of $\cD$.
\item\label{submove:2} $\cD'$ is obtained from $\cD$ by adding or removing a fattened 0-cell $B$
  and a 1-cell $\lambda$ that connects $B$ to another 0-cell in $\cD$,
  but is otherwise disjoint from the 0- and 1-cells of $\cD$.
\end{enumerate}
Moves~\ref{submove:1} and~\ref{submove:2} are illustrated in Figure~\ref{fig::43}.

\begin{figure}[ht!]
 \centering
 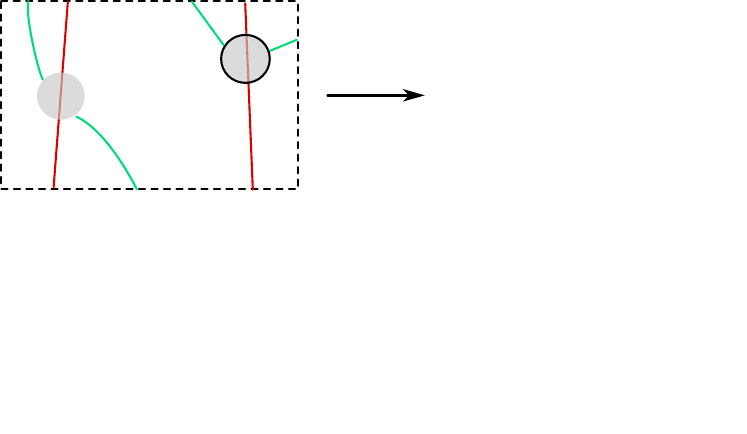
 \caption{Moves~\ref{submove:1} and~\ref{submove:2} between sutured cell decompositions of $(F,\gamma)$.
 The grey disks are the fattened 0-cells, and the green arcs marked $\lambda$ are the Legendrians.
 The red arcs marked $\g$ form the dividing set.  \label{fig::43}}
\end{figure}

\begin{lem}\label{lem:suturedcellmoves}If $F$ is a closed, orientable surface and $\gamma\subset F$ is a dividing set,
 then any two sutured cell decompositions of $(F,\gamma)$ can be connected
 by a sequence of moves~\ref{submove:1} and \ref{submove:2}.
\end{lem}

To prove Lemma~\ref{lem:suturedcellmoves}, it is convenient to consider the following notion of cell decomposition for surfaces with divides which is more general than Definition~\ref{def:almostcelldecomp}:

\begin{define} A \emph{generalized sutured cell decomposition} $\cD^\bullet$ of a surface $F$
with divides $\g$ consists of the following:
\begin{itemize}
\item  (\emph{Fattened 0-cells}) A collection of pairwise disjoint disks $B_1, \dots, B_n \subset F$
  such that each $B_i \cap \g$ is an arc and each component of $\g$ intersects some $B_i$.
\item (\emph{1-cells}) A collection of pairwise disjoint, properly embedded arcs
\[
\lambda_1,\dots, \lambda_m\subset F\setminus \bigcup_{i=1}^n \Int(B_i)
\]
which are transverse to $\gamma$. Furthermore, each component of $F\setminus (B_1\cup \cdots \cup B_n\cup \lambda_1\cup \cdots \cup \lambda_m)$ is homeomorphic to an open disk (which may be disjoint from $\gamma$).

 \item (\emph{Splitting arcs}) A collection of oriented, properly embedded, pairwise disjoint arcs $c_1,\dots, c_l\subset \bigcup_{i=1}^n  B_i$, disjoint from each of the $\lambda_i$, such that $|c_i\cap \gamma|=1$ for all $i$, and such that each component of the closure of $F\setminus (B_1\cup \cdots \cup B_n\cup \lambda_1\cup \cdots \cup \lambda_m)$ which is disjoint from $\gamma$ intersects the terminal endpoint of at least one $c_i$.
 \end{itemize}
\end{define}

Note that a sutured cell decomposition can be viewed as a generalized sutured
cell decomposition with no splitting arcs. In the other direction, if
$\cD^\bullet$ is a generalized sutured cell decomposition of a surface $F$
with dividing set $\g$, we can define the \emph{split} of $\cD^\bullet$,
denoted $\cS(\cD^\bullet)$, as the (genuine) sutured cell decomposition
obtained by performing the following two modifications to $\cD^\bullet$:
\begin{enumerate}[label=($\cS$-\arabic*)]
\item\label{def:split1} A small, fattened 0-cell is added at each intersection point between a 1-cell $\lambda_i$
and the dividing set $\gamma$.
\item\label{def:split2} The fattened 0-cells are split in half along the arcs $c_1,\dots, c_l$.
A new 1-cell is added for each of the arcs $c_i$, as shown in Figure~\ref{fig::71}.
The new 1-cell is on the initial side of $c_i$, with respect to the orientation of $c_i$.
\end{enumerate}

\begin{figure}[ht!]
 \centering
 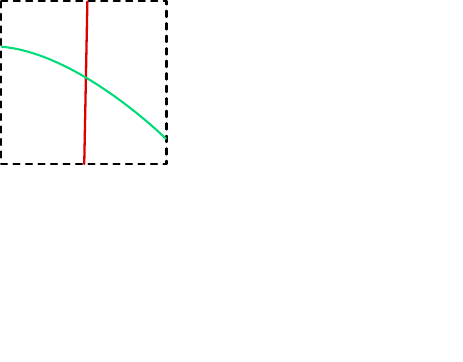
 \caption{Splitting a generalized sutured cell decomposition $\cD^\bullet$.
 On the left is $\cD^\bullet$, and on the right is the split, $\cS(\cD^\bullet)$.\label{fig::71}}
\end{figure}

\begin{proof}[Proof of Lemma~\ref{lem:suturedcellmoves}]Suppose $\cD_1$ and $\cD_2$
are two sutured cell decompositions of $(F,\gamma)$.
By move \ref{submove:2}, we can increase the number of fattened
0-cells in both, and hence we can assume that they have the
same number of fattened 0-cells.  It is not hard to see
that we can isotope or rescale a fattened 0-cell by a
sequence of moves~\ref{submove:1} and~\ref{submove:2}.
The procedure is illustrated in Figure~\ref{fig::70}.
So we can assume that $\cD_1$ and $\cD_2$ have the same fattened 0-cells.

\begin{figure}[ht!]
 \centering
 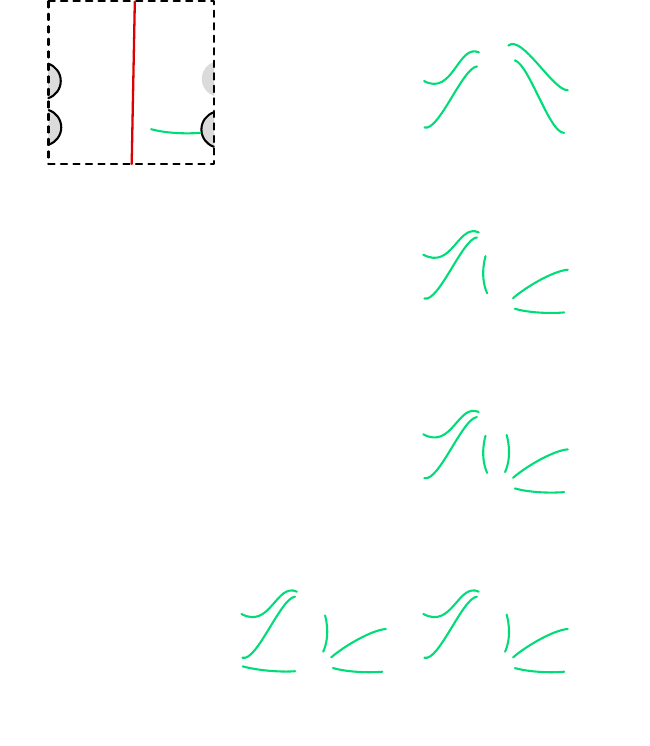
 \caption{Isotoping a fattened 0-cell via a sequence of
  moves~\ref{submove:1} and~\ref{submove:2}.\label{fig::70}}
\end{figure}

We can view the fattened 0-cells as 0-handles of
the surface, and the 1-cells $\lambda_1,\dots, \lambda_n$
as the cores of 1-handles. Hence, a
generalized sutured cell decomposition with fattened 0-cells
$B_1, \dots, B_n$ corresponds to a Morse function
\[
f\colon \Sigma\setminus \Int(B_1\cup \cdots \cup B_n)\to [0,\infty)
\]
 that has no index 0 critical points, achieves its minimal
 value of $0$ on $\d B_1\cup \cdots \cup \d B_n$, and such
 that the union of the descending manifolds of the index
 1 critical points is $\lambda_1\cup \cdots \cup \lambda_m$.
 The collection of splitting arcs $c_1,\dots, c_{\l}$ is some additional combinatorial
 data, which can always be constructed, given such a Morse
 function. Two such Morse functions can always be connected
 by a path of smooth functions with no index~0 critical points
 relative to $\d B_1 \cup \cdots \cup \d B_n$ (see Lemma~\ref{lem:allowablearcslides}
 for a proof of a closely related result that
 adapts to our present setting). Consequently, two generalized
 sutured cell decompositions can be connected by the
 following moves:
\begin{enumerate}[label=($g\cM$-\arabic*)]
\item\label{generalmove:1} Adding or removing a splitting arc
  disjoint from all the other splitting arcs and 1-cells.
\item\label{generalmove:2} Isotoping a 1-cell.
\item\label{generalmove:3} Adding or removing a 1-cell.
\item\label{generalmove:4} Arc sliding a 1-cell $\lambda_i$ across another 1-cell $\lambda_j$.
\end{enumerate}

We can view any sutured cell decomposition as a
generalized sutured cell decomposition with no splitting arcs, and with no intersections
between the 1-cells and the dividing set.
Hence, to prove the main claim, it suffices to show that if two generalized sutured cell
decompositions $\cD_1^\bullet$ and
$\cD_2^\bullet$ differ by moves~\ref{generalmove:1}--\ref{generalmove:4},
then their splits $\cS(\cD_1^\bullet)$ and $\cS(\cD_2^\bullet)$
differ by a sequence of moves \ref{submove:1} and \ref{submove:2}.

We first address move~\ref{generalmove:1}. We illustrate
in Figure~\ref{fig::72} an example of how to connect
the splits of two generalized sutured cell decompositions
that differ by move~\ref{generalmove:1}.

\begin{figure}[ht!]
 \centering
 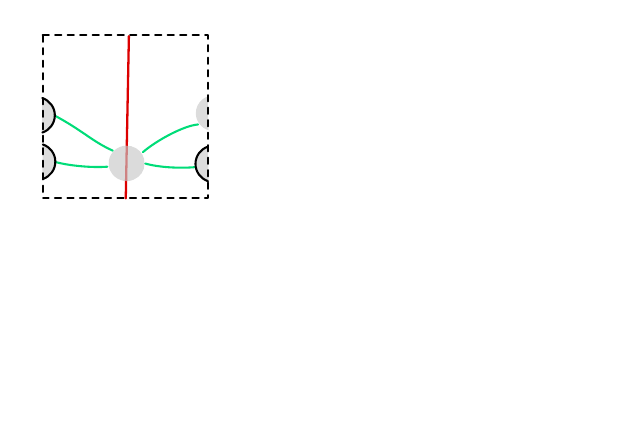
 \caption{Connecting $\cS(\cD_1^\bullet)$ and $\cS(\cD_2^\bullet)$
 by a sequence of moves \ref{submove:1} and \ref{submove:2}
  when $\cD_1^\bullet$ and $\cD_2^\bullet$ differ by move~\ref{generalmove:1}. \label{fig::72}}
\end{figure}

Next, we consider move~\ref{generalmove:2}, isotopies of a 1-cell $\lambda_i$.
Moves of type~\ref{generalmove:2} can be further broken down into three subtypes:
\begin{enumerate}
\item\label{it:isot1} Isotopies supported outside a neighborhood of $\gamma$.
\item\label{it:isot2} Isotopies supported in a neighborhood of $\gamma$ that are fixed on the circles $\d B_j$,
and which either create or cancel a pair of intersection points between $\gamma$ and $\lambda_i$.
\item\label{it:isot3} Isotopies supported in a neighborhood of a single $\d B_i$
that isotope an end of a 1-cell $\lambda_i$ across $\gamma$.
\end{enumerate}

Isotopies of type~\eqref{it:isot1} can be addressed using
a manipulation similar to the one shown in Figure~\ref{fig::70},
so we leave this case to the reader. In case~\eqref{it:isot2},
a sequence of moves~\ref{submove:1} and~\ref{submove:2} suffice;
an example is shown in Figure~\ref{fig::73}.
We leave case~\eqref{it:isot3} to the reader, since the manipulation is similar.

\begin{figure}[ht!]
 \centering
 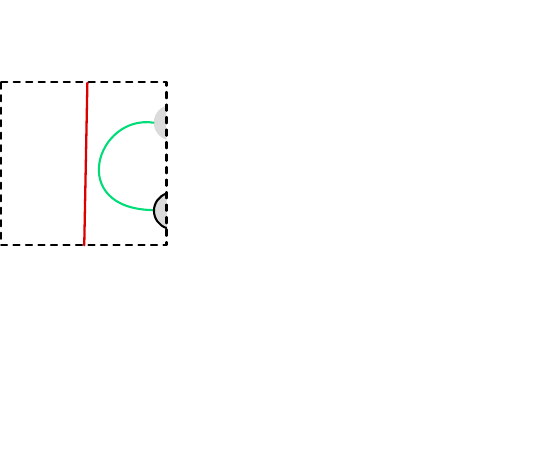
 \caption{An example of connecting $\cS(\cD_1^\bullet)$ and $\cS(\cD_2^\bullet)$
 by a sequence of moves \ref{submove:1} and
 \ref{submove:2} when $\cD_1^\bullet$ and $\cD_2^\bullet$
 differ by move~\ref{generalmove:2}. \label{fig::73}}
\end{figure}

Sufficiency of moves~\ref{submove:1} and~\ref{submove:2}
to connect splits of generalized sutured cell
decompositions differing by moves~\ref{generalmove:3} and \ref{generalmove:4}
is proven in an analogous fashion. We leave the argument as an easy exercise for the reader.
\end{proof}

\subsection{Contact cell decompositions}

In this section, we describe some background on contact cell decompositions. The technical content is due to
Honda, Kazez, and Mati\'c~\cite{HKMSutured}*{Section~1.1} and  Giroux~\cite{GirouxCorrespondence}.

\begin{define}\label{def:contactcelldecomposition}
Suppose that $(M,\g)$ is a sutured submanifold of $(M',\g')$ and
$\xi$ is a contact structure on $Z := M'\setminus \Int(M)$ such that $\d Z$ is convex with
dividing set $\g \cup \g'$. A \emph{contact cell decomposition of~$(Z,\xi)$}
consists of the following data:
\begin{enumerate}
\item \label{it:vector} A non-vanishing contact vector field $v$ defined on a neighborhood of $\d Z$ in $Z$
  and transverse to $\d Z$ such that it induces the dividing set $\g \cup \g'$.
  The flow of $v$ induces a diffeomorphism of
  $\d Z \times I$ with  a collar neighborhood of $\d Z$ in $Z$.
  Under this diffeomorphism, $v$ corresponds to $\d / \d t$,
  the boundary $\d M$ is identified with $\d M \times \{0\}$,
  and $\d M'$ is identified with $\d M' \times \{1\}$.
  We let $\nu = v|_{\d M \times I}$ and $\nu' = v|_{\d M' \times I}$.
\item \label{it:barrier} ``Barrier'' surfaces $S\subset \d M\times (0,1)$ and $S'\subset \d M'\times (0,1)$
  in $Z$ that are isotopic to $\d M$ and $\d M'$, respectively,
  and are transverse to $v$.
  Write $N$ and $N'$ for the collar neighborhoods of $\d M$ and $\d M'$ in
  $Z$ that are bounded by $S$ and $S'$, respectively, and set $Z' = Z \setminus \Int(N \cup N')$.
\item A Legendrian graph $\Gamma \subset Z'$
  that intersects $\d Z'$ transversely in a finite collection of points along the dividing
  set of $\d Z'$ with respect to the vector field $v$.
  Furthermore, $\Gamma$ is tangent to the vector field $v$ near $\d Z'$.
\item A choice of regular neighborhood $N(\Gamma)$ of $\Gamma$ such that $\xi|_{N(\Gamma)}$
  is tight and $\d N(\Gamma) \setminus \d Z'$
  is a convex surface.  Furthermore, $N(\Gamma) \cap \d Z'$
  is a collection of disks $D$ with Legendrian boundary such that $\tb(\d D) = -1$.
  We assume that the edge rounding procedure of Honda~\cite{HondaClassI}*{Section~3.3.2}
  has been performed such that $N(\Gamma)$ meets $\d Z'$ tangentially along
  the Legendrian unknots forming $\d N(\Gamma)$.
\item A collection of convex 2-cells $D_1, \dots, D_n$ inside
  $Z' \setminus \Int(N(\Gamma))$
  with Legendrian boundary on $\d (Z' \setminus \Int(N(\Gamma)))$ and $\tb(\d D_i) = -1$.
\end{enumerate}
Furthermore, the following hold:
\begin{enumerate} [label=(\alph*)]
\item \label{condition:a} The complement of $N(\Gamma) \cup D_1 \cup \cdots \cup D_n$
  in $Z'$ is a finite collection of topological 3-balls, and $\xi$ is tight on each.
\item \label{condition:b} The disks in $N(\Gamma) \cap \d Z'$
  and the Legendrian arcs $\d D_i \cap \d Z'$ induce a sutured
  cell decomposition of $\d Z'$, with dividing set $\{\, x \in \d Z' : v_x \in \xi_x \,\}$
  (Definition~\ref{def:almostcelldecomp}).
\end{enumerate}
\end{define}

\begin{rem}
  Given surfaces $S$ and $S'$ in $Z$ and a transverse contact
  vector field $v$ defined in a neighborhood of $\d Z$ that
  satisfy~\eqref{it:vector} and \eqref{it:barrier},
  it is not always possible to construct a contact cell decomposition of
  $Z$ with $S$ and $S'$ as barrier surfaces. For example,
  the characteristic foliations on $S$ and $S'$ may obstruct the existence of
  sutured cell decompositions (Definition~\ref{def:almostcelldecomp}) such that
  the arcs $\lambda_i$ and the curves $\d B_i$ are Legendrian.
\end{rem}

We now describe an important example of contact cell decompositions:

\begin{example} \label{ex:productcelldecomp}
Suppose that $(M,\g)$ is a sutured submanifold of $(M',\g')$ and
$\xi$ is a compatible contact structure on $Z = M'\setminus \Int(M)$. Furthermore,
suppose that $(Z, \xi)$ is contactomorphic to $F \times I$ with an
$I$-invariant contact structure for a surface $F \iso \d M \iso \d M'$.
Suppose that the disks $B_1, \dots, B_n \subset F$ and the (not necessarily
Legendrian) properly embedded arcs $\lambda_1, \dots, \lambda_m \subset F \setminus \Int(B_1\cup \cdots B_n)$
give a sutured cell decomposition of $F \times \{0\}$, as in Definition~\ref{def:almostcelldecomp}.

There is an induced contact cell decomposition of $Z$,
called the \emph{product contact cell decomposition}, that we describe presently.
For an illustration, see Figure~\ref{fig::42}.
Let $G \subset F$ be the graph consisting of
\[
G = \d B_1 \cup \cdots \cup \d B_n \cup \lambda_1 \cup \dots \cup \lambda_m;
\]
this is not necessarily Legendrian. However, using Giroux's flexibility theorem
\cite[Proposition~3.6]{Giroux},
we can find a surface $S \subset F \times I$ that is the image of a $C^0$
small isotopy of $F \times \{\tfrac{1}{3}\}$ that is transverse to $\d/\d t$
and such that the image of $G \times \{\tfrac{1}{3}\}$ is Legendrian. We let
$S'$ be the translation of $S$ by $\tfrac{1}{3}$ in the $\d/\d t$ direction.
We use $S$ and $S'$ for the barrier surfaces of our contact cell
decomposition. We will also write $B_1, \dots, B_n$ and $\lambda_1, \dots, \lambda_m$
for the images on $S$ of the corresponding cells on $F \times \{\frac13\}$, under the
chosen $C^0$ small isotopy. Let $N$ and $N'$ be the collars of $\d M$ and $\d M'$
bounded by $S$ and $S'$, respectively, and write $Z' = Z \setminus \Int(N \cup N')$.

For every $i \in \{\,1, \dots, n \,\}$, choose a point $p_i \in \Int(\g \cap B_i)$.
We define the Legendrian graph $\Gamma$ as
\[
\bigcup_{i = 1}^n (\{p_i\} \times I) \cap Z',
\]
and set $N(\Gamma) = \bigcup_{i=1}^n (B_i \times I) \cap Z'$.
The contact 2-cells of the decomposition are
defined to be $D_i := (\lambda_i \times I) \cap Z'$. Using Giroux's flexibility theorem on
$\d N(\Gamma)$, we can assume that each $D_i$ has Legendrian boundary. The disks $D_i$
have $\tb(\d D_i) = -1$ since their boundaries intersect the dividing
set exactly twice. If $C$ is a component of $S \setminus (B_1 \cup \cdots \cup
B_n \cup \lambda_1 \cup \dots \cup \lambda_m)$, then
$(C \times I) \cap Z'$ is a tight contact
ball in the complement of $N(\Gamma) \cup D_1 \cup \dots \cup D_n$ in $Z'$.
\end{example}

\begin{figure}[ht!]
 \centering
 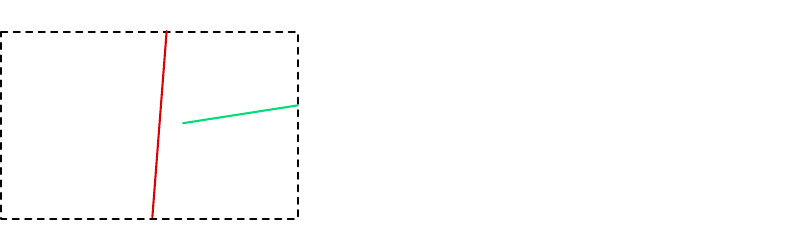
 \caption{The left shows a sutured cell decomposition of a surface $F$ with dividing set $\g$.
 The right shows the induced product contact cell decomposition of an
 $I$-invariant contact structure on $F \times I$ with dividing set $\g$ from
 Example~\ref{ex:productcelldecomp}. The barrier surfaces are $S$ and $S'$,
 which are $C^0$ close to $F \times \{\tfrac{1}{3}\}$ and $F \times \{\tfrac{2}{3}\}$,
 respectively. The fattened
 0-cell $B_i$ is shown on the left in gray. The dividing set $\g$ is
 shown in red. The Legendrian arcs $\lambda_j$ and $\lambda_k$, as well as
 the corresponding convex 2-cells $D_j$ and $D_k$, are shown in green.
 \label{fig::42}}
\end{figure}

To show invariance of the gluing map, we need to describe how two contact cell
decompositions are related. We have the following; cf.~\cite{HKMSutured}*{Theorem~1.2}:

\begin{prop}\label{prop:movesbetweencelldecompositions}
Suppose that $(M,\g)$ is a sutured submanifold of $(M',\g')$, and
$\xi$ is a contact structure on $Z = M'\setminus \Int(M)$ such that $\d Z$
is convex with dividing set $\g \cup \g'$. If $\cC_1$ and
$\cC_2$ are contact cell decompositions of $Z$, then there
is a sequence $\cC_{(1)}, \dots, \cC_{(\ell)}$ of contact cell decompositions
with $\cC_1 = \cC_{(1)}$ and $\cC_2 = \cC_{(\ell)}$, such that $\cC_{(i+1)}$ is
obtained from $\cC_{(i)}$ by one of the following moves, or its inverse:
\begin{enumerate}[label=(C\arabic*)]
\item\label{move:isotopy} (Isotopy) Replacing a contact cell decomposition
  $\cC$ with a contact cell decomposition of the form $\phi(\cC)$, where
  $\phi \in \Diff(Z)$ (not necessarily a contactomorphism)
  fixes $\d Z$ pointwise and is isotopic to the identity relative to $\d Z$.
\item\label{move:index0/1} (Index 0/1 cancellation) Subdividing a Legendrian
  edge of $\Gamma$, or adding a Legendrian edge that has one endpoint on
  $\Gamma$, but which is otherwise disjoint from $\Gamma$ and the 2-cells of $\cC$.
\item\label{move:index1/2} (Index 1/2 cancellation)
  Adding a Legendrian edge $\lambda$ to $\Gamma$, and adding a convex 2-cell
  $D$ with $\tb(\d D)=-1$, such that the interiors of $\lambda$ and $D$ are
  disjoint from all the other cells, and $\d D = c' \cup c''$, where $c'$ is a
  Legendrian arc on a neighborhood of $\lambda$, and $c'' \subset \d (Z' \setminus \Int  N(\Gamma)) $
  is a Legendrian arc that is disjoint from the dividing set on $\d (Z' \setminus \Int N(\Gamma))$.
\item\label{move:index2/3} (Index 2/3 cancellation)
  Adding a convex disk $D$ with $\d D \subset \d (Z' \setminus \Int N(\Gamma))$ and
  $\tb(\d D)=-1$ disjoint from the other cells.
\end{enumerate}
\end{prop}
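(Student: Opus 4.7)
The plan is to adapt Honda--Kazez--Mati\'c's proof of \cite{HKMSutured}*{Theorem~1.2} from the closed to the relative setting. The key observation is that a contact cell decomposition of $(Z,\xi)$ is the contact-geometric shadow of a smooth handle decomposition of the underlying 3-manifold $Z'$: each vertex of $\Gamma$ corresponds to a 0-handle, each edge of $\Gamma$ to a 1-handle, each convex 2-cell $D_i$ to a 2-handle, and each tight ball component of the complement to a 3-handle.

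First I would reduce to the case where the two decompositions $\cC_1$ and $\cC_2$ share the same contact vector field $v$ and the same barrier surfaces $S$, $S'$. Given contact vector fields $v_1$, $v_2$ on a collar of $\partial Z$ both inducing the dividing set $\g \cup \g'$, a standard argument using Giroux flexibility and the contact isotopy extension theorem produces a diffeomorphism of a collar carrying $v_1$ to $v_2$; extending by the identity and applying move~\ref{move:isotopy} arranges $v_1=v_2$. A further isotopy using Giroux flexibility for convex surfaces identifies the barrier surfaces.

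Next, on the common submanifold $Z'$ with matching boundary data, I would invoke Cerf theory for the underlying smooth 3-manifold: any two relative handle decompositions of $Z'$ are related by a finite sequence of isotopies, handle slides, and creation/cancellation of canceling handle pairs. Since a handle slide can always be realized as the creation of one canceling pair followed by the cancellation of a different pair, it suffices to realize each elementary smooth move in the contact category using moves~\ref{move:isotopy}--\ref{move:index2/3}. Subdividing a 1-handle or attaching a new 1-handle with a free end corresponds to move~\ref{move:index0/1}, and is possible because the standard neighborhood theorem for Legendrian arcs lets us extend $\Gamma$ along any generic path in $Z'$ while keeping it Legendrian. Creating a canceling (1-handle, 2-handle) pair is the contact analogue of Giroux stabilization of an open book, realized by move~\ref{move:index1/2}. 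Finally, creating a canceling (2-handle, 3-handle) pair by introducing a convex disk that separates a tight ball into two pieces is move~\ref{move:index2/3}.

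The main obstacle is verifying that conditions~\ref{condition:a} and~\ref{condition:b} of Definition~\ref{def:contactcelldecomposition} are preserved at every intermediate step. For~\ref{condition:a}, whenever a convex disk $D$ with $\tb(\partial D)=-1$ is introduced, its dividing set consists of a single arc, so Giroux's criterion ensures that both pieces into which $D$ splits a tight ball remain tight; this is the content of Remark~\ref{rem:disk}. For~\ref{condition:b}, one checks that each move either leaves $\partial Z'$ unchanged or modifies the induced sutured cell decomposition by an elementary operation compatible with Definition~\ref{def:almostcelldecomp}. This second check is the most delicate: it requires local Giroux flexibility on $\partial Z'$ to realize the required Legendrian arcs, together with the edge-rounding procedure of~\cite{HondaClassI}*{Section~3.2.2} to keep $N(\Gamma)$ meeting $\partial Z'$ tangentially along the prescribed Legendrian unknots throughout the interpolation.
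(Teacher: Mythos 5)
Your overall architecture (reduce to common boundary data, then compare decompositions of the interior) matches the paper's, but the core of your argument --- invoking smooth Cerf theory for relative handle decompositions of $Z'$ and then ``realizing each elementary smooth move in the contact category'' --- has a genuine gap. A contact cell decomposition is far more rigid than the smooth handle decomposition underlying it: the 1-skeleton must be Legendrian, the 2-cells convex with $\tb=-1$, the complementary pieces \emph{tight} balls, and the induced decomposition of $\d Z'$ a sutured cell decomposition. Cerf theory produces a path through intermediate \emph{smooth} handle decompositions, and there is no argument that these intermediates can be promoted to contact cell decompositions, nor that the promotions at consecutive stages are compatible. Tightness of the complementary 3-balls is the sharpest obstruction: a coarse intermediate decomposition may have complementary regions on which $\xi$ is not tight, and no amount of Legendrian realization fixes that. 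This is precisely why the paper (following Giroux and Honda--Kazez--Mati\'c) does \emph{not} use Cerf theory, but instead shows that two contact cell decompositions admit a common \emph{subdivision} reachable from each by Moves~\ref{move:isotopy}--\ref{move:index2/3}; subdivision only refines an existing contact cell decomposition, so tightness and convexity are inherited rather than re-proved. Your unproved claim that a handle slide can be traded for a birth/death pair would also need a contact-geometric justification; the paper's remark after the proposition carries out exactly such a trade, but only for the specific 2-cell subdivision move of Giroux, not for arbitrary slides.

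A second, related omission concerns the boundary. Your final paragraph treats condition~\ref{condition:b} as something to be \emph{preserved}, but the real difficulty is that $\cC_1$ and $\cC_2$ induce \emph{different} sutured cell decompositions on (different) barrier surfaces, and one must actively connect them. The paper devotes Step~3 to this: it introduces two elementary moves on sutured cell decompositions of $(\d M,\g)$ (adding an edge between fattened 0-cells, and adding a new fattened 0-cell together with a connecting edge), shows any two sutured cell decompositions are related by these, and then realizes each boundary move by Moves~\ref{move:index1/2} and~\ref{move:index2/3} applied to product cells of the form $\lambda\times I$ and $\{p\}\times[a,b]$ in the collar. Relative Cerf theory on $Z'$ with fixed boundary cannot see this step at all. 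Your reduction of the contact vector fields is essentially the paper's Step~4 (which uses convexity of the space of germs of contact vector fields inducing $\g$), so that part is fine.
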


\begin{proof}
The result follows from an adaptation of the subdivision techniques due to
Giroux~\cite{GirouxCorrespondence} and Honda, Kazez, and Mati\'c~\cite{HKMSutured}*{Theorem~1.2}.
Giroux's  technique involves the following move:
Replace a convex 2-cell $D$ with a pair of convex  2-cells $D_1$ and
$D_2$ with $\tb(\d D_i) = -1$ that meet along a Legendrian arc $\lambda$
such that $D_1\cup D_2=D$ and $\lambda\subset D$ is an arc that intersects
the dividing set of $D$ transversely at a single point.

\begin{lem}\label{lem:subdivision}
The above subdivision of a 2-cell can be achieved using Moves~\ref{move:isotopy}--\ref{move:index2/3}.
\end{lem}

\begin{proof}
Let $D'$ be a parallel copy
of $D$, intersecting $D$ only along $\d D$. Let $D_1'\subset D'$, $D_2'\subset D'$,
and $\lambda'\subset D'$ be the images of $D_1$, $D_2$, and $\lambda$,
respectively.  We can add $D_1'$ and $\lambda'$ to the decomposition using
Move~\ref{move:index1/2}. Then we can add $D_2'$ to the decomposition using
Move~\ref{move:index2/3}. Then we remove $D$ from the decomposition using the
inverse of Move~\ref{move:index2/3}. Finally, we use Move~\ref{move:isotopy}
to move $D_1'$, $D_2'$, and $\lambda'$ into the position of $D_1$, $D_2$, and
$\lambda$, while preserving all the other cells.
\end{proof}

\textbf{Step 1:} \emph{Using Move~\ref{move:isotopy}, we can change the vector field $v_1$ of $\cC_1$
to the vector field $v_2$ of $\cC_2$.}

We will focus on changing the contact vector field $\nu_1 = v_1|_{\d M \times I}$
to the vector field $\nu_2 = v_2|_{\d M \times I}$, as changing
$\nu_1' = v_1|_{\d M' \times I}$ to the vector field $\nu_2' = v_2|_{\d M' \times I}$ is analogous.
The idea is that the space of germs of contact vector fields defined
on open neighborhoods of $\d M$ that induce the dividing set $\g$ is
convex and hence contractible. There is an open neighborhood of $\d M$ where the convex
combinations $\nu_t = (2-t) \nu_1 + (t-1)\nu_2$ are non-vanishing for $t \in [1,2]$.
Then $\nu_t$ is transverse to $\d M$ for every $t \in [1,2]$. We can
define an isotopy $\psi_t$ of a neighborhood of $\d M$, by flowing a point
$p \in M$ backward along $\nu_1$ until it hits $\d M$ (say, at a time
$-T(p)$), then flowing along $\nu_t$ for time $T(p)$. This
yields a 1-parameter family of contactomorphic embeddings of a neighborhood $U$
of $\d M$ into $Z$. We can extend this to a family of
contactomorphisms $\psi_t$ of all of $M$ for $t \in [1,2]$ by writing
$\psi_t$ as the integral of a time-dependent contact vector field $X_t$ (i.e.,
$X_t$ is a contact vector field for each $t$) defined over $\d M \times I$,
and then extending $X_t$ to all of $Z$ using
time-dependent contact Hamiltonians that vanish outside a neighborhood of $\d M$.
Using Move~\ref{move:isotopy}, we can isotope $S_1$ into $U$ using Move~\ref{move:isotopy},
and then push $\cC_1$ forward under $\psi_1$.

\textbf{Step 2:} \emph{If $\cC_1$ and $\cC_2$ are contact cell
decompositions with the same vector fields $v$ and $v'$, then
we can achieve that $\cC_1$ and $\cC_2$ have the same barrier surfaces with identical
induced sutured cell decompositions using Moves~\ref{move:isotopy}--\ref{move:index2/3}.}

Let $\cD_i$ and $\cD_i'$ be the sutured cell decompositions induced on the barrier surfaces $S_i$ and $S_i'$
of the contact cell decomposition $\cC_i$ for $i \in \{1,2\}$.
Write $\pi \colon \d M \times I \to \d M$ and $\pi' \colon \d M' \times I \to \d M'$ for the projections.
Note that  $\pi(\cD_1)$ and $\pi(\cD_2)$ are sutured cell decompositions of $\d M$,
and $\pi'(\cD_1')$ and $\pi'(\cD_2')$ are sutured cell decompositions of $\d M'$.
Let us focus on the barrier surfaces $S_1$ and $S_2$ near $\d M$, and their sutured cell
decompositions $\cD_1$ and $\cD_2$.
By Lemma~\ref{lem:suturedcellmoves}, it follows that $\pi(\cD_1)$ and $\pi(\cD_2)$ can be connected
by Moves~\ref{submove:1} and~\ref{submove:2}.

We will show that, using Moves~\ref{move:isotopy}--\ref{move:index2/3},
we can connect $\cC_1$ to a contact cell decomposition
$\hat{\cC}_2$ with the same contact vector field as $\cC_1$
and $\cC_2$, whose induced sutured cell decomposition
of $\d M$ is $C^0$ close to $\pi(\cD_2)$.
It is sufficient to show the claim when $\pi(\cD_2)$ is obtained from
$\pi(\cD_1)$ by a single instance of Move~\ref{submove:1} or~\ref{submove:2}.

Firstly, we can construct two contact automorphisms $\varphi_1$ and
$\varphi_2$ of $(Z, \xi)$ that are isotopic to the
identity relative to $\d Z$, such that  $\varphi_1(\cC_1)$ and
$\varphi_2(\cC_2)$ are cell decompositions with Legendrian graphs
$\varphi_1(\Gamma_1)$ and $\varphi_2(\Gamma_2)$, respectively, that intersect
$\d M \times I$ and $\d M' \times I$ along arcs of the form $\{p\}\times [a,1]$
or $\{p'\}\times [0,a]$ for various $p \in \g$, $p'\in \g'$, and $a > 0$.
Furthermore, by applying Move~\ref{move:isotopy}, we may assume that each component of the intersection
of every 2-cell with $\d Z \times I$ is $C^0$ close
to a set of the form $(\lambda \times I) \cap Z'$ for some arc $\lambda$ in $\d Z$.

Consider first the case when $\pi(\cD_2)$ is obtained
from $\pi(\cD_1)$ by adding a 1-cell, as in Move~\ref{submove:1}.
Write $\lambda\subset \d M$ for the new 1-cell, which
is added to the complement of the other 1-cells of
$\pi(\cD_1)$. To construct $\hat{\cC}_2$, we add a new Legendrian edge $e$ to
the graph $\Gamma_1$ of $\cC_1$, as well as a new 2-cell $c$, as follows.
Let $\lambda_1 \subset S_1$ denote the preimage of $\lambda$ under
$\pi|_{S_1}$. Using Legendrian realization that only changes $S_1$
in the $\d/\d t$-direction, we may assume that the resulting arc
$\lambda_1 \subset S_1$ is Legendrian, and projects $C^0$ close to
$\lambda$ under $\pi$. Note that the isotopy of $S_1$ used for
Legendrian realization can be chosen to fix the 0- and 1-cells
of $\cD_1$, and hence can be realized by an instance of
Move~\ref{move:isotopy} according to the following result:

\begin{lem}\label{lem:perturb}
Let $\cC$ be a contact cell decomposition with
barrier surface $S$ and induced sutured cell decomposition $\cD$ of $S$.
Then any isotopy of $S$ through surfaces that are transverse to $\nu = \d/\d t$
that fixes the 0- and 1-cells of $\cD$ can be achieved using Move~\ref{move:isotopy}.
The same holds for $S'$, $\nu'$, and $\cD'$.
\end{lem}

\begin{proof}
As above, we can assume that the graph $\Gamma$ is a union of Legendrians of the form
$\{p\}\times [a,1]$ for various $p\in \g$ and $0<a<1$, and the 2-cells are
small perturbations of sets of the form $(\lambda\times I) \cap Z'$ for arcs
$\lambda \subset \d M$. Suppose that $f \colon S\to \d M\times I$ is an embedding
that is the identity on the 0- and 1-cells of $\cD$, and whose image is transverse to $\nu$.
We can extend $f$ to an automorphism of $Z$ that is the identity outside $\d M \times (0,1)$.
It is straightforward to see that $f(\cC)$ is a contact cell decomposition, and
that $\cC$ and $f(\cC)$ are related by Move~\ref{move:isotopy}.
\end{proof}

Let $\varepsilon > 0$ be small, and let
\[
\theta \colon S_1 \times (-\varepsilon, \varepsilon)\to \d M\times I
\]
denote the map induced by the flow of $\nu=\d/ \d t$; i.e., translation in the $I$-direction.
We obtain the new edge $e$ by extending
$\theta(\lambda_1 \times \{\varepsilon\})$ into $N(\Gamma_1)$.
The new 2-cell $c$ is the intersection of $\theta(\lambda_1 \times [0,\varepsilon])$
with the complement of $N(\Gamma_1 \cup e)$. A schematic is shown
in Figure~\ref{fig::76}. Adding $e$ and $c$ to $\cC_1$
is an instance of Move~\ref{move:index1/2}.

\begin{figure}[ht!]
 \centering
 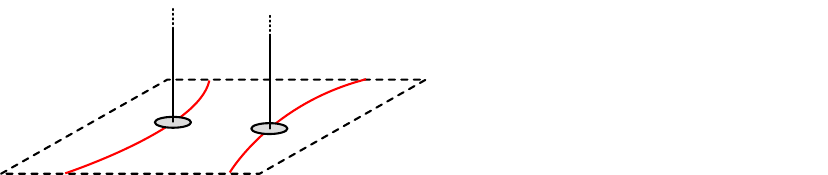
 \caption{Adding a new edge $e$ and a new 2-cell $c$
 to a contact cell decomposition to realize Move~\ref{submove:1}
 on the induced sutured cell decomposition of $\d M$.
 Shown on the left is a neighborhood of the Legendrian graph
 $\Gamma_1$, as well as the barrier surface $S$. Shown on the
 right is a neighborhood of $\Gamma_1\cup e$, as well as the
  new 2-cell $c$.\label{fig::76}}
\end{figure}

In the opposite direction, suppose that $\pi(\cD_2)$ is obtained from $\pi(\cD_1)$ by removing
a 1-cell $\lambda$, as in Move~\ref{submove:1}. Let $c$ be the 2-cell of $\cC_1$
corresponding to $\lambda_1 = (\pi|_{S_1})^{-1}(\lambda)$.
Note that the 2-cells of $\pi(\cD_1)$ neighboring $\lambda$ are disjoint,
since otherwise there would be a component of the complement of $\pi(\cD_2)$ that were not a disk.
We subdivide $\cC_1$ away from $S \cup S'$ such that the 3-cells of $\cC_1$ corresponding to
the 2-cells of $\cD_1$ on the two sides of $\lambda_1$ become distinct.
We then subdivide $c$ along $e := \theta(\lambda_1 \times \{\varepsilon\})$ for
$\varepsilon$ small using Lemma~\ref{lem:subdivision},
and cancel the 2-cell $\theta(\lambda_1 \times [0,\varepsilon]) \setminus N(\Gamma_1 \cup e)$
with one of the neighboring 3-cells using Move~\ref{move:index2/3}.

Next, we consider the case when $\pi(\cD_2)$ is obtained from
$\pi(\cD_1)$ by adding a fattened 0-cell $B$ and a 1-cell $\lambda$, as in Move~\ref{submove:2}.
To construct the new contact cell decomposition $\hat{\cC}_2$, we add two
new Legendrian edges $e_1$ and $e_2$ to $\cC_1$ that meet at a valence 2 vertex,
as well as a new 2-cell $c$, as follows. Let $\lambda_1 \subset S_1$ denote
the Legendrian realization of the preimage under $\pi|_{S_1}$ of
an extension of the Legendrian $\lambda$ to a point $p \in \Int(B)$.
The Legendrian realization can be achieved using Move~\ref{move:isotopy} according to Lemma~\ref{lem:perturb}.
We define the edge $e_1$ to be $\theta(\lambda_1 \times \{\varepsilon\})$ for some small $\varepsilon > 0$, and
$e_2$ as $\theta(\{p\} \times [0,\varepsilon])$.
We let the 2-cell $c$ be $\theta(\lambda_1 \times [0,\varepsilon]) \setminus N(\Gamma_1 \cup e_1 \cup e_2)$. Note
that adding $e_1$, $e_2$, and $c$ to $\cC_1$ can be achieved by
Moves~\ref{move:index0/1} and~\ref{move:index1/2}.
Writing $\hat{\cD}_2$ for the sutured cell decomposition of
$S$ induced by $\hat{\cC}_2$, we note that $\pi(\hat{\cD}_2)$ is $C^0$ close
to the one obtained from $\pi(\cD_1)$ by Move~\ref{submove:2}.

In the opposite direction, suppose that $\pi(\cD_2)$ is obtained from $\pi(\cD_1)$ by
removing a fattened 0-cell $B$ and a 1-cell $\lambda$, as in Move~\ref{submove:2}.
Then we split the 2-cell of $\cC_1$ corresponding to $\lambda_1 = (\pi|_{S_1})^{-1}(\lambda)$
along $e_1 := \theta(\lambda_1 \times \{\varepsilon\})$,
and cancel the 2-cell $\theta(\lambda_1 \times [0,\varepsilon]) \setminus N(\Gamma_1 \cup e_1)$ and the 1-cell
$\theta(\{p\} \times [0,\varepsilon])$ for $p = B \cap \Gamma_1$ using Move~\ref{move:index1/2}.

Having obtained $\hat{\cC}_2$ as above,
we can construct an isotopy of $M$ that is supported in a neighborhood of $\d M$,
and moves $\hat{\cC}_2$ to a contact cell decomposition $\cC_2''$ that
shares the same barrier surface and induced sutured cell
decomposition of $\d M$ as $\cC_2$. It follows that $\hat{\cC}_2$
and $\cC_2''$ are related by Move~\ref{move:isotopy}.
This completes Step~2.

\textbf{Step 3:} \emph{Suppose $\cC_1$ and $\cC_2$ are contact cell
decompositions with the same barrier surfaces $S$ and $S'$, the same contact
vector fields $v$ and $v'$, such that $\cC_1$ and $\cC_2$ induce the same
sutured cell decompositions $\cD$ and $\cD'$ of $S$ and $S'$, and such that
the Legendrian 1-skeletons of $\cC_1$ and $\cC_2$ intersect $S$ and $S'$ at
the same points. Then  $\cC_1$ and $\cC_2$ can be connected by Moves
\ref{move:isotopy}--\ref{move:index2/3}.}

The proof follows from the strategy
of \cite{HKMSutured}*{Theorem~1.2}, which we summarize using our present notation.
As in Step~2, we can assume that the graph $\Gamma$ is a union of Legendrians of the form
$\{p\}\times [a,1]$ or $\{p'\} \times [0,a]$ for various $p \in \g$, $p' \in \g'$, and $0<a<1$,
and the 2-cells are small perturbations of sets of the form $(\lambda \times I) \cap Z'$ for arcs
$\lambda \subset \d Z$.

Using Lemma~\ref{lem:subdivision}, we subdivide each 2-cell $D$ of $\cC_1$ and $\cC_2$
that intersects $\d Z \times I$ by adding a Legendrian arc obtained by perturbing
$D \cap (\d M \times \{1\} \cup \d M' \times \{0\})$ along $v$ relative to its endpoints.
Furthermore, using Move~\ref{move:index2/3}, we add a new convex 2-cell $D$
with Legendrian boundary and $\tb(\d D) = -1$ near $\d M \times \{1\}$ for
each 2-cell of the sutured cell decomposition $\cD$,
and near $\d M \times \{0\}$ for each 2-cell of $\cD'$.
Then we apply the subdivision procedure of Giroux~\cite{GirouxCorrespondence}
away from $\d Z \times I$.
\end{proof}

\subsection{Contact handle maps} \label{sec:contacthandlemap}

We first recall the definition of contact handles in dimension~3 due to Giroux~\cite{Giroux};
see also Ozbagci~\cite{Ozbagci}.

\begin{define} \label{def:contacthandle}
 For $k \in \{\,0,1,2,3\,\}$, a \emph{3-dimensional contact handle of index $k$}
attached to a sutured manifold $(M,\g)$ is a tight contact ball $(B_0,\xi_0)$
with convex boundary (possibly with corners)
that is attached via a map $\phi \colon S \to \d M$ for some subset $S \subset \d B_0$.
Furthermore, the dividing set of $\xi_0$ on $S$ is mapped into $\g$ under $\phi$,
and we have the following requirements, depending on the index:
\begin{enumerate}[leftmargin=20mm, label = \text{(Index \arabic*)}:]
\setcounter{enumi}{-1}
\item  $B_0 = D^3$ has no corners and $S = \emptyset$. The dividing set on $\d B_0$ is a single circle.
\item As a manifold with corners, $B_0$ is $I \times D^2$, and $S = \d I \times D^2$.
    The dividing set on $\d I \times D^2$ consists of one arc on each component.
    The dividing set on $I\times \d D^2$ consists of two arcs,
    each connecting the two components of $\d I \times \d D^2$.
\item As a manifold with corners, $B_0$ is $D^2 \times I$ and $S = \d D^2 \times I$.
    The dividing set is the same as on a contact 1-handle.
\item $B_0 = D^3$ with no corners, and $S = \d D^3$. The dividing set on $\d B_0$ is a single circle.
\end{enumerate}
\end{define}

In Figure~\ref{fig::20}, we have drawn the dividing sets on contact handles.
Note that, for handles of index~1 and~2, the curves $\d I \times \d D^2$ and $\d D^2 \times \d I$,
respectively, are not Legendrian.

\begin{figure}[ht!]
 \centering
 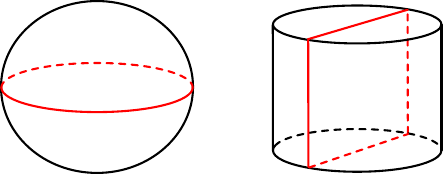
 \caption{Contact handles. On the left is a picture of a contact 0-handle or 3-handle.
 On the right is a contact 1-handle or 2-handle. \label{fig::20}}
\end{figure}

\subsubsection{Contact 0-handle map}
Adding a contact 0-handle $h^0$ amounts to adding a copy of the product sutured manifold
$(D^2\times [-1,1], S^1 \times \{0\})$ to $(M, \g)$. Noting that
\[
\SFH(-D^2\times [-1,1], -S^1 \times \{0\})\iso \bF_2,
\]
the contact 0-handle map is simply the tautological map
\[
\begin{split}
C_{h^0} \colon &\SFH(-M,-\g) \stackrel{\cong}{\longrightarrow}
\SFH(-M,-\g) \otimes \bF_2 \stackrel{\cong}{\longrightarrow} \\
&\SFH(-M,-\g) \otimes \SFH(-D^2\times [-1,1], -S^1 \times \{0\}) \stackrel{\cong}{\longrightarrow} \\
&\SFH((-M,-\g) \sqcup (-D^2\times [-1,1], -S^1 \times \{0\})).
\end{split}
\]
On the chain level, it is given as follows. Choose a diagram $\cH = (\bar{\S}, \as, \bs)$ of $(-M,-\g)$.
As $\cH_0 = (\bar{D}^2, \emptyset, \emptyset)$ is a diagram of $(-D^2 \times [-1,1], -S^1 \times \{0\})$,
the disjoint union $\cH \sqcup \cH_0$ is a diagram
of $(-M,-\g) \sqcup (-D^2\times [-1,1], -S^1 \times \{0\})$.
Let $i \colon \bar{\S} \to \bar{\S} \sqcup \bar{D}^2$ be the inclusion.
Then, for $\x = (x_1, \dots, x_d) \in \T_\a \cap \T_\b$,
we set $C_{h^0}(\x) = (i(x_1), \dots, i(x_d))$. This clearly induces $C_{h^0}$ on homology.

\subsubsection{Contact 1-handle map}
A contact 1-handle $h^1$ determines two points along the sutures. If
$(\S,\as,\bs)$ is a Heegaard surface for $(M,\g)$, glue a strip to $\d \S$
where the feet of the 1-handle are attached; see Figure~\ref{fig::1}. Add no
new $\as$ or $\bs$ curves. The map on complexes is the tautological map
induced by the inclusion of Heegaard surfaces. Let us call this map $C_{h^1}$.
This is an isomorphism of chain complexes, since the domains of the curves
counted by the boundary map on $\CF(\S,\as,\bs)$ have coefficient zero along $\d \S$,
where the strip is attached; see \cite[Lemma~9.13]{JDisks}.

\begin{figure}[ht!]
 \centering
 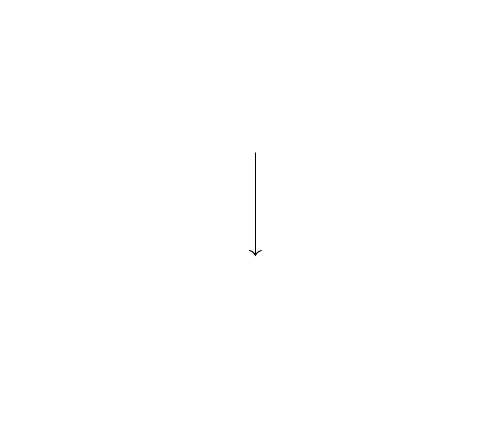
 \caption{A contact 1-handle, on the level of diagrams. \label{fig::1}}
\end{figure}

\begin{lem}
The map $C_{h^1}$ is natural; i.e., it commutes with change of diagrams maps.
\end{lem}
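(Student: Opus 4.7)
The plan is to reduce to checking compatibility of $C_{h^1}$ with each elementary Heegaard move, since by Juh\'asz--Thurston naturality \cite{JTNaturality} any change-of-diagrams map factors (up to chain homotopy) as a composition of isotopies of $\as$ or $\bs$ curves, handleslides, (compound) stabilizations, and diffeomorphisms of $(M,\g)$ isotopic to the identity. The two feet of $h^1$ are attached along points of the suture $\g \subset \d M$, so the strip $T$ glued to $\d \S$ lies in a small collar of $\d \S$ that is disjoint from every $\as$ and $\bs$ curve. This disjointness is the reason all four move types can be handled uniformly.

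First I would treat isotopies and handleslides. Any such move on $\cH = (\S,\as,\bs)$ can be performed in a compact region $K \subset \Int(\S)$ that is disjoint from (the image of) $T$. The naturality map is computed by counting holomorphic disks (for isotopies) or triangles (for handleslides) on an associated Heegaard (triple) diagram; since all curves involved and all intersection points lie in $K$, and since $T$ carries no attaching curves, the same counts compute the corresponding naturality map on $\cH' = (\S \cup T, \as, \bs)$. The tautological identification of generators then gives the desired commutativity on the nose.

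Second I would handle stabilizations. We may choose the stabilization region inside $K$. For a simple stabilization the naturality map sends $\xs \mapsto \xs \times c$ for the unique new intersection point $c$ on the inserted tube, which commutes with $C_{h^1}$. For a compound stabilization along a path $\lambda \subset \Int(\S)$ disjoint from $T$, Proposition~\ref{prop:compoundstabilizationmap} identifies the naturality map with $\sigma^{\a_0,\b_0}(\xs) = \xs \times c_{\a_0,\b_0}$; this formula is unchanged by the presence of $T$, and again commutes with $C_{h^1}$ tautologically.

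Finally, for a diffeomorphism $\phi$ of $(M,\g)$ isotopic to the identity, we may isotope $\phi$ through diffeomorphisms of $(M,\g)$ so that it is the identity on a collar neighborhood of $\d M$ containing the feet of $h^1$ and the image of $T$. The induced map on $\CF(\cH')$ is then the extension by the identity on~$T$ of the corresponding map on $\CF(\cH)$, so commutativity with $C_{h^1}$ is immediate. The only subtlety is the ``push off $T$'' step in each case, but this is always possible because $T$ is a collar of part of $\d \S'$ containing no attaching curves, so every compact configuration in $\Int(\S)$ can be arranged to avoid it.
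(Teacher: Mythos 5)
Your proof is correct and follows essentially the same route as the paper: reduce to elementary Heegaard moves and observe that, because the strip lies in a collar of $\d \S$ carrying no attaching curves, every region touching it has zero multiplicity in any domain counted by the naturality maps, so the counts are unaffected. The paper's version is terser but rests on the same observation that the relevant holomorphic curves never meet $\d \S \times \Sym^{k-1}(\S)$.
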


\begin{proof}
The proof is straightforward, since the change of diagrams maps are either
tautological (stabilization, isotopy) or involve counting holomorphic curves
that do not intersect  $\d \S \times \Sym^{k-1}(\S)\subset \Sym^{k}(\S)$
(continuation maps for changes of the almost complex structure, triangle maps
for changes of the $\as$ and $\bs$ curves).
\end{proof}

\subsubsection{Contact 2-handle map}
\label{sec:2-handle}

We now define a map for contact 2-handles. Let $(\S,\as,\bs)$ be an admissible diagram of $(M,\g)$.
Suppose that $l$ is the curve on $\d M$ along which we add a 2-handle $h^2$,
and $l$ intersects $\g$ (the sutures) exactly twice.
Let $p_1$, $p_2\in \g$ be the two points of intersection
and $l_\pm = l \cap R_\pm(\g)$. We denote the result of the 2-handle attachment by $(M',\g')$.

We now construct a diagram of $(M',\g')$.
Choose a sutured Morse function $f$ and gradient-like vector field $v$ on $(M,\g)$
that induce $(\S,\as,\bs)$,
and follow the flow of $v$ from $l_+$ and $l_-$ onto $\S$.
Writing $\lambda_+$ and $\lambda_-$ for the resulting arcs on $\S$,
we note that $\lambda_+$ avoids the $\bs$ curves and $\lambda_-$ avoids the $\as$ curves.
Hence, we can form a diagram $(\S',\as\cup \{\a'\},\bs\cup \{\b'\})$, as in Figure~\ref{fig::2}.
The surface $\S'$ is obtained by adding a band~$B$ to the boundary of $\S$ at $p_1$ and $p_2$.
The curve $\a'$ is obtained by concatenating the curve $\lambda_-$ with an arc in the band.
The curve $\b'$ is obtained by concatenating the curve $\lambda_+$ with an arc in the band.
We assume that $\a'$ and $\b'$ intersect in a single point in the band
(and possibly other places outside the band).
Furthermore, we assume that the intersection point of $\a'$ and $\b'$ has the configuration
shown in Figure~\ref{fig::2}; i.e., locally, it looks like there is a holomorphic disk on
$(\bar{\S}', \as \cup \{\a'\}, \bs \cup \{\b'\})$
going towards the intersection point that does not intersect $\d \S'$.

To see that $(\S', \as \cup \{\a'\}, \bs \cup \{\b'\})$ is indeed a diagram of $(M',\g')$, let
\[
J = [-1,1] \times \{0\} \subset D^2 = \{\, (x,y) \in \R^2 : x^2 + y^2 \le 1 \,\}.
\]
Consider the arc $a = J \times \{1/2\}$ in the 2-handle $D^2 \times I \subset M'$
connecting $p_1$ and $p_2$.
Furthermore, consider the neighborhood
\[
N(a) = (([-1,1] \times (-\varepsilon,\varepsilon)) \cap D^2) \times I
\]
of $a$, where $\varepsilon \in (0,1)$. Then $N(a) \cap R_\pm(\g')$ is a regular neighborhood of
$\g' \cap (D^2 \times I) = J \times \{0,1\}$,
and $N(a)$ is a product 1-handle. On the sutured diagram level,
attaching $N(a)$ corresponds to adding the band~$B = J \times I$.
Then $(D^2 \times I) \setminus N(a)$ is the disjoint
union of two 3-dimensional 2-handles, attached to $(M \cup N(a),\g')$ along
$l_\pm \cup s_\pm$, where $s_\pm$ is a properly embedded arc in $N(a) \cap R_\pm(\g')$.
These attaching curves correspond to $\a'$ and $\b'$ on the Heegaard surface $\S'$.
The diagram $(\S', \as \cup \{\a'\}, \bs \cup \{\b'\})$ is also admissible.

\begin{figure}[ht!]
 \centering
 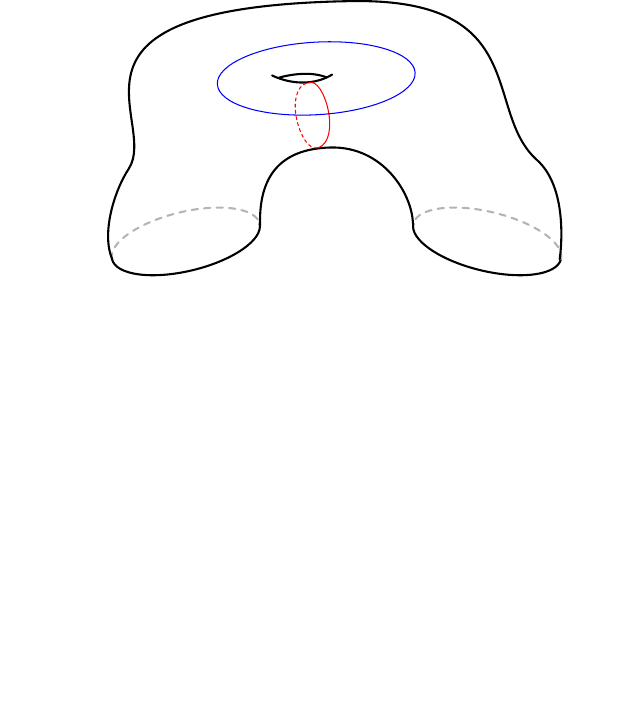
 \caption{The contact 2-handle map. The orientation of $\bar{\S}$ is shown on the bottom
  of the surface. \label{fig::2}}
\end{figure}

The contact 2-handle map
\[
C_{h^2} \colon \SFH(\bar{\S}, \as,\bs)\to \SFH(\bar{\S}', \as \cup \{\a'\},\bs \cup \{\b'\})
\]
is defined on $\ve{x} \in \T_{\as} \cap \T_{\bs}$ by the formula
\[
C_{h^2}(\ve{x}):= \ve{x} \times c,
\]
where $c \in \a' \cap \b'$ is the intersection point in the band.

\begin{lem}The following hold:
\begin{enumerate}
\item The map $C_{h^2}$ is a chain map.
\item The map $C_{h^2}$ is independent of the choices made in the construction;
    i.e., the choice of diagram $(\S, \as, \bs)$, and the choice of arcs $\lambda_+$ and $\lambda_-$
    obtained by projecting the two arcs $l_+$ and $l_-$ onto $\Sigma$.
\end{enumerate}
\end{lem}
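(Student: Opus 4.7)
The plan is to treat the two claims separately, in each case exploiting the fact that the band~$B$, the curves $\a'$ and $\b'$, and the intersection point $c$ sit in a collar of $\d\S'$, so that holomorphic curve counts in the enlarged diagram reduce to counts in the original diagram plus a model computation on the band.

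For claim~(1), I would analyze any Whitney disk $\phi$ on $(\bar\S', \as\cup\{\a'\}, \bs\cup\{\b'\})$ connecting $\x\times c$ to some generator $\y\times c'$ of the form $\y\times c'$ with $c' \in \a'\cap \b'$. The configuration around $c$ is arranged (see Figure~\ref{fig::2}) precisely so that the region of $\bar\S' \setminus (\as\cup \bs\cup\{\a',\b'\})$ containing a piece of $\d\S'$ adjacent to $c$ appears on the boundary of any outgoing disk with positive multiplicity there. Since that region contains a portion of $\d\S'$, positivity of domains forces its multiplicity to be $0$. An easy model analysis in the band then shows that $c' = c$ and that the domain of $\phi$ is supported in $\bar\S$. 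Hence the differential satisfies $\d(\x\times c) = (\d\x)\times c$, so $C_{h^2}$ is a chain map.

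For claim~(2), I would reduce independence to invariance under the elementary Heegaard moves relating any two admissible diagrams of $(M,\g)$ (isotopy, handleslide, index $1/2$ stabilization), together with the moves between arcs $\lambda_\pm$ with the same endpoints. Each diagram move for $(\S,\as,\bs)$ extends tautologically to a diagram move for $(\S', \as\cup\{\a'\}, \bs\cup\{\b'\})$ that fixes $B$, $\a'$, $\b'$, and~$c$; isotopy and stabilization moves commute with $C_{h^2}$ trivially, while handleslides are governed by a triangle count. Observe that $(\S', \as\cup\{\a'\}, \bs\cup\{\b'\})$ is obtained from $(\S,\as,\bs)$ by compound stabilization along $\lambda_-$ (or equivalently $\lambda_+$) in the sense of Section~\ref{sec:compoundstabilization}: the tube~$B$ is added near $\d\S$, and the distinguished intersection point is~$c$. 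Hence Lemma~\ref{lem:compoundtrianglemap1} computes triangle maps associated to $\bs$-handleslides as $(\x,\y)\mapsto F_\cT(\x,\y)\times c$, which is exactly what commutativity with $C_{h^2}$ demands; the same holds for $\as$-handleslides by the symmetric statement Lemma~\ref{lem:compoundtrianglemap2}. For independence from the choice of $\lambda_+$ and $\lambda_-$, note that two arcs from $p_1$ to $p_2$ on $\S$ avoiding $\bs$ (respectively $\as$) are related by a finite sequence of isotopies rel endpoints in $\S\setminus \bs$ and handleslides across $\as$-curves (respectively the opposite). Each isotopy alters $\b'$ (respectively $\a'$) by an ambient isotopy of~$\S'$ preserving~$c$, and each handleslide is again handled by the compound stabilization lemmas. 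Finally, changing the sutured Morse function and gradient-like vector field underlying $(\S,\as,\bs)$ only affects $\lambda_\pm$ up to such isotopies and handleslides, so the map is independent of all choices.

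The main obstacle I expect is bookkeeping rather than new analysis: namely, checking that every Heegaard move on $(\S,\as,\bs)$ and every deformation of $\lambda_\pm$ can be realized by a sequence of moves on the enlarged diagram whose triangle counts fall exactly within the scope of Lemmas~\ref{lem:compoundtrianglemap1} and~\ref{lem:compoundtrianglemap2}. The essential point, which makes the argument work, is that $B$, $\a'$, $\b'$, and $c$ all live in an arbitrarily small collar of $\d\S'$, so boundary positivity forces every holomorphic disk or triangle to factor into a piece on~$\bar\S$ and a piece on the band; the band-piece is then a direct model calculation.
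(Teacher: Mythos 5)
Your argument for part (1) is essentially the paper's: the band sits against $\d\S'$, so positivity of domains forces multiplicity zero at $c$ and the differential splits as $\d(\x\times c)=(\d\x)\times c$. That part is fine.

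For part (2) there is a genuine gap in the tool you invoke. The diagram $(\S',\as\cup\{\a'\},\bs\cup\{\b'\})$ is \emph{not} a compound stabilization of $(\S,\as,\bs)$ in the sense of Section~\ref{sec:compoundstabilization}: a compound stabilization attaches a tube with both feet in the interior of $\S$ (near $\d\S$) and produces another diagram of the \emph{same} sutured manifold, with the new $\b_0$ a meridian meeting only $\a_0$; here the band $B$ is attached along $\d\S$ at $p_1,p_2$, the sutured manifold changes (that is the whole point of the 2-handle), and \emph{both} new curves $\a'$ and $\b'$ may meet old curves of the opposite type. Consequently Lemmas~\ref{lem:compoundtrianglemap1} and~\ref{lem:compoundtrianglemap2} do not apply as stated, and in any case they address the wrong moves: they compute triangles when the \emph{old} curves are handleslid, or when the new curves are replaced by translates within the tube, whereas the moves you must control are handleslides of $\b'$ itself over $\bs$-curves (coming from changing $\lambda_+$) and of $\a'$ over $\as$-curves (from changing $\lambda_-$). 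The computation actually needed is the local triangle count of Figure~\ref{fig::50} (taken from Honda--Kazez--Mati\'c): any index-$0$ triangle with vertices at $c$ and the top intersection point of the two handleslid curves is forced to have its third vertex at $c'$ with domain the small shaded region, which is what makes the change-of-diagrams map send $\x\times c\mapsto\x'\times c'$. You would need to supply this model computation (or cite it) rather than the compound stabilization lemmas. A smaller slip: two choices of $\lambda_+$ (which avoids $\bs$) are related by isotopies in $\S\setminus\bs$ and handleslides over $\bs$-curves, not over $\as$-curves as you wrote, since both arcs are flow projections of $l_+$ through the $\bs$-compression body.
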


\begin{proof}
The claim that $C_{h^2}$ is a chain map is straightforward. We wish to show
that $\d \circ C_{h^2} = C_{h^2} \circ \d$, and hence we need to check that
$\d (\ve{x} \times c) = \d \ve{x} \times c$. To this end, we note that any
disk counted by $\d (\ve{x}\times c)$ must have zero multiplicity around $c$,
since the boundary $\d \S$ is nearby. Hence, the homology class of the disk is equal to a
class on $(\bar{\S},\as,\bs)$ with the constant class at $c$ added.

We now show that $C_{h^2}$ is independent of the choices made in the
construction (i.e., commutes with the change of diagrams maps,
appropriately). We note that there are two sources of ambiguity: the curves
$\lambda_+$ and $\lambda_-$ in $\S$, and the diagram $(\S,\as,\bs)$.
Let us address the ambiguity of $\lambda_+$ and $\lambda_-$. Since they are
gotten by flowing curves in $\d M$  under the gradient-like vector field of a
Morse function, any two choices of $\lambda_+$ are related by handle slides
over $\bs$ curves (as well as isotopies of~$\lambda_+$, relative $\d \S$,
such that it never intersects any $\bs$ curves). Similarly, any two choices
of $\lambda_-$ are related by handle slides over the $\as$ curves and
isotopies within $\S \setminus \as$. To see that the change of diagrams
map commutes with $C_{h^2}$, one realizes the handle slide and isotopy maps as
triangle maps, and uses the local computation  at the end of the proof of
\cite{HKMSutured}*{Lemma~3.5}, which is illustrated by
\cite{HKMSutured}*{Figure~8}. For the reader's convenience, we have
reproduced the relevant picture in Figure~\ref{fig::50}, since we will use
the local computation later, as well.
\end{proof}

\begin{figure}[ht!]
 \centering
\begingroup%
  \makeatletter%
  \providecommand\color[2][]{%
    \errmessage{(Inkscape) Color is used for the text in Inkscape, but the package 'color.sty' is not loaded}%
    \renewcommand\color[2][]{}%
  }%
  \providecommand\transparent[1]{%
    \errmessage{(Inkscape) Transparency is used (non-zero) for the text in Inkscape, but the package 'transparent.sty' is not loaded}%
    \renewcommand\transparent[1]{}%
  }%
  \providecommand\rotatebox[2]{#2}%
  \newcommand*\fsize{\dimexpr\f@size pt\relax}%
  \newcommand*\lineheight[1]{\fontsize{\fsize}{#1\fsize}\selectfont}%
  \ifx\svgwidth\undefined%
    \setlength{\unitlength}{165.26455808bp}%
    \ifx\svgscale\undefined%
      \relax%
    \else%
      \setlength{\unitlength}{\unitlength * \real{\svgscale}}%
    \fi%
  \else%
    \setlength{\unitlength}{\svgwidth}%
  \fi%
  \global\let\svgwidth\undefined%
  \global\let\svgscale\undefined%
  \makeatother%
  \begin{picture}(1,0.80693687)%
    \lineheight{1}%
    \setlength\tabcolsep{0pt}%
    \put(0,0){\includegraphics[width=\unitlength,page=1]{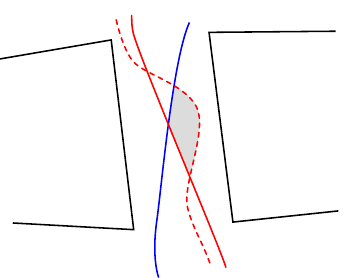}}%
    \put(0.46483398,0.43634732){\color[rgb]{0,0,0}\makebox(0,0)[rt]{\lineheight{1.25}\smash{\begin{tabular}[t]{r}$c$\end{tabular}}}}%
    \put(0.67910036,0.35690772){\color[rgb]{0,0,0}\makebox(0,0)[lt]{\lineheight{1.25}\smash{\begin{tabular}[t]{l}$\theta_{\alpha_2',\alpha_1'}^+$\end{tabular}}}}%
    \put(0.52235322,0.56112564){\color[rgb]{0,0,0}\makebox(0,0)[lt]{\lineheight{1.25}\smash{\begin{tabular}[t]{l}$c'$\end{tabular}}}}%
    \put(0.65244107,0.08137536){\color[rgb]{1,0,0}\makebox(0,0)[lt]{\lineheight{1.25}\smash{\begin{tabular}[t]{l}$\alpha'_1$\end{tabular}}}}%
    \put(0.57534209,0.05815231){\color[rgb]{1,0,0}\makebox(0,0)[rt]{\lineheight{1.25}\smash{\begin{tabular}[t]{r}$\alpha'_2$\end{tabular}}}}%
    \put(0.4579637,0.27019616){\color[rgb]{0,0,1}\makebox(0,0)[rt]{\lineheight{1.25}\smash{\begin{tabular}[t]{r}$\beta'$\end{tabular}}}}%
    \put(0.8096375,0.73860084){\color[rgb]{0,0,0}\makebox(0,0)[lt]{\lineheight{1.25}\smash{\begin{tabular}[t]{l}$\d \Sigma$\end{tabular}}}}%
    \put(0,0){\includegraphics[width=\unitlength,page=2]{fig50.pdf}}%
  \end{picture}%
\endgroup%

 \caption{The local computation required to show that the contact 2-handle map
 is independent of handle slides of the $\a'$ curve used in the definition of the contact 2-handle map.
 A local computation using the vertex multiplicities (identical to the one in \cite{HKMSutured}*{Lemma~3.5})
 shows that any holomorphic triangle with vertices at $c$ and $\theta_{\a_2',\a_1'}^+$ also has a vertex at $c'$,
 and has domain equal to the shaded region. The orientation of $\bar{\S}$ is shown.\label{fig::50}}
\end{figure}

\subsubsection{Contact 3-handle maps}\label{sec:3-handlemap}
We now define the contact 3-handle map.
Let $(M',\g')$ be the result of gluing a contact 3-handle $h^3$ to the balanced sutured
manifold $(M,\g)$, and suppose that $(M',\g')$ is also balanced.
The 3-handle is attached to a 2-sphere component $S \subset \d M$ such that
$\g_S = \g \cap S$ is a single closed curve.
One defines the contact 3-handle map $C_{h^3}$ to be equal to the composition of the 4-dimensional
3-handle map obtained by surgering $(-M, -\g)$ along the 2-sphere obtained by pushing $S$
into $\Int(M)$, followed by the inverse of the contact 0-handle map corresponding to
removing the resulting copy of $(-D^2 \times [-1,1], -S^1 \times \{0\})$.

On the diagram level, we choose a diagram $(\S, \as, \bs)$ of $(M,\g)$ such that there
are curves $\a \in \as$ and $\b \in \bs$ that are parallel to $\g_S \subset \partial \S$,
intersect transversely in two points $x$ and $y$, and there are no other $\as$ or $\bs$ curves
between $\a$ and $\g_S$ or $\b$ and $\g_S$. Let $\S'$ be the result of gluing a disk
to $\S$ along $\g_S$. Then $(\S', \as \setminus \{\a\}, \bs \setminus \{\b\})$
is a diagram of $(M',\g')$. Suppose that $x$ has larger relative grading than $y$ in $(\bar{\S}, \as, \bs)$.
For $\x \in \T_\a \cap \T_\b$, we set $C_{h^3}(\x \times \{x\}) = 0$
and $C_{h^3}(\x \times \{y\}) = \x$.

\subsection{Definition of the contact gluing map} \label{sec:def-gluingmap}

We now define the gluing map, in terms of a contact cell decomposition
(Definition~\ref{def:contactcelldecomposition}).  Suppose that $(M,\g)$
and $(M',\g')$ are balanced sutured manifolds, $(M,\g)$ is a
sutured submanifold of $(M',\g')$, and $\xi$ is contact structure on
$Z = M'\setminus \Int(M)$ that has dividing set $\g \sqcup \g'$. Let
$\cC$ be a contact cell decomposition of~$Z$. Let $\nu$ and
$\nu'$ be the chosen contact vector fields, and $S$ and $S'$ the chosen
barrier surfaces near $\d M$ and $\d M'$, respectively. Recall that we write
$N$ and $N'$ for the collar neighborhoods of $\d M$ and $\d M'$ bounded by
$S$ and $S'$, respectively.  Let $\Gamma\subset Z \setminus \Int(N \cup N')$ denote
the Legendrian 1-skeleton and $D_1, \dots, D_n$ the convex 2-cells.

A neighborhood of each vertex of $\Gamma$ that is not contained in $S$ or
$S'$ determines a contact 0-handle. A neighborhood of each edge of $\Gamma$
is a contact 1-handle. A neighborhood of each convex 2-cell $D_i$ is a
contact 2-handle. Finally, after removing neighborhoods of the graph $\Gamma$
and the disks $D_i$, we are left with a collection of tight contact 3-balls
that we view as a collection of contact 3-handles. Write $h_1,\dots, h_n$
for these handles, ordered such that their indices are nondecreasing.

Let $\g_0$ be the dividing set on $S$ with respect to $\nu$ and
$\xi$, and write $\g_0'$ for the dividing set on $S'$ with respect to
$\nu'$ and $\xi$. The flow of $\nu$ induces a diffeomorphism
\[
\psi^{\nu} \colon (M,\g)\to (M\cup N,\g_0)
\]
that is well defined up to isotopy.
Note that $(N', \g_0' \cup \g')$ is a balanced sutured
manifold, and $\xi$ induces the dividing set $\g_0' \cup \g'$ on
$\d N'$. There is a canonical isomorphism
\[
\SFH(-M \sqcup -N', -\g \cup -\g_0' \cup -\g') \iso
\SFH(-M, -\g) \otimes \SFH(-N', -\g_0' \cup -\g').
\]
Hence, we can define a map
\[
\Phi_{\sqcup N'} \colon \SFH(-M, -\g) \to
\SFH(-M \sqcup -N', -\g \cup -\g_0' \cup -\g')
\]
via the formula
\[
\Phi_{\sqcup N'}(\ve{x}) = \ve{x} \otimes \EH(N', \g_0' \cup \g', \xi|_{N'}),
\]
where $\EH(N', \g_0' \cup \g',\xi|_{N'})\in \SFH(-N',-\g_0'\cup -\g')$ is the
contact invariant of $\xi|_{N'}$, constructed using a partial open book
decomposition, as defined by Honda, Kazez, and Mati\'c~\cite{HKMSutured}.
We describe the invariant
$\EH(N', \g_0' \cup \g', \xi|_{N'})$ in more detail in
Section~\ref{sec:partialopenbooks}, when we prove some properties of the gluing map.
We will show in Lemma~\ref{lem:EH} that $\Phi_{\sqcup N'}$ can be written
as a composition of contact handle maps.

We now define the contact gluing map as the composition
\begin{equation}
\Phi_{\xi,\cC} := C_{h_n} \circ \cdots \circ C_{h_1} \circ
(\psi_*^{\nu} \otimes \id)\circ \Phi_{\sqcup N'}, \label{eq:defgluingmap}
\end{equation}
where $C_{h_i}$ is the map induced by the contact handle $h_i$,
as in Section~\ref{sec:contacthandlemap}.

\subsection{Invariance of the contact gluing map}

In this section, we prove the following:

\begin{thm}\label{thm:invarianceofcontacthandlemap}
The contact gluing map $\Phi_{\xi,\cC}$ is independent of the contact cell decomposition $\cC$.
\end{thm}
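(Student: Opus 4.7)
The plan is to invoke Proposition~\ref{prop:movesbetweencelldecompositions} and verify that $\Phi_{\xi,\cC}$ is preserved under each of its four moves. For each move I will translate the effect on the composition~\eqref{eq:defgluingmap} into a statement about contact handle maps on sutured Heegaard diagrams, and then reduce the verification to a local model computation near the cells being added.

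Move~\ref{move:isotopy} will be immediate from naturality: an ambient diffeomorphism isotopic to $\id$ rel $\d Z$ preserves $\nu$, $\nu'$, $S$, $S'$, and the collar $N'$ carrying the $\EH$ invariant, while each handle $h_i$ is replaced by a diffeomorphic copy with identical attaching data, so every factor in~\eqref{eq:defgluingmap} is unchanged up to the naturality transformation. Move~\ref{move:index0/1} should follow directly from the chain-level definitions: the inserted contact $0$-handle adds a disjoint disk to $\bar\S$ and the paired $1$-handle tautologically reattaches a strip between the disk and $\d\bar\S$, giving $C_{h^1}\circ C_{h^0}=\id$ on chains under the natural identification of diagrams.

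Move~\ref{move:index1/2} is the main case. The new Legendrian edge $\lambda$ produces a contact $1$-handle whose diagram effect is to add a strip to $\d\S$, and the convex $2$-cell $D$ with $\d D = c'\cup c''$ (where $c''$ is disjoint from the dividing set and so lies entirely in one of $R_\pm$) produces a contact $2$-handle whose arcs $\lambda_+,\lambda_-$ from Section~\ref{sec:2-handle} will consist of a short arc across the new strip together with the projection of $c''$ onto $\S$. Thus the combined diagram will be precisely a compound stabilization of $(\S,\as,\bs)$ along the path given by $c''$, in the sense of Section~\ref{sec:compoundstabilization}, and $C_{h^2}\circ C_{h^1}$ will equal the compound stabilization map $\xs\mapsto \xs\times c$. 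Invoking Proposition~\ref{prop:compoundstabilizationmap}, this is chain-homotopic to the naturality map $\Phi_{\cH\to\cH'}$, which is the identity after the natural identification of $\SFH$ before and after the cancelling pair.

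For Move~\ref{move:index2/3} the added disk $D$ subdivides an existing tight contact $3$-ball into two tight $3$-balls, introducing a contact $2$-handle $h^2$ followed by a contact $3$-handle $h^3$ capping off the resulting $S^2$-boundary component. On diagrams, the $2$-handle will produce a pair $(\a',\b')$ that is exactly the encircling pair of the new $S^2$-suture used to define $C_{h^3}$, and the intersection point $c$ created by $C_{h^2}$ will be the generator preserved by $C_{h^3}$, yielding $C_{h^3}\circ C_{h^2}=\id$ on chains by direct inspection. The main technical obstacle will be Move~\ref{move:index1/2}, since the curves $\a'$ and $\b'$ can interact with pre-existing $\bs$ and $\as$ curves in complicated ways; this is precisely where Proposition~\ref{prop:compoundstabilizationmap} is indispensable, as it bypasses a direct neck-stretching argument that would otherwise be required. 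Finally, independence of the auxiliary choices of contact vector fields $\nu,\nu'$ and barrier surfaces $S,S'$ is already subsumed by Steps~3 and~4 of the proof of Proposition~\ref{prop:movesbetweencelldecompositions}, combined with the contactomorphism-invariance of the $\EH$ invariant used in $\Phi_{\sqcup N'}$.
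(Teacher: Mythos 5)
Your proposal is correct and follows essentially the same route as the paper's proof: reduce to the four moves of Proposition~\ref{prop:movesbetweencelldecompositions}, dispatch Moves~\ref{move:isotopy} and~\ref{move:index0/1} by naturality and tautology, identify the index 1/2 cancellation with a compound stabilization and invoke Proposition~\ref{prop:compoundstabilizationmap}, and treat the index 2/3 cancellation by a local model computation. The only small points the paper makes explicit that you elide are the appeal to Lemma~\ref{lem:disjointhandlescommute} so that handles of equal index can be reordered and the canceling pair brought adjacent in the composition~\eqref{eq:defgluingmap}, and, in the 2/3 case, the check that the new intersection point $c$ has the \emph{smaller} relative Maslov grading (since the bigons run from the other intersection point of $\a'\cap\b'$ to $c$), so that $C_{h^3}(\x\times c)=\x$ rather than $0$.
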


As a first step, the following lemma is helpful:

\begin{lem}\label{lem:disjointhandlescommute}
If $h_1$ and $h_2$ are two contact handles (of arbitrary index)
that are disjoint and are attached to $(M,\g)$, then
\[
C_{h_1}\circ C_{h_2}=C_{h_2}\circ C_{h_1}.
\]
\end{lem}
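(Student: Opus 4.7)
My approach will be to exploit the fact that since $h_1$ and $h_2$ are attached along disjoint subsets of $\d M$, I can select a single sutured diagram $(\S, \as, \bs)$ of $(M,\g)$ that is simultaneously adapted to the definitions of both $C_{h_1}$ and $C_{h_2}$. Concretely, I would pick a sutured Morse function on $(M,\g)$ whose gradient-like vector field respects both attaching regions, so that the projected arcs $\lambda_\pm$ used to construct $C_{h^2}$ (for any 2-handles among the $h_i$) and the $\a$-, $\b$-curves parallel to the attaching 2-spheres used to construct $C_{h^3}$ (for any 3-handles) lie in disjoint portions of $\S$. This is possible precisely because the attaching regions are disjoint, and the invariance of $C_{h^2}$ under the choice of defining arcs (established in the previous lemma) lets me fix such a diagram without loss of generality.

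With a simultaneously adapted diagram in place, I will verify $C_{h_1} \circ C_{h_2} = C_{h_2} \circ C_{h_1}$ by case analysis on the indices $(i_1, i_2) \in \{0,1,2,3\}^2$. When either handle is a 0- or 1-handle, its induced map is the tautological identification of chain complexes coming from an inclusion of Heegaard surfaces, so it commutes automatically with any operation supported in a disjoint region. When both $h_1$ and $h_2$ are 2-handles, both orders of composition send $\ve{x}$ to $\ve{x} \times c_1 \times c_2$, where $c_k$ is the intersection point in the $k$-th attached band. A disjoint 2-handle and 3-handle commute because taking a product with a new intersection point in one region is independent of the formula $\ve{x} \times \{x\} \mapsto 0$, $\ve{x} \times \{y\} \mapsto \ve{x}$ applied in a disjoint region; the case of two disjoint 3-handles is analogous.

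The main obstacle, and the only step needing care, is arranging the initial choice of diagram: I must verify that a single $(\S, \as, \bs)$ can be made to satisfy the conditions demanded by the constructions of $C_{h_1}$ and $C_{h_2}$ simultaneously, since each $C_{h^i}$ was defined with respect to a diagram tailored to that particular handle. Once this is set up, all the chain-level commutativity checks follow from disjointness of supports. For two disjoint 3-handles, in particular, disjointness of their attaching 2-spheres allows the two copies of $(-D^2 \times [-1,1], -S^1 \times \{0\})$ appearing in the definition of $C_{h^3}$ to be split off independently, so the constituent 4-dimensional 3-handle maps and inverse 0-handle maps commute pairwise.
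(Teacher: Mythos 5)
Your proposal is correct and follows essentially the same route as the paper: choose a single diagram in which the bands, curves, and (for 3-handles) boundary spheres for both handles are simultaneously in standard position and mutually disjoint, then observe that the explicit formulas for $C_{h_1}$ and $C_{h_2}$ act on disjoint parts of the diagram and hence commute. The paper treats the two-2-handle case in the same way and dispatches the remaining index combinations as "obvious" or "similar," exactly as you do.
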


\begin{proof}
The proof follows by analyzing the formulas for the two maps. If one of $h_1$
and $h_2$ is a 0-handle or a 1-handle, then the statement is obvious. Let us
consider the case when $h_1$ and $h_2$ are disjoint 2-handles. Let us recall
how the maps $C_{h_i}$ are defined, for $i \in \{1 ,2\}$. We first attach a band
$B_i$ to the boundary of a sutured diagram for $(-M,-\g)$. The
band $B_i$ is attached where the attaching circle of $h_i$ intersects
$\g \subset \d M$. We pick curves $\a_i$ and $\b_i$, according to
the attaching circle of $h_i$, that intersect at a single point
$c_i$ in $B_i$. The map $C_{h_i}$ is defined by $C_{h_i}(\ve{x}) = \ve{x}\times c_i$.
Since the handles $h_1$ and $h_2$ are disjoint, the bands $B_1$ and
$B_2$ can be chosen to be disjoint, and the curves $\a_i$ and $\b_i$
can be assumed to not intersect the band $B_j$ when $i \neq j$. Hence, it
follows that we can use the curves $\a_1$, $\b_1$, $\a_2$, and $\b_2$
to compute both compositions $C_{h_2} \circ C_{h_1}$ and $C_{h_2} \circ C_{h_1}$,
and since the formulas for the two compositions clearly agree, we
conclude that $C_{h_2} \circ C_{h_1} = C_{h_1}\circ C_{h_2}$. In a similar
manner, one can show that the same formula holds if at least one of $h_1$
and $h_2$ is a 3-handle.
\end{proof}

\begin{proof}[Proof of Theorem~\ref{thm:invarianceofcontacthandlemap}]
Independence of the relative ordering of cells of the same index follows
from Lemma~\ref{lem:disjointhandlescommute}, so it is sufficient to check
invariance under the moves in Proposition~\ref{prop:movesbetweencelldecompositions}.

First consider the case when $\cC_1$ and $\cC_2$ differ by Move~\ref{move:isotopy} (isotopy).
The maps $\Phi_{\xi,\cC_1}$ and $\Phi_{\xi,\cC_2}$ differ by post-composition
with a map $\phi_* \colon \SFH(M',\g')\to \SFH(M',\g')$
for a diffeomorphism $\phi \colon M'\to M'$ that is isotopic to the identity
relative to $\d M'$. By naturality of sutured Floer homology~\cite{JTNaturality}*{Theorem~1.9},
the map $\phi_*$ is the identity.

We now consider Move~\ref{move:index0/1} (index 0/1 cell cancellation). We
need to check that subdividing an edge of $\Gamma$, or adding a Legendrian edge
$\lambda$ to $\Gamma$ that meets $\Gamma$ at a single vertex and intersects none
of the other cells does not change the map. Let us first consider
subdivision. The contact 0-handle map adds a disk to the Heegaard surface
with no new $\a$ or $\b$ curves, and the contact 1-handle map
attaches a band to the boundary of the Heegaard surface with no new $\a$
or $\b$ curves. When we subdivide a Legendrian edge into two Legendrian
edges that meet at a single vertex, the contact handle map changes by first
adding a disk to $S$ (the 0-handle map), followed by two bands, each of which
has one foot on the new disk, and another foot at a foot of the original
band. Clearly, the induced maps agree. Invariance under adding a Legendrian
edge $\lambda$ that intersects $\Gamma$ at a single vertex follows similarly.

We now consider the case when $\cC_1$ and $\cC_2$ differ by Move~\ref{move:index1/2}
 (index 1/2 contact cell cancellation). The proof is a model computation, that we now describe.
The computation is summarized in Figure~\ref{fig::40}. Note that the contact cell decompositions
$\cC_1$ and $\cC_2$ have the same barrier surfaces $S$ and $S'$ near $\d M$ and $\d M'$, respectively.
Recall that we write $N$, $N'\subset Z$ for the neighborhoods
of $\d M$ and $\d M'$ that are bounded by $S$ and $S'$, and $Z' := Z \setminus \Int(N \cup N')$.
By definition of an index 1/2 contact cell cancellation, the graph $\Gamma_2$ is formed by adding a Legendrian edge $\lambda$ to $\Gamma_1$, and a new 2-cell $D$ is added that intersects $\d (Z'\setminus \Int N(\Gamma_1))$ along an arc that
does not intersect the dividing set.

For the sake of demonstration, let us
assume that the new 2-cell $D$ intersects $\d (Z'\setminus \Int N(\Gamma_1))$ along
$R_+$. If $(\S,\as,\bs)$ is a sutured diagram for $M \cup N \cup
N(\Gamma_1) \cup N'$, the effect of adding the 1-handle $h^1$ corresponding to $\lambda$ is
to attach a band $B$ to the boundary of $\bar{\S}$. The attaching circle of the
2-handle $h^2$ corresponding to $D$ intersects the dividing set of $\d N(\lambda)$ at two points. On the
level of diagrams, the effect is to attach a band $B'$ to $B$, and add new
curves $\a_0$ and $\b_0$, as in Figure~\ref{fig::40}. Note that the
effect of adding $B$ and $B'$ to $\bar{\S}$ is to attach a tube to two points
along the interior of $\bar{\S}$ near $\d \bar{\S}$, and the curve $\b_0$ is a meridian of the
tube, while the curve $\a_0$ is the concatenation of a longitude of the tube
with a path on $\bar{\S}$ between the two ends of the tube. The curves
$\a_0$ and $\b_0$ intersect at a single point $c$, and the composition
of the contact 1-handle and 2-handle maps is
\[
(C_{h^2}\circ
C_{h^1})(\ve{x})=\ve{x}\times c.
\]
This is the compound stabilization map from Section~\ref{sec:compoundstabilization}. By
Proposition~\ref{prop:compoundstabilizationmap}, this is equal to the transition map from
the naturality of sutured Floer homology.

\begin{figure}[ht!]
 \centering
 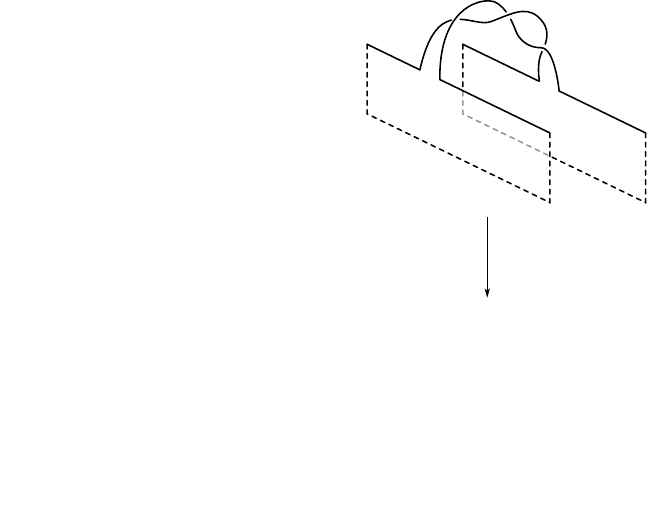
 \caption{The composition of the contact 1-handle map followed by a contact 2-handle map,
 for a canceling pair of contact 1- and 2-cells. The composition is the compound stabilization map
 described in Section~\ref{sec:compoundstabilization}.\label{fig::40}}
\end{figure}

Finally, we consider invariance under Move~\ref{move:index2/3} (index 2/3
contact cell cancellation). In this move, we
add a convex 2-cell $D$ to the decomposition that has Legendrian boundary
and $\tb(\d D) = -1$, and such that $\d D$ intersects the dividing set on
$\d (Z'\setminus \Int N(\Gamma))$ exactly twice. By definition of a contact cell
decomposition, the disk $D$ cuts one of the contact 3-cells in our
decomposition into two contact 3-cells. Hence, the effect on the contact
gluing map from equation~\eqref{eq:defgluingmap} is to insert a
contact 2-handle map $C_{h^2}$, followed by a contact 3-handle map $C_{h^3}$.
The composition is easily seen to be a diffeomorphism map,
as demonstrated in Figure~\ref{fig::41}.

Indeed, for notational simplicity, assume that $h^2$ is attached to $(M,\g)$.
Let $(M',\g')$ be the result of attaching $h^2$ to $(M,\g)$, and $(M'',\g'')$ the result
of attaching $h^3$ to $(M',\g')$. Given a diagram $(\S,\as,\bs)$ of $(M,\g)$,
we get a diagram of $(M',\g')$ by attaching a band $B$ to a component of $\partial \S$,
and add curves $\a'$ to $\as$ and $\b'$ to $\bs$ parallel to the suture $\g_S$ along which
$h^3$ is attached, and such that $\a' \cap \b' \cap B = \{c\}$.
For $\x \in \T_\a \cap \T_\b$, we have $C_{h^2}(\x)= \x \times c$.
 We can choose $\a'$ and $\b'$ so that $|\a' \cap \b'| = 2$,
 and  we write $\a'\cap \b'=\{\theta^+,\theta^-\}$,
where $\theta^+$ and $\theta^-$ are distinguished by
the relative Maslov grading.
By construction, on $(\bar{\S}, \as \cup \{\a'\}, \bs \cup \{\b'\})$
there are two bigons connecting the two points of $\a' \cap \b'$, which we can use to
determine that $c$ has lower
relative grading, so $c=\theta^-$. It follows that $(C_{h^3} \circ C_{h^2})(\x) = C_{h^3}(\x \times c) = \x$.
\end{proof}

\begin{figure}[ht!]
 \centering
 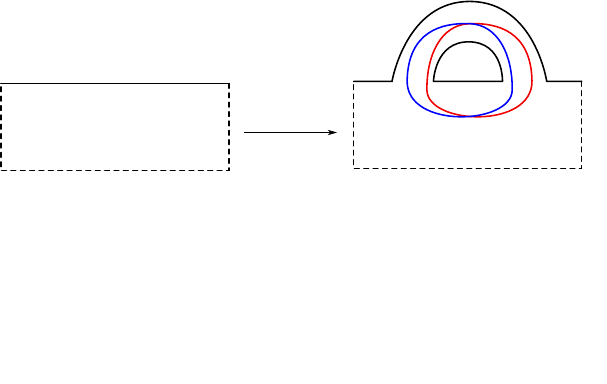
 \caption{The composition of a contact 2-handle map followed by a contact 3-handle map, for a canceling pair of contact 2- and 3-cells \label{fig::41}}
\end{figure}

\begin{rem}
It is possible to directly show invariance of the contact gluing map under
subdividing a contact 2-cell into two contact 2-cells that meet along a Legendrian
arc. Indeed, the topological manipulation described in the proof of Lemma~\ref{lem:subdivision}
gives a recipe for doing so. Nonetheless, the model computations required to show invariance
of the gluing map under Moves~\ref{move:isotopy}--\ref{move:index2/3} are simpler.
\end{rem}

\section{Contact handles and partial open book decompositions}
\label{sec:partialopenbooks}

\subsection{Partial open book decompositions and sutured Heegaard diagrams}
\label{sec:partialopenbookandHD}

In order to prove basic properties of the gluing map, and to compare our
construction to the one due to Honda, Kazez, and Mati\'c~\cite{HKMTQFT},
we need the following definition from~\cite{HKMSutured}*{Section~1.2}:

\begin{define}
A \emph{partial open book decomposition} is a triple $(S,P,h)$ consisting
of a compact, connected, oriented surface $S$ with non-empty boundary, a
compact subsurface $P\subset S$ (the
\emph{page}), such that $S$ is obtained from $\cl(S \setminus P)$ by successively attaching
1-handles, and a smooth embedding $h \colon P\to S$ (the \emph{monodromy})
such that $h|_{\d S\cap P}=\id$.
\end{define}

An abstract (non-embedded) partial open book decomposition $(S,P,h)$ defines a sutured manifold
$(M,\g)$ via the formula
\[
M = \left( S\times [0,\tfrac{1}{2}] \cup P \times [\tfrac{1}{2},1] \right)/{\sim}_h,
\]
where $\sim_h$ is the equivalence relation defined as
\[
\begin{split}
&(x,t)\sim_h (x,t') \text{ if } x \in \d S \text{ and } t \text{, } t' \in [0,\tfrac{1}{2}],\\
& (x,t)\sim_h (x,t') \text{ if } x \in \d P\cap \d S \text{ and } t \text{, } t'\in [\tfrac{1}{2},1], \text{ and }\\
&(x,1)\sim_h (h(x),0) \text{ if } x \in P.
\end{split}
\]

The  manifold $M$ contains the properly embedded surface
\[
\S := \left( S\times \{\tfrac{1}{4}\} \right) \cup \left( P\times \{\tfrac{3}{4}\} \right).
\]
The curves $\g=\d M\cap \S$ divide $\d M$ into two subsurfaces that meet along $\g$.
Furthermore, $(M,\g)$ is a balanced sutured manifold and $\S$ is a
sutured Heegaard surface for $(M,\g)$. Using our orientation
conventions, $R_+(M)$ deformation retracts onto $(S\setminus P)\times
\{\tfrac{1}{2}\}$ and $R_-(M)$ deformation retracts onto $(S\setminus
h(P))\times \{0\}$.

\begin{define}
If $(S,P,h)$ is a partial open book decomposition, a \emph{basis of arcs
for $P$ in $S$} is a collection of pairwise disjoint, properly embedded arcs $\ve{a}=\{a_1,\dots, a_k\}$ on $P$
with ends on $P \cap \d S$ such that $P \setminus (a_1\cup \cdots \cup a_k)$
deformation retracts onto $\d P\setminus \d S$ (or, equivalently,
$S \setminus (a_1\cup \cdots \cup a_k)$ deformation retracts onto $\cl(S \setminus P)$).
\end{define}

Given a basis of arcs for $P$ in $S$, we can construct attaching curves $\as$ and
$\bs$ on $\S$ that make $(\S,\as,\bs)$ a Heegaard diagram for
$(M,\g)$. Let $b_i$ be an isotopic copy of $a_i$ obtained by moving the ends
of $a_i$ along $\d \S$ in the positive direction such that $a_i \cap b_j = \delta_{ij}$.
We then set
\[
\a_i = \left(a_i \times \{\tfrac14\} \right) \cup \left(a_i \times \{\tfrac34\}\right)
\text{ and } \b_i = \left(b_i \times \{\tfrac34\} \right) \cup \left( h(b_i) \times \{\tfrac14\} \right).
\]

A partial open book decomposition $(S,P,h)$ determines a unique contact structure $\xi$ on $(M,\g)$,
up to equivalence, as follows; see \cite{OzbagciEtguContactClass}*{Proposition~1.2}.  Let
\begin{equation}\label{eq:U1U2contacthandlebodies}
U_1 := \left( S \times [0,\tfrac{1}{2}] \right)/{\sim_1} \text{ and }
U_2:= \left( P \times [\tfrac{1}{2},1] \right)/{\sim_2},
\end{equation}
where $\sim_1$ and $\sim_2$ are  the relations defined by
\[
(x,t)\sim_1 (x,t') \text{ if } x \in \d S, \text{ and }
(x,t)\sim_2(x,t') \text{ if } x \in \d P \cap \d S.
\]
Then $\xi|_{U_1}$ is the unique tight contact structure on $U_1$
with dividing set $\d S \times \{\tfrac14\}$, and $\xi|_{U_2}$
is the unique tight contact structure on $U_2$ with dividing set $\d P \times \{\tfrac34\}$.
Hence, we say that $(S,P,h)$ is a partial open book decomposition of
the contact 3-manifold $(M,\g,\xi)$ if we are given a contactomorphism
between the contact 3-manifold defined by $(S,P,h)$ and $(M,\g,\xi)$.

Given a partial open book decomposition $(S,P,h)$ of $(M,\g,\xi)$
and a basis of arcs $\{a_1, \dots, a_k\}$, let $x_i = (a_i \times \{\tfrac34\}) \cap (b_i \times \{\tfrac34\})$
for $i \in \{1, \dots, k\}$ and $\x_\xi = x_1 \times \dots \times x_k$.
Then Honda, Kazez, and Mati\'c~\cite{HKMSutured} showed that
$\x_\xi$ is a cycle whose homology class $\EH(M,\g,\xi) \in \SFH(-M,-\g)$
is independent of the choice of partial open book and basis of arcs,
and is hence an invariant of~$\xi$.

\subsection{Partial open books and contact handles}
\label{sec:partialopenbookdecompositionsandhandledecomps}

Contact handle decompositions were defined by Giroux~\cite{Giroux}.
In this section, we describe some useful relations between contact handle decompositions
and the Honda--Kazez--Mati\'{c} definition of a partial open book; compare~\cite{OzbagciEtguContactClass}.

Given a partial open book decomposition $(S,P,h)$ for the contact sutured manifold $(M, \g,\xi)$,
we can naturally construct a contact handle decomposition of $(M, \g)$ with no 3-handles, as follows.
Recall that the manifold $M$ is obtained by gluing the contact handlebodies $U_1$ and $U_2$ together, as described in Section~\ref{sec:partialopenbookandHD}, along a portion of their boundaries, using the map $h$.

We start by constructing a contact handle decomposition of $U_1$ consisting
of only 0-handles and 1-handles. Such a description is obtained by giving the
surface $S$ a decomposition into 2-dimensional 0-handles and 1-handles.
Next, we extend this decomposition to a contact handle decomposition of all
of $M$. To do this, pick a basis of arcs $\ve{a}$ for $P$ in $S$. The closed
curves
\[
l_i := \left(a_i\times \{\tfrac{1}{2}\}\right) \cup \left( h(a_i) \times \{0\} \right)
\]
bound disks in $U_2$ that intersect the dividing set $\d S \times \{0\}$
of $\d U_1$ exactly twice. By perturbing $U_1$ and $U_2$ slightly inside
$M$, we can assume that the curves $l_i$ are Legendrian. As the curve
$l_i$ intersects the dividing set of $\d U_1$  exactly twice, it follows that
they bound convex disks with $\tb = -1$, and that neighborhoods of
these convex disks are contact 2-handles. Furthermore, after attaching these
contact 2-handles, we obtain the sutured manifold $M$.

In the opposite direction, given a contact handle decomposition $\ve{H}$ of $(M,\g,\xi)$
with handles ordered with nondecreasing index and no 3-handles,
viewed as a cobordism from $\emptyset$ to $\d M$, we can construct a partial open book
decomposition $(S,P,h)$, as follows. The handlebody $U_1$ is the union of the 0- and 1-handles.
If $\g_1$ is the dividing set of $\xi$ on $\d U_1$, then $(U_1, \g_1)$ is diffeomorphic to the product
sutured manifold $((S \times [0,\tfrac12])/{\sim}, \d S \times \{\tfrac14\})$ where $S:=  R_+(\g_1)\subset \d U_1$.
Let $U_2$ be the union of the 2-handles, and let $\g_2$ be the dividing set of $\xi$ on $\d U_2$.
Then $(U_2, \g_2)$ is a product sutured manifold of the form $((P \times [\tfrac12,1])/{\sim}, \d P \times \{\tfrac34\})$
for $P := U_2 \cap S$. We finally set $h$ to be $\pi_1 \circ i_2 \colon P \to S$,
where $\pi_1 \colon S \times [0,\tfrac12] \to S$ is the projection and
\[
i_2 \colon P \to P \times \{1\} \subset R_-(\g_1) \approx S \times \{0\}
\]
is the canonical embedding.

\begin{lem} \label{lem:EH}
  Let $h_1, \dots, h_n$ be the handles of a contact handle decomposition $\ve{H}$ of $(M,\g,\xi)$,
  ordered such that their indices are nonincreasing. If there are no 3-handles, then
  \[
  \EH(M,\g,\xi) = (C_{h_1} \circ \dots \circ C_{h_n})(1) \in \SFH(-M,-\g),
  \]
  where $1 \in \bF_2 \cong \SFH(\emptyset)$.
\end{lem}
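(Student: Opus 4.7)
The plan is to compute $(C_{h_1}\circ\cdots\circ C_{h_n})(1)$ by tracking the sutured Heegaard diagram and the distinguished cycle through each handle attachment, and then to compare the result with the Honda--Kazez--Mati\'c generator $\x_\xi$. The correspondence of Section~\ref{sec:partialopenbookdecompositionsandhandledecomps} associates to $\ve{H}$ a partial open book $(S,P,h)$ of $(M,\g,\xi)$ with $S=R_+(\g_1)$ and $P=S\cap U_2$, where $U_1$ denotes the union of the 0- and 1-handles and $U_2$ the union of the 2-handles. Each contact 2-handle $h_i^2$ is attached along a Legendrian curve $l_i\subset \d U_1$, and the arcs $a_i:=l_i\cap R_+(\g_1)\subset S$ yield a basis $\{a_1,\dots,a_k\}$ of arcs for $P$ in $S$, essentially by construction of the correspondence.

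Running through the composition with handles ordered by nondecreasing index, the 0-handle maps tautologically add disk components to $\S$, and the 1-handle maps attach bands to $\d \S$ without introducing any new attaching curves. Hence, after all 0- and 1-handles have been processed, $1\in\bF_2$ is sent to the empty generator of the diagram $(S,\emptyset,\emptyset)$, which represents the product sutured handlebody $U_1 \iso (S\times [0,\tfrac12])/{\sim}$. Since the contact 2-handles are pairwise disjoint, Lemma~\ref{lem:disjointhandlescommute} lets me apply $C_{h_1^2},\dots,C_{h_k^2}$ in any order. Using the obvious height Morse function on $U_1$ together with the gradient-like vector field $\d/\d t$, the construction of Section~\ref{sec:2-handle} attaches a band $B_i$ to $\d \S$ at the feet $\d a_i\subset \d S$ and adds a new pair of attaching curves meeting transversely in a single point $c_i\in B_i$, whose restrictions to $S$ are (isotopic to) $a_i$ and $h(a_i)$. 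Iterating, the image of $1$ under the full composition is the cycle $(c_1,\dots,c_k)$ on the resulting diagram $(\S',\as',\bs')$.

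Finally, I compare $(\S',\as',\bs',(c_1,\dots,c_k))$ with the Heegaard diagram $(\S,\{\a_i\},\{\b_i\})$ and cycle $\x_\xi=(x_1,\dots,x_k)$ associated to $(S,P,h)$ and $\{a_1,\dots,a_k\}$ in Section~\ref{sec:partialopenbookandHD}. Both Heegaard surfaces are obtained from $S$ by attaching $k$ 1-handles at the pairs $\d a_i$, and inside each 1-handle both $(\a_i,\b_i)$ and the new pair added by $C_{h_i^2}$ consist of a single pair of transversely intersecting parallel strands whose restrictions to $S$ are (isotopic to) $a_i$ and $h(a_i)$. Up to the push-off taking $a_i$ to its parallel copy $b_i$ together with a local isotopy inside each 1-handle, the two diagrams coincide, and the identification carries $c_i$ to $x_i=a_i\cap b_i$. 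The main technical obstacle is precisely this last matching step: one must verify that the naturality maps implementing the required isotopies and small handleslides act as the identity on the distinguished generators, which reduces to local model computations on each band, analogous in spirit to Lemma~\ref{lem:compoundtrianglemap1} and to the computation in \cite{HKMSutured}*{Lemma~3.5} reproduced in Figure~\ref{fig::50}. Once this identification is in hand, the equality $(C_{h_1}\circ\cdots\circ C_{h_n})(1)=\x_\xi$ in $\SFH(-M,-\g)$ follows.
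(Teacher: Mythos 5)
Your proposal is correct and follows essentially the same route as the paper: the 0- and 1-handles build $S\times\{\tfrac14\}$, each contact 2-handle adds the band $B_i$ together with the pair of curves coming from the basis arc $a_i$, and the resulting generator $(c_1,\dots,c_k)$ is the Honda--Kazez--Mati\'c cycle $\x_\xi$. The only divergence is that the final matching you flag as the main technical obstacle is treated in the paper as tautological: with the natural choices, the diagram produced by the handle attachments literally coincides with the partial-open-book diagram of Section~\ref{sec:partialopenbookandHD}, so no naturality or model computation is needed at that step.
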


\begin{proof}
  Let $(S,P,h)$ be the partial open book corresponding to $\ve{H}$, as above.
  Suppose that $h_1, \dots, h_k$ are the 2-handles.
  On the level of diagrams, $C_{h_{k+1}} \circ \cdots \circ C_{h_n}$ corresponds to adding
  a disk for each 0-handle, and a band for each 1-handle. The union of these is $S \times \{\tfrac14\}$.
  Adding a 2-handle $h_i$ for $i \in \{1, \dots, k\}$ corresponds to attaching a band $B_i$ to $S$,
  and adding curves $\a_i$ and $\b_i$. Then $B_1 \cup \dots \cup B_k = P \times \{\tfrac34\}$,
  and $\a_i$ and $\b_i$ are obtained from the basis of arcs $\{a_1, \dots, a_k\}$
  as described in Section~\ref{sec:partialopenbookandHD}, where $a_i$ is the core of $B_i$.
  Let $x_i = \a_i \cap \b_i \cap B_i$ for $i \in \{1,\dots,k\}$.
  Then the element $(C_{h_1} \circ \dots \circ C_{h_n})(1) = x_1 \times \dots \times x_k$ tautologically agrees
  with the cycle $\x_\xi$ representing $\EH(M,\g,\xi)$,
  as defined by Honda, Kazez, and Mati\'c~\cite{HKMSutured} for the partial open book decomposition $(S,P,h)$
  and the basis of arcs $\{a_1, \dots, a_k\}$.
\end{proof}

We now describe the effect of attaching a single contact handle on the level of partial open books.
We begin with the effect of attaching a contact 1-handle:

\begin{lem}\label{lem:partialopenbookcont1-handle}
Suppose that we obtain the contact sutured manifold $(M',\g',\xi')$
from $(M,\g,\xi)$ by attaching a contact 1-handle $h^1$.
Let $(S,P,h)$ be a partial open book decomposition for the contact
structure $\xi$ on $(M,\g)$. A partial open book decomposition
$(S',P',h')$ for the contact structure $\xi'$ on $(M', \g')$ can be obtained
by setting
\begin{enumerate}
\item $S' = S \cup B$,
  where $B$ is a band attached to $\d S$,
\item $P' = P$, and
\item $h'=\iota_S\circ h$, where $\iota_S \colon S \to S'$ is the embedding.
\end{enumerate}
\end{lem}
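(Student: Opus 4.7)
The plan is to verify directly that the sutured contact manifold associated to the proposed partial open book $(S',P',h')$ coincides with $(M',\gamma',\xi')$, the result of attaching the contact $1$-handle $h^1$ to $(M,\gamma,\xi)$. Recall from Section~\ref{sec:partialopenbookandHD} that a partial open book determines both the underlying sutured manifold and, via the contact handlebodies $U_1$ and $U_2$ of~\eqref{eq:U1U2contacthandlebodies}, the contact structure up to equivalence. So it suffices to identify each piece.

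First I would analyze the topology. Writing $M' = (S' \times [0,\tfrac12] \cup P' \times [\tfrac12,1])/{\sim_{h'}}$, and noting that $P' = P$ and $h' = \iota_S \circ h$ agrees with $h$ when composed with the obvious inclusion $S \hookrightarrow S'$, the top piece $P' \times [\tfrac12,1]$ and its gluing at height $\tfrac12$ to $S$ are unchanged, and the identification $(x,1) \sim (h'(x),0) = (h(x),0)$ takes place entirely inside $S \subset S'$. Hence $M \subset M'$, and the extra piece is $(B \times [0,\tfrac12])/{\sim_1}$, where $\sim_1$ collapses $(\partial S' \cap B) \times [0,\tfrac12]$ onto $\partial S' \cap B$. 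Since $B$ is a $2$-dimensional $1$-handle attached to $\partial S$ along two arcs, crossing with $[0,\tfrac12]$ and collapsing the new $\partial S'$-edges yields a $3$-ball attached to $M$ along two disks, each lying in a neighborhood of $\gamma = \partial S \times \{\tfrac14\}$. This is topologically a $3$-dimensional $1$-handle attached to $\partial M$ at two points of~$\gamma$.

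Next I would check the dividing set on the attaching region and the new boundary. The dividing set of $\partial U_1'$ is $\partial S' \times \{\tfrac14\}$; on $B \times [0,\tfrac12]$, this restricts to a single arc on each of the two attaching disks (matching the dividing data of $\gamma$ on $\partial M$ where the handle is glued) and to two arcs on the ``outer'' boundary of the new $3$-ball (one on each of $B \times \{0\}$ and $B \times \{\tfrac12\}$, connected through the collapsed $\partial I \times J$ edges). This is exactly the dividing pattern of Definition~\ref{def:contacthandle} for a contact $1$-handle.

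Finally, for the contact structure, $U_1'$ carries the unique tight contact structure with dividing set $\partial S' \times \{\tfrac14\}$, and $U_2' = U_2$ is unchanged as a contact handlebody. Restricted to the new $3$-ball $(B \times [0,\tfrac12])/{\sim_1}$, the tight structure on $U_1'$ has Legendrian corners matching the standard contact $1$-handle model, so by uniqueness of tight contact structures on the $3$-ball with prescribed convex boundary (Eliashberg), this piece is contactomorphic to the standard contact $1$-handle relative to its boundary. The gluing of $U_1$ and $U_2$ via $h$ inside $M$ is unaffected by extending the monodromy via $\iota_S$, since $\iota_S \circ h$ differs from $h$ only by viewing its image inside $S'$. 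Hence $(S',P',h')$ produces $(M,\gamma,\xi)$ with a contact $1$-handle attached, which is $(M',\gamma',\xi')$. The main obstacle is purely bookkeeping: carefully tracking the corners and the quotient relations so that the extra piece of $M'$ matches the standard contact $1$-handle both topologically and as a convex contact region; once the dividing sets are shown to agree, uniqueness of tight structures closes the argument.
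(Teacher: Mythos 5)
Your proposal is correct and follows essentially the same route as the paper: decompose $M'$ as $M$ together with the extra piece $(B\times[0,\tfrac12])/{\sim}$, and identify that piece with the standard contact $1$-handle via its dividing set and tightness. The only detail the paper makes explicit that you omit is an initial isotopy of $(S,P,h)$ arranging that the attaching sphere of $h^1$ lies in $\d S\setminus P$, which is needed so that $(S',P',h')$ with $P'=P$ still satisfies the definition of a partial open book.
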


\begin{proof}
First, we isotope the partial open book $(S,P,h)$ in $M$ such that the attaching
sphere of $h^1$ lies in $\d S \setminus P$. We set $S' = S \cup B$,
where the band $B\subset h^1$ has boundary equal to the dividing set on $h^1$. Then
\[
\left(S' \times [0,\tfrac12] \cup P \times [\tfrac12, 1]\right)/{\sim}_{h'} =
\left(S \times [0,\tfrac12] \cup P \times [\tfrac12, 1]\right)/{\sim}_{h'} \cup
\left(B \times [0,\tfrac12]\right)/{\sim}_{h'}.
\]
Furthermore,
\[
\left(S \times [0,\tfrac12] \cup P \times [\tfrac12, 1]\right)/{\sim}_{h'} =
\left(S \times [0,\tfrac12] \cup P \times [\tfrac12, 1]\right)/{\sim}_h = M
\]
and $\left(B \times [0,\tfrac12]\right)/{\sim}_{h'} = h^1$.
\end{proof}

We now consider the effect of attaching a contact 2-handle $h^2$ to $(M,\g)$.
Let $\xi'$ denote the contact structure on $M \cup h^2$ with dividing set $\g'$,
obtained by gluing the tight contact structure $\xi_2$ on $h^2$ to $\xi$. Let $p_1$, $p_2 \in \g$
denote the two points of intersection of the attaching circle of $h^2$ with $\g$.
Furthermore, the attaching circle of $h^2$ consists of a path
$l_+$ in $R_+(\g)$ from $p_1$ to $p_2$, concatenated with the reverse of a path
$l_-$ in $R_-(\g)$ from $p_1$ to $p_2$. We can isotope the partial open
book decomposition $(S,P,h)$ such that $p_1$ and $p_2$ lie in $\d S \setminus P$.
Since we have identifications
\[
S \setminus P \iso R_+(M) \text{ and } S \setminus h(P) \iso R_-(M),
\]
we can view $l_+$ as a path $\lambda_+$ in
$S \setminus P$, and $l_-$ as a path $\lambda_-$ in $S \setminus h(P)$. Using the
above notation, we are now prepared to describe a partial open book
decomposition for the contact structure $\xi'$ on $(M \cup h^2, \g')$.

\begin{lem}\label{lem:partialopenbookcont2-handle}
Suppose that $h^2$ is a contact 2-handle attached to $(M, \g, \xi)$,
and let $\xi'$ be the resulting contact structure on $(M \cup h^2, \g')$. Given a
partial open book decomposition $(S,P,h)$ for $\xi$ on $(M, \g)$, a partial open
book decomposition for $\xi'$ on $(M \cup h^2, \g')$ is given by $(S',P',h')$, where
\begin{enumerate}
\item $S' = S$,
\item $P' = P \cup N(\lambda_+)$,
\item $h'|_P = h$, and $h'$ maps $N(\lambda_+)$ to $N(\lambda_-)$.
\end{enumerate}
\end{lem}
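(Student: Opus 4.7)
The plan is to verify that $(S', P', h')$ is a valid partial open book decomposition, that the resulting sutured manifold is diffeomorphic to $M \cup h^2$, and that the induced contact structure matches $\xi'$. First, checking the partial open book axioms: since $\lambda_+ \subset S \setminus P$ is an embedded arc with endpoints $p_1, p_2 \in \d S \setminus P$, a small regular neighborhood $N(\lambda_+)$ can be chosen disjoint from $P$ and forms a 2-dimensional 1-handle, so $P' = P \cup N(\lambda_+)$ is a proper subsurface of $S = S'$ obtained from $P$ by a single 1-handle attachment, and $S$ is obtained from $S \setminus P'$ by attaching $N(\lambda_+)$ together with the original 1-handles expressing $S$ from $S \setminus P$. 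The map $h'$ satisfies $h'|_{\d S \cap P'} = \id$, since $h' = h$ on $\d S \cap P$ and $h'$ can be chosen to restrict to the identity on the two short arcs of $\d S \cap N(\lambda_+) = \d S \cap N(\lambda_-)$ near $p_1$ and $p_2$.

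For the topological identification, I would unwind the definition:
\[
M' = \bigl( S \times [0,\tfrac{1}{2}] \cup (P \cup N(\lambda_+)) \times [\tfrac{1}{2}, 1] \bigr)/{\sim}_{h'}.
\]
Since $h'|_P = h$, the subunion $(S \times [0,\tfrac{1}{2}] \cup P \times [\tfrac{1}{2}, 1])/{\sim}_{h'}$ recovers $M$. The remaining piece $Q = (N(\lambda_+) \times [\tfrac{1}{2}, 1])/{\sim}_{h'}$ is a topological 3-ball, obtained from $I \times I \times I$ by collapsing the two latitudinal sides $(\d N(\lambda_+) \cap \d S) \times [\tfrac{1}{2},1]$ under $\sim_{h'}$. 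The ball $Q$ is glued to $M$ along $N(\lambda_+) \times \{\tfrac{1}{2}\} \subset R_+(M)$ and, via $h'$, along $N(\lambda_-) \times \{0\} \subset R_-(M)$, with corners identified along $\g$ at $p_1$ and $p_2$. This is exactly the attachment datum of a 3-dimensional 2-handle along the circle $\lambda_+ \cup \lambda_-$, so $M' \cong M \cup h^2$ and the induced dividing set on $\d M'$ is $\g'$.

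For the contact structures, I would invoke uniqueness of tight contact structures on handlebodies with prescribed convex dividing sets. On $U_1$, which is unchanged, both sides give the unique tight structure with dividing set $\d S \times \{\tfrac{1}{4}\}$. On $U_2' = (P' \times [\tfrac{1}{2},1])/{\sim}$, the partial open book defines the unique tight structure with dividing set $\d P' \times \{\tfrac{3}{4}\}$, while $\xi'|_{U_2 \cup h^2}$ is obtained by gluing the tight structure $\xi|_{U_2}$ to the tight contact 2-handle along arcs of matching dividing sets, producing a tight structure with the same dividing set $\d P' \times \{\tfrac{3}{4}\}$ by standard convex gluing. Uniqueness then forces the two structures to agree, so the contact structure induced on $M'$ matches $\xi'$. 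The main technical point is the careful bookkeeping of corner identifications at $p_1$ and $p_2$: one must verify that the collapsing of $\d N(\lambda_+) \cap \d S$ in $Q$ precisely reproduces the way the 2-handle's attaching cylinder meets $\g$, so that the dividing sets genuinely match and the uniqueness step applies cleanly.
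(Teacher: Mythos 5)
Your proof is correct and follows essentially the same route as the paper's, which simply observes that $\left(P' \times [\tfrac{1}{2},1]\right)/{\sim_2}$ decomposes as $U_2$ together with the extra piece $\left(N(\lambda_+) \times [\tfrac{1}{2},1]\right)/{\sim_2}$, a contact $2$-handle attached along a neighborhood of the circle $\left(\lambda_+ \times \{\tfrac{1}{2}\}\right) \cup \left(\lambda_- \times \{0\}\right)$; your additional axiom-checking and the appeal to uniqueness of tight structures on the product-disk-decomposable handlebodies is the right way to finish. One small correction: since $\lambda_+ \subset S \setminus P$, the band $N(\lambda_+)$ is disjoint from $P$, so $U_2' = U_2 \sqcup h^2$ and there is no gluing of $U_2$ to $h^2$ along convex surfaces (which is fortunate, because gluing tight pieces along convex surfaces does not in general yield a tight structure) --- tightness of $\xi'|_{U_2'}$ is immediate as a disjoint union of tight pieces, and then uniqueness applies as you say.
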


\begin{proof}
Using equation~\eqref{eq:U1U2contacthandlebodies}, we write $M$ as the union
of the subsets $U_1 = \left(S \times [0,\tfrac{1}{2}]\right)/{\sim}_1$ and
$U_2 = \left(P \times [\tfrac{1}{2},1]\right)/{\sim_2}$. We note that
\[
\left(P' \times [\tfrac{1}{2}, 1]\right)/{\sim_2} = \left(P \times [\tfrac{1}{2},1]\right)/{\sim_2}
\cup \left(N(\lambda_+) \times [\tfrac{1}{2},1]\right)/{\sim_2}.
\]
Let $U_2' = \left(P' \times [\tfrac{1}{2}, 1] \right)/{\sim_2}$ and
$h^2 = \left(N(\lambda_+) \times [\tfrac{1}{2}, 1] \right)/{\sim_2}$.
Then $(U_1 \cup U_2')/{\sim_{h'}}$ is obtained
from $(U_1 \cup U_2)/{\sim_{h}}$ by attaching $h^2$ along a neighborhood of the
circle $\left(\lambda_+ \times \{\tfrac{1}{2}\}\right) \cup \left(\lambda_- \times \{0\}\right)$. By
definition of a partial open book decomposition, it follows that $(S',P',h')$
is a partial open book for~$\xi'$ on $(M \cup h^2, \g')$.
\end{proof}

\subsection{Positive stabilizations and contact handle cancellations}

Honda, Kazez, and Mati\'{c} \cite{HKMSutured} extended the notion of positive
stabilizations to partial open book decompositions,
adapting Giroux's construction~\cite{GirouxCorrespondence}
for open books of closed manifolds. In this section, as an instructive example,
we show how to interpret their construction in terms of canceling pairs of contact handles.

We begin with Honda, Kazez, and Mati\'{c}'s definition of a positive stabilization of a partial open book:

\begin{define}
Suppose that $(S,P,h)$ is a partial open book decomposition of the contact 3-manifold $(M, \g, \xi)$,
and suppose that $c$ is a properly embedded arc on $S$. The arc $c$ is allowed to
intersect $P$; however, we require $\d c \subset \d S\setminus P$.
The \emph{positive stabilization} $(S',P',h')$ of $(S,P,h)$
along the arc $c$ is defined as follows. Let $S' := S \cup B$, where $B$ is a band
that we attach along $\d c \subset \d S$. Furthermore, let $P' := P \cup B$ be the new page.
Let $\tau \subset S'$ be the curve obtained by concatenating the arc $c$ with a core of $B$.
The new partial monodromy map $h' \colon P' \to S'$ is defined as
\[
h' := R_\tau \circ (h \cup \id_B),
\]
where $R_{\tau}$ is a right-handed Dehn twist along $\tau$,
with respect to the orientation of $S$.
\end{define}

We now wish to relate positive stabilizations to canceling pairs of contact
handles. As described in Section~\ref{sec:partialopenbookdecompositionsandhandledecomps},
the partial open book decomposition $(S,P,h)$, together with a choice of handle decomposition
of the surface $S$ into 0-handles and 1-handles, as well as a basis of arcs $\ve{a}$
for $P$, determine a contact handle decomposition of $(M, \g)$. We now show
that the contact handle decomposition arising from a positive stabilization
$(S',P',h')$ of $(S,P,h)$ can be obtained from a contact handle decomposition
arising from $(S,P,h)$ by inserting a pair of canceling index 1 and 2 contact handles.

\begin{lem} \label{lem:stab}
Suppose $(S,P,h)$ is a partial open book decomposition of $(M, \g, \xi)$, and that
$(S',P',h')$ is a positive stabilization of $(S,P,h)$ along a properly
embedded arc $c \subset S$. Let $U_1$ and $U_2$ be the tight contact
handlebodies defined in equation~\eqref{eq:U1U2contacthandlebodies}
associated to $(S,P,h)$, whose union is $M$. Consider a contact handle decomposition
$\ve{H}$ of $(M,\g,\xi)$ arising from the partial open book $(S,P,h)$ as in
Section~\ref{sec:partialopenbookdecompositionsandhandledecomps}.
Then a handle decomposition $\ve{H}'$ arising from the positive stabilization $(S',P',h')$
can be obtained from $\ve{H}$ by adding a pair of
canceling index 1 and 2 contact handles between $U_1$ and $U_2$.
\end{lem}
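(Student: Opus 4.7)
The plan is to make deliberate choices for the 2-dimensional handle decomposition of $S'$ and for the basis of arcs for $P'$ so that $\ve{H}'$ visibly consists of $\ve{H}$ together with one extra 1-handle and one extra 2-handle; the content of the lemma is then that these extra handles form a canceling pair.

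First I would fix a handle decomposition of $S$ into 0- and 1-handles and a basis $\ve{a}=\{a_1,\dots,a_k\}$ for $P$, yielding the contact handles of $\ve{H}$ via the recipe of Section~\ref{sec:partialopenbookdecompositionsandhandledecomps}. By a generic choice of $\ve{a}$ (replacing it by a handle-slid basis that avoids $h^{-1}(c)\subset P$ if necessary), I may assume $h(a_i)\cap c = \emptyset$ for every $i$, so that each $h(a_i)$ is disjoint from $\tau = c \cup (\text{core of } B)$. To build $\ve{H}'$ I would extend the handle decomposition of $S$ to one of $S' = S\cup B$ by declaring $B$ itself to be a single additional 2-dimensional 1-handle, producing exactly one new contact 1-handle $h^1_B$; and I would extend $\ve{a}$ to $\ve{a}' = \{a_0\}\cup \ve{a}$ for $P' = P\cup B$ by taking $a_0\subset B$ to be a small transverse perturbation of the cocore of $B$, producing exactly one new contact 2-handle $h^2_0$, attached along the circle $l_0 = (a_0\times\{\tfrac{1}{2}\})\cup(h'(a_0)\times\{0\})$. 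Because $h' = R_\tau\circ(h\cup\id_B)$ and $h(a_i)\cap\tau = \emptyset$, we obtain $h'(a_i) = h(a_i)$ for all $i\geq 1$, so the other 2-handles of $\ve{H}'$ literally agree with those of $\ve{H}$; hence $\ve{H}'$ consists of $\ve{H}$ together with the pair $(h^1_B, h^2_0)$.

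The substantive step, which I expect to be the main obstacle, is checking that $(h^1_B, h^2_0)$ cancel, i.e.\ that $l_0$ meets the belt sphere of $h^1_B$ transversely in a single point. Parametrize $B = [-1,1]\times[-1,1]$ with attaching region $\{\pm 1\}\times[-1,1]\subset\d S$, so $h^1_B = B\times[0,\tfrac{1}{2}]$ and its belt sphere $\beta$ is (isotopic to) the boundary of the cocore disk $\{0\}\times[-1,1]\times[0,\tfrac{1}{2}]$. The ``top'' piece $a_0\times\{\tfrac{1}{2}\}$ of $l_0$ lies at first coordinate $\varepsilon\neq 0$, parallel to the cocore direction, and is therefore disjoint from $\beta$, as are the two dividing-set arcs of $l_0$ connecting top to bottom. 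Thus every intersection of $l_0$ with $\beta$ lives on the bottom piece $R_\tau(a_0)\times\{0\}$. Since $a_0$ meets $\tau$ transversely in a single point, $R_\tau(a_0)$ agrees with $a_0$ outside a small annular neighborhood of $\tau$ and is homotopic rel endpoints to $a_0\ast\tau$; its ``excursion around $\tau$'' traverses the core of $B$ exactly once, contributing precisely one transverse intersection with the cocore arc $\{0\}\times[-1,1]\times\{0\}$, while the perturbed $a_0$ inside $B$ and the portion of $R_\tau(a_0)$ lying outside $B$ contribute nothing. Hence $|l_0\cap\beta|=1$ transversely, identifying $(h^1_B, h^2_0)$ as a canceling pair and completing the proof.
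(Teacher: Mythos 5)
Your identification of the canceling pair (the band $B\times[0,\tfrac12]$ as the new contact $1$-handle and a neighborhood of the disk over the cocore $a_0$ of $B$ as the new contact $2$-handle), and your intersection count showing that the attaching circle of $h^2_0$ runs over $h^1_B$ geometrically once, agree with the paper and are fine. The gap is the very first reduction: the claim that, after a handleslide of the basis, one may assume $h(a_i)\cap c=\emptyset$ for all $i$. This is false in general. The stabilization arc $c$ is allowed to intersect $P$ and $h(P)$ essentially (the definition in the paper makes a point of this), and when $c\cap h(P)$ contains an arc whose preimage $h^{-1}(c)\cap P$ is an essential arc with endpoints on $\d P\cap\d S$, \emph{no} basis of arcs for $P$ can avoid it: the complement of a basis deformation retracts onto $\d P\setminus\d S$, so any arc of $h^{-1}(c)$ disjoint from the basis is forced to be inessential. (For instance $c$ may be taken parallel to $h(a_1)$.) Moreover, the lemma fixes $\ve{H}$, hence the basis, so you are not free to change it in any case.

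This is not a removable convenience but the actual content of the lemma. When $h(a_i)\cap c\neq\emptyset$, the stabilized monodromy gives $h'(a_i)=R_\tau(h(a_i))\neq h(a_i)$, so the $2$-handles of $\ve{H}'$ are \emph{not} attached along the same circles as those of $\ve{H}$; one must instead show that the old $2$-handles (the same subsets of $M$) have attaching circles which, when read off on $\d(U_1\cup h^1)$ after inserting the canceling pair, appear twisted by $R_\tau$ — this is exactly the computation the paper carries out (Figure~\ref{fig::38}), and it is what produces the right-handed Dehn twist in $h'$. Your argument never produces the Dehn twist at all on the old page; it only sees it on the new arc $a_0$. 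In the special case where the disjointness genuinely holds your argument is correct, but as written the proof assumes away precisely the case that makes the statement nontrivial. To repair it, drop the disjointness assumption, insert the canceling pair as you have it, and then track how the attaching circles of the original $2$-handles intersect the new $1$-handle: each crossing of $h(a_i)$ with $c$ forces the circle to slide over $h^1_B\cup h^2_0$, and the resulting curve on $S'\times\{0\}$ is $R_\tau(h(a_i))$.
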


\begin{proof}
We attach a pair of canceling
contact 1- and 2-handles to $U_1$. Let $h^1$ denote a 1-handle,
attached with feet along $\d c\times \{\tfrac{1}{4}\}$.
We then attach a contact 2-handle $h^2$ with attaching
circle equal to the concatenation of $c\times \{0\}\subset \d U_1$,
together with a longitude of the 1-handle $h^1$; see the middle
row of Figure~\ref{fig::38}. The contact manifolds $U_1$ and
$U_1\cup h^1\cup h^2$ are equivalent, since $h^1$ and $h^2$
are canceling contact handles.

Note that $(S,\emptyset,\emptyset)$ is a partial
open book decomposition for $U_1$. By Lemma~\ref{lem:partialopenbookcont1-handle},
if $B$ denotes a band attached at $\d c\subset \d S$,
then the surface $(S\cup B,\emptyset,\emptyset)$ is
a partial open book decomposition for $U_1\cup h^1$.

Let $h^2_1,\dots, h^2_n$ denote the contact 2-handles
obtained by picking an arc basis of $P$. To build $M$
from $U_1\cup h^1$, we first attach the contact 2-handle $h^2$,
followed by $h^2_1, \dots, h^2_n$.
We can determine the effect of this on the partial open book
using Lemma~\ref{lem:partialopenbookcont2-handle}.
The partial monodromy map after attaching~$h^2$ (but before attaching the other 2-handles)
is determined by the attaching circle of the 2-handle~$h^2$.
Upon examining Figure~\ref{fig::38}, the partial monodromy map
is the composition of the inclusion of $B$ into $S$, followed by
a right-handed Dehn twist along the curve $\tau$ obtained by concatenating $c$ with the core of $B$.

We now need to compute the effect on the partial open book of
attaching $h_1^2,\dots, h^2_n$. Since the contact 2-handles
$h_1^2,\dots,h_n^2$ were obtained by picking a basis of arcs
of $P$, it follows that the attaching circle of the 2-handle
$h_i^2$ is equal to the concatenation of properly embedded arcs
$\lambda_{+,i}\times \{\tfrac{1}{2}\}\subset S\times \{\tfrac{1}{2}\}$
and $\lambda_{-,i}\times \{0\}\subset S\times \{0\}$,
where $\lambda_{+,i}$, $\lambda_{-,i} \subset S$.

 The monodromy map $h$ is determined up to isotopy relative
 to $\d P\cap \d S$ by requiring $\lambda_{+,i}$ to be sent to
 $\lambda_{-,i}$ by $h$.  It is clear that, after attaching
 $h^1$ and $h^2$, the attaching arcs $\lambda_{-,i}$ must be
 modified if they intersect the arc $c$ on $S$, since now they
 are attached on top of $h^1$ and $h^2$. Indeed, by examining
 Figure~\ref{fig::38}, we see that the effect is to replace each
 $\lambda_{-,i}$ by $R_{\tau}(\lambda_{-,i})$. It follows that
 the new partial diffeomorphism map is simply
 \[
R_{\tau}\circ (h\cup \id_B),
 \]
 completing the proof.
\end{proof}

\begin{figure}[ht!]
 \centering
 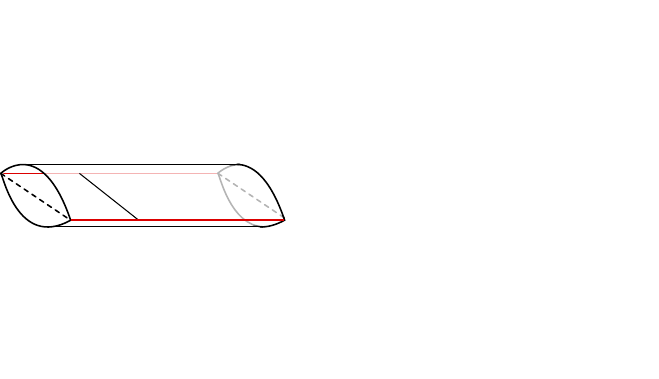
 \caption{Positively stabilizing a partial open book
 is the same as inserting a pair of canceling contact
 1- and 2-handles. On the top row, we show a schematic of
 the new partial open book. In the middle row, we show the
 contact handlebody $U_1=S\times [0,\tfrac{1}{2}]/{\sim}_1$
 (left), as well as the equivalent contact handlebody $U_1\cup h^1\cup h^2$ (right).
 On the bottom row, we show that the attaching circle of
 any contact 2-handles in the previous decomposition are
 changed by a positive Dehn twist along $\tau$. Note that the
 Dehn twist looks negative; however, the picture is of
 $\left(S \times [0,\tfrac{1}{2}] \right)/{\sim}_1$ turned ``upside down,''
  since we are drawing the images of the contact 2-handles on $R_-(U_1)=S\times \{0\}$.
 \label{fig::38}}
\end{figure}

\section{Properties of the gluing map}

\subsection{The gluing map for \texorpdfstring{$I$}{I}-invariant contact structures}
\label{sec:gluingmapIinvariant}
In this section, we prove that gluing on a copy of $\d M\times I$ induces
the identity map, in a sense that we now describe. Suppose that $(M,\g)$
is a sutured submanifold of $(M',\g')$ and $\xi$ is a contact structure
on $M'\setminus \Int(M)$. Furthermore, suppose that there is a Morse function
$f$ on $M'\setminus \Int(M)$ such that $f|_{\d M}\equiv 0$, $f|_{\d M'}\equiv 1$,
$f$ has no critical points, and there is a contact vector field $\nu$ such
that the derivative $\nu(f) > 0$. Furthermore, suppose that the dividing set of $\xi$ on $\d
M\cup \d M'$, with respect to $\nu$, is equal to $\g \cup \g'$. The
vector field $\nu$ induces a diffeomorphism
\[
\phi^{\nu} \colon (-M,-\g) \to (-M',-\g'),
\]
which is well-defined up to isotopy, relative to $\d M$. The
induced diffeomorphism map
\[
\phi^{\nu}_* \colon \SFH(-M,-\g)\to \SFH(-M',-\g')
\]
has a simple description. If $(\bar{\S},\as,\bs)$ is a sutured Heegaard diagram
for $(-M,-\g)$, then we can construct a Heegaard diagram for $(-M',-\g')$ as
\[
(\bar{\S} \cup \bar{A}, \as,\bs),
\]
where $A$ is the \emph{characteristic surface} of $\xi$, with respect to $\nu$; i.e.,
\[
A:=\{\, p\in M'\setminus \Int(M) \,\colon\,  \nu_p\in \xi_p \,\}.
\]
The surface $A$ is a collection of annuli. With
respect to these two diagrams, the diffeomorphism map takes the form
\[
\phi^{\nu}_*(\ve{x})=\ve{x}.
\]
Our gluing map satisfies the following analogue of \cite{HKMTQFT}*{Theorem~6.1}.

\begin{prop}\label{prop:glueinproduct}
Suppose that $(M,\g)$ is a sutured submanifold of $(M',\g')$ and
$\xi$ is a contact structure on $M'\setminus \Int(M)$. Furthermore, suppose
that there is a Morse function $f$ on $M'\setminus \Int(M)$ such that $f|_{\d
M} \equiv 0$, $f|_{\d M'} \equiv 1$, and there is contact vector field $\nu$
such that $\nu(f)>0$. Under the above assumptions, the contact gluing map
$\Phi_{\xi}$ satisfies
\[
\Phi_{\xi}=\phi^{\nu}_*,
\]
where $\phi^\nu \colon (-M,-\g)\to (-M',-\g')$ is the diffeomorphism described above.
\end{prop}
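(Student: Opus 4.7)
My plan is to use Theorem~\ref{thm:invarianceofcontacthandlemap} to reduce the computation to a convenient contact cell decomposition of $(Z,\xi)$, and then trace through the composition on the level of sutured Heegaard diagrams.  Specifically, I would choose a product contact cell decomposition $\cC$ as in Example~\ref{ex:productcelldecomp}, built from an auxiliary sutured cell decomposition $\cD$ of $\d M \cong F$ with fattened $0$-cells $B_1,\dots,B_n$, $1$-cells $\lambda_1,\dots,\lambda_m$, and complementary $2$-cells $C_1,\dots,C_p$.  Under the identification $Z \cong F\times I$, the handles of $\cC$ lying in $Z'$ are one contact $1$-handle $h_i^1$ for each $B_i$, one contact $2$-handle $h_j^2$ for each $\lambda_j$, and one contact $3$-handle $h_k^3$ for each $C_k$, with no $0$-handles.

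The first step would be to compute $\Phi_{\sqcup N'}(\x)$ using the $I$-invariance of $\xi|_{N'}$.  Since $N' \cong F\times [2/3,1]$ is a product sutured manifold with a product contact structure, it admits the trivial partial open book decomposition $(F,\emptyset,\id)$.  By the Honda--Kazez--Mati\'c construction recalled in Section~\ref{sec:partialopenbookandHD}, $\EH(\xi|_{N'})$ is then the unique nonzero class of $\SFH(-N',-\g_0' \cup -\g') \cong \bF_2$, represented by the empty tuple on the diagram $(\bar F,\emptyset,\emptyset)$ of $-N'$.  Consequently, if $\x$ is represented on a diagram $(\bar\S,\as,\bs)$ of $-M$, then $\Phi_{\sqcup N'}(\x)$ is represented by $\x$ on the disjoint-union diagram $(\bar\S \sqcup \bar F,\as,\bs)$.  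The map $\psi^\nu_*\otimes \id$ then just reparametrizes $-M$ to $-(M\cup N)$ by flowing along $\nu$ through the product collar, acting tautologically on intersection points.

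Next, I would track the contact handle attachments on the diagram level.  Each $C_{h_i^1}$ adjoins a band joining $\d\bar\S$ to $\d\bar F$ near $p_i\in \g\cap B_i$, introducing no new curves; together the $n$ such attachments weld $\bar\S$ and $\bar F$ into a connected Heegaard surface.  Each $C_{h_j^2}$ then attaches a further band at $\d\lambda_j$ and inserts curves $\a_j',\b_j'$ whose portions on $\bar F$ run parallel to $\lambda_j$, meeting transversely once at a point $c_j$ in the new band, with $\ys \mapsto \ys\times c_j$.  After all of these, the $\a_j',\b_j'$ curves subdivide $\bar F$ into disk regions matching the $2$-cells $C_k$ of $\cD$, and for each $k$ the boundary of the current sutured manifold contains an $S^2$ encircled by a canonically canceling pair $(\a,\b)$ built from the $\a_j',\b_j'$ bounding $C_k$.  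Attaching $h_k^3$ then kills this ringed pair by the local computation underlying invariance under Move~\ref{move:index2/3} in Proposition~\ref{prop:movesbetweencelldecompositions} (compare Figure~\ref{fig::41}).

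Assembling these local computations, the diagram obtained after all $n+m+p$ handle maps is isotopic to $(\bar\S \cup \bar A,\as,\bs)$, where $A = \g\times I$ is the characteristic surface of $\xi$ with respect to $\nu$, and the image of $\x$ is $\x$ itself --- which is precisely $\phi^\nu_*(\x)$, as described just before the statement of the proposition.  The main obstacle I expect is the global consistency of the third step: one must verify that the $\a_j',\b_j'$ curves produced by the $2$-handle attachments line up around every $2$-cell $C_k$ to give a pair with the correct relative Maslov grading for the $3$-handle map to act as the expected cancellation, so that the local $2/3$-cancellations assemble globally into the claimed diffeomorphism map rather than merely recovering it up to some unspecified automorphism of the final diagram.
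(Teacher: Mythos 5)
There is a genuine gap, and it sits exactly where your argument starts: the computation of $\Phi_{\sqcup N'}$. You assert that $N'$ is a product sutured manifold admitting the trivial partial open book $(F,\emptyset,\id)$, so that $\SFH(-N',-\g_0'\cup-\g')\cong\bF_2$ and $\EH(\xi|_{N'})$ is represented by the empty tuple on $(\bar F,\emptyset,\emptyset)$. But $N'$ is a collar of $\d M'$, i.e.\ diffeomorphic to $F'\times I$ for the \emph{closed} surface $F'=\d M'$, with dividing sets $\g_0'$ and $\g'$ on the two boundary copies of $F'$. This is not a product sutured manifold in the sense of $(R\times I,\d R\times I)$: the pair $(F',\emptyset,\id)$ is not a valid partial open book (the page of a partial open book must have non-empty boundary), the diagram $(\bar F,\emptyset,\emptyset)$ with $F$ closed does not represent a balanced sutured manifold, and $\SFH(-N',-\g_0'\cup-\g')$ is in general far from one-dimensional (already for $F'=T^2$ with essential dividing curves it has rank greater than one). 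So the starting point of your diagram chase is wrong, and everything downstream — the welding of $\bar\S$ to $\bar F$ by the $1$-handle maps, the identification of the final diagram with $(\bar\S\cup\bar A,\as,\bs)$ — is built on it.

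The paper's proof repairs precisely this by introducing a \emph{dual} sutured cell decomposition $\cD^*$ of $(\d M,\g)$ (Definition~\ref{def:dualcelldecomposition}): $\cD^*$ induces a contact handle decomposition $\ve{H}'$ of $N'$ built from $\emptyset$, with a $(2-k)$-handle for each $k$-cell of $\cD^*$, and Lemma~\ref{lem:EH} expresses $\EH(N',\g'\sqcup\g_0',\xi|_{N'})$ as the composition of the corresponding contact handle maps applied to $1\in\SFH(\emptyset)$. One then pairs each $(k+1)$-handle $h_i$ of $\cC$ with the corresponding $k$-handle $h_i'$ of $\ve{H}'$, commutes disjoint handles using Lemma~\ref{lem:disjointhandlescommute} to rewrite the gluing map as $(C_{h_n}\circ C_{h_{n}'})\circ\cdots\circ(C_{h_1}\circ C_{h_1'})$, and checks that each pair is (after a small isotopy) a canceling pair of consecutive index, so each factor is a diffeomorphism map by the model computations already done for Theorem~\ref{thm:invarianceofcontacthandlemap}. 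Note that this pairing also disposes of the ``main obstacle'' you flag at the end: you never need the $\a_j',\b_j'$ curves to assemble globally around each $2$-cell into a correctly graded ringed pair, because each $3$-handle of $\cC$ cancels against the $2$-handle of $\ve{H}'$ dual to it, locally and one pair at a time. If you want to keep a direct diagrammatic argument without the dual decomposition, you would have to both produce a correct admissible diagram and $\EH$ representative for $(N',\g_0'\cup\g')$ and then prove the global assembly statement you acknowledge is missing; the dual-decomposition bookkeeping is what makes both steps unnecessary.
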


Before we begin with the proof, we need the following definition regarding
sutured cell decompositions of surfaces:

\begin{define}\label{def:dualcelldecomposition}
We say that the sutured cell decompositions
$\cD=(B_1,\dots, B_n, \lambda_1,\dots, \lambda_m)$ and $\cD^*=(B_1',\dots,
B_n', \lambda_1',\dots, \lambda_m')$ of the surface with divides $(F, \g)$ are
\emph{dual} if the following hold:
\begin{enumerate}
\item $B_i\cap B_j'=\emptyset$ for all $i$ and $j$.
\item Each component of $F \setminus (B_1\cup \cdots \cup B_n\cup \lambda_1\cup \cdots \cup \lambda_m)$
  intersects $\g$ in a single arc, and contains a single fattened 0-cell $B_k'$.
  The same statement holds with the roles of $\cD$ and $\cD^*$ reversed.
\item $|\lambda_i \cap \lambda_j'| = \delta_{ij}$, where $\delta_{ij}$ denotes the Kronecker delta.
\end{enumerate}
A sutured cell decomposition $\cD$ that admits a dual is called \emph{dualizable}.
\end{define}

As an example, the sutured cell decomposition of a torus with
 two parallel sutures shown in Figure~\ref{fig::77} is dualizable.

\begin{rem}
Not all sutured cell decompositions $\cD$ admit dual cell decompositions. For
example, if the intersection of a component of $F\setminus (B_1 \cup \cdots \cup B_n \cup
\lambda_1\cup \cdots \cup \lambda_m)$ with $\g$ is disconnected,
then $\cD$ is not dualizable.
\end{rem}

\begin{rem}
Every surface with divides $(F,\g)$ such that $\pi_0(\g) \to \pi_0(F)$
is surjective admits a dualizable cell decomposition. To
construct one, we pick sets of arcs $A_0$ and $A_1$ along $\g$, such that
the arcs in $A_0 \cup A_1$ are pairwise disjoint, and $A_0$ and $A_1$ each
contain exactly one arc in every component of $\g$. We can then view $R_+(\g)$
as a cobordism with product boundary from $A_0$ to $A_1$. By picking a Morse function
on $R_+(\g)$ which is minimized along $A_0$, maximized along $A_1$, and which only
has index 1 critical points, we get a collection of arcs $\lambda_1,\dots, \lambda_k$
(the stable manifolds of the index~1 critical points) with boundary on $A_0$
which cut $R_+(\g)$ into a collection of disks,
each of which contains exactly one component of $A_1$. By picking a similar Morse function on $R_-(\g)$,
we obtain another collection of arcs $\lambda_{k+1}, \dots, \lambda_m$ in $R_-(\g)$
with boundary on $A_0$ that cut $R_-(\g)$ into disks, each of which contains exactly one arc of $A_1$.
We get a sutured cell decomposition with a fattened 0-cell $B_i$ along each arc of $A_0$, and we
use the collection of arcs $\lambda_1, \dots, \lambda_m$. The 0-cells
$B_1, \dots, B_n$ and the arcs $\lambda_1,\dots,\lambda_m$ determine a handle
decomposition of $F$, and the dual sutured cell decomposition is obtained by
using the dual handle decomposition obtained by turning the Morse functions upside down.
\end{rem}

\begin{proof}[Proof of Proposition~\ref{prop:glueinproduct}]
Let $\cD$ and $\cD^*$ be dual sutured cell decompositions of $(\d M,\g)$.
Let $\cC$ be the product contact cell decomposition of
$M'\setminus \Int(M)$ constructed from $\cD$ as in Example~\ref{ex:productcelldecomp},
with barrier surfaces $S$ and $S'$. Write $N$ and $N'$ for the collar
neighborhoods of $\d M$ and $\d M'$ in $M' \setminus \Int(M)$
that are bounded by $S$ and $S'$, respectively.
We denote the dividing set on $S'$ by $\g_0'$.

For each fattened 0-cell $B$ of $\cD$, there is a contact 1-handle $h_{B}$ of
$\cC$. For each arc $\lambda$ of $\cD$, there is a contact 2-handle
$h_{\lambda}$ of $\cC$. For each 2-cell $c$ of $\cD$, there is a contact
3-handle $h_{c}$ of $\cC$. Let $h_1,\dots, h_n$ be an enumeration
of these handles with nondecreasing index. Using equation~\eqref{eq:defgluingmap},
the gluing map is defined as
\begin{equation}
\Phi_{\xi}(\ve{x}):= (C_{h_n}\circ \cdots \circ C_{h_1})
\left(\psi_*^\nu(\ve{x})\otimes \EH(N', \g'\cup \g_0', \xi|_{N'})\right).
\label{eq:defcontactgluingmap}
\end{equation}

The element $\EH(N',\g'\cup \g_0,\xi|_{N'})$ is defined using a partial
open book decomposition of $(N',\g'\cup \g_0,\xi|_{N'})$.
In Section~\ref{sec:partialopenbookdecompositionsandhandledecomps},
we described how a contact handle decomposition of $N'$ with no 3-handles,
viewed as a cobordism from $\emptyset$ to $\d N'$, can be used to
construct a partial open book.
By turning around our construction of a contact cell decomposition
$\cC$ from $\cD$, we can also construct a contact handle
decomposition of $N'$ from a sutured cell decomposition of
$(\d M,\g)$; however, the indices of the corresponding handles will be
different. For our argument to work, we will actually consider the handle
decomposition $\ve{H}'$ of $N'$ induced by the dual sutured cell decomposition $\cD^*$.
For each fattened 0-cell $B'$ of $\cD^*$, there is a contact
2-handle $h_{B'}$ of $\ve{H}'$. For each arc $\lambda'$ of $\cD^*$, there is a
contact 1-handle $h_{\lambda'}$ of $\ve{H}'$. For each 2-cell $c'$ of $\cD^*$,
there is a contact 0-handle $h_{c'}$ of $\ve{H}'$.

By the above construction, there is a correspondence between
the $k$-handles of $\ve{H}'$ and the $(2-k)$-cells of $\cD^*$,
which, in turn, correspond to the $k$-cells of $\cD$.
Finally, there is a correspondence between the $k$-cells of $\cD$ and the $(k+1)$-handles of $\cC$.
Combining these, we get a correspondence between the $k$-handles of $\ve{H}'$ and
the $(k+1)$-handles of $\cC$. Let $h_1',\dots, h_n'$
be an enumeration of the contact handles of $\ve{H}'$
such that $h_i'$ corresponds to $h_i$ under the above correspondence. By Lemma~\ref{lem:EH},
\[
\EH(N',\g'\cup \g_0',\xi) = \left(C_{h_n'}\circ \cdots \circ C_{h_1'} \right)(1),
\]
where $1$ is the generator of $\SFH(\emptyset) \iso \bF_2$. Using
Lemma~\ref{lem:disjointhandlescommute}, we can commute contact handle maps for
disjoint contact handles, so we can rearrange equation~\eqref{eq:defcontactgluingmap} as
\begin{equation}
\Phi_{\xi}(\ve{x})=\left((C_{h_n}\circ C_{h_n'})\circ \cdots \circ (C_{h_1}\circ C_{h_1'})\right)\left(\psi_*^\nu(\ve{x})\right).\label{eq:reordercontacthandles}
\end{equation}

The contact handles $h_i$ and $h_i'$ do not always form a canceling pair
in the sense of Proposition~\ref{prop:movesbetweencelldecompositions}.
However, they are close enough to a
canceling pair to allow us to reduce the above composition to the
diffeomorphism map $\psi^\nu_*$ by performing a sequence of handle
cancellations and isotopies, as we now describe.

Let us first consider the case when $h_i$ and $h_i'$ correspond to a 0-cell
$B$ of $\cD$. Under the previously described correspondence, there is a
2-cell $B^*$ of $\cD^*$ that $B$ corresponds to. Using the previous
notation, we have $h_i = h_B$ and $h_i' = h_{B^*}$.
Then $h_B$ and $h_{B^*}$ form a pair of canceling index 0 and 1 contact handles;
see Figure~\ref{fig::45}. Hence $C_{h_i} \circ C_{h_i'}$ is a diffeomorphism map by
the computation in the proof of Theorem~\ref{thm:invarianceofcontacthandlemap}.

\begin{figure}[ht!]
 \centering
\begingroup%
  \makeatletter%
  \providecommand\color[2][]{%
    \errmessage{(Inkscape) Color is used for the text in Inkscape, but the package 'color.sty' is not loaded}%
    \renewcommand\color[2][]{}%
  }%
  \providecommand\transparent[1]{%
    \errmessage{(Inkscape) Transparency is used (non-zero) for the text in Inkscape, but the package 'transparent.sty' is not loaded}%
    \renewcommand\transparent[1]{}%
  }%
  \providecommand\rotatebox[2]{#2}%
  \newcommand*\fsize{\dimexpr\f@size pt\relax}%
  \newcommand*\lineheight[1]{\fontsize{\fsize}{#1\fsize}\selectfont}%
  \ifx\svgwidth\undefined%
    \setlength{\unitlength}{258.84855669bp}%
    \ifx\svgscale\undefined%
      \relax%
    \else%
      \setlength{\unitlength}{\unitlength * \real{\svgscale}}%
    \fi%
  \else%
    \setlength{\unitlength}{\svgwidth}%
  \fi%
  \global\let\svgwidth\undefined%
  \global\let\svgscale\undefined%
  \makeatother%
  \begin{picture}(1,0.50357959)%
    \lineheight{1}%
    \setlength\tabcolsep{0pt}%
    \put(0,0){\includegraphics[width=\unitlength,page=1]{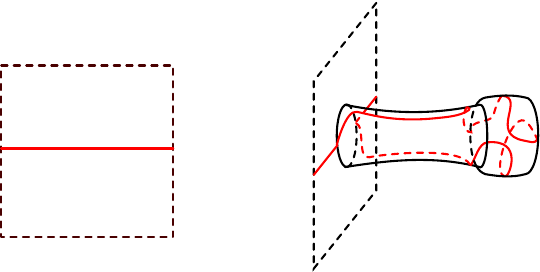}}%
    \put(0.05342505,0.2426715){\color[rgb]{1,0,0}\makebox(0,0)[t]{\lineheight{2.49999976}\smash{\begin{tabular}[t]{c}$\gamma$\end{tabular}}}}%
    \put(0,0){\includegraphics[width=\unitlength,page=2]{fig45.pdf}}%
    \put(0.1634017,0.22241914){\color[rgb]{0,0,0}\makebox(0,0)[t]{\lineheight{2.5}\smash{\begin{tabular}[t]{c}$B$\end{tabular}}}}%
    \put(0,0){\includegraphics[width=\unitlength,page=3]{fig45.pdf}}%
    \put(0.77489959,0.31578429){\color[rgb]{0,0,0}\makebox(0,0)[t]{\lineheight{2.5}\smash{\begin{tabular}[t]{c}$h_B$\end{tabular}}}}%
    \put(0.94021192,0.34112112){\color[rgb]{0,0,0}\makebox(0,0)[t]{\lineheight{2.5}\smash{\begin{tabular}[t]{c}$h_{B^*}$\end{tabular}}}}%
    \put(0,0){\includegraphics[width=\unitlength,page=4]{fig45.pdf}}%
  \end{picture}%
\endgroup%

 \caption{A fattened 0-cell $B$ of a sutured cell decomposition $\cD$
 induces a pair of canceling index 0 and 1 contact handles. We label the
 handles as $h_B$ and $h_{B^*}$. Here $B^*$ denotes the 2-cell of $\cD^*$ that is
 dual to $B$. A portion of the sutured cell decomposition $\cD$ is shown on
 the left, and the corresponding contact handles are shown on the
 right.\label{fig::45}}
\end{figure}

We now consider the case when $h_i$ and $h_i'$ correspond to a 1-cell
$\lambda$ of $\cD$. Let $\lambda^*$ denote the corresponding dual arc of
$\cD^*$. Let $B_1$ and $B_2$ be the fattened 0-cells of $\cD$ at $\d \lambda$
(note that we do not exclude the possibility that
$B_1 = B_2$). Let $h_{B_1}$ and $h_{B_2}$ denote the corresponding 1-handles of
$\cC$. The arc $\lambda$ corresponds to a 2-handle $h_{\lambda}$ of $\cC$.
Let $B_1^*$ and $B_2^*$ denote the 2-cells of $\cD^*$ that correspond to
$B_1$ and $B_2$, respectively. The 2-cells
$B_1^*$ and $B_2^*$ correspond to 0-handles $h_{B_1^*}$ and $h_{B_2^*}$ of $\ve{H}'$. The
dual arc $\lambda^*$ of $\cD^*$ induces a contact 1-handle of $\ve{H}'$. As
described above, the handles $h_{B_i}$ and $h_{B_i^*}$ form a canceling pair of index 1 and 2
contact handles. After canceling these two handles, the handles
$h_{\lambda}$ and $h_{\lambda^*}$ do not quite form a canceling pair of index 1 and 2
handles in the sense of Proposition~\ref{prop:movesbetweencelldecompositions},
because the attaching circle of the
2-handle $h_{\lambda}$ does not intersect the dividing set along
$h_{\lambda^*}$. Instead, it intersects the dividing set near the feet of the
1-handle $h_{\lambda^*}$.  This is shown in Figure~\ref{fig::46}. After
performing an isotopy of $h_{\lambda}$, the handles $h_{\lambda}$ and
$h_{\lambda^*}$ form a canceling pair of index 1 and 2 contact handles. Hence,
the composition $C_{h_i} \circ C_{h_i'}$ induces a diffeomorphism map, by the
computation in the proof Theorem~\ref{thm:invarianceofcontacthandlemap}.

\begin{figure}[ht!]
 \centering
 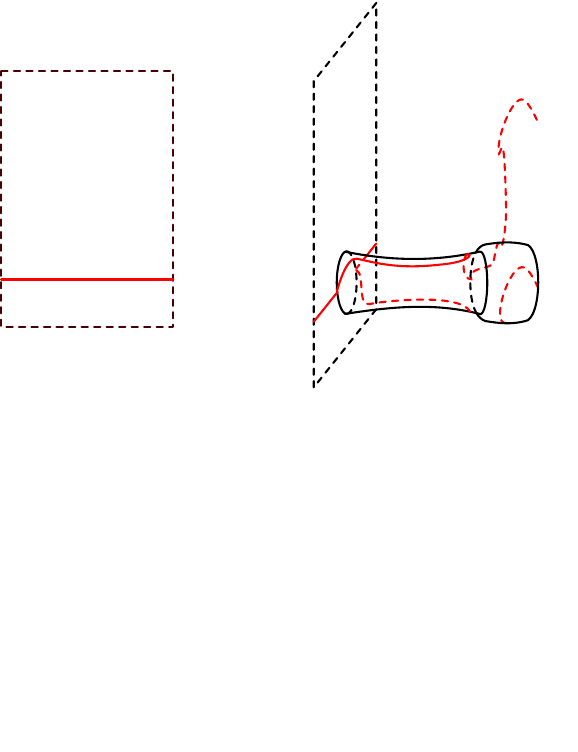
 \caption{A canceling pair of index 1 and 2 contact handles induced by a 1-cell
 $\lambda$ of $\cD$, and the dual 1-cell $\lambda^*$ of $\cD^*$. The first
 picture shows the sutured cell decomposition $\cD$ near $\lambda$.
 The second shows the contact handles associated to the
 cells $B_1$, $B_2$, $B_1^*$, $B_2^*$, $\lambda$, and $\lambda^*$. The third picture is
 obtained by canceling $h_{B_i}$ against $h_{B_i^*}$ for $i \in \{1,2\}$. After this
 cancellation and a small isotopy of $h_\lambda$, the handles $h_{\lambda}$ and $h_{\lambda^*}$ form a
 canceling pair of contact handles.\label{fig::46}}
\end{figure}

Finally, we consider the case when $h_i$ and $h_i'$ correspond to a 2-cell
$c$ of $\cD$. Corresponding to $c$, there is a fattened 0-cell $c^*$ of
$\cD^*$. The 2-cell $c$ induces a 3-handle $h_c = h_i$ of $\cC$, as well as a
2-handle $h_{c^*} = h_i'$ in $\ve{H}'$. The
handles $h_c$ and $h_{c^*}$ form a canceling pair of index 2 and 3 contact
handles. Hence, the composition $C_{h_c} \circ C_{h_{c^*}}$ is equal to a
diffeomorphism map, by the same computation as in the proof of
Theorem~\ref{thm:invarianceofcontacthandlemap}.

We have shown that $C_{h_i} \circ C_{h_i'}$ is equal to a diffeomorphism
map for every $i \in \{\, 1,\dots, n \,\}$, induced by cancelling the handles
$h_i$ and $h_i'$ that are stacked horizontally in the $\nu$-direction. It follows from
equation~\eqref{eq:reordercontacthandles} that $\Phi_{\xi}$ is equal to the
diffeomorphism map $\phi^\nu_*$.
\end{proof}

\subsection{Morse-type contact handles}
In Section~\ref{sec:contacthandlemap}, we defined maps for gluing a contact
handle~$h$ onto the boundary of a sutured manifold $(M,\g)$. We note that
$M$ is \emph{not} a sutured submanifold of $M \cup h$, so the
Honda--Kazez--Mati\'{c} framework does not assign a gluing map to the inclusion
$M \subset M \cup h$. Nonetheless, there is a natural notion of contact
handle that fits into the Honda--Kazez--Mati\'{c} TQFT framework:

\begin{define}\label{def:Morsetypecontacthandle}
Suppose that $(M,\g)$ is a sutured submanifold of $(M',\g')$, and
$\xi$ is a contact structure on $Z = M' \setminus \Int(M)$ with
dividing set $\g \cup \g'$. We say that $(Z,\xi)$
is a \emph{Morse-type contact handle of index $k$} if there is a
contact vector field $\nu$ on~$Z$ that points into~$Z$ on~$\d M$ and out
of~$Z$ on~$\d M'$, as well as a decomposition $Z = Z_0 \cup h$, such that
\begin{enumerate}
\item $Z_0$ is diffeomorphic to $\d M \times I$,
\item $\nu$ is non-vanishing on
  $Z_0$, points into $Z_0$ on $\d M \times \{0\}$ and out of $Z_0$ on
  $\d M \times \{1\}$, and each flowline of $\nu$ is an arc from $\d M \times \{0\}$
  to $\d M \times \{1\}$,
\item $h$ is a topological 3-ball with piecewise smooth boundary, and $\xi$ is tight on $h$.
\end{enumerate}
Furthermore, $h$ is a contact $k$-handle attached to $M \cup Z_0$, as in Definition~\ref{def:contacthandle},
with corners smoothed.
\end{define}

If $(M,\g)$ is a sutured submanifold of $(M',\g')$, and
$(Z,\xi) = (M' \setminus \Int(M), \xi)$ is a Morse-type contact handle of
index~$k$ attached to $(M,\g)$, then we call a choice of contact vector field
$\nu$ and decomposition $Z = Z_0 \cup h$ a \emph{parametrization} of $(Z,\xi)$. Given a
parametrization of $(Z,\xi)$, there is a natural candidate for the contact
gluing map $\Phi_{\xi}$, namely
\[
C_{h} \circ \phi^{\nu|_{Z_0}}_* \colon \SFH(-M,-\g) \to \SFH(-M',-\g').
\]
In the above equation,
\[
\phi^{\nu|_{Z_0}}_* \colon \SFH(-M,-\g) \to \SFH(-M \cup -Z_0,-\g_0)
\]
is the diffeomorphism map induced by the vector
field $\nu$, as discussed in Section~\ref{sec:gluingmapIinvariant},
where $\g_0$ is the dividing set of $\xi$ on $\d (M \cup Z_0)$.
Furthermore,
\[
C_h \colon \SFH(-M \cup -Z_0, -\g_0)\to \SFH(-M',-\g')
\]
is the contact handle map, as defined in Section~\ref{sec:contacthandlemap}.
Indeed, we will prove the following:

\begin{prop}\label{prop:computemorsetypehandlemap}
Suppose $(M,\g)$ is a sutured submanifold of $(M',\g')$, and
$(Z,\xi) = (M' \setminus \Int(M), \xi)$ is a Morse-type contact handle, with a
parametrizing contact vector field $\nu$ and decomposition $Z = Z_0 \cup h$.
Then the contact gluing map $\Phi_{\xi}$ is equal to the composition
$C_h \circ \phi^{\nu|_{Z_0}}_*$.
\end{prop}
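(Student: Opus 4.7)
The plan is to invoke the invariance of the contact gluing map (Theorem~\ref{thm:invarianceofcontacthandlemap}) and compute $\Phi_\xi$ using a contact cell decomposition $\cC$ of $(Z,\xi)$ that is specifically adapted to the parametrization $Z = Z_0 \cup h$. First I would construct $\cC$ as follows. Pick a dualizable sutured cell decomposition $\cD$ of $(\d M, \g)$ (with dual $\cD^*$, cf.~Definition~\ref{def:dualcelldecomposition}) arranged so that the attaching region of $h$, transported to $\d M$ via the $\nu$-flow, is disjoint from the 1-skeleton of $\cD$. On the product region $Z_0 \cong \d M \times I$, install the product contact cell decomposition of Example~\ref{ex:productcelldecomp} associated to $\cD$, with barrier surfaces $S$ near $\d M$ and $\tilde S$ near $\d(M \cup Z_0)$. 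Extend the cell structure across $\tilde S$ into $h$ by installing precisely the cells needed to realize $h$ as a single contact $k$-handle of $\cC$: for $k \in \{0,1\}$, a single Legendrian vertex or edge inside $h$; for $k = 2$, the co-core convex disk of $h^2$ with Legendrian boundary and $\tb = -1$; for $k = 3$, a Legendrian-bounded convex disk that slices $h^3$ into tight 3-cells, together with the requisite 1-skeleton. Pick $S'$ close to $\d M'$.

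Now I would compute $\Phi_{\xi,\cC}$ via the formula~\eqref{eq:defgluingmap}. The handles coming from the product decomposition on $Z_0$, together with the partial open book contribution $\EH(N',\g_0'\cup \g', \xi|_{N'})$ (computed via Section~\ref{sec:partialopenbookdecompositionsandhandledecomps} and Lemma~\ref{lem:EH} from the dual decomposition $\cD^*$ on the $N'$-side), and the flow diffeomorphism $\psi^\nu_*$ on the inner collar, pair up exactly as in the proof of Proposition~\ref{prop:glueinproduct}: each pair consists of a contact handle of $\cC$ and its dual handle from the decomposition of $N'$, and via Moves~\ref{move:index0/1}, \ref{move:index1/2}, and \ref{move:index2/3} together with the model computations in the proof of Theorem~\ref{thm:invarianceofcontacthandlemap}, each pair contributes a diffeomorphism map. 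In aggregate these diffeomorphism maps realize $\phi^{\nu|_{Z_0}}_*$. The extra cells installed inside $h$ have no dual partners and survive uncancelled; by construction, they compose to the contact handle map $C_h$ defined in Section~\ref{sec:contacthandlemap}.

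The main obstacle is the case-by-case verification that the cells installed inside $h$ compose to $C_h$ on the nose, rather than merely up to a diffeomorphism map. For $k \in \{0,1\}$ this is immediate from the tautological definitions. For $k = 2$, the co-core disk, interpreted as a contact 2-handle of $\cC$, induces the band-and-arc configuration of Section~\ref{sec:2-handle} with a single intersection point $c$ in the band, recovering $C_{h^2}(\x) = \x \times c$; this amounts to a local diagrammatic check. The hardest case is $k = 3$, because $C_{h^3}$ is defined through a Maslov-grading condition on two intersection points of parallel $\as$- and $\bs$-curves encircling the filled sphere. One must arrange the sutured cell decomposition of $\d h^3$ and the splitting disk inside $h^3$ so that the induced composition of contact 2- and 3-handle maps in $\cC$ reproduces exactly this grading-sensitive cancellation. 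Once the correct placement of these cells is pinned down, the match of the two definitions reduces to a local computation in a neighborhood of the encircling $\as$- and $\bs$-curves, analogous to the local model computations already appearing in Section~\ref{sec:definingthegluingmap}.
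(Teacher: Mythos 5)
Your overall strategy --- build a contact cell decomposition adapted to the parametrization, cancel the product-like cells against the $\EH$ contribution as in Proposition~\ref{prop:glueinproduct}, and let the surviving cells inside $h$ produce $C_h$ --- is the same as the paper's. But the way you set up the decomposition breaks the key cancellation step. You base the product cells on a sutured cell decomposition $\cD$ of $(\d M,\g)$ and install them on $Z_0$ only, so those cells terminate at the barrier surface near $\d(M\cup Z_0)$. The class $\EH(N',\g_0'\cup\g',\xi|_{N'})$, however, is computed from a contact handle decomposition of $N'$, a collar of $\d M'$, which must come from a (dual) sutured cell decomposition of $\d M'$ --- and $\d M'$ is in general not even diffeomorphic to $\d M$ (a Morse-type $1$- or $2$-handle changes the boundary), so ``$\cD^*$ on the $N'$-side'' is not defined without further work. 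More importantly, the pairwise cancellation in Proposition~\ref{prop:glueinproduct} is geometric: each handle of $\cC$ and its cancelling partner in $N'$ are stacked along flow lines of $\nu$. In your setup the handle $h$ and its extra cells sit between the product cells on $Z_0$ and the collar $N'$, so no such stacking exists, and the claim that the pairs ``pair up exactly as in the proof of Proposition~\ref{prop:glueinproduct}'' does not go through.

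The paper resolves this by working from the outside in rather than the inside out. It extends the core disk $D$ of $h$ down to the inner barrier surface $S$ so that $\d D\subset S$ is Legendrian with $\tb(\d D)=-1$, observes that $\nu$ is non-vanishing on $Z'=\cl(Z\setminus(N\cup N(D)))$, flows $\d N(D)\setminus S$ out to two disks $D_1,D_2\subset\d M'$, and chooses a dualizable sutured cell decomposition of $(\d M',\g')$ avoiding $D_1\cup D_2$. All cells of the resulting $\cC$ other than $D$ are then product-like with respect to $\nu|_{Z'}$ and cancel against the dual decomposition on $N'$ exactly as in Proposition~\ref{prop:glueinproduct}, leaving $\phi_*^{\nu|_{Z'}}\circ C_{h_D}\circ\phi_*^{\nu|_{N}}=C_h\circ\phi_*^{\nu|_{Z_0}}$; since the surviving cell is literally (a neighborhood of the extended core of) $h$, no separate verification that it reproduces $C_h$ is needed. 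Finally, your treatment of the $k=3$ case is an acknowledged placeholder rather than an argument; in the paper's outside-in setup all indices are handled uniformly, with no special grading-sensitive analysis required for $3$-handles.
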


\begin{proof}
The proof is essentially the same for all handle indices, so for definiteness
we will focus on 2-handles. By assumption, the contact vector field
$\nu$ is non-vanishing on $Z_0$, and on a collar neighborhood of $\d M'$. Let
$D \subset Z$ be a core of $h$. Pick an incoming barrier surface $S \subset Z_0$.
Extend $D$ down into $Z_0$ such that $\d D \subset S$ is Legendrian with $\tb(\d D) = -1$.
Then we can perturb $D$ while fixing $\d D$ such that it becomes convex.
Let $N$ denote the collar of $\d M$
bounded by $S$, and let
\[
\tilde{Z} := \cl(Z \setminus (N \cup N(D))).
\]
We can pick $N(D)$ such that $\nu$ is non-vanishing on $\tilde{Z}$. Using the flow of
$\nu$, one can construct a Morse function $f$ on $\tilde{Z}$ that is $0$ on
$\d \tilde{Z} \setminus \d M'$ and $1$ on $\d M'$, and such that $\nu(f) > 0$.

The image of $\d N(D) \setminus S$ in $\d M'$ under the flow of $\nu$
consists of two disks, $D_1$ and $D_2$. Pick a dualizable sutured
cell decomposition $\cD$ of $(\d M', \g')$ with no 0-cells or 1-cells
that intersect~$D_1$ or~$D_2$. Let $B_1, \dots, B_m$ be the 0-cells of $\cD$,
and let $\lambda_1, \dots, \lambda_n$ be the 1-cells. Write $c_1, \dots, c_k$
for the 2-cells. Adapting Example~\ref{ex:productcelldecomp}, after
performing a $C^0$-small isotopy of $S$, and picking a barrier surface $S'$
that bounds a collar neighborhood $N'$ of $\d M'$, we can construct a
contact cell decomposition~$\cC$ of~$Z$ that has barrier surfaces $S$ and
$S'$. The contact cell decomposition $\cC$ has no 0-cells, one 1-cell for
each 0-cell $B_1, \dots, B_m$ of $\cD$, one 2-cell for each 1-cell
$\lambda_1, \dots, \lambda_n$ of $\cD$, one 3-cell for each 2-cell $c_1, \dots, c_k$
of $\cD$, and also the 2-cell $D$. Let us write $h_1, \dots, h_n$ for the
handles induced by $\cD$, and write $h_D$ for $N(D)$, viewed as a contact
2-handle. By definition
\begin{equation}\label{eq:Morsehandlemap1}
\Phi_{\xi}(\ve{x}) =
(C_{h_{n}} \circ \cdots \circ C_{h_{1}} \circ C_{h_{D}})
\left(\phi^{\nu|_{N}}_*(\ve{x}) \otimes \EH(N', \xi|_{N'})\right).
\end{equation}

A dual sutured cell decomposition $\cD^*$ of $(\d M', \g')$ gives rise to
a contact handle decomposition~$\ve{H}'$ of $(N', \xi|_{N'})$ starting at the empty
sutured manifold. We can compute $\EH(N',\xi|_{N'})$ by applying Lemma~\ref{lem:EH} to $\ve{H}'$.
Exactly as in the proof of Proposition~\ref{prop:glueinproduct},
the handles of $\ve{H}'$ cancel $h_1, \dots, h_n$ pairwise,
and we can reduce equation~\eqref{eq:Morsehandlemap1} to
\[
\left(\phi^{\nu|_{\tilde{Z}}}_*\circ C_{h_{D}} \circ \phi^{\nu|_{N}}_*\right)(\ve{x}).
\]
Given the description of the diffeomorphism maps $\phi_*^{\nu|_{\tilde{Z}}}$ and
$\phi_*^{\nu|_N}$ from Section~\ref{sec:gluingmapIinvariant}, the above
expression is clearly equal to $C_h \circ \phi_*^{\nu|_{Z_0}}$.
\end{proof}

\subsection{Functoriality of the gluing map}

We now show that the gluing map defined in this paper satisfies the
functoriality property of the Honda--Kazez--Mati\'{c} construction
\cite{HKMTQFT}*{Proposition~6.2}. This property will be useful when we prove
the equivalence of our construction with the Honda--Kazez--Mati\'{c}
construction.

\begin{prop}\label{prop:compositionlaw}
Suppose that we have a chain of sutured submanifolds
\[
(M,\g)\subset (M',\g')\subset (M'',\g''),
\]
as well as a contact structure $\xi$ on $M'' \setminus \Int(M)$ such that
$\d M$, $ \d M'$, and $\d M''$ are convex with dividing sets $\g$, $\g'$, and
$\g''$, respectively. Writing $\xi'$ for $\xi|_{M'\setminus \Int(M)}$ and
$\xi''$ for $\xi|_{M''\setminus \Int(M')}$, we have
\[
\Phi_{\xi} = \Phi_{\xi''}\circ \Phi_{\xi'}.
\]
\end{prop}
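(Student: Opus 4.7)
The plan is to exploit the invariance of the gluing map under the choice of contact cell decomposition (Theorem~\ref{thm:invarianceofcontacthandlemap}) by computing $\Phi_\xi$ with a decomposition of $Z := M''\setminus\Int(M)$ that is adapted to the chain. Concretely, I will choose contact cell decompositions $\cC'$ of $Z' := M'\setminus\Int(M)$ and $\cC''$ of $Z'' := M''\setminus\Int(M')$ whose contact vector fields agree on $\d M'$, with barrier surfaces $S_1'\subset Z'$ and $S_2'\subset Z''$ near $\d M'$. The slab $K$ between $S_1'$ and $S_2'$ is a collar of $\d M'$ carrying a contact vector field transverse to its boundary. Using the product decomposition of Example~\ref{ex:productcelldecomp}, built from a dualizable sutured cell decomposition of $\d M'$, I can extend $\cC'\sqcup\cC''$ across $K$ to obtain a contact cell decomposition $\cC$ of $Z$.

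Next, I will unroll the formula~\eqref{eq:defgluingmap} for $\Phi_{\xi,\cC}$ and use Lemma~\ref{lem:disjointhandlescommute} to reorder the contact handle maps so that those arising from $\cC'$ are applied first, then those from the product decomposition of $K$, then those of $\cC''$. The middle block can be analyzed exactly as in the proof of Proposition~\ref{prop:glueinproduct}: its handles pair up and cancel with the handles implicit in an $\EH$-invariant of a collar of $\d M'$ (expressed as a composition of contact handle maps via Lemma~\ref{lem:EH}), reducing this block to the diffeomorphism map $\phi^{\nu_K}_*$ on $\SFH(-M',-\g')$, which is the identity after naturality. The remaining two blocks will then factor as $\Phi_{\xi',\cC'}$ and $\Phi_{\xi'',\cC''}$, respectively, after using Lemma~\ref{lem:EH} to recognize the $\EH$-invariants appearing in their definitions as compositions of contact handle maps drawn from the corresponding parts of $\cC$.

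The main obstacle is the bookkeeping around the various collars and $\EH$-invariants. The outer collar of $\d M''$ used in $\Phi_\xi$ must be matched with the one in $\Phi_{\xi''}$, while the intermediate $\EH$-invariant at $\d M'$ featured in $\Phi_{\xi'}$ (together with the corresponding inner-collar diffeomorphism at the start of $\Phi_{\xi''}$) has no direct counterpart in $\Phi_\xi$; both must be created and then cancelled using the product structure of $K$ via Lemma~\ref{lem:EH} and Proposition~\ref{prop:glueinproduct}. Once these identifications are pinned down, the equality $\Phi_\xi = \Phi_{\xi''}\circ\Phi_{\xi'}$ drops out of the factorization above.
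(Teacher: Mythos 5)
Your overall strategy is viable and is, at bottom, the paper's argument run in the opposite direction: the paper starts from the composition $\Phi_{\xi''}\circ\Phi_{\xi'}$, converts $\Phi_{\sqcup N_1'}$ into contact handle maps via Lemma~\ref{lem:EH}, absorbs the collar diffeomorphism $\phi_*^{\nu''|_{N_2}}$ by pushing the $Z'$-handles forward, commutes $\Phi_{\sqcup N_2'}$ to the right with Lemma~\ref{lem:disjointhandlescommute}, and then observes that the resulting string of handle maps \emph{is} $\Phi_{\xi,\cC}$ for some cell decomposition $\cC$ of $Z$; invariance (Theorem~\ref{thm:invarianceofcontacthandlemap}) finishes the proof. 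Crucially, in that direction no cancellation over the intermediate collar is needed: the handles computing $\EH(N_1')$ are not destroyed, they are simply reinterpreted as interior cells of $\cC$ covering the region near $\d M'$. The paper's proof therefore never invokes Proposition~\ref{prop:glueinproduct}.

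The soft spot in your version is precisely the middle block. First, the clean three-block reordering ($\cC'$-handles, then $K$-handles, then $\cC''$-handles) is not available: the $2$- and $3$-handles of $\cC'$ have attaching regions on $S_1'$, i.e.\ on the piece of $Z$ that is only built by the $1$-handles of your product decomposition of $K$, so the blocks necessarily interleave (the definition orders all handles by nondecreasing index for exactly this reason). Second, and more seriously, in $\Phi_{\xi,\cC}$ there is nothing for the product handles over $K$ to cancel against: the only $\EH$-factor present in formula~\eqref{eq:defgluingmap} is the one attached at $\d M''$, whereas the cancellation mechanism of Proposition~\ref{prop:glueinproduct} pairs the product handles coming from a sutured cell decomposition $\cD$ of $\d M'$ with the handles of the \emph{dual} decomposition computing $\EH$ of a collar of $\d M'$ --- a factor that simply does not occur in $\Phi_{\xi,\cC}$. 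Your "create and then cancel" remark is the right instinct, but making it precise amounts to replacing, via Theorem~\ref{thm:invarianceofcontacthandlemap} and Lemma~\ref{lem:EH}, the product handles over $K$ by an $\EH(N_1')$-type handle decomposition followed by the collar flow across $N_2$ --- at which point you have reconstructed the paper's choice of $\cC$ and its proof. So either supply that comparison-of-decompositions step explicitly, or reverse the direction of the argument as the paper does and avoid the cancellation altogether.
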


\begin{proof}
Define $Z' := M' \setminus \Int(M)$ and $Z'' := M'' \setminus \Int(M')$. Let
$\cC'$ and $\cC''$ be contact cell decompositions of $(Z',\xi')$ and
$(Z'',\xi'')$. Let us write $h_{1}', \dots, h_n'$ for the contact handles of
$\cC'$, and $h_{1}'', \dots, h_m''$ for the contact handles of $\cC''$,
ordered such that their indices are nondecreasing.
Let $\nu'$ and $\nu''$ denote the contact vector fields chosen on the incoming
ends of $Z'$ and $Z''$, let $N_1$ and $N_2$ denote the incoming layers of
$Z'$ and $Z''$, and let $N_1'$ and $N_2'$ denote the outgoing layers,
respectively, as described in Definition~\ref{def:contactcelldecomposition}.
By definition, the composition $\Phi_{\xi''} \circ \Phi_{\xi'}$ is equal to
\begin{equation}
(C_{h_m''} \circ \cdots \circ C_{h_1''}) \circ \Phi_{\sqcup N_2'}
\circ \phi_*^{\nu''|_{N_2}}\circ (C_{h_n'} \circ \cdots \circ C_{h_1'})
\circ \Phi_{\sqcup N_1'} \circ \phi_*^{\nu'|_{N_1}}.
\label{eq:composition1}
\end{equation}
The map $\Phi_{\sqcup N_i'}$ is given by tensoring with
$\EH(N_i',\xi|_{N_i'})$, for $i \in \{1,2\}$. As in Lemma~\ref{lem:EH}, the element $\Phi_{\sqcup
N_1'}$ can be written as a composition of contact handle maps
$C_{h_1} \circ \cdots \circ C_{h_\ell}$, for a sequence of contact 0-, 1-,
and 2-handles $h_1,\dots, h_\ell$. Hence, the composition in
equation~\eqref{eq:composition1} can be written as
\begin{equation}
(C_{h_m''} \circ \cdots \circ C_{h_1''}) \circ \Phi_{\sqcup N_2'} \circ \phi_*^{\nu''|_{N_2}}
\circ (C_{h_n'} \circ \cdots \circ C_{h_1'}) \circ (C_{h_\ell} \circ \cdots \circ C_{h_1})
\circ \phi_*^{\nu'|_{N_1}}. \label{eq:composition2}
\end{equation}
By picking $\nu''$ and $N_2$ appropriately, we can assume that the
diffeomorphism $\phi^{\nu''|_{N_2}} \colon M' \to M' \cup N_2$  is a contactomorphism
on all of $Z'$, and is the identity on $\d M$. Write
$\bar{h}_{k} = \phi^{\nu''|_{N_2}}(h_k)$ and $\bar{h}_{k}' = \phi^{\nu''|_{N_2}}(h_k')$.
Using the diffeomorphism invariance of the contact
handle maps, we can rewrite equation~\eqref{eq:composition2} as
\begin{equation}
(C_{h_m''} \circ \cdots \circ C_{h_1''}) \circ \Phi_{\sqcup N_2'}
\circ (C_{\bar{h}_n'} \circ \cdots \circ C_{\bar{h}_1'})
\circ (C_{\bar{h}_\ell} \circ \cdots \circ C_{\bar{h}_1}) \circ \phi_*^{\nu'|_{N_1}}.
\label{eq:composition3}
\end{equation}
The map $\Phi_{\sqcup N_2'}$ can be commuted with all the contact
handle maps to the right of it by Lemma~\ref{lem:disjointhandlescommute}.
After possibly isotoping some of the remaining contact handles, we can apply
Lemma~\ref{lem:disjointhandlescommute} and reorder the handles such that they
are attached with nondecreasing index. Furthermore, after isotoping some of
the handles, we can assume that the handles in the above composition are
induced by a contact cell decomposition (i.e., the 0-handles and 1-handles are
induced by a Legendrian graph, and the 2-handles are induced by a sequence of
convex disks with $\tb = -1$ attached to a neighborhood of the graph and $\d
((Z' \cup Z'') \setminus \Int(N_1 \cup N_2')$). It follows that
equation~\eqref{eq:composition3} is equal to $\Phi_{\xi,\cC}$ for some
contact cell decomposition $\cC$ of $(Z,\xi)$, completing the proof.
\end{proof}

\subsection{Equivalence with the Honda--Kazez--Mati\'{c} construction}

In this section, we prove that our construction of the gluing map from
Section~\ref{sec:def-gluingmap} is equivalent to the original construction
due to Honda, Kazez, and Mati\'c~\cite{HKMTQFT}. We will write
$\Phi_{\xi}^{\HKM}$ for the map defined using their construction.

\begin{thm}\label{thm:Contacthandlemaps=HKMmaps}
Suppose $(M,\g)$ is a sutured submanifold of $(M',\g')$ with no isolated components,
and that $\xi$ is a contact structure on $M' \setminus \Int(M)$ with convex boundary
and dividing set $\g \cup \g'$.
Then the Honda--Kazez--Mati\'c gluing map $\Phi_{\xi}^{\HKM}$
is equal to the gluing map $\Phi_{\xi}$ we defined in Section~\ref{sec:def-gluingmap}.
\end{thm}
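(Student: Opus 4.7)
The plan is to compare both constructions by reducing to two elementary situations via composition. Choose a contact cell decomposition $\cC$ of $(Z,\xi) = (M'\setminus \Int(M),\xi)$ with handles $h_1,\dots,h_n$ ordered by non-decreasing index, together with the barrier collars $N$ and $N'$ from Definition~\ref{def:contactcelldecomposition}, and let
\[
(M,\g)=(M_0,\g_0)\subset (M_1,\g_1)\subset\cdots\subset (M_{n+2},\g_{n+2})=(M',\g')
\]
be the resulting filtration in which each inclusion is either the attachment of $N$ or $N'$ (an $I$-invariant product layer), or the attachment of a single Morse-type contact handle (Definition~\ref{def:Morsetypecontacthandle}) whose core is the cell corresponding to $h_i$, thickened with an $I$-invariant collar so that each intermediate boundary is convex. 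By Proposition~\ref{prop:compositionlaw} and the analogous composition law \cite{HKMTQFT}*{Proposition~6.2} for the HKM gluing map, both $\Phi_\xi$ and $\Phi_\xi^{\HKM}$ factor as the composition of the gluing maps associated to each step of this filtration. Hence it suffices to verify equality $\Phi_\xi=\Phi_\xi^{\HKM}$ in the two base cases.

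For the product case with an $I$-invariant contact structure, Proposition~\ref{prop:glueinproduct} identifies our map $\Phi_\xi$ with the diffeomorphism map $\phi_*^\nu$, and \cite{HKMTQFT}*{Theorem~6.1} gives the same conclusion for $\Phi_\xi^{\HKM}$. For a single Morse-type contact handle $h$ of index $k\in\{0,1,2,3\}$, Proposition~\ref{prop:computemorsetypehandlemap} expresses our gluing map as $C_h\circ\phi_*^{\nu|_{Z_0}}$, where $Z=Z_0\cup h$ is the parametrization; after absorbing the collar $Z_0$ using the product case already established, it remains to show that the HKM map for this handle attachment agrees with the contact handle map $C_h$ from Section~\ref{sec:contacthandlemap}. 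To do this, I would build an HKM-compatible partial open book decomposition for $(Z,\xi)$ that starts from the trivial one on the collar and then applies Lemma~\ref{lem:partialopenbookcont1-handle} or Lemma~\ref{lem:partialopenbookcont2-handle} to incorporate~$h$. Reading off the Heegaard diagram and the HKM generator produced this way and comparing with Section~\ref{sec:contacthandlemap} gives equality on the chain level: for $k=1$ no new basis arc is created and the HKM map is tautologically the inclusion of Heegaard surfaces, matching $C_{h^1}$; for $k=2$ exactly one new basis arc $a_{k+1}$ is added and the new factor in the HKM generator $x_{k+1}=a_{k+1}\cap b_{k+1}$ is the intersection point $c$ in the band, matching the formula $C_{h^2}(\x)=\x\times c$. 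The index $0$ case is handled directly by Lemma~\ref{lem:EH}, since it amounts to taking the disjoint union with a standard tight contact 3-ball whose $\EH$ invariant is the generator of $\SFH(-D^2\times I,-S^1\times\{0\})\cong\bF_2$. The index $3$ case is not covered by the partial open book dictionary of Section~\ref{sec:partialopenbookdecompositionsandhandledecomps}, and I would handle it via a duality argument, turning the cobordism around so that the 3-handle becomes a 0-handle on the opposite side, then applying the $k=0$ case together with the turning-around behaviour of both gluing maps.

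The main obstacle will be the reduction step: one must show that after each handle attachment the intermediate inclusion $(M_i,\g_i)\subset (M_{i+1},\g_{i+1})$ genuinely admits a Morse-type parametrization and that the HKM contact compatibility condition on partial open books can be arranged simultaneously at every stage of the filtration, so that \cite{HKMTQFT}*{Proposition~6.2} applies term-by-term in parallel with Proposition~\ref{prop:compositionlaw}. A secondary, more technical difficulty is the index $3$ case, since attaching a 3-handle fills in an $S^2$ boundary component and the partial open book dictionary developed in Section~\ref{sec:partialopenbookdecompositionsandhandledecomps} only covers indices at most $2$; extending it by duality with the index $0$ case, or by a direct model computation identifying the HKM invariant of the tight $3$-ball as the unique nontrivial element, will be required to close out that last case.
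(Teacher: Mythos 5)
Your proposal follows the paper's proof almost exactly: both reduce via the composition laws (Proposition~\ref{prop:compositionlaw} and \cite{HKMTQFT}*{Proposition~6.2}) to a single Morse-type contact handle, treat the product layer by Proposition~\ref{prop:glueinproduct} together with the HKM product theorem, and compare the two maps handle-by-handle using the partial open book dictionary of Lemmas~\ref{lem:partialopenbookcont1-handle} and~\ref{lem:partialopenbookcont2-handle}. The one place you diverge is the index~3 case, which you flag as a real difficulty requiring a duality argument; in fact it is vacuous under the hypotheses. A Morse-type contact 3-handle caps off an entire $S^2$ boundary component of $M\cup Z_0$, so the component of $Z$ containing it never reaches $\d M'$ and is therefore an isolated component, which the theorem explicitly excludes (and for which $\Phi_\xi^{\HKM}$ is not even defined). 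The ``no isolated components'' hypothesis is precisely what lets one decompose $Z$ into Morse-type handles of index at most~2 (turn $Z$ upside down: every putative 3-handle becomes a 0-handle of the reversed cobordism that cancels against a 1-handle, since every component of $Z$ meets $\d M'$), so the paper only ever needs indices $0$, $1$, $2$, and your proposed duality step can be discarded.
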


\begin{proof}
Using the composition law for both constructions of the gluing map
(Proposition~\ref{prop:compositionlaw} and \cite{HKMTQFT}*{Proposition~6.2}),
it is sufficient to show the claim when $M' \setminus \Int(M)$ consists of a
single Morse-type contact handle of index 0, 1, or 2.

For a Morse-type index 0 handle, the claim is straightforward. Write
$M' \setminus \Int (M)$ as $Z_0 \cup h^0$ where  $Z_0 \iso (I \times \d M)$, and
$h^0$ is a 3-ball. The contact structure $\xi$ is the union of the
$I$-invariant contact structure on $Z_0$ and the unique tight contact
structure on $h^0$. Write $\g_0'$ for the suture on $M \cup Z_0$, and suppose
that $\nu$ is a parametrizing contact vector field on $Z_0$.

Under the identification
\[
\SFH(M \cup Z_0 \cup h^0, \g_0' \cup \g_0) \iso
\SFH(M \cup Z_0,\g_0') \otimes \SFH(h^0,\g_0),
\]
where $\g_0$ consists of a single suture on $h^0$,  both gluing maps
\[
\Phi_{\xi}, \,\Phi_{\xi}^{\HKM} \colon \SFH(M,\g) \to \SFH(M\cup Z_0,\g_0') \otimes \SFH(h^0,\g_0)
\]
take the form
\[
\ve{x} \mapsto \phi^{\nu}_*(\ve{x}) \otimes \EH(h^0,\g_0,\xi_0) = \phi^{\nu}_*(\ve{x})\otimes 1,
\]
where $1$ denotes the generator of $\SFH(h^0,\g_0)\iso \bF_2$. This follows
from \cite{HKMTQFT}*{Proposition~6.1} for~$\Phi_{\xi}^{\HKM}$, and from
Proposition~\ref{prop:computemorsetypehandlemap} for our map $\Phi_{\xi}$.

Before we consider index 1 and 2 Morse-type contact handles, we must first
give a more detailed description of the construction of Honda, Kazez, and Mati\'c~\cite{HKMTQFT}.
The definition of the map~$\Phi_{\xi}^{\HKM}$ uses the description of partial
open books from~\cite{HKMSutured}.
A contact 3-manifold with convex boundary is called \emph{product disk decomposable} if
it is a union of handlebodies, and
it contains a collection of pairwise disjoint compressing disks, each intersecting
the dividing set in two points, whose complement is a union of standard contact balls.
Given a contact sutured manifold
$(M,\g,\xi)$, one picks a properly embedded Legendrian graph $K \subset M$
that intersects $\d M$ along a collection of univalent vertices in
$\g$, such that $M \setminus \Int(N(K))$ is product disk decomposable.
Here $N(K)$ denotes a standard contact neighborhood, which is also product disk decomposable.
It follows that $M \setminus \Int(N(K))$ is contactomorphic to
$(S \times I)/{\sim_1}$ for a compact surface $S$ with boundary, and $N(K)$ is
contactomorphic to $(P \times I)/{\sim_2}$ for a compact surface with boundary $P$
(for the definition of $\sim_1$ and $\sim_2$, see Section~\ref{sec:partialopenbookandHD}).
Here $P$ has piecewise smooth boundary whose edges can naturally be divided
into two types: those that intersect $\d N(K) \setminus \d M$, and those
that intersect $\d M$. By $(P \times I)/{\sim_2}$,  we mean the space obtained
by quotienting out the $I$ direction along the edges that are contained
in $\d N(K)$. Since $(P \times I)/{\sim_2}$ meets $(S \times  I)/{\sim_1}$
along $\d N(K) \setminus \d M$, the surfaces $P \times \{0\}$ and $P \times \{1\}$ give two
embeddings of $P$ into $S$. Using the projection of $S \times I$ onto $S$, we
identify $P \times \{0\} \subset S \times \{1\}$ with a subset of $S$, for which
we also write $P$. The surface $P \times \{1\}$ then gives another smooth
embedding $h \colon P \to S$, which is the monodromy map.

The Honda--Kazez--Mati\'{c} map is easiest to define if one picks a contact
structure $\zeta$ on $(M,\g)$, such that $\d M$ is convex with dividing set
$\g$. To define the map $\Phi_{\xi}^{\HKM}$, one picks Legendrian graphs
$K \subset M$ and $K' \subset M' \setminus \Int(M)$, whose complements are
product disk decomposable. After modifying $K'$ in a neighborhood of $\d M$,
one extends $K$ to a Legendrian graph on all of $M'$, whose complement is
product disk decomposable. Furthermore, outside a small neighborhood of
$\d M$, the extension agrees with $K$ and $K'$. Let us write $\bar{K}$ for
this Legendrian graph. The graph $K$ is also required to satisfy a
\emph{contact compatibility} condition near $\d M$, described in
\cite{HKMTQFT}, though the specific form is not important for our present
argument. The graph $K \subset M$ induces a partial open book $(S,P,h)$ for
$(M,\g,\zeta)$, which induces a diagram $(\Sigma,\as,\bs)$ for $(M,\g)$. The
graph $\bar{K}$ induces a partial open book $(S',P',h')$ for $(M',\g')$,
which gives rise to the diagram $(\Sigma',\as',\bs')$. Furthermore
$\Sigma' = \Sigma \cup \Sigma''$, $\as'=\as\cup \as''$, and $\bs=\bs\cup \bs''$,
for a surface $\Sigma''$ and a collection of curves $\as''$ and $\bs''$ on
$\Sigma'$. There is a canonical intersection point $\ve{x}_{\xi} \in
\bT_{\as''} \cap \bT_{\bs''}$, and the map $\Phi_{\xi}^{\HKM}$ on the chain level is defined by
the formula
\[
\Phi_{\xi}^{\HKM}(\ve{x}) = \ve{x} \times \ve{x}_{\xi}.
\]

We now consider a Morse-type contact 1-handle addition. In this case,
\[
(M' \setminus \Int(M), \xi) \iso (Z_0 \cup h_1,\xi_0 \cup \xi_1),
\]
where $(Z_0,\xi_0)$ is an $I$-invariant contact structure on $I \times \d M$, and
$(h^1,\xi_1)$ is a contact 1-handle. Let $\bar{K} \subset M \cup Z_0$ denote an
extension that can be used to compute the gluing map. We note that if
$K' \subset Z_0$ is a graph whose complement is product disk decomposable,
then the complement of $K'$ in $Z_0 \cup h^1$ is also product disk
decomposable.  It follows that we can use the same graph
\[
\bar{K} \subset M \cup Z_0 \subset M \cup Z_0 \cup h^1
\]
to compute the gluing map for $Z_0 \cup
h^1$. Write $(S,P,h)$ for the partial open book of $(M,\g)$ induced by $K$.
Write $(S'_0,P'_0,h'_0)$ for the partial open book of $(M \cup Z_0,\g_0')$
induced by $\bar{K} \subset M \cup Z_0$, and write $(S',P',h')$ for the partial
open book of $(M \cup Z_0 \cup h^1,\gamma')$ induced by $\bar{K}$.

Since $(M \cup Z_0 \cup h^1) \setminus N(\bar{K})$ is obtained by attaching a
contact 1-handle to $(M \cup Z_0) \setminus N(\bar{K})$, we can apply
Lemma~\ref{lem:partialopenbookcont1-handle} to see that the partial open book
$(S',P',h')$ is obtained from $(S'_0,P'_0,h'_0)$ by attaching a 1-handle to
$S_0'$ along $\d S_0' \setminus \d P_0'$, and setting $P' = P'_0$ and $h' = \iota_{S_0'} \circ h'_0$,
where $\iota_{S_0'} \colon S_0' \to S'$ is the embedding.
The same basis of arcs $\ve{a}'$ for $P'_0$ in $S'_0$ can be used for $P'$ in
$S'$, which we assume extends a basis of arcs $\ve{a}$ for $P$ in $S$. Let
$(\Sigma,\as,\bs)$, $(\Sigma_0',\as_0',\bs_0')$, and $(\Sigma',\as',\bs')$ be
the diagrams induced by $(S,P,h)$, $(S_0', P_0',h_0')$, and $(S',P',h')$
with the bases $\ve{a},$ $\ve{a}'$, and $\ve{a}'$, respectively.

The Heegaard surface $\Sigma'$ is thus obtained by attaching a 1-handle along
the boundary of $\Sigma'_0.$ Notice that, since $\as_0' = \as'$ and
$\bs_0' = \bs'$, and $\Sigma'$ is obtained from $\Sigma_0'$ by attaching a band
along $\d \Sigma'_0$, the groups $\SFH(\Sigma'_0,\as'_0,\bs'_0)$ and
$\SFH(\Sigma',\as',\bs')$ are naturally isomorphic, and the isomorphism is
given by the contact 1-handle map, defined in this paper. Furthermore, we
observe that
\begin{equation} \label{eq:HKM=contacthandle1}
\Phi_{\xi_0\cup \xi_1}^{\HKM} = C_{h^1} \circ \Phi_{\xi_0}^{\HKM}.
\end{equation}
By \cite{HKMTQFT}*{Proposition~6.1},  the above expression is equal to
$C_{h^1}\circ \phi^{\nu}_*$, where
\[
\phi_*^\nu \colon \SFH(\Sigma,\as,\bs) \to \SFH(\Sigma_0',\as_0',\bs_0')
\]
is the composition of the tautological map induced by a diffeomorphism, and the transition maps
induced by naturality. By Proposition~\ref{prop:computemorsetypehandlemap},  we see that
the expression in equation~\eqref{eq:HKM=contacthandle1} agrees with the
definition of $\Phi_{\xi_0\cup\xi_1}$, in this paper.

The argument when
\[
(M' \setminus \Int(M),\xi) = (Z_0 \cup h^2,\xi_0 \cup \xi_2)
\]
is a Morse-type contact 2-handle is similar. Suppose that $K \subset M$ is a Legendrian graph
such that $M \setminus \Int(N(K))$ is product disk decomposable, and
induces a partial open book that satisfies the contact compatibility
condition near $\d M$. We then let $\bar{K}_0$ denote a Legendrian extension
to $M \cup Z_0$, whose complement is product disk decomposable, and which can
be used to compute the map $\Phi_{\xi_0}^{\HKM}$ for gluing $(Z_0,\xi_0)$ to
$M$. We can define an extension $\bar{K}$ of $K$ into all of $M \cup Z_0 \cup h^2$
by setting
\[
\bar{K} := \bar{K}_0 \cup c,
\]
where $c$ is a Legendrian co-core of the 2-handle $h^2$. We note that $(M \cup
Z_0 \cup h^2) \setminus \Int(N(\bar{K}))$ is  product disk decomposable. Let
$(S,P,h)$, $(S'_0,P'_0,h'_0)$, and $(S',P',h')$ denote the partial open books
induced by $K \subset (M,\g)$, $\bar{K}_0 \subset (M \cup Z_0,\g'_0)$, and
$\bar{K} \subset (M \cup Z_0 \cup h^2, \g')$, respectively.  Let
$\pi \colon S\times I\to S$ be the projection.
Writing the 2-handle $h^2$ as $(B \times I)/{\sim_2}$,
where $B$ is a square, we observe that $P'$ is obtained by adding the 1-handle
$\pi(B \times \{0\})$ to $P'_0$. The monodromy is extended to $P'$ by mapping
$\pi(B \times \{0\})\subset S$ to $\pi(B \times \{1\}) \subset S$.

We start with a basis of arcs $\ve{a}$ for $P \subset S$, and extend $\ve{a}$ to a basis
$\ve{a}'_0$ for $P'_0 \subset S_0'$. A basis of arcs $\ve{a}'$ for $P' \subset
S'$ can then be obtained from $\ve{a}'_0$  by adding a new arc $a'$, which is
a co-core of the band $\pi(B\times \{0\})$. Write $(\Sigma,\as,\bs)$,
$(\Sigma'_0,\as'_0,\bs'_0)$, and $(\Sigma',\as',\bs')$ for the diagrams
obtained from the partial open books $(S,P,h)$, $(S'_0,P'_0,h'_0)$, and
$(S',P',h')$, with bases $\ve{a}$, $\ve{a}_0'$, and $\ve{a}'$, respectively.
Also, let us write $\as'_0 = \as \cup \as''_0$ and $\bs'_0 = \bs \cup \bs''_0$, where
$\as''_0$ and $\bs''_0$ are the curves induced by the arcs in
$\ve{a}'_0 \setminus \ve{a}$, and  $\ve{x}_{\xi_0}$ for the canonical
intersection point in $\bT_{\as''_0} \cap \bT_{\bs''_0}$. Finally, write
$\alpha'$ and $\beta'$ for the curves induced by the new basis arc $a'$, and
write $c'$ for the canonical intersection point of $\alpha'\cap \beta'$. The
map $\Phi^{\HKM}_{\xi_0\cup \xi_2}$ on $\ve{x} \in \T_{\as} \cap \T_{\bs}$ is defined by the formula
\[
\Phi^{\HKM}_{\xi_0\cup \xi_2}(\ve{x}) = \ve{x}\times \ve{x}_{\xi_0} \times c'.
\]
Noting that $\Phi_{\xi_0}^{\HKM}(\ve{x}) = \ve{x} \times \ve{x}_{\xi_0}$, we see that
\[
\Phi_{\xi_0 \cup \xi_2}^{\HKM}(\ve{x}) = (C_{h^2} \circ \Phi_{\xi_0}^{\HKM})(\ve{x}).
\]
By the same argument as for contact 1-handles, this is equal to
$(C_{h^2} \circ \phi_*^{\nu})(\ve{x}) = \Phi_{\xi_0 \cup \xi_2}(\ve{x})$,
completing the proof.
\end{proof}

\section{Turning around cobordisms of sutured manifolds and duality}

In this section, we compute the effect of turning around a cobordism of sutured manifolds,
proving Theorem~\ref{thm:turningaroundcobordism}.

\subsection{The canonical trace pairing}
\label{sec:tracepairing}
As described in \cite{decateg}*{Proposition~2.14} and
\cite{JCob}*{Section~11.2}, there is duality between $\SFH(M,\g)$ and
$\SFH(-M,\g)$. If $(\S,\as,\bs)$ is a diagram for $(M,\g)$, then
$(\S,\bs,\as)$ is a diagram for $(-M,\g)$. Since $\bT_{\as}\cap
\bT_{\bs}$ is equal to $\bT_{\bs}\cap \bT_{\as}$, we can define a map
\begin{equation}
\tr \colon \CF(\S,\as,\bs) \otimes \CF(\S,\bs,\as)\to \bF_2\label{eq:tracedef}
\end{equation}
by the formula
\[
\tr(\ve{x}\otimes \ve{y})=
\begin{cases}
1& \text{if } \ve{x}=\ve{y},\\
0& \text{otherwise}.
\end{cases}
\]
It is straightforward to see that $\tr$ is a chain map, since $J$-holomorphic
discs on $(\S,\as,\bs)$ from~$\xs$ to~$\ys$ are in bijection with
$J$-holomorphic discs on $(\S,\bs,\as)$ from~$\ys$ to~$\xs$.
Note that the trace pairing gives a natural isomorphism
\[
\CF(\S,\bs,\as)\iso \CF(\S,\as,\bs)^\vee:=\Hom_{\bF_2}(\CF(\S,\as,\bs),\bF_2).
\]
In particular, $\tr$ is the usual pairing between homology and cohomology.

In the opposite direction, there is the cotrace map,
\[
\cotr \colon \bF_2 \to \CF(\S,\as,\bs) \otimes \CF(\S,\bs,\as).
\]
The cotrace map is defined by the formula
\[
\cotr(1) = \sum_{\ve{x}\in \bT_{\as}\cap \bT_{\bs}} \ve{x}\otimes \ve{x}.
\]

We note that if $V$ is a finite dimensional
 vector space over $\bF_2$, then there are canonical isomorphisms
\[
\Hom_{\bF_2}(V,V) \iso \Hom_{\bF_2}(V\otimes V^\vee, \bF_2)\iso \Hom_{\bF_2}(\bF_2, V^\vee\otimes V).
\]
Under these isomorphisms, the $\tr$ and $\cotr$ maps are identified with $\id_V \in \Hom_{\bF_2}(V,V)$.

In \cite{JCob}, the first author defined a pairing
\[
\langle \, , \, \rangle \colon \SFH(M,\g) \otimes \SFH(-M,-\g) \to \bF_2,
\]
which at first glance appears to have a different domain than the trace map
defined in equation~\eqref{eq:tracedef}. However there is a canonical isomorphism
\begin{equation}
\SFH(-M,\g) \iso \SFH(-M,-\g),\label{eq:canonicaliso}
\end{equation}
which we describe presently. The diagram $(\Sigma,\bs,\as)$ represents $(-M,\g)$,
while $(-\Sigma,\as,\bs)$ represents $(-M,-\g)$. If $\phi\in \pi_2(\xs,\ys)$ is a homology class
of disks for $(\Sigma,\bs,\as)$, then there is
a uniquely determined homology class $\bar{\phi}\in \pi_2(\xs,\ys)$ for
$(-\Sigma,\as,\bs)$. Furthermore, by precomposing a holomorphic disk
$u\colon \bD\to \Sym^n(\Sigma)$ with the unique anti-holomorphic involution of $\bD$ that fixes
$\pm i\in \d \bD$, we obtain a bijection
\[
\cM_J(\phi)/\R\iso \cM_{-J}(\bar{\phi})/\R,
\]
establishing the isomorphism in Equation~\eqref{eq:canonicaliso}. We note that
$\tr$ agrees with $\langle\, ,\, \rangle$
under the isomorphism in Equation~\eqref{eq:canonicaliso}.

\subsection{Sutured manifold cobordisms and the induced maps}\label{sec:suturedcobordisms}

In this section, we review the definition of sutured manifold cobordisms,
special cobordisms, boundary cobordisms, and the construction of the sutured
cobordism maps. We finally give a simpler definition of the cobordism maps
using our gluing map from Section~\ref{sec:def-gluingmap}. The following is
\cite{JCob}*{Definition~2.3}.

\begin{define}\label{def:equivalence}
  We say that the contact structures $\xi_0$ and $\xi_1$ on the sutured
  manifold $(M,\g)$ are \emph{equivalent} if they can be connected by a
  1-parameter family $\{\xi_t : t \in I\}$ of contact structures on $(M,\g)$,
  such that $\d M$ is convex with dividing set $\gamma$ for each $\xi_t$. In
  this case, we write $\xi_1 \sim \xi_2$, and denote the equivalence class of
  $\xi$ by $[\xi]$.
\end{define}

Sutured manifold cobordisms were defined in \cite{JCob}*{Definition~2.4}.

\begin{define}
  Let $(M_0,\g_0)$ and $(M_1,\g_1)$ be sutured manifolds. A \emph{cobordism}
  from $(M_0,\g_0)$ to $(M_1,\g_1)$ is a triple $\cW = (W,Z,[\xi])$ such that
  \begin{itemize}
    \item $W$ is a compact, oriented 4-manifold with boundary and corners,
    \item $Z$ is a codimension-0 submanifold of $\d W$, and $\d W \setminus \Int(Z) = -M_0 \sqcup M_1$,
    \item $\xi$ is a positive contact structure on $(Z,\g_0 \cup \g_1)$.
  \end{itemize}
\end{define}

Note that equivalent contact structures can have different characteristic foliations on $\d M$,
which gives us enough flexibility to compose cobordisms.
The sutured manifold cobordism $\cW$ is \emph{balanced} if $(M_0,\g_0)$ and $(M_1,\g_1)$ are balanced
sutured manifolds. Furthermore, we say that $Z_0$ is an \emph{isolated component}
of $Z$ if $Z_0 \cap M_1 = \emptyset$. The following is \cite{JCob}*{Definition~5.1}

\begin{define}
  We say that the cobordism $\cW = (W,Z,[\xi])$ from $(M_0,\g_0)$ to $(M_1,\g_1)$ is \emph{special} if
  \begin{itemize}
    \item $\cW$ is balanced,
    \item $\d M_0 = \d M_1$, and $Z = -I \times \d M_0$ is the trivial cobordism between them,
    \item $\xi$ is an $I$-invariant contact structure on $Z$ such that each $\{t\} \times \d M_0$
    is convex with dividing set $\{t\} \times \g_0$ for every $t \in I$,
    with respect to the contact vector field $\d/\d t$.
  \end{itemize}
\end{define}

Given a special cobordism $\cW$ from $(M_0,\g_0)$ to $(M_1,\g_1)$, we define the map
\[
F_\cW \colon \SFH(M_0,\g_0) \to \SFH(M_1,\g_1)
\]
by composing maps
associated to 4-dimensional handle attachments along the interior of $M_0$;
see \cite{JCob}*{Section~8}. The following is equivalent to \cite{JCob}*{Definition~10.4}.

\begin{define}\label{def:boundarycobordism}
A sutured cobordism $(W,Z,[\xi])$ from $(M,\g)$ to $(M',\g')$ is called a
\emph{boundary cobordism} if $M \subset \text{Int}(M')$,
$W = I \times M'/{\sim}$, where $(t,x) \sim (t',x)$ for every $x \in \partial M'$ and $t \in I$,
and $\xi$ is a contact structure on $Z = - \{0\} \times (M' \setminus \Int(M))$
inducing the sutures $\{0\} \times \g$ on $\{0\} \times \d M$ and $\{0\} \times \g'$
on $\{0\} \times \d M'$.
\end{define}

If $\cW = (W,Z,[\xi])$ is a boundary cobordism from $(M,\g)$ to $(M',\g')$,
then we can view $(-M,-\g)$ as a sutured submanifold of $(-M',-\g')$,
and $-\xi$ is a positive contact structure on $Z = -M' \setminus \Int(-M)$
with dividing set $-\g_0 \cup -\g_1$. If $Z$ has no isolated components,
then the map $F_\cW \colon \SFH(M,\g) \to \SFH(M',\g')$ is defined
as the Honda--Kazez--Mati\'c gluing map $\Phi_{-\xi}$.

Every balanced cobordism $\cW = (W,Z,[\xi])$ from $(M_0,\g_0)$ to $(M_1,\g_1)$
can uniquely be written as a composition $\cW^s \circ \cW^b$, where
\[
\cW^b = (I \times (M_0 \cup -Z)/{\sim}, \{0\} \times Z, [\xi])
\]
is a boundary cobordism from $(M_0,\g_0)$ to $(M_0 \cup -Z,\g_1)$. Furthermore,
\[
\cW^s = (W, -I \times \d M_1, [\eta])
\]
is a special cobordism from $(M_0 \cup -Z,\g_1)$ to $(M_1,\g_1)$,
where $-I \times \d M_1$ is a collar of $\d M_1$ in $Z$, and $\eta$ is an $I$-invariant
contact structure with dividing set $\{t\} \times \g_1$ on $\{t\} \times \d M_1$
for every $t \in I$, with respect to $\d/\d t$.
We call $\cW^s$ the special part, and $\cW^b$ the boundary part of $\cW$.
If $Z$ has no isolated components, then the cobordism map $F_\cW$ is defined as $F_{\cW^s} \circ F_{\cW^b}$.

According to \cite{JCob}*{Definition~10.1}, in the general case,
we choose a standard contact ball $B_0 \subset \Int(Z_0)$ with convex boundary
and dividing set $\delta_0$ in each isolated component $Z_0$ of $Z$.
We write $(B,\delta)$ for the union of the balls $(B_0,\delta_0)$,
and consider the cobordism $\cW' = (W,Z',[\xi'])$ from $(M_0,\g_0)$ to
$(M_1,\g_1) \sqcup (B,\delta)$, where $Z' = Z \setminus \Int(B)$ and $\xi' = \xi|_{Z'}$.
Since $Z'$ has no isolated components and
\[
\SFH((M_1, \g_1) \sqcup (B,\delta)) \iso \SFH(M_1,\g_1),
\]
we can define $F_\cW := F_{\cW'}$. This is independent of the choice of $B$.

The gluing map that we defined in Section~\ref{sec:def-gluingmap} also assigns
maps to contact 3-handles, and hence $\Phi_{-\xi}$ makes sense even if
$Z$ has isolated components whenever $M_0 \cup -Z$ has no closed components,
giving rise to an alternative definition of $F_{\cW^b}$.
We now show that the two constructions agree.

\begin{prop}
  Let $\cW = (W,Z,\xi)$ be a sutured manifold cobordism from $(M_0,\g_0)$ to $(M_1,\g_1)$,
  possibly with $Z$ having isolated components, but such that $M_0 \cup -Z$ has no closed components.
  Then
  \[
  F_\cW = F_{\cW^s} \circ \Phi_{-\xi},
  \]
  where $F_\cW$ is the cobordism map defined
  in \cite{JCob}*{Definition~10.1} using the Honda--Kazez--Mati\'c gluing map,
  and $\Phi_{-\xi}$ is the gluing map from Section~\ref{sec:def-gluingmap}.
\end{prop}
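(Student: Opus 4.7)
The plan is to verify the formula by reducing to the case of a boundary cobordism and then directly comparing the two definitions of the gluing map on a well-chosen contact cell decomposition. The key observation is that the original JCob definition and the new definition differ only in whether the standard contact balls $B_0$ drilled out of the isolated components are treated as part of the target sutured manifold or are absorbed into the gluing map via contact 3-handle attachments.

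First I would factor $\cW = \cW^s \circ \cW^b$, where $\cW^s$ is special and $\cW^b$ is a boundary cobordism. Since $\cW^s$ has no isolated components (its $Z$-part is a trivial collar of $\d M_1$), both constructions of $F_{\cW^s}$ agree, so functoriality of composition reduces the proof to showing $F_{\cW^b} = \Phi_{-\xi}$. Next, pick a standard contact ball $B_0 \subset \Int(Z_0)$ with convex boundary and single-circle dividing set $\delta_0$ in each isolated component $Z_0$ of $Z$, and set $B = \sqcup B_0$, $\delta = \sqcup \delta_0$, $Z' = Z \setminus \Int(B)$, and $\cW' = (W, Z', [\xi|_{Z'}])$. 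By the JCob definition and the fact that $Z'$ has no isolated components, $F_{\cW^b}$ equals $\iota \circ \Phi_{-\xi|_{Z'}}$, where
\[
\iota \colon \SFH((M_1,\g_1) \sqcup (B,\delta)) \xrightarrow{\cong} \SFH(M_1,\g_1)
\]
is the canonical isomorphism sending $\x \otimes 1 \mapsto \x$, with $1$ the generator of $\SFH(-B_0,-\delta_0) \cong \bF_2$.

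Then I would choose a contact cell decomposition $\cC$ of $(Z, \xi)$ in which $B_0$ appears as a single contact 3-cell for each isolated $Z_0$, and such that removing these 3-cells leaves a valid contact cell decomposition $\cC'$ of $(Z', \xi|_{Z'})$. The construction: first fix a contact cell decomposition of $Z'$ adapted to the new convex sphere boundaries $\d B_0$, then declare each $B_0$ to be an additional contact 3-cell of $\cC$. By the definition of $\Phi_{-\xi,\cC}$ in equation~\eqref{eq:defgluingmap} and the commutativity of disjoint contact handle maps (Lemma~\ref{lem:disjointhandlescommute}), the handle maps coming from $\cC'$ assemble into $\Phi_{-\xi|_{Z'},\cC'}$, and the 3-handle maps associated to the $B_0$'s can be performed last. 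By the description of the contact 3-handle map in Section~\ref{sec:3-handlemap}, each such 3-handle attachment implements, at the chain level, the canonical isomorphism $\SFH((\cdot) \sqcup (B_0, \delta_0)) \to \SFH(\cdot)$. Combining these contributions yields $\Phi_{-\xi} = \iota \circ \Phi_{-\xi|_{Z'}} = F_{\cW^b}$, and composing with $F_{\cW^s}$ gives the desired identity.

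The main obstacle is arranging the contact cell decomposition $\cC$ so that each $B_0$ appears as a single contact 3-cell while $\cC$ still restricts to a valid decomposition of $Z'$. This requires choosing the barrier surfaces and Legendrian 1-skeleton near $\d B_0$ compatibly with the new convex sphere boundary of $Z'$, so that the induced sutured cell decomposition on $\d B_0$ matches $\delta_0$ with trivial cellular structure. A secondary point to verify carefully is that the 3-handle map of Section~\ref{sec:3-handlemap}, applied to an $S^2$ boundary with one circle suture, indeed computes the canonical pairing with $1 \in \SFH(-B_0,-\delta_0)$; this is essentially tautological from the diagrammatic definition but should be unpacked to close the loop.
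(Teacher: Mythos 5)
There is a genuine gap, and it sits exactly where the proposition has actual content. Your opening reduction assumes that $F_{\cW} = F_{\cW^s} \circ F_{\cW^b}$ for some well-defined map $F_{\cW^b}$, and that it therefore suffices to show $F_{\cW^b} = \Phi_{-\xi}$. But when $Z$ has isolated components, the composition $F_{\cW^s}\circ F_{\cW^b}$ is \emph{not} how \cite{JCob}*{Definition~10.1} defines $F_\cW$; the definition is $F_\cW := F_{\cW'}$, where $\cW'$ is the modified cobordism with the balls $B$ removed, so $F_\cW = i \circ F_{(\cW')^s} \circ F_{(\cW')^b}$. Crucially, $(\cW')^s$ is \emph{not} $\cW^s$: it is a special cobordism from $(M_0\cup -Z', \g_1 \sqcup \delta)$ to $(M_1,\g_1)\sqcup(B,\delta)$. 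Reconciling $i \circ F_{(\cW')^s}$ with $F_{\cW^s}\circ(\text{contact 3-handle maps})$ is precisely the point of the proposition, so your reduction begs the question.

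The second problem is the identification of the contact 3-handle contribution with "the canonical isomorphism $\SFH((\cdot)\sqcup(B_0,\delta_0))\to\SFH(\cdot)$." The target of $\Phi_{-\xi|_{Z'}}$ is $\SFH$ of $(M_0\cup -Z',\g_1\sqcup\delta)$, which has sphere boundary components $\d B_0$ but is not a disjoint union with $(B,\delta)$, so your map $\iota$ does not even have the right domain to post-compose. More substantively, the contact 3-handle map is by definition (Section~\ref{sec:3-handlemap}) a genuine 4-dimensional 3-handle map along a push-off of $\d B_0$, followed by the inverse of a contact 0-handle map; it is not tautological. The missing step is to commute this 4-dimensional 3-handle past the special part: one needs the factorization $(\cW')^s = (\cW^s\sqcup\id_{(B,\delta)})\circ\cW_B$, where $\cW_B$ attaches the 3-handles, together with the observation that $F_{\cW^s\sqcup\id_{(B,\delta)}} = F_{\cW^s}\otimes\id_{\SFH(B,\delta)}$ and $\SFH(B,\delta)\cong\bF_2$. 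Your contact-cell-decomposition construction realizing each $B_0$ as a single contact 3-cell is fine — it just re-derives the composition law $\Phi_{-\xi}=\Phi_{-\xi|_B}\circ\Phi_{-\xi|_{Z'}}$ — but it does not substitute for the 4-dimensional bookkeeping above.
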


\begin{proof}
  By definition, $F_\cW = i \circ F_{\cW'} = i \circ F_{(\cW')^s} \circ F_{(\cW')^b}$, where
  \[
  \cW' = (W,Z',[\xi']) \colon (M,\g) \to (M',\g') \sqcup (B,\delta)
  \]
  is the cobordism defined above, and
  \[
  i \colon \SFH((M_1, \g_1) \sqcup (B,\delta)) \stackrel{\sim}{\longrightarrow} \SFH(M_1,\g_1)
  \]
  is the canonical isomorphism.
  As $Z'$ has no isolated components,
  $F_{(\cW')^b}$ is the Honda--Kazez--Mati\'c gluing map $\Phi_{-\xi'}$,
  which agrees with our gluing map from Section~\ref{sec:def-gluingmap}
  by Theorem~\ref{thm:Contacthandlemaps=HKMmaps}.

  On the other hand, $\Phi_{-\xi} = \Phi_{-\xi_B} \circ \Phi_{-\xi'}$,
  where $\xi_B = \xi|_B$. Hence, it suffices to show that
  \[
  i \circ F_{(\cW')^s} = F_{\cW^s} \circ \Phi_{-\xi_B}.
  \]
  Let $\cW_B$ be the special cobordism from $(M_0 \cup -Z',\g_1 \cup \delta)$
  to $(M_0 \cup -Z,\g_1) \sqcup (B,\delta)$ obtained by pushing $\d B$ slightly
  into $Z'$, and attaching a 4-dimensional 3-handle along each component. Then
  \[
  (\cW')^s = (\cW^s \sqcup \id_{(B,\delta)}) \circ \cW_B.
  \]
  By definition, the contact 3-handle map $\Phi_{-\xi_B} = j \circ F_{\cW_B}$, where
  \[
  j \colon \SFH((M_0 \cup -Z,\g_1) \sqcup (B,\delta)) \stackrel{\sim}{\longrightarrow} \SFH(M_0 \cup -Z,\g_1)
  \]
  is the canonical isomorphism; see Section~\ref{sec:3-handlemap}.
  Hence, it suffices to show that
  \[
  i \circ F_{\cW^s \sqcup \id_{(B,\delta)}} = F_{\cW^s} \circ j.
  \]
  This holds since $F_{\cW^s \sqcup \id_{(B,\delta)}} = F_{\cW^s} \otimes \id_{\SFH(B,\delta)}$
  and $\SFH(B,\delta) \iso \bF_2$.
\end{proof}

\subsection{Turning around sutured cobordisms}

In this section, we use our contact handle maps to prove a first result about duality in sutured Floer homology.
If
\[
\cW=(W,Z,[\xi]) \colon (M,\g)\to (M',\g')
\]
is a cobordism of sutured manifolds, then we can form the cobordism
\[
\cW^\vee := (W,Z,[\xi]) \colon (-M',\g') \to (-M,\g)
\]
by reversing which ends of $W$ are viewed as incoming or outgoing.
The main result of this section is the following:

\begin{thm}\label{thm:firstduality}
If $\cW \colon (M,\g) \to (M',\g')$ is a balanced cobordism of sutured manifolds, and
$\cW^\vee$ is the cobordism obtained by turning around $\cW$, then
\[
F_{\cW^\vee} = (F_{\cW})^\vee,
\]
with respect to the trace pairing from Section~\ref{sec:tracepairing}.
\end{thm}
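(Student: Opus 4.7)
The plan is to decompose $\cW$ into elementary cobordisms and verify the duality on each piece, then reassemble. Both operations involved reverse composition order: $(\cW_1\circ\cW_2)^\vee = \cW_2^\vee\circ\cW_1^\vee$, and $(f_1\circ f_2)^\vee = f_2^\vee\circ f_1^\vee$. Combined with the functoriality of $F$ and the composition law for contact gluing (Proposition~\ref{prop:compositionlaw}), this reduces the theorem to checking the identity $F_{\cW^\vee} = (F_\cW)^\vee$ for each elementary type of cobordism.

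First I would write $\cW = \cW^s\circ \cW^b$ as the composition of its boundary and special parts. The boundary cobordism $\cW^b$ then decomposes, via a contact cell decomposition, into a sequence of contact $i$-handle cobordisms for $i\in\{0,1,2,3\}$, and $\cW^s$ decomposes into cobordisms corresponding to 4-dimensional $k$-handle attachments for $k\in\{1,2,3\}$. Turning around exchanges $i \leftrightarrow 3-i$ for contact handles and $k\leftrightarrow 4-k$ for 4-dimensional handles. Thus the theorem reduces to three claims: (a) the contact 0-handle map is dual to the contact 3-handle map; (b) the contact 1-handle map is dual to the contact 2-handle map; and (c) the 4-dimensional $k$-handle map is dual to the 4-dimensional $(4-k)$-handle map for each $k\in\{1,2,3\}$.

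Claim (a) and the 4-dimensional 1/3 handle case of (c) follow from direct inspection of the explicit formulas. For example, the 1-handle map $\x\mapsto \x\times \theta^+_{\a_0,\b_0}$ and the 3-handle formula $\x\times\theta^+_{\a_0,\b_0}\mapsto 0$, $\x\times \theta^-_{\a_0,\b_0}\mapsto \x$ are visibly adjoint once one observes that reversing the orientation of $\Sigma$ exchanges the relative Maslov grading on the stabilizing annulus, and hence swaps $\theta^+$ and $\theta^-$. Claim (b) is similar: the contact 1-handle map is the tautological map on intersection points (no new curves are added), while the contact 2-handle map is $\x \mapsto \x\times c$; choosing compatible diagrams for $\cW^b$ and $(\cW^b)^\vee$ on the band attached by the contact handle, one sees by a direct model computation that these two maps are adjoint under the trace pairing.

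The main obstacle is the 4-dimensional 2-handle case of (c), since here the cobordism map is defined by a count of holomorphic triangles rather than by a tautological formula. For this I would use the identification of the 2-handle map with the triangle map $F_\cT$ on a suitable Heegaard triple $\cT = (\Sigma,\as,\bs,\gs)$, and then exploit the fact that turning around the corresponding cobordism amounts to reversing the orientation of $\Sigma$ and permuting the attaching curves $(\as,\bs,\gs)\mapsto (\gs,\bs,\as)$. The classical observation that the moduli space of pseudo-holomorphic triangles is invariant under this operation (with the conjugate almost complex structure) shows that the matrix of $F_\cT$ with respect to the bases of intersection points transposes under the trace pairing, giving precisely the required adjointness. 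Once this model computation is in place, combining with Proposition~\ref{prop:1handletrianglecount} to handle interactions with compound stabilizations, and with the gluing map invariance established in Theorem~\ref{thm:invarianceofcontacthandlemap} to patch the pieces of the contact handle decomposition together, yields the theorem.
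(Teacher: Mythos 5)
There is a genuine gap in the reduction to elementary pieces. You assert that ``turning around exchanges $i \leftrightarrow 3-i$ for contact handles'' and that, consequently, the contact $0$-handle map is adjoint to the contact $3$-handle map and the contact $1$-handle map is adjoint to the contact $2$-handle map. Neither statement is correct, and indeed neither even typechecks: the adjoint of the tautological contact $1$-handle map $C_{h^1}$ (no new attaching curves, identity on generators) must again be tautological on generators, whereas the contact $2$-handle map $\x \mapsto \x \times c$ lands in a complex whose generators are those of the source times \emph{all} points of $\a' \cap \b'$, of which there may be several. The underlying issue is that if $\cW$ is the boundary cobordism attaching a contact handle $h$ to $(M,\g)$, then $\cW^\vee$ is \emph{not} a boundary cobordism attaching a complementary-index contact handle: it is the composition of a boundary cobordism that re-glues the handle region $Z$ to $-M'$ along $\d M'$ (which does flip the contact handle index $i \mapsto 3-i$ for $i=1,2$) with a nontrivial \emph{special} cobordism consisting of $4$-dimensional handles of index $4-i$ whose attaching spheres are the doubles of the co-cores. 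Establishing this requires a Morse-theoretic argument (an auxiliary function on $I \times M'/{\sim}$ whose critical points correspond to those of a Morse function on $Z$ with index reversed), which is absent from your proposal. The correct elementary statements are therefore of the form $(C_{h^1})^\vee \simeq F_3 \circ C_{h^2}$ and $(C_{h^2})^\vee \simeq F_2 \circ C_{h^1}$ (up to compound destabilization and the identification of Proposition~\ref{prop:compoundstabilizationmap}), not $(C_{h^1})^\vee \simeq C_{h^2}$; similarly the turned-around contact $3$-handle is a contact $0$-handle followed by a $4$-dimensional $1$-handle, and the turned-around contact $0$-handle must be handled through the isolated-component convention in the definition of the cobordism maps rather than through a contact $3$-handle.

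Two smaller points. First, your treatment of the $4$-dimensional $2$-handle case via the symmetry of holomorphic triangles under $(\as,\bs,\gs) \mapsto (\gs,\bs,\as)$ together with orientation reversal of $\S$ is a reasonable sketch, but this duality for special cobordisms is already available and can simply be cited; the real content of the theorem is the boundary-cobordism case, which is exactly where your argument breaks down. Second, even after the elementary statements are corrected, the contact $2$-handle case still requires identifying a compound (de)stabilization with the naturality map and a triangle-count model computation; your closing appeal to Proposition~\ref{prop:1handletrianglecount} and Theorem~\ref{thm:invarianceofcontacthandlemap} does not engage with where these inputs are actually needed.
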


Suppose that $(M,\g)$ is a sutured submanifold of $(M',\g')$, and let $\xi$ be a contact structure
on $Z = -M' \setminus \Int(M)$ with dividing set $\g \cup \g'$ on the convex surface $\d Z$.
Consider the boundary cobordism
\[
\cW := (I \times M'/{\sim}, \{0\} \times Z,[\xi])
\]
from $(M,\g)$ to $(M',\g')$. Then $\cW^\vee$
is a sutured cobordism from $(-M',\g')$ to $(-M,\g)$. In general, $\cW^\vee$
will be neither a special cobordism nor a boundary cobordism.
It is the product of a boundary cobordism
\[
(\cW^\vee)^b \colon (-M',\g') \to (-M' \cup -Z,\g),
\]
and a special cobordism
\[
(\cW^\vee)^s \colon (-M' \cup -Z,\g) \to (-M,\g).
\]
The 4-manifold underlying $(\cW^\vee)^s$ is also $M'\times I/{\sim}$.
We need the following topological description of the special cobordism $(W^\vee)^s$:

\begin{lem}\label{lem:Morsetheoryturningaroundsuturedcobordism}
Suppose that $(M,\gamma)$ is a sutured submanifold of $(M',\g')$, and
\[
\cW^\vee := (I \times M'/{\sim}, \{0\} \times Z, [\xi]) \colon (-M',\g') \to (-M,\g)
\]
is the dual of the corresponding boundary cobordism, as described above,
and suppose that $(Z,\xi)$ has a contact handle decomposition
relative to $\d M$ with an associated Morse function $f \colon Z \to I$.
Then the special part $(W^\vee)^s$ of $\cW^\vee$ has a Morse function~$F$
whose critical points are in bijective correspondence with the critical points of~$f$.
Furthermore, if $p$ is a critical point of $f$
and $p'$ is the associated critical point of $F$, then
\[
\ind_{p'}(F)= 4 - \ind_{p}(f).
\]
The intersection of the descending manifold of a critical point
of $F$ with
\[
-M' \cup_{\d M'} -Z = -M \cup_{\d M} Z \cup_{\d M'} -Z
\]
is equal to the union of the ascending flow lines of the
corresponding critical point of $f$ in $Z$, together with their images in $-Z$.
\end{lem}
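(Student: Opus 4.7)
The plan is to construct the Morse function $F$ on $W = I \times M'/{\sim}$ explicitly from $f$, and then read off the critical points, indices, and descending manifolds directly from the formula. After perturbing $f$ to be $C^\infty$-flat at $\d M$, I would extend it to a smooth $\tilde f \colon M' \to [0,1]$ with $\tilde f|_M \equiv 0$; pick a bump $g \colon I \to [0,\infty)$ with $g(0) = g(1) = 0$, $g(1/2) = 1$, $g''(1/2) < 0$, and $g'(0), g'(1) \neq 0$; and choose a smooth cutoff $\chi \colon M' \to [0,1]$ equal to $1$ on $\Int M$ away from $\d M$, equal to $0$ on $\Int Z$ away from $\d M$, and interpolating in a collar of $\d M$. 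The ansatz is
\[
F(s,x) = (1-s)\,\chi(x) + g(s)\,\bigl(1 - \chi(x)\bigr)\,\bigl(1 - \tilde f(x)\bigr).
\]
Both summands vanish on $\d M'$ (where $\chi = 0$ and $\tilde f = 1$), so $F$ is constant along the collapsed fibers $I \times \{p\}$, $p \in \d M'$, and descends smoothly to $W$. On $\d W$ one reads $F \equiv 0$ on $(\{1\} \times M') \cup (\{0\} \times Z) = -N$ and $F \equiv 1$ on $\{0\} \times M = -M$, matching the sutured structure of $(\cW^\vee)^s$.

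I would then compute critical points locally. On $\Int M$ the formula reduces to $F = 1-s$ with $F_s = -1$, so there are no critical points; in the transition collar of $\d M$ I would verify case-by-case that the simultaneous vanishing of $F_s$, the $\d M$-normal derivative, and the $\d M$-tangential derivative is impossible (using that $f$ has no critical points in the collar and that $d\chi$ is nonzero transversely); and in the bulk of $Z$, where $\chi \equiv 0$, the formula becomes $F = g(s)(1 - f(x))$ whose only critical points are the pairs $(1/2, p)$ with $p \in \Crit(f)$. The Hessian at such $(1/2, p)$ is block diagonal, with $s$-block $g''(1/2)(1 - f(p)) < 0$ contributing index $1$, and $x$-block $-g(1/2)\,\mathrm{Hess}(f)_p$ whose $3 - k$ negative eigenvalues correspond to the $3-k$ positive eigenvalues of $\mathrm{Hess}(f)_p$ when $\ind_p(f) = k$. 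Summing gives $\ind_{(1/2, p)}(F) = 4 - k$.

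For the descending-manifold statement, at $(1/2, p)$ the negative eigenspace of $\mathrm{Hess}(F)$ is spanned by $\d/\d s$ together with the positive eigenspace of $\mathrm{Hess}(f)_p$, which is the tangent space at $p$ of the $f$-ascending manifold $\mathrm{Asc}_f(p) \subset Z$. Integrating $-\nabla F$ shows that the descending manifold of $(1/2, p)$ is $I \times \mathrm{Asc}_f(p) \subset I \times Z \subset W$: the $s$-coordinate flows unstably from $1/2$ to both endpoints $0$ and $1$, while the $x$-coordinate traces out $\mathrm{Asc}_f(p)$ along constant-$s$ slices. Its intersection with $\d W$ therefore consists of $\{0\} \times \mathrm{Asc}_f(p) \subset \{0\} \times Z = -Z$ together with $\{1\} \times \mathrm{Asc}_f(p) \subset \{1\} \times Z \subset \{1\} \times M' = -M'$, giving the two copies of $\mathrm{Asc}_f(p)$ inside $-N = -M' \cup_{\d M'} -Z$ as asserted.

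The hardest part will be the verification that $F$ is genuinely smooth across the collapsed locus $\d M' \subset W$ and that no stray critical points appear in the transition collar of $\d M$; both come down to a careful local analysis of the ansatz, exploiting the flatness of $f$ at $\d M$, the nonvanishing of $d\chi$ transverse to $\d M$, and the explicit behavior of $g$ near its endpoints.
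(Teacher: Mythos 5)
Your overall strategy --- build $F$ explicitly by coupling $f$ to a function of the interval coordinate, then read off indices and descending manifolds from the local model $g(s)(1-f(x))$ on the bulk of $Z$ --- is the same as the paper's, and your boundary identifications, index computation $\ind = 1+(3-k) = 4-k$, and description of the descending manifolds are all correct. The gap is in the ansatz itself, at precisely the step you flag as hard: the function $F(s,x)=(1-s)\chi(x)+g(s)\bigl(1-\chi(x)\bigr)\bigl(1-\tilde f(x)\bigr)$ genuinely acquires critical points in the transition collar of $\d M$, and in fact fails to be Morse there. In that collar $F$ depends only on $s$ and the collar coordinate $u$ (it is constant in the $\d M$-directions), so any critical point of the resulting two-variable function sweeps out a critical submanifold diffeomorphic to $\d M$. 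Such a point exists: writing $\tilde f=\tilde f(u)$ and $\chi=\chi(u)$, one has $\d F/\d u=\chi'(u)\bigl[(1-s)-g(s)(1-\tilde f)\bigr]-g(s)(1-\chi)\tilde f'$, whose dominant first term forces $\d F/\d u=0$ near the value $s^*$ where $1-s=g(s)$; this $s^*$ lies in $(0,\tfrac12)$ because $g(0)=0$ and $g(\tfrac12)=1$. At $s=s^*$ the equation $\d F/\d s=-\chi+g'(s^*)(1-\chi)(1-\tilde f)=0$ is then solved by some $\chi\in(0,1)$, i.e.\ by a point strictly inside the transition (for instance, with $g(s)=\sin(\pi s)$ and $\chi$ linear in $u$ one finds a critical point at $s^*\approx 0.26$, $\chi\approx 0.68$). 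The root cause is that you are interpolating between the monotone function $1-s$ and the unimodal function $g(s)$; wherever the weights are comparable, $\d F/\d s$ vanishes at an interior $s$, and $\d F/\d u$ also vanishes along that locus.

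The paper avoids this by forcing $dF$ to lie pointwise in the span of $ds$ and $dh$ for a \emph{single} extended height function $h$ on $M'$ ($h$ constant on $M$, equal to a collar coordinate near $\d M$ and near $\d M'$, and equal to $f$ on $Z$), setting $F(s,x)=G\bigl(s,h(x)\bigr)$ for a two-variable function $G$ on a (partially collapsed) rectangle subject to: $\nabla G\neq 0$ everywhere, $\d G/\d s=0$ over $h\in f(Z)$ only at $s=\tfrac12$, and $\d G/\d h<0$ there. A critical point of $F$ then requires either $dh=0$ --- which off $\Crit(f)$ happens only on $M$, where $\d G/\d s\neq 0$ --- or $\nabla G=0$, which is excluded; so the critical set is exactly $\{\tfrac12\}\times\Crit(f)$, and your index and descending-manifold analysis applies verbatim. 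Your construction could be repaired the same way, by replacing the independent cutoff $\chi$ with a function of $h$ and then designing the resulting two-variable $G$ to have nowhere-vanishing gradient, but as written the interpolation does not satisfy that condition.
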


\begin{proof}
We first define an auxiliary function
\[
G \colon (I \times [-1,2])/(I \times \{2\}) \to I,
\]
where $I = [0,1]$. We require $G$ to satisfy the following:
\begin{itemize}
\item $G(t,s) = t$ for $s = -1$.
\item $\nabla G \neq 0$ for all $(t,s)$.
\item If $(t,s) \in I \times I$, then $(\d G/\d t)(t,s) = 0$ if and only if $t = \tfrac{1}{2}$.
\item If $(t,s) \in \{\tfrac{1}{2}\} \times I$, then $(\d G/ \d s) < 0$.
\item $G|_{\{0\} \times [-1,2]} \equiv G|_{I \times \{2\}} \equiv G|_{\{1\} \times [0,2]} \equiv 0$.
\end{itemize}
The graph of such a function $G(t,s)$ is shown in Figure~\ref{fig::54}.

\begin{figure}[ht!]
  \centering
  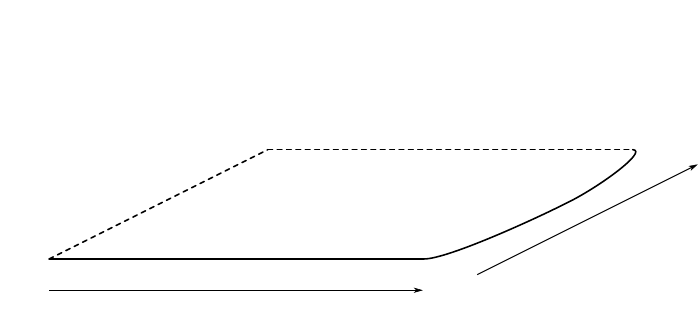
  \caption{An example of a function
  $G(t,s) \colon (I \times [-1,2])/(I \times \{2\}) \to I$ considered in
  Lemma~\ref{lem:Morsetheoryturningaroundsuturedcobordism}. \label{fig::54}}
\end{figure}

Let us view $M'$ as
\[
M' \iso M \cup  (\d M \times [-1,0]) \cup -Z \cup (\d M' \times [1,2]),
\]
and extend $f$ over all of $M'$ such that
\begin{itemize}
\item $f|_{M} \equiv -1$,
\item $f|_{\d M \times [-1,0]}(x,s) = s$, and
\item $f|_{\d M' \times [1,2]}(x,s) = s$.
\end{itemize}
We then consider the function $F \colon I \times M'/{\sim} \to I$ given by
\[
F(t,x) := G(t, f(x)).
\]
It is then straightforward to verify that $F$ has the stated properties.
\end{proof}

\begin{rem}
For an index 2 critical point of $f$, the attaching sphere of the
corresponding critical point of $F$ will be a knot $K$. The framing
of $K$ depends on some auxiliary choices, such as a choice of Riemannian
metric, and the precise choice of $G$. However, up to isotopy, the framing is
determined uniquely by the property that the framing of $K \cap Z$ is the
mirror of the framing of $K \cap (-Z)$.
\end{rem}

\begin{proof}[Proof of Theorem \ref{thm:firstduality}]
The claim was shown for special cobordisms in \cite{JCob}*{Theorem~11.8}.
Hence, it only remains to verify it for boundary cobordisms. By the
composition law for the gluing map, it is sufficient to prove the claim when
the boundary cobordism $\cW = (W,Z,[\xi])$ is formed by adding
a single contact $k$-handle for $k \in \{\,0,1,2,3\,\}$.
The cobordism map
\[
F_\cW \colon \SFH(M,\g) \to \SFH(M',\g')
\]
is the gluing map $\Phi_{-\xi}$, corresponding to the sutured submanifold
$(-M,-\g)$ of $(-M',-\g')$.

For a contact 0-handle, the map $\Phi_{-\xi}$ is the tautological one from $\SFH(M,\g)$ to
\[
\SFH(M',\g') \cong \SFH(M,\g) \otimes \SFH(D^2 \times [-1,1], S^1 \times \{0\})
\cong \SFH(M,\g) \otimes \bF_2.
\]
On the other hand, consider $\cW^\vee$ from $(-M',\g')$ to $(-M,\g)$.
Here $Z$ has an isolated component $Z_0$ corresponding to the contact 0-handle; i.e.,
$Z_0 \cap M = \emptyset$.
Hence, by \cite{JCob}*{Definition~10.1}, the map $F_{\cW^\vee}$ is defined by removing
a standard contact ball $B$ with connected dividing set $\delta$ on $\partial B$ from the interior of $Z_0$,
and adding $(B,\delta)$ to $(-M,\g)$.
The resulting cobordism is a product from $(-M',\g') = (-M,\g) \sqcup (B,\delta)$ to
$(-M',\g')$. Hence $F_{\cW^\vee}$ is $\id_{\SFH(-M',\g')}$, followed by the canonical
identification of $\SFH(-M',\g')$ with $\SFH(-M,\g)$, which is $\Phi_{-\xi}^\vee$.

Now suppose that $\cW$ is formed by adding a contact 1-handle to $(M,\g)$.
In this case, $F_\cW = \Phi_{-\xi}$, which is obtained by adding a strip to the boundary of
a Heegaard diagram for $(-M,-\g)$.
By Lemma~\ref{lem:Morsetheoryturningaroundsuturedcobordism}, the cobordism $\cW^\vee$
is obtained by gluing $(Z, -\xi)$ along $\d M'$ to $M' = M \cup_{\d M} -Z$, then attaching a
4-dimensional 3-handle.
Note that we attach $Z$ along $\d M'$, not $\d M$, so it becomes a contact 2-handle.
The 4-dimensional 3-handle is attached to a 2-sphere in $Z \cup_{\d M'} -Z$.
As in Lemma~\ref{lem:Morsetheoryturningaroundsuturedcobordism},
the 2-sphere is the union of the ascending flowlines of $f$ in $Z$
(i.e., the co-core of $Z$, viewed as a contact 1-handle attached to $-M$)
together with its image in the copy of $-Z$ that we glue onto $-M'$.
Diagrammatically, this is shown in Figure~\ref{fig::3}.
An easy model computation shows that this is equal to the dual of the contact 1-handle map.

\begin{figure}[ht!]
 \centering
 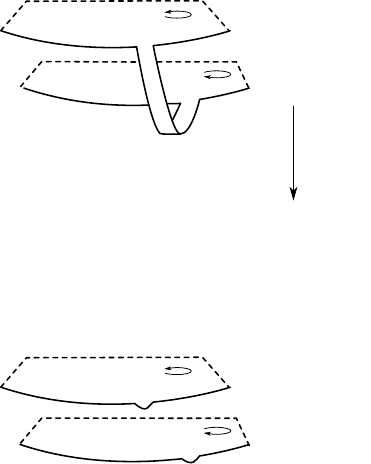
 \caption{Computing the cobordism map for a turned-around contact 1-handle.
 Orientations on the Heegaard surface are shown. \label{fig::3}}
\end{figure}

We now consider a sutured cobordism $\cW$ formed by a contact 2-handle
attachment to the sutured manifold $(M,\g)$. In this case, the dual
cobordism $\cW^\vee$ is formed by gluing $(Z,-\xi)$ to $M' = M \cup -Z$ along
$\d M'$, and then attaching a 4-dimensional 2-handle. Gluing $(Z,-\xi)$ to $M'$ is
now a contact 1-handle attachment. As described in
Lemma~\ref{lem:Morsetheoryturningaroundsuturedcobordism}, the knot that we
attach a 2-handle along is given as the union of the co-core of $Z$ (viewed
as a contact 2-handle) as well as its image in $-Z$.
Let $\cH'$ be an admissible diagram of $(-M',\g')$.
After adding the contact 1-handle and the 4-dimensional 2-handle, we get a diagram that is a compound
stabilization of $\cH'$. After performing a compound
destabilization, we get back to $\cH'$. An easy triangle map
computation for the 2-handle map shows that the composition of the triangle
map and the compound destabilization map is dual to the contact 2-handle map.
That the compound stabilization map agrees with the map from naturality is
shown in Proposition~\ref{prop:compoundstabilizationmap}. A schematic for the
turned-around contact 2-handle cobordism is shown in Figure~\ref{fig::4}. The
triangle map is shown in more detail in Figure~\ref{fig::5}.

\begin{figure}[ht!]
  \centering
  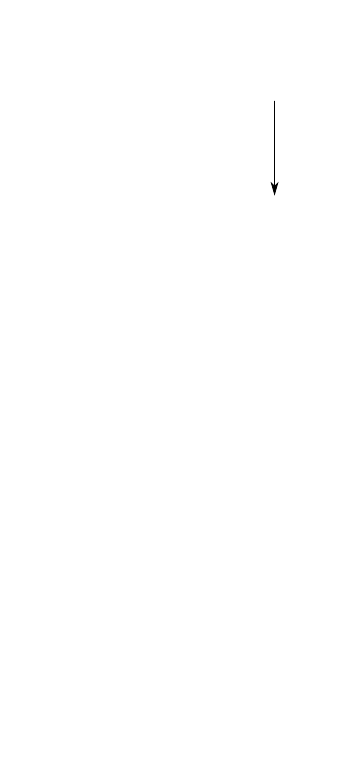
  \caption{Computing the cobordism map for a turned-around contact 2-handle.} \label{fig::4}
\end{figure}

\begin{figure}[ht!]
  \centering
  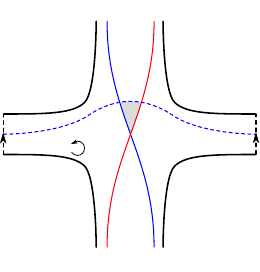
  \caption{The triangle map computation for the 4-dimensional 2-handle  map
  in the  cobordism map for a turned-around contact 2-handle. The orientation
  of the surface is shown. The two dashed lines with arrows on the left and
  right are identified. The only homology classes of triangles that have a
  vertex at $c$ have multiplicity 1 in the shaded region, and zero in the
  other regions shown.}
  \label{fig::5}
\end{figure}

Finally, we consider the case when $\cW$ is formed by adding a contact 3-handle.
In this case, $Z$ has an isolated component, and the cobordism map $F_\cW$
is obtained by removing a standard contact ball from the 3-ball we are adding.
Then the map is computed as a trivial gluing map, followed by a 4-dimensional 3-handle map.
The 4-dimensional 3-handle is attached along an embedded 2-sphere that is the
push-off of the boundary 2-sphere on which we are attaching the contact 3-handle.
The dual cobordism $\cW^\vee$ is formed by adding a contact 0-handle,
followed by a 4-dimensional 1-handle. Hence, the claim that
$F_\cW^\vee = F_{\cW^\vee}$ follows from the fact that
the 4-dimensional 1-handle and 3-handle maps are dual to each other,
as in \cite{OSTriangles}*{Theorem~3.5}.
\end{proof}

\section{Triangle cobordisms} \label{sec:trianglecobordisms}

If $\cT = (\S, \as, \bs, \gs)$ is a balanced sutured triple diagram,
then there is a natural sutured cobordism
\[
\cW_{\as,\bs,\gs} = (W_{\as,\bs,\gs}, Z_{\as,\bs,\gs}, \xi_{\as,\bs,\gs}),
\]
as in \cite[Section~5]{JCob}.
We note, however, that the construction of the contact structure
$\xi_{\as,\bs,\gs}$ in \cite[Section~5]{JCob} was incorrect, as it involved
gluing contact structures along annuli whose boundaries did not intersect the dividing set.
In this section, we will provide a different description, which we will take
as the definition.

Before describing the 4-manifold $W_{\as,\bs,\gs}$, we establish some
notation. For $\ts \in \{\as, \bs, \gs\}$, let $U_{\ts}$ be the sutured
compression body obtained from $\S \times I$ by attaching 3-dimensional 2-handles along
${\ts} \times \{0\} \subset \S \times \{0\}$.
We view $\S$ as being embedded into $\d U_{\ts}$ as $\S \times \{1\}$.
Similarly, we denote by $R_{\ts}$ the result of
surgering $\S \times \{0\}$ along $\ts \times \{0\}$. Using this notation,
\[
\partial U_{\ts} = \S \cup (\d \S \times I) \cup \bar{R}_{\ts},
\]
where $\bar{R}_{\ts}$ is the surface $R_{\ts}$ with the opposite orientation.

For $\ts$, $\ts' \in \{\as,\bs,\gs\}$, consider the 3-manifold
\[
M_{\ts,\ts'}:= U_{\ts} \cup_{\S} -U_{\ts'},
\]
and let $R_{\ts,\ts'}$ denote the surface
\[
\bar{R}_{\ts} \cup_{\d \S} R_{\ts'}.
\]
Here, we write $-U_{\ts'}$ for the 3-manifold $U_{\ts'}$ with the opposite
orientation. Note that, using this orientation convention, we have
\[
\d M_{\ts,\ts'} = R_{\ts,\ts'}.
\]
The oriented 1-manifold $\d \S$ has a natural embedding into
$\d M_{\ts,\ts'}$ that we denote $\g_{\ts,\ts'}$,
and $(M_{\ts,\ts'}, \g_{\ts,\ts'})$ is a sutured manifold with diagram $(\S,\ts,\ts')$.

\begin{figure}[ht!]
  \centering
\begingroup%
  \makeatletter%
  \providecommand\color[2][]{%
    \errmessage{(Inkscape) Color is used for the text in Inkscape, but the package 'color.sty' is not loaded}%
    \renewcommand\color[2][]{}%
  }%
  \providecommand\transparent[1]{%
    \errmessage{(Inkscape) Transparency is used (non-zero) for the text in Inkscape, but the package 'transparent.sty' is not loaded}%
    \renewcommand\transparent[1]{}%
  }%
  \providecommand\rotatebox[2]{#2}%
  \newcommand*\fsize{\dimexpr\f@size pt\relax}%
  \newcommand*\lineheight[1]{\fontsize{\fsize}{#1\fsize}\selectfont}%
  \ifx\svgwidth\undefined%
    \setlength{\unitlength}{112.76123625bp}%
    \ifx\svgscale\undefined%
      \relax%
    \else%
      \setlength{\unitlength}{\unitlength * \real{\svgscale}}%
    \fi%
  \else%
    \setlength{\unitlength}{\svgwidth}%
  \fi%
  \global\let\svgwidth\undefined%
  \global\let\svgscale\undefined%
  \makeatother%
  \begin{picture}(1,0.7745594)%
    \lineheight{1}%
    \setlength\tabcolsep{0pt}%
    \put(0,0){\includegraphics[width=\unitlength,page=1]{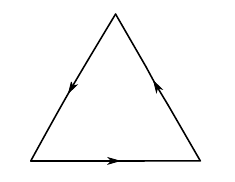}}%
    \put(0.69079499,0.42153744){\color[rgb]{0,0,0}\makebox(0,0)[lt]{\lineheight{0}\smash{\begin{tabular}[t]{l}$e_{\as}$\end{tabular}}}}%
    \put(0.4287637,0.00678979){\color[rgb]{0,0,0}\makebox(0,0)[lt]{\lineheight{0}\smash{\begin{tabular}[t]{l}$e_{\bs}$\end{tabular}}}}%
    \put(0.27481374,0.42153744){\color[rgb]{0,0,0}\makebox(0,0)[rt]{\lineheight{0}\smash{\begin{tabular}[t]{r}$e_{\gs}$\end{tabular}}}}%
    \put(0,0){\includegraphics[width=\unitlength,page=2]{fig56.pdf}}%
    \put(0.8727954,0.02862543){\color[rgb]{0,0,0}\makebox(0,0)[t]{\lineheight{0}\smash{\begin{tabular}[t]{c}$v_{\as,\bs}$\end{tabular}}}}%
    \put(0.12739514,0.02862543){\color[rgb]{0,0,0}\makebox(0,0)[t]{\lineheight{0}\smash{\begin{tabular}[t]{c}$v_{\bs,\gs}$\end{tabular}}}}%
    \put(0.48982642,0.74726168){\color[rgb]{0,0,0}\makebox(0,0)[t]{\lineheight{0}\smash{\begin{tabular}[t]{c}$v_{\gs,\as}$\end{tabular}}}}%
  \end{picture}%
\endgroup%

  \caption{A triangle $\Delta$, oriented as in the construction of
  $W_{\as,\bs,\gs}$. The boundary orientations of the edges $e_{\as},$
  $e_{\bs}$, and $e_{\gs}$ are also shown.}
  \label{fig::56}
\end{figure}

Let $\Delta$ be a regular triangle in $\C$ with edges $e_{\as}$, $e_{\bs}$, and $e_{\gs}$
appearing clockwise, and give $\Delta$ the complex orientation; see Figure~\ref{fig::56}.
We define the 4-manifold
\[
W_{\as,\bs,\gs} := (\Delta \times \S) \cup (e_{\as} \times U_{\as})
\cup (e_{\bs} \times U_{\bs}) \cup (e_{\gs} \times U_{\gs})/{\sim},
\]
where $\sim$ denotes gluing $\Delta \times \S$ to $e_{\ts} \times U_{\ts}$
along $e_{\ts} \times \S$ for $\ts \in \{\as, \bs, \gs\}$.
Furthermore, let
\[
Z_{\as,\bs,\gs}:= (\Delta \times \d \S) \cup (e_{\as}\times R_{\as})
\cup (e_{\bs}\times R_{\bs}) \cup (e_{\gs}\times R_{\gs}) \subset \d W_{\as,\bs,\gs},
\]
which we orient as the boundary of $W_{\as,\bs,\gs}$.
We then have the decomposition
\[
\d W_{\as,\bs,\gs} = -M_{\as,\bs}\cup -M_{\bs,\gs} \cup M_{\as,\gs} \cup Z_{\as,\bs,\gs}.
\]
Note that $M_{\as,\gs} = -M_{\gs,\as}$.
Using our orientation conventions,
\[
\d Z_{\as,\bs,\gs} = R_{\as,\bs} \sqcup R_{\bs,\gs} \sqcup \bar{R}_{\as,\gs}.
\]
There is a natural collection of sutures $\kappa_{\as,\bs,\gs}$ on $\d Z_{\as,\bs,\gs}$, defined as
\[
 \kappa_{\as,\bs,\gs}:= \{v_{\as,\bs}, v_{\bs,\gs}, v_{\gs,\as}\} \times \d \S =
 \g_{\as,\bs} \cup \g_{\bs,\gs} \cup \g_{\gs,\as},
\]
where $v_{\sigmas,\ts}$ is the vertex of $\Delta$ between $e_{\sigmas}$ and $e_{\taus}$.
Then
\[
R_+(\kappa_{\as,\bs,\gs}) = (\{v_{\as,\bs}\} \times R_{\bs}) \cup (\{v_{\bs,\gs}\} \times R_{\gs}) \cup
(\{v_{\gs,\as}\} \times R_{\as}).
\]

There is a natural contact structure $\xi_{\as,\bs,\gs}$ on
$Z_{\as,\bs,\gs}$ with dividing set $\kappa_{\as,\bs,\gs}$,
which is positive for the orientation of $Z_{\as,\bs,\gs}$ we have described. We delay our
description of $\xi_{\as,\bs,\gs}$ until Section~\ref{sec:xi-abgconstruction}.
The triple $\cW_{\as,\bs,\gs} = (W_{\as,\bs,\gs}, Z_{\as,\bs,\gs}, \xi_{\as,\bs,\gs})$
is a sutured manifold cobordism from $(M_{\as,\bs}, \g_{\as,\bs}) \sqcup (M_{\bs,\gs}, \g_{\bs,\gs})$
to $(M_{\as,\gs}, \g_{\as,\gs})$. The main goal of this section is to prove the following:

\begin{thm}\label{thm:trianglemapiscobmap}
Let $(\S,\as,\bs,\gs)$ be an admissible balanced sutured triple diagram.
Then the cobordism map
\[
F_{\cW_{\as,\bs,\gs}} \colon \CF(\S,\as,\bs) \otimes \CF(\S,\bs,\gs) \to \CF(\S,\as,\gs)
\]
is chain homotopic to the map $F_{\as,\bs,\gs}$ defined in \cite[Definition~5.13]{JCob}
that counts holomorphic triangles on the triple diagram~$(\S,\as,\bs,\gs)$.
\end{thm}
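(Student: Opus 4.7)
The plan is to factor the cobordism map $F_{\cW_{\as,\bs,\gs}}$ into an explicit sequence of contact handles and 4-dimensional handles, and then to use the compound-stabilization results of Section~\ref{sec:compoundstab} to identify the composition with the triangle map $F_{\as,\bs,\gs}$ up to chain homotopy. First, I would split $\cW_{\as,\bs,\gs}$ as $\cW^s \circ \cW^b$ into its boundary and special parts, so that $F_{\cW_{\as,\bs,\gs}} = F_{\cW^s} \circ \Phi_{-\xi_{\as,\bs,\gs}}$. The contact structure $\xi_{\as,\bs,\gs}$, whose construction is deferred to Section~\ref{sec:xi-abgconstruction}, is built from $I$-invariant pieces over the edges of the triangle $\Delta$. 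Choosing a contact cell decomposition of $(Z_{\as,\bs,\gs},\xi_{\as,\bs,\gs})$ adapted to a Morse function on $\Delta$, viewed as a cobordism from $e_{\as} \cup e_{\bs}$ to $e_{\gs}$, and applying Propositions~\ref{prop:glueinproduct},~\ref{prop:computemorsetypehandlemap}, and~\ref{prop:compositionlaw}, one can rewrite $\Phi_{-\xi_{\as,\bs,\gs}}$ as a composition of naturality maps with contact 1- and 2-handles concentrated near $\d\S$ in the interior of $\Delta$. These handles pair up into canceling 1-2 pairs that realize compound stabilizations of the triple diagram, as in the proof of Theorem~\ref{thm:invarianceofcontacthandlemap}.

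Next, I would exhibit the special cobordism $\cW^s$ as a sequence of 4-dimensional 2-handle attachments along the $\as$, $\bs$, and $\gs$ curves in neighborhoods of the corresponding Heegaard surfaces of the enlarged diagram. Using Lemma~\ref{lem:disjointhandlescommute} to reorder disjoint contact handles past the 4-dimensional handles, $F_{\cW_{\as,\bs,\gs}}$ assumes the form of a compound stabilization map, followed by the triangle map on the stabilized triple, followed by a compound destabilization map identifying the output with $\CF(\S,\as,\gs)$. Lemmas~\ref{lem:compoundtrianglemap1} and~\ref{lem:compoundtrianglemap2} reduce the triangle map on the stabilized triple to $F_{\as,\bs,\gs}$ on the original triple, tensored with the tautological contribution on the stabilizing tube, while Proposition~\ref{prop:compoundstabilizationmap} identifies the outer compound stabilization/destabilization maps with the naturality transition maps of sutured Floer homology. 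Together these identifications collapse the composition to $F_{\as,\bs,\gs}$.

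The main obstacle, I expect, will be in matching the combinatorics of the contact cell decomposition of $(Z_{\as,\bs,\gs},\xi_{\as,\bs,\gs})$ to the triangle geometry, so that the induced compound stabilizations fit precisely into the framework of Lemmas~\ref{lem:compoundtrianglemap1} and~\ref{lem:compoundtrianglemap2}. A related subtlety is the order in which the contact handles and the 4-dimensional 2-handles must be attached, so that Proposition~\ref{prop:1handletrianglecount} can be applied to pass any residual 1- and 3-handle maps through the triangle map without altering the count. Once these arrangements are made, the final identification with $F_{\as,\bs,\gs}$ reduces to a local model computation inside the compound-stabilizing tubes, which is already handled by the lemmas of Section~\ref{sec:compoundstab}.
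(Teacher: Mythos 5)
Your overall architecture is the same as the paper's (split $\cW_{\as,\bs,\gs}$ into boundary and special parts, compute the contact gluing map by an explicit handle decomposition of $(Z_{\as,\bs,\gs},\xi_{\as,\bs,\gs})$ concentrated near $\d\S$, and invoke Proposition~\ref{prop:compoundstabilizationmap} and the triangle counts of Section~\ref{sec:compoundstab}), but as written the plan has two genuine gaps. First, the gluing of $Z_{\as,\bs,\gs}$ identifies $\bar{R}_{\bs}\subset \d M_{\as,\bs}$ with $R_{\bs}\subset \d M_{\bs,\gs}$ (Lemma~\ref{lem:xi-abg-istwoIinvariantgluedtogether}), and to see this on the level of diagrams one must first replace $(\S,\bs,\gs)$ by a \emph{weakly conjugated} diagram $C(\cH_{\bs,\gs})$ whose Heegaard surface contains $R_{\bs}$ as an actual subsurface; the output of the gluing map then lives on the \emph{amalgamated} diagram $A(\cT)$ on $\S\,\natural\,\bar{\S}\,\natural\,R_{\gs}$ (Sections~\ref{sec:weaklyconjugated} and~\ref{sec:glueddiagram}). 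This intermediate diagram is not a compound stabilization of $(\S,\as,\bs,\gs)$ — its genus is roughly doubled and it carries the new curve systems $\bar{\bs}$, $\ds_{\bs}$, $\es$ — so your picture of ``canceling 1--2 pairs realizing compound stabilizations of the triple diagram'' does not account for it. The contact 2-handles in the decomposition of $Z_{\as,\bs,\gs}$ do not cancel the 1-handles; they assemble a copy of $\bar{R}_{\bs}$ which must later be removed by handleslides and compound destabilizations (this is where Proposition~\ref{prop:compoundstabilizationmap} and Lemmas~\ref{lem:compoundtrianglemap1}--\ref{lem:compoundtrianglemap2} actually enter, in the proof of Proposition~\ref{prop:gluingmapformula}).

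Second, your mechanism for collapsing the composition to $F_{\as,\bs,\gs}$ is wrong in two places. The special part $(\cW_{\as,\bs,\gs})^s$ is \emph{not} a sequence of 2-handles along the $\as$, $\bs$, and $\gs$ curves: by Lemma~\ref{lem:Wabgspecialcobordism2handle} it consists of exactly $|\bs|$ 2-handles, attached along the doubles of the cocores of the $\bs$-compressions in $-U_{\bs}\cup_{R_{\bs}}U_{\bs}$, and its map is a holomorphic triangle map $F_{D_{\gs}(\as),A(\bs),\Ds_{\S}}(-,\Theta_{A(\bs),\Ds_{\S}})$ on the amalgamated diagram. The final identification with $F_{\as,\bs,\gs}$ is then \emph{not} a local model computation in a stabilizing tube: it requires (i) recognizing the gluing map as the amalgamation map $\Psi = F_{D_{\gs}(\as),D_{\gs}(\bs),A(\bs)}\circ(F_1\otimes F_1)$, (ii) the explicit descriptions of the change-of-diagrams maps between $\cH$, $D_{\bs}(\cH)$, and $C(\cH)$ as compositions of 1-/3-handle maps with triangle maps (Lemmas~\ref{lem:changeofdiagramsdoubleddiagram} and~\ref{lem:changeofdiagramsdoubletoconj}), and (iii) repeated use of \emph{associativity of the holomorphic triangle maps} to reassociate the various $\Theta$-inputs until only the count on $(\S,\as,\bs,\gs)$ remains. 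Associativity is the engine of the final computation and is absent from your proposal; Proposition~\ref{prop:1handletrianglecount} only lets you commute 1- and 3-handle maps past triangle maps, which is necessary bookkeeping but does not by itself produce the collapse.
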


The proof occupies the remainder of this section. The first step is to
define the contact structure $\xi_{\as,\bs,\gs}$ in detail, and describe
some useful properties. Next, we introduce some special diagrams
called ``doubled diagrams'' and ``weakly conjugated diagrams'' that appear
when computing the cobordism map for $\cW_{\as,\bs,\gs}$.
We then compute convenient formulas for the contact gluing map for
$(Z_{\as,\bs,\gs}, \xi_{\as,\bs,\gs})$, and for the
4-dimensional handle attachments of the cobordism $\cW_{\as,\bs,\gs}$.
Finally, we put all the pieces together, using associativity of the triangle
maps and some other relations to show that the cobordism map
$F_{\cW_{\as,\bs,\gs}}$ is chain homotopic to the triangle map $F_{\as,\bs,\gs}$.

\subsection{The contact structure \texorpdfstring{$\xi_{\a,\b,\g}$}{} on \texorpdfstring{$Z_{\a,\b,\g}$}{Z}}
\label{sec:xi-abgconstruction}
The sutured manifold $(Z_{\as,\bs,\gs},\kappa_{\as,\bs,\gs})$ has a natural
contact structure $\xi_{\as,\bs,\gs}$ that we define by decomposing $Z_{\as,\bs,\gs}$
along convex surfaces. The construction generalizes to the case of sutured
multi-diagrams $(\S,\etas_1, \dots, \etas_n)$ for arbitrary $n \ge 1$;
though in this paper, we will only need it for $n \in \{1,2,3\}$.

Let $P_n$ be an $n$-gon, viewed as the complex unit disk with boundary divided into $n$ arcs
labeled $e_{\etas_1}, \dots, e_{\etas_n}$ clockwise. We write $v_{\etas_{i-1}, \etas_i}$
for the terminal endpoint of $e_{\etas_i}$ for $i \in \{1, \dots, n\}$, where $\etas_0 := \etas_n$.
Let
\begin{equation}
Z_{\etas_1,\dots,\etas_n} = (\Delta \times \d \S) \cup (e_{\etas_1} \times R_{\etas_1})
\cup \dots \cup (e_{\etas_n} \times R_{\etas_n})
\label{eq:Zabgdef}
\end{equation}
with sutures
\[
\kappa_{\etas_1, \dots, \etas_n} = \{v_{\etas_1, \etas_2}, \dots, v_{\etas_n, \etas_1}\} \times \d \S =
\g_{\etas_1, \etas_2} \cup \dots \cup \g_{\etas_n, \etas_1}
\]
and
\[
R_+(\kappa_{\etas_1, \dots, \etas_n}) = (\{v_{\etas_1,\etas_2}\} \times R_{\etas_2}) \cup \dots \cup
(\{v_{\etas_n,\etas_1}\} \times R_{\etas_1}).
\]

We now define the contact structure $\xi_{\etas_1, \dots, \etas_n}$ on
$(Z_{\etas_1, \dots, \etas_n}, \kappa_{\etas_1, \dots, \etas_n})$.
Let us write $Z_0$ for $P_n \times \d \S$, which is a union of $|\d \S|$ solid tori.
Let $\g_0$ consist of two parallel longitudinal sutures on each component of $\d Z_0$,
such that their projections to $P_n$ wind
positively around $\d P_n$ with respect to the orientation of $P_n$
(i.e., they wind counterclockwise around $\d P_n \subset \C$).
Since $(Z_0,\g_0)$ is product disc decomposable, it admits a unique
tight contact structure $\xi_0$, up to equivalence, which has $\d Z_0$ as a convex
surface with dividing set $\g_0$.

For $\ts \in \{\etas_1, \dots, \etas_n\}$, write $(Z_{\ts}, \g_{\ts})$ for the product sutured manifold
$(e_{\ts} \times R_{\ts}, \{m_{\ts}\} \times \d R_{\ts})$, where $m_{\ts}$ is the
midpoint of $e_{\ts}$. The sutured manifold $(Z_{\ts}, \g_{\ts})$ is
product disc decomposable, and hence admits a unique tight contact structure
$\xi_{\ts}$ with dividing set $\g_{\ts}$, up to equivalence. Let
$s_{\ts}$ denote a small translation of the suture
$\g_{\ts}$ for $\ts \in \{\etas_1, \dots, \etas_n\}$, such that
each component of $s_{\ts}$ intersects the corresponding
component of $\g_{\ts}$ transversely at exactly two points.
Let $N(s_{\ts})\subset \d Z_{\ts}$ denote
a small regular neighborhood of $s_{\ts}$ that intersects $\g_{\ts}$
along two arcs. Using Giroux's Legendrian realization principle, we may
assume that each $\d N(s_{\ts})$ is Legendrian.

We now describe how the subsurfaces $N(s_{\ts})\subset Z_{\ts}$ are glued to $Z_0$.
By picking $\g_0$ appropriately, we may assume that each component of
$\{m_{\ts}\} \times \d \S$ intersects $\g_0$ transversely at two points.
For each $\ts \in \{\etas_1, \dots, \etas_n\}$, we pick a small neighborhood
$N(\{m_{\ts}\}\times \d \S) \subset \d Z_0$, such that each component of
$N(\{m_{\ts}\}\times \d \S)$  intersects $\g_0$ along two arcs. Using
Legendrian realization, we may assume that $\d N(\{m_{\ts}\} \times \d \S )$ is Legendrian.

We glue $(Z_{\ts},\xi_{\ts})$ for every $\ts \in \{\etas_1, \dots, \etas_n\}$ to $(Z_0, \xi_0)$ by
identifying $N(s_{\ts})$ and $N(\{m_{\ts}\} \times \d \S)$.
We let $\xi_{\etas_1, \dots, \etas_n}$ be the resulting contact structure. After rounding the
Legendrian corners, the contact structure $\xi_{\etas_1, \dots, \etas_n}$ has  dividing
set isotopic to $\kappa_{\etas_1, \dots, \etas_n}$. This is shown in
Figure~\ref{fig::57} for $n = 3$ and $(\etas_1, \etas_2, \etas_3) = (\as, \bs, \gs)$.
As $N(s_{\ts})$ is unique up to isotopy, $\xi_{\etas_1, \dots, \etas_n}$ is well defined up to equivalence.

\begin{figure}[ht!]
  \centering
  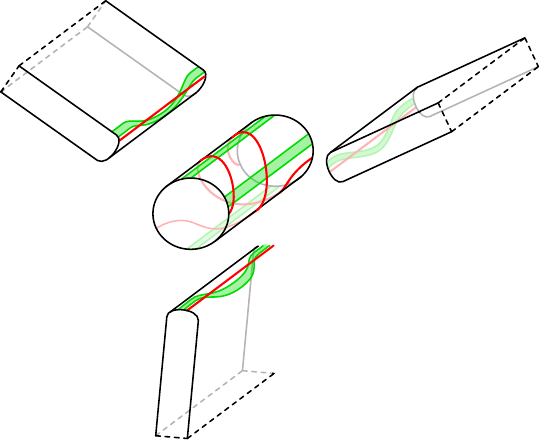
  \caption{Constructing the contact structure $\xi_{\as,\bs,\gs}$ on
  $Z_{\as,\bs,\gs}$ by gluing along convex surfaces. We glue
  $(Z_{\as},\xi_{\as})$, $(Z_{\bs},\xi_{\bs})$, and $(Z_{\gs},\xi_{\gs})$ to
  $(Z_0,\xi_0)$ along the green strips, which have Legendrian boundary. The
  faces on the front side are identified with the faces on the back.}
  \label{fig::57}
\end{figure}

The following will be useful throughout the paper:

\begin{lem}\label{lem:xi-ab-isproduct}
If $(\S,\as,\bs)$ is a sutured Heegaard diagram, then the sutured
manifold $(Z_{\as,\bs}, \kappa_{\as,\bs})$ is diffeomorphic to
\[
\left(I \times R_{\as,\bs} , (-\{0\} \times \g_{\as,\bs}) \cup (\{1\} \times \g_{\as,\bs})\right),
\]
and the contact structure $\xi_{\as,\bs}$ is isotopic to the $I$-invariant contact structure on
$I \times R_{\as,\bs} $ with dividing set $\{t\} \times \g_{\as,\bs}$ on $\{t\} \times R_{\as,\bs}$
for every $t \in I$.
\end{lem}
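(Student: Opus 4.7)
The plan is to prove the lemma in two steps: first, topologically identify $Z_{\as,\bs}$ with $R_{\as,\bs}\times I$ via a foliation of the bigon $\Delta$; and second, show that $\xi_{\as,\bs}$ is equivalent to the $I$-invariant contact structure, either by exhibiting a transverse contact vector field or by invoking uniqueness for tight contact structures on product sutured manifolds.

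For the topological step, model the bigon $\Delta$ as the unit disk in $\R^2$, with vertices $v_0=(-1,0)$ and $v_1=(1,0)$, and with $e_{\as}$ and $e_{\bs}$ being the upper and lower semicircles respectively. Foliate the interior of $\Delta$ by the vertical arcs $\lambda_x:=\{x\}\times[-\sqrt{1-x^2},\sqrt{1-x^2}]$ for $x\in(-1,1)$; each such $\lambda_x$ has one endpoint on $e_{\as}$ and one on $e_{\bs}$. Under the defining gluings of $Z_{\as,\bs}$, the slice
\[
F_x:=\left(R_{\as}\times\{\lambda_x\cap e_{\as}\}\right)\cup\left(\d\S\times\lambda_x^{\mathrm{int}}\right)\cup\left(R_{\bs}\times\{\lambda_x\cap e_{\bs}\}\right)
\]
is embedded in $Z_{\as,\bs}$ and diffeomorphic to $R_{\as,\bs}=\bar R_{\as}\cup_{\d\S}R_{\bs}$, with the annular bridge $\d\S\times\lambda_x^{\mathrm{int}}$ corresponding to a collar of $\d\S$. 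After smoothing the corners of $\Delta$ at $v_0,v_1$ consistently with the corner rounding used in the construction of $\xi_{\as,\bs}$, the family $\{F_x\}$ extends smoothly to $x=\pm 1$, where it recovers the two boundary copies of $R_{\as,\bs}$ in $\d Z_{\as,\bs}$. A reparametrization $x\in[-1,1]\leftrightarrow t\in[0,1]$ then yields a diffeomorphism $\varphi\colon R_{\as,\bs}\times I\to Z_{\as,\bs}$ carrying $\d\S\times\{0,1\}$ onto $\g_{\as,\bs}=\d\S\times\{v_0,v_1\}$.

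For the contact step, I will build a nonvanishing contact vector field $\nu$ on $Z_{\as,\bs}$ transverse to the foliation $\{F_t\}$. On the product pieces $Z_{\as}=R_{\as}\times e_{\as}$ and $Z_{\bs}=R_{\bs}\times e_{\bs}$, the tautological vector fields along the $e_{\as}$- and $e_{\bs}$-directions are contact vector fields for $\xi_{\as}$ and $\xi_{\bs}$, transverse to the $R_{\as}$- and $R_{\bs}$-slices. On each solid-torus component of $Z_0=\d\S\times\Delta$, the unique tight model $\xi_0$ admits a contact vector field transverse to the annular slices $\d\S\times\lambda_x$, extracted from the standard $S^1$-invariant tight solid torus. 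These local vector fields can be matched along the gluing annuli $N(s_{\as})\iso N(\d\S\times\{m_{\as}\})$ and $N(s_{\bs})\iso N(\d\S\times\{m_{\bs}\})$ because the construction of $\xi_{\as,\bs}$ arranges the dividing sets to agree on both sides, yielding a global contact vector field $\nu$. The flow of $\nu$ then realizes $(Z_{\as,\bs},\xi_{\as,\bs})$ as an $I$-invariant contact manifold over $R_{\as,\bs}$ with dividing set $\d\S$ on each slice, as desired. Alternatively, since $\xi_{\as,\bs}$ is tight (being glued from tight pieces along convex surfaces with matching dividing sets), and $(R_{\as,\bs}\times I,\d\S\times\{0,1\})$ is a product sutured manifold, one may appeal to the Honda--Kazez--Mati\'c classification of tight contact structures on such manifolds (cf.\ \cite{HKMSutured}) to conclude uniqueness directly.

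The main technical obstacle will be the careful matching of the three local contact vector fields across the rounded corners at the gluing annuli, so that $\nu$ is genuinely smooth and transverse to $\{F_t\}$ globally. A secondary point is verifying that the corner smoothing of $\Delta$ at $v_0,v_1$ can be chosen compatibly with the corner rounding in the construction of $\xi_{\as,\bs}$, so that the slices $F_x$ extend smoothly to the boundary. Once these local matching issues are resolved, the product-structure conclusion and the equivalence with the $I$-invariant contact structure follow immediately.
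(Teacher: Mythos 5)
Your topological identification of $Z_{\as,\bs}$ with $R_{\as,\bs}\times I$ via the foliation of the bigon is fine, but the contact-geometric step has a genuine gap in both routes you offer. In route (a), the assertion that the slice-transverse contact vector fields on $Z_{\as}$, $Z_{\bs}$, and $Z_0$ ``can be matched along the gluing annuli because the dividing sets agree'' is not a proof: a contact vector field transverse to a convex surface is highly non-unique, and agreement of dividing sets on $N(s_{\ts})\iso N(\d\S\times\{m_{\ts}\})$ gives no canonical interpolation between the chosen fields, nor any compatibility with your pre-chosen foliation $\{F_x\}$. You flag this as ``the main technical obstacle'' and then do not resolve it, so the $I$-invariance is never actually established. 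Route (b) is worse: $(R_{\as,\bs}\times I,\,\d\S\times\{0,1\})$ is an invariant neighborhood of a \emph{closed} convex surface, not a product disk decomposable sutured manifold, and tight contact structures on $F\times I$ for closed $F$ with prescribed boundary dividing sets are in general far from unique (already on $T^2\times I$ there are infinitely many). The Honda--Kazez--Mati\'c uniqueness you cite applies to the pieces $Z_{\as}$, $Z_{\bs}$, $Z_0$, not to the glued-up manifold. Moreover, tightness of a contact structure obtained by gluing tight pieces along convex surfaces is not automatic, so even the premise of (b) is unjustified (and is not needed for the lemma).

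The way to close the gap is to run the argument in the opposite direction, which is what the paper does: start from the $I$-invariant contact structure on $R_{\as,\bs}\times I$, choose two Legendrian realizations $s_1,s_2$ of small translates of $\d\S$, and cut along the annuli $s_i\times I$. Because the $s_i$ are Legendrian, the characteristic foliation on $s_i\times I$ is explicit, Giroux's convexity criterion applies, and the dividing set consists of two vertical arcs; after edge rounding, the three complementary pieces are exactly the product disk decomposable sutured manifolds $(Z_{\as},\g_{\as})$, $(Z_{\bs},\g_{\bs})$, $(Z_0,\g_0)$, each carrying its unique tight contact structure. Since $\xi_{\as,\bs}$ is \emph{defined} by regluing these pieces along such convex annuli, the two structures agree. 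This direction requires only a dividing-set computation and uniqueness on the small pieces, and entirely avoids the vector-field matching problem on which your proposal founders.
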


\begin{proof}
By construction, $Z_{\as,\bs}$ is obtained by gluing $e_{\as} \times R_{\as}$ and $e_{\bs} \times R_{\bs}$
to $P_2 \times \d \S$. This is diffeomorphic to gluing $I \times R_{\as}$ to $I \times R_{\bs}$
by identifying $(t,x) \in I \times \d \S$ with $(1-t,x) \in I \times \d \S$, where $\d R_{\as} = \d R_{\bs} = \d \S$.
Hence $Z_{\as,\bs}$ is diffeomorphic to $I \times (\bar{R}_{\as} \cup R_{\bs}) = I \times R_{\as,\bs}$.
Under this diffeomorphism, the sutures $\kappa_{\as,\bs} = \g_{\as,\bs} \cup \g_{\bs,\as}$ are mapped to
$(-\{0\} \times \g_{\as,\bs}) \cup (\{1\} \times \g_{\as,\bs})$, since $\g_{\bs,\as} = -\g_{\as,\bs}$.
In particular, $R_+(\kappa_{\as,\bs})$ is identified with $(\{0\} \times R_{\as}) \cup (\{1\} \times R_{\bs})$.

To see that the contact structure $\xi_{\as,\bs}$ is isotopic to the $I$-invariant contact
structure in the statement, we will construct a decomposition of
the latter along convex annuli that cuts
$I\times R_{\as,\bs} $ into the disjoint union of the three contact manifolds
$(Z_{\as}, \xi_{\as})$, $(Z_{\bs}, \xi_{\bs})$, and $(Z_0,\xi_0)$.
Let $s_1$, $s_2 \subset R_{\as,\bs}$ be two disjoint curves that
are both small translates of the dividing set $\g_{\as,\bs} = \d \S$.
We assume that $s_1$ and $s_2$ each intersects $\g_{\as,\bs}$
transversely at two points. Using Legendrian realization, we can assume that
both $s_1$ and $s_2$ are Legendrian.

We will cut $I\times R_{\as,\bs}$ along the two annuli $I\times s_1$ and
$I\times s_2$. We note that since $s_1$ and $s_2$ are Legendrians, the
characteristic foliations on $I\times s_1$ and $I\times s_2$ are simple to
describe. They consist of the horizontal leaves $\{t\}\times s_i$ for all
$t \in I$, as well as two vertical lines of singularities along $I\times \{p\}$ for
$p \in s_i \cap \g_{\as,\bs}$. This is shown in Figure~\ref{fig::59}. We
note that the characteristic foliation satisfies the Poincar\'{e}-Bendixson
property (the limit set of a flow line consists of a singular point, a
periodic orbit, or a finite union of singular points and connecting orbits), and has
no closed orbits or retrograde saddle connections. So, by the work of
Giroux~\cite{GirouxCharacteristicFoliations}, the surfaces $I\times s_i$ are convex.
Furthermore, the dividing set on $I\times s_i$
consists of two curves of the form $I\times \{q\}$, where
$q \in s_i \setminus \g_{\as,\bs}$. See also \cite[Example~2.24]{Etnyre}.

After rounding the Legendrian corners that appear when we cut along
$I\times s_i$, we obtain the disjoint union of the sutured manifolds
$(Z_{\as},\xi_{\as})$, $(Z_{\bs},\xi_{\bs})$, and $(Z_0,\g_0)$.
Furthermore, the contact structures obtained on the three pieces are isotopic
to $\xi_{\as}$, $\xi_{\bs}$, and $\xi_0$, since they are tight by Giroux's
criterion~\cite{HondaClassI}*{Theorem~3.5}, and $\xi_{\as}$, $\xi_{\bs}$, and
$\xi_0$ are the unique tight contact structures, by definition. A picture of
the convex decomposition of $I\times R_{\as,\bs}$ along $I\times s_1$ and
$I\times s_2$ is shown in Figure~\ref{fig::58}.
\end{proof}

\begin{figure}[ht!]
    \centering
    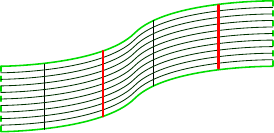
    \caption{The characteristic foliation (thin black) and the dividing set
    (thick red) on the annulus $I\times s_i \subset I\times  R_{\as,\bs}$.
    The vertical black lines consist of singularities. The
    left- and right-hand sides of the surface are identified to form an annulus.}
    \label{fig::59}
\end{figure}

\begin{figure}[ht!]
    \centering
    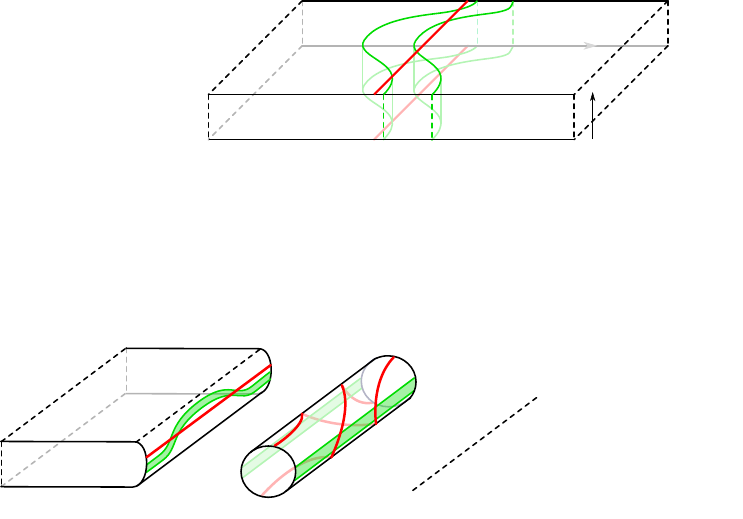
    \caption{Decomposing $I\times R_{\as,\bs}$ (top) along two convex annuli
    $I\times s_1$ and $I\times s_2$, we obtain the middle picture. After
    rounding the Legendrian corners, we obtain the bottom picture, which is
    the disjoint union of $Z_{\as}$, $Z_{\bs}$, and $Z_0$. The front and back
    sides of each picture are identified.}
    \label{fig::58}
\end{figure}

Finally, we need one additional description of the contact structure
$\xi_{\as,\bs,\gs}$ on $Z_{\as,\bs,\gs}$, in terms of gluing
$(Z_{\as,\bs},\xi_{\as,\bs})$ and $(Z_{\bs,\gs}, \xi_{\bs,\gs})$ together. By
Lemma~\ref{lem:xi-ab-isproduct}, the contact manifolds
$(Z_{\as,\bs},\xi_{\as,\bs})$ and $(Z_{\bs,\gs},\xi_{\bs,\gs})$ are
contactomorphic to $I\times R_{\as,\bs}$ and $I\times R_{\bs,\gs}$, respectively,
with $I$-invariant contact structures. We pick automorphisms of each of the
surfaces $R_{\as,\bs}$ and $R_{\bs,\gs}$, supported in
neighborhoods of $\g_{\as,\bs}$ and $\g_{\bs,\gs}$, that perturb
the dividing sets $\g_{\as,\bs}$ and $\g_{\bs,\gs}$ slightly. Write
$s_{\as,\bs} \subset R_{\as,\bs}$ and $s_{\bs,\gs} \subset R_{\bs,\gs}$ for the
images of $\g_{\as,\bs}$ and $\g_{\bs,\gs}$, respectively. We assume
that $s_{\as,\bs}$ and $\g_{\as,\bs}$ intersect transversely at two
points, and similarly for $s_{\bs,\gs}$ and $\g_{\bs,\gs}$. Let
$\bar{R}_{\bs}' \subset \{0\}\times R_{\as,\bs}$, oriented as the boundary
of $I\times R_{\as,\bs}$, denote the image of
$\{0\} \times \bar{R}_{\bs}  \subset \{0\}\times  R_{\as,\bs} $, and let
$R_{\bs}' \subset \{0\} \times R_{\bs,\gs} $ denote the image of
$\{0\}\times R_{\bs} \subset \{0\}\times  R_{\bs,\gs}$.
Using the Legendrian realization principle,
we may assume that $s_{\as,\bs}$ and $s_{\bs,\gs}$ are Legendrian.
The following description of $(Z_{\as,\bs,\gs},\xi_{\as,\bs,\gs})$
will be useful for our purposes:

\begin{figure}[ht!]
 \centering
 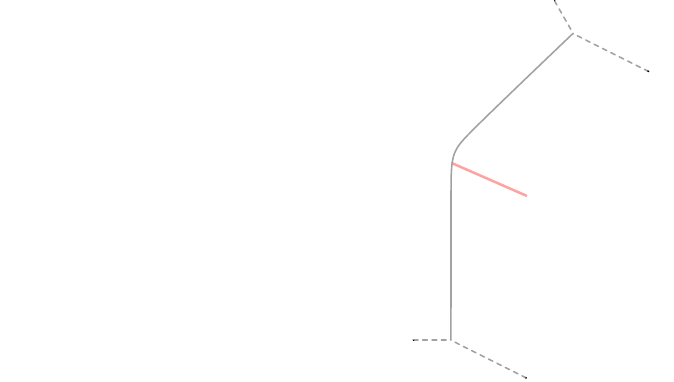
 \caption{Gluing $(I\times R_{\as,\bs} , \xi_{\as,\bs})$ and
 $(I\times R_{\bs,\gs} , \xi_{\bs,\gs})$ together to obtain
 $(Z_{\as,\bs,\gs}, \xi_{\as,\bs,\gs})$. We glue along the green shaded
 subsurfaces labeled $R_{\bs}'$ and $\bar{R}_{\bs}'$.  The dividing sets
 $\g_{\as,\bs}$ and $\g_{\bs,\gs}$ are shown in red. The curves
 $s_{\as,\bs}$ and $s_{\bs,\gs}$ are small perturbations of the dividing
 sets, and are Legendrian. \label{fig::51}}
\end{figure}

\begin{lem}\label{lem:xi-abg-istwoIinvariantgluedtogether}
The contact structure $(Z_{\as,\bs,\gs}, \xi_{\as,\bs,\gs})$ is equivalent to
the one obtained by gluing $(I\times R_{\as,\bs}, \xi_{\as,\bs})$ and
$(I\times R_{\bs,\gs}, \xi_{\bs,\gs})$ together along the surfaces $R_{\bs}'$
and $\bar{R}_{\bs}'$ (which are convex with Legendrian boundary), described
above; see Figure~\ref{fig::51}.
\end{lem}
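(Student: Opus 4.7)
The plan is to decompose both contact manifolds along convex surfaces into the same elementary pieces, each of which carries a tight contact structure uniquely determined by its dividing set, and then invoke uniqueness of tight contact structures on product disk decomposable sutured manifolds to conclude equivalence.

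First, I will apply Lemma~\ref{lem:xi-ab-isproduct} to each of the two factors $(R_{\as,\bs}\times I,\xi_{\as,\bs})$ and $(R_{\bs,\gs}\times I,\xi_{\bs,\gs})$. This decomposes them along Legendrian realizations of translates of the dividing sets into contactomorphic copies of $(Z_{\as},\xi_{\as}) \sqcup (Z_0^{(1)},\xi_0^{(1)}) \sqcup (Z_{\bs}^{(1)},\xi_{\bs}^{(1)})$ and $(Z_{\bs}^{(2)},\xi_{\bs}^{(2)}) \sqcup (Z_0^{(2)},\xi_0^{(2)}) \sqcup (Z_{\gs},\xi_{\gs})$, respectively, where the pieces with identical subscripts come from the two different factors. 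I will choose the Legendrian cutting annuli in each factor so that their endpoints on the gluing regions $\bar R_{\bs}'$ and $R_{\bs}'$ line up under the identification $\bar R_{\bs}' \sim R_{\bs}'$.

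Second, I will analyze the effect of the gluing along $\bar R_{\bs}' \sim R_{\bs}'$ on this decomposition. Because the cutting annuli were arranged compatibly, the gluing identifies $Z_{\bs}^{(1)}$ with $Z_{\bs}^{(2)}$ along a rectangle of the form $N(s_{\bs})$, and similarly identifies a portion of the boundary of $Z_0^{(1)}$ with $Z_0^{(2)}$ along a rectangle of the form $N(\d \S\times \{m_{\bs}\})$. After rounding the resulting Legendrian corners, the combined piece $Z_{\bs}^{(1)} \cup Z_{\bs}^{(2)}$ is a product sutured solid torus whose dividing set is a pair of longitudinal curves, and the contact structure on it has convex boundary with no closed curves in the dividing set, hence is tight by Giroux's criterion (Remark~\ref{rem:disk}). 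Since such a sutured manifold admits a unique tight contact structure up to equivalence, this combined piece is contactomorphic to $(Z_{\bs},\xi_{\bs})$. The same argument applied to $Z_0^{(1)} \cup Z_0^{(2)}$ recovers $(Z_0,\xi_0)$.

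Finally, I will check that the resulting decomposition of the glued manifold into $(Z_0,\xi_0)$, $(Z_{\as},\xi_{\as})$, $(Z_{\bs},\xi_{\bs})$, $(Z_{\gs},\xi_{\gs})$, glued along Legendrian rectangles $N(s_{\ts})\sim N(\d\S\times\{m_{\ts}\})$ for $\ts\in\{\as,\bs,\gs\}$, matches the construction of $(Z_{\as,\bs,\gs},\xi_{\as,\bs,\gs})$ given in Section~\ref{sec:xi-abgconstruction}. This is a topological identification together with a verification that the gluing rectangles and induced dividing sets correspond. The main obstacle is bookkeeping: orientations of the pieces, the way the Legendrian cutting annuli in the two factors meet the identification surface $\bar R_{\bs}'\sim R_{\bs}'$, and the edge-rounding that converts the piecewise-smooth convex boundary of each combined piece into the standard smooth boundary used in Section~\ref{sec:xi-abgconstruction}. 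Once these identifications are made carefully, both constructions produce the same contact structure on $Z_{\as,\bs,\gs}$ up to the equivalence relation of Definition~\ref{def:equivalence}.
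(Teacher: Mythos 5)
Your route runs in the opposite direction to the paper's. The paper starts from $(Z_{\as,\bs,\gs},\xi_{\as,\bs,\gs})$ and cuts it along a single convex surface $S_{\bs}\cup S_0$ (the middle level of $Z_{\bs}$ together with an annulus in $Z_0$), obtaining two pieces that are each identified with an $I$-invariant product via Lemma~\ref{lem:xi-ab-isproduct}; you instead decompose each product factor into three pieces and try to reassemble the four pieces of the definition after gluing. The reversal creates a genuine gap at your second step. To conclude that $Z_{\bs}^{(1)}\cup Z_{\bs}^{(2)}$ is contactomorphic to $(Z_{\bs},\xi_{\bs})$ you invoke uniqueness of the tight contact structure on a product disk decomposable sutured manifold, but that only applies once you know the glued-up structure is tight, and tightness does not follow from Giroux's criterion applied to the boundary dividing set: gluing two tight pieces along a convex surface can in general produce an overtwisted structure, and Remark~\ref{rem:disk} only provides a tight \emph{neighborhood} of a convex surface, not tightness of the whole piece. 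The paper's backwards direction sidesteps this entirely, since a piece cut out of a tight (indeed $I$-invariant) contact manifold inherits tightness by restriction. The cleanest repair within your framework is to observe that convex gluing is inverse to convex cutting: the middle level $R_{\bs}\times\{\tfrac{1}{2}\}$ cuts $(Z_{\bs},\xi_{\bs})$ into exactly the two halves you are regluing, so the reglued structure is $\xi_{\bs}$ up to equivalence, with no tightness claim needed; the same remark handles $Z_0^{(1)}\cup Z_0^{(2)}$.

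There are also bookkeeping errors in that step which you would need to correct before the final matching with Section~\ref{sec:xi-abgconstruction} can go through: $Z_{\bs}^{(1)}\cup Z_{\bs}^{(2)}$ is a product sutured manifold over the surface $R_{\bs}$, not a solid torus with two longitudinal sutures (that description fits $Z_0^{(1)}\cup Z_0^{(2)}$, and the dividing set $\d R_{\bs}\times\{m_{\bs}\}$ does consist of closed curves, just no contractible ones); and the locus along which $Z_{\bs}^{(1)}$ and $Z_{\bs}^{(2)}$ are identified is essentially all of the gluing region $\bar{R}_{\bs}'\sim R_{\bs}'$ minus a collar absorbed by the $Z_0$ pieces, not an annulus of the form $N(s_{\bs})$. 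The annuli $N(s_{\ts})\sim N(\d\S\times\{m_{\ts}\})$ arise only as the interfaces between $Z_{\ts}$ and $Z_0$ \emph{after} the reassembly, and verifying that these interfaces and their dividing sets match the prescription defining $\xi_{\as,\bs,\gs}$ is precisely the content your third step defers to ``bookkeeping.''
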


\begin{proof}
As in Lemma~\ref{lem:xi-ab-isproduct}, we work backwards, and provide a
convex decomposition of $(Z_{\as,\bs,\gs},\xi_{\as,\bs,\gs})$ into the
disjoint union of $(I\times R_{\as,\bs}, \xi_{\as,\bs})$ and
$(I\times R_{\bs,\gs}, \xi_{\bs,\gs})$. By definition,
$Z_{\as,\bs,\gs}$ is the union of $Z_{\as}$, $Z_{\bs}$, $Z_{\gs}$, and $Z_0$.
Let us view $Z_{\bs}$ as $I\times R_{\bs}$ with $I$-invariant contact
structure, and Legendrian corners along $ \{0,1\}\times \d R_{\bs}$, as on the
right side of the middle level of Figure~\ref{fig::58}. We start with the
surface $S_{\bs} := \{\tfrac{1}{2}\}\times R_{\bs}$, which is convex with
Legendrian boundary. We can view $\d S_{\bs}$ as a Legendrian arc on
$\d Z_0$ that intersects the dividing set on $\d Z_0$ at two points. There is
an annulus $S_0 \subset Z_0$ with one boundary component on $\d S_{\bs}$, and
another boundary component on $\d Z_0$ between $Z_{\gs}$ and $Z_{\as}$.
Furthermore, after perturbing the surface $S_0$ to be convex, it cuts
$(Z_0,\xi_0)$ into two copies of $(Z_0,\xi_0)$. To see this, we consider
$(Z_0,\xi_0)$ as the $I$-invariant contact manifold (with Legendrian corners)
in the center of the middle of Figure~\ref{fig::58}. In this picture, an
example of an annulus cutting $(Z_0,\xi_0)$ into two copies of $(Z_0,\xi_0)$
would be the intersection of $Z_0$ with the slice $\{\tfrac{1}{2}\}\times R_{\as,\bs}$.
We let our decomposing surface $S$ be the union $S_{\bs} \cup S_0$.
This is shown schematically in Figure~\ref{fig::60}. When we cut along $S$, we get two
components, one of which is $Z_{\as} \cup Z_0 \cup Z_{\bs}$, and one of which is
$Z_{\bs} \cup Z_0 \cup Z_{\gs}$.  Lemma~\ref{lem:xi-ab-isproduct} identifies
these latter two contact manifolds with $I\times R_{\as,\bs}$ and
$I\times R_{\bs,\gs}$ with $I$-invariant contact structures.
\end{proof}

\begin{figure}[ht!]
 \centering
 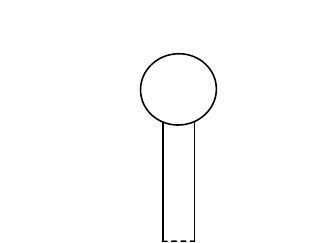
 \caption{Decomposing $Z_{\as,\bs,\gs}$ along the convex surface
 $S = S_{\bs} \cup S_0$ into $Z_{\as} \cup Z_0 \cup Z_{\bs}$ and $Z_{\bs} \cup
 Z_0 \cup Z_{\gs}$.  \label{fig::60}}
\end{figure}

\subsection{A handle decomposition of \texorpdfstring{$(W_{\a,\b,\g})^s$}{W}}

Recall that $\cW_{\as,\bs,\gs}$ is a sutured manifold cobordism from
$(M_{\as,\bs}, \g_{\as,\bs}) \sqcup (M_{\bs,\gs}, \g_{\bs,\gs})$ to
$(M_{\as,\gs}, \g_{\as,\gs})$. The boundary cobordism $(\cW_{\as,\bs,\gs})^b$
corresponds to gluing $(Z_{\as,\bs,\gs}, -\xi_{\as,\bs,\gs})$ to
\[
(-M_{\as,\bs}, -\g_{\as,\bs}) \sqcup (-M_{\bs,\gs}, -\g_{\bs,\gs}).
\]
In light of Lemma~\ref{lem:xi-abg-istwoIinvariantgluedtogether}, we can topologically
view this as gluing $\bar{R}_{\bs} \subset -\d M_{\as,\bs}$ to
$R_{\bs} \subset -\d M_{\bs,\gs}$. Hence, the special
cobordism $(W_{\as,\bs,\gs})^s$ goes from $(M_{\as,\bs} \cup_{R_{\bs}}
M_{\bs,\gs}, \g_{\as,\gs})$ to $(M_{\as,\gs},\g_{\as,\gs})$. In this section,
we give a topological description of the special cobordism
$(W_{\as,\bs,\gs})^s$ in terms of 4-dimensional handle attachments.

A handle decomposition of $(W_{\as,\bs,\gs})^s$ can be constructed from a
sutured Morse function on $U_{\bs}$, as we now describe. Let
$f_{\bs} \colon U_{\bs} \to I$ be a Morse function induced by the diagram
$(\S,\bs)$; i.e, we view $U_{\bs}$ as a collection of 2-handles glued to
$\S \times I$ along $\bs \times \{0\} \subset \S \times \{0\}$.
Furthermore, we pick $f_{\bs}$ such that $f_{\bs}^{-1}(1) = \S \times \{1\}$,
$f_{\bs}^{-1}(0) = R_{\bs}$, and $f_{\bs}(y,t) = t$ for $y \in \d \S$.
We also assume that $f_{\bs}$ has $|\bs|$ index 1 critical points, whose
ascending manifolds intersect $\S \times \{1\}$ along the $\bs$ curves,
and that $f_{\bs}$ has no other critical points. For a curve $\b_i \in \bs$,
let $\lambda_i \subset U_{\bs}$ denote the descending manifold of the
critical point of $f_{\bs}$ corresponding to $\b_i$. We have the following
topological description of $W_{\as,\bs,\gs}$:

\begin{lem}\label{lem:Wabgspecialcobordism2handle}
The special cobordism $(W_{\as,\bs,\gs})^s$ from $(M_{\as,\bs} \cup_{R_{\bs}}
M_{\bs,\gs}, \g_{\as,\gs})$ to $(M_{\as,\gs},\g_{\as,\gs})$ consists of
$|\bs|$ 2-handle attachments. The 2-handles are attached along the link formed
by concatenating the arcs $\lambda_i \subset U_{\bs}$
(that have boundary on $\bar{R}_{\bs}$) with their
reflections in $-U_{\bs}$. The framing on this link is determined by picking
an arbitrary framing on $\lambda_i$, and concatenating it with the
mirrored framing on the image of $\lambda_i$ in $-U_{\bs}$.
\end{lem}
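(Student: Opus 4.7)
The plan is to exhibit an explicit handle decomposition of $(W_{\as,\bs,\gs})^s$ arising from the Morse structure of $U_\bs$ induced by $f_\bs$.

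By Lemma~\ref{lem:xi-abg-istwoIinvariantgluedtogether}, the boundary cobordism $(\cW_{\as,\bs,\gs})^b$ identifies $R_\bs \subset -M_{\as,\bs}$ with $R_\bs \subset -M_{\bs,\gs}$, so the incoming end of $(W_{\as,\bs,\gs})^s$ is $M_{\as,\bs}\cup_{R_\bs} M_{\bs,\gs}$, which decomposes as $U_\as\cup_\S (-U_\bs \cup_{R_\bs} U_\bs)\cup_\S -U_\gs$. The outgoing end $M_{\as,\gs}=U_\as\cup_\S -U_\gs$ arises by ``collapsing'' the middle compression body to a trivial product $\S \times I$, and I would argue that $(W_{\as,\bs,\gs})^s$ realizes this replacement topologically.

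Using the Morse function $f_\bs$, I would decompose $U_\bs = (\S\times I) \cup \bigsqcup_i H^2_i$, where $H^2_i\cong D^2\times D^1$ is the 3-dimensional 2-handle attached along $\b_i\times\{0\}$ with cocore $\lambda_i$. Crossing with the edge $e_\bs\cong I$, each piece $e_\bs\times H^2_i \cong I\times D^2\times D^1\cong D^2\times D^2$ is naturally a 4-dimensional 2-handle of $(W_{\as,\bs,\gs})^s$, whose core (the 2-disk whose boundary is the attaching sphere) is $e_\bs\times \lambda_i$. I would then check that removing these handles from $W_{\as,\bs,\gs}$ leaves $(\Delta\times \S)\cup (e_\bs\times \S\times I)\cup (e_\as\times U_\as)\cup (e_\gs\times U_\gs)$, which deformation retracts, by pushing the residual $e_\bs\times\S\times I$ strip into $\Delta\times\S$, onto a product cobordism over $M_{\as,\gs}$; this confirms the claimed count of exactly $|\bs|$ two-handles.

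To identify the attaching circle of each 2-handle $e_\bs\times H^2_i$, I would examine the boundary $\d(e_\bs\times \lambda_i) = (\d e_\bs\times \lambda_i)\cup (e_\bs\times \d\lambda_i)$ of the core disk. The two arcs in $\d e_\bs\times \lambda_i$ at the vertices $v_{\as,\bs}$ and $v_{\bs,\gs}$ appear as copies of $\lambda_i$ inside $-U_\bs\subset M_{\as,\bs}$ and $U_\bs\subset M_{\bs,\gs}$, respectively, while $e_\bs\times \d\lambda_i \subset e_\bs\times R_\bs \subset Z_{\as,\bs,\gs}$ connects their endpoints through $R_\bs$ after applying the identification from Lemma~\ref{lem:xi-abg-istwoIinvariantgluedtogether}. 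The attaching circle is thus exactly the concatenation $K_i$ described in the statement, and the framing is induced by the product structure of $e_\bs\times H^2_i$: any normal framing of $\lambda_i$ in $U_\bs$ extends along $e_\bs$ to a normal framing of the core disk $e_\bs\times \lambda_i$, and this restricts on $K_i$ to the concatenation of the chosen framing of $\lambda_i$ with its mirror image in $-U_\bs$; a different choice of framing on $\lambda_i$ differs from this one by a twist that is cancelled by the mirror twist on the other half, so the induced framing on $K_i$ is independent of the choice.

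The main obstacle will be carefully smoothing the corners of the pieces $e_\ts\times U_\ts$ and $\Delta\times\S$ in $W_{\as,\bs,\gs}$ so that each $e_\bs\times H^2_i$ is a genuine 4-dimensional 2-handle attached along a tubular neighborhood of $K_i$ in the incoming boundary, together with verifying rigorously that the complementary region deformation retracts onto $M_{\as,\gs}\times I$. This last step reduces to the purely diagrammatic observation that, after all the $\bs$-data is removed, the pieces $(\Delta\times \S)\cup (e_\as\times U_\as)\cup (e_\gs\times U_\gs)$ glued along $e_\as$ and $e_\gs$ assemble exactly into the trivial product cobordism over $M_{\as,\gs}$.
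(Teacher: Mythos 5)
Your proof is correct. The paper's own proof is only a pointer: it says to argue as in Lemma~\ref{lem:Morsetheoryturningaroundsuturedcobordism}, i.e., to build a global Morse function $F(t,x)=G(t,f_{\bs}(x))$ from $f_{\bs}$ and an auxiliary function $G$, and to read off that all critical points have index $2$, with descending manifolds meeting the incoming end in the doubled arcs $\lambda_i$. Your argument reaches the same conclusion without the auxiliary function, by exhibiting the handles directly as the product pieces $e_{\bs}\times H^2_i\cong D^2\times D^2$ (with core $e_{\bs}\times\lambda_i$) and identifying the complement $(\Delta\times\S)\cup(e_{\bs}\times\S\times I)\cup(e_{\as}\times U_{\as})\cup(e_{\gs}\times U_{\gs})$ with the trivial product over $M_{\as,\gs}$, which is the same collapsing the paper uses elsewhere via \cite{JCob}*{Proposition~6.6}. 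The geometric content is identical --- the $4$-dimensional $2$-handles are the $3$-dimensional $2$-handles of $U_{\bs}$ crossed with $e_{\bs}$, the attaching circles are the doubled cocores after the identification of Lemma~\ref{lem:xi-abg-istwoIinvariantgluedtogether}, and the mirroring kills the framing ambiguity --- but your version makes explicit the details the paper leaves to the reader. One point to tighten: the complement must be \emph{diffeomorphic} to a product relative to the outgoing and vertical boundary, not merely deformation retract onto one; your closing sentence states the correct claim, so replace ``deformation retracts'' accordingly in the two earlier occurrences.
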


\begin{proof}
This can be proven similarly to
Lemma~\ref{lem:Morsetheoryturningaroundsuturedcobordism}, by using a Morse
function built from $f_{\bs}$ as well as another auxiliary function. We leave
the details to the reader.
\end{proof}

\subsection{Arc slides and bases of arcs}

In this section, we describe some basic topological facts about arc decompositions
of surfaces with boundary.

\begin{define}\label{def:allowablearcslice} Suppose that $\Sigma$ is a
 compact, orientable surface with no closed components,
 and $\cI\subset \d \Sigma$ is a collection of subarcs of $\d \Sigma$,
 such that each component of $\d \Sigma$ contains at least one component of $\cI$.
 \begin{enumerate}
 \item We say that   $\ve{a}=\{a_1,\dots, a_n\}$ is an \emph{arc basis} for $(\Sigma,\cI)$
 if $\ve{a}$ is a set of pairwise disjoint,
 properly embedded arcs with boundary on $\cI$
 that form a basis of $H_1(\Sigma,\cI)$ (or, equivalently, $\ve{a}$ cuts $\Sigma$
  into a collection of closed disks,
 each containing exactly one component of $\d \Sigma \setminus \cI$).
 \item We say an arc $a_i'$
 is formed by an \emph{allowable arc slide} of $a_i$ across $a_j$ if
 there is an embedded hexagon $P_6\subset \Sigma$, whose boundary consists
 of $a_i$, $a_j$, $a_i'$, as well as three subarcs of $\cI$,
 and is otherwise disjoint from the arcs $a_1,\dots, a_n$; see Figure~\ref{fig::75}.
 \end{enumerate}
\end{define}

\begin{figure}[ht!]
 \centering
\begingroup%
  \makeatletter%
  \providecommand\color[2][]{%
    \errmessage{(Inkscape) Color is used for the text in Inkscape, but the package 'color.sty' is not loaded}%
    \renewcommand\color[2][]{}%
  }%
  \providecommand\transparent[1]{%
    \errmessage{(Inkscape) Transparency is used (non-zero) for the text in Inkscape, but the package 'transparent.sty' is not loaded}%
    \renewcommand\transparent[1]{}%
  }%
  \providecommand\rotatebox[2]{#2}%
  \newcommand*\fsize{\dimexpr\f@size pt\relax}%
  \newcommand*\lineheight[1]{\fontsize{\fsize}{#1\fsize}\selectfont}%
  \ifx\svgwidth\undefined%
    \setlength{\unitlength}{177.81975419bp}%
    \ifx\svgscale\undefined%
      \relax%
    \else%
      \setlength{\unitlength}{\unitlength * \real{\svgscale}}%
    \fi%
  \else%
    \setlength{\unitlength}{\svgwidth}%
  \fi%
  \global\let\svgwidth\undefined%
  \global\let\svgscale\undefined%
  \makeatother%
  \begin{picture}(1,0.3678823)%
    \lineheight{1}%
    \setlength\tabcolsep{0pt}%
    \put(0,0){\includegraphics[width=\unitlength,page=1]{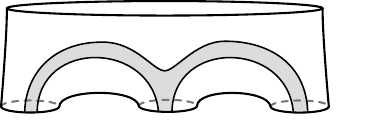}}%
    \put(0.26012138,0.16139171){\color[rgb]{0,0,0}\makebox(0,0)[t]{\lineheight{1.25}\smash{\begin{tabular}[t]{c}$a_i$\end{tabular}}}}%
    \put(0.61530422,0.158612){\color[rgb]{0,0,0}\makebox(0,0)[t]{\lineheight{1.25}\smash{\begin{tabular}[t]{c}$a_j$\end{tabular}}}}%
    \put(0.35957961,0.24495247){\color[rgb]{0,0,0}\makebox(0,0)[lt]{\lineheight{1.25}\smash{\begin{tabular}[t]{l}$a_i'$\end{tabular}}}}%
    \put(0.90793413,0.17640714){\color[rgb]{0,0,0}\makebox(0,0)[t]{\lineheight{1.25}\smash{\begin{tabular}[t]{c}$\Sigma$\end{tabular}}}}%
    \put(0,0){\includegraphics[width=\unitlength,page=2]{fig75.pdf}}%
    \put(0.07752587,0.00396513){\color[rgb]{0,0,0}\makebox(0,0)[t]{\lineheight{1.25}\smash{\begin{tabular}[t]{c}$\cI$\end{tabular}}}}%
    \put(0.44968224,0.00396513){\color[rgb]{0,0,0}\makebox(0,0)[t]{\lineheight{1.25}\smash{\begin{tabular}[t]{c}$\cI$\end{tabular}}}}%
    \put(0.81858506,0.00396513){\color[rgb]{0,0,0}\makebox(0,0)[t]{\lineheight{1.25}\smash{\begin{tabular}[t]{c}$\cI$\end{tabular}}}}%
  \end{picture}%
\endgroup%

 \caption{An allowable arc slide of $a_i$ across $a_j$.}
 \label{fig::75}
\end{figure}

The reader may compare the following to \cite{HKMSutured}*{Lemma~3.3}:

\begin{lem}\label{lem:allowablearcslides}
 Suppose $\Sigma$ is a compact, orientable
 surface with boundary and no closed components, and $\cI$ is a collection
 of pairwise disjoint, closed subintervals of $\d \Sigma$, such that each
 boundary component of $\d \Sigma$ contains at least one component
 of $\cI$. If $\ve{a}$ and $\ve{a}'$
 are two arc bases of $(\Sigma,\cI)$, then $\ve{a}$
 can be obtained from $\ve{a}'$
 by a sequence of allowable arc slides.
\end{lem}

\begin{proof}
Pick another collection of closed, pairwise
disjoint subarcs $\cI'\subset \Int(\d \Sigma \setminus \cI)$, such that each
component of $\d \Sigma\setminus \cI$ contains exactly
one component of $\cI'$. We view $\Sigma$ as a cobordism from
$\cI$ to $\cI'$, with horizontal boundary $\cI\cup \cI'$,
and vertical boundary $\d \Sigma\setminus \Int(\cI\cup \cI')$.
We can view each basis of arcs $\ve{a}$ as corresponding to a
Morse function $f$ and gradient-like vector field $v$ on $\Sigma$,
such that $f$ has only index 1 critical points with stable manifolds $\ve{a}$.
Furthermore, $f$ is minimized along $\cI$, maximized along $\cI'$,
and the components of $\d \Sigma\setminus \Int (\cI\cup \cI')$ are flow lines of $v$.

More generally, we can consider pairs $(f,v)$ of Morse functions $f$ with gradient-like vector fields $v$
such that $f$ is minimized on $\cI$, maximized on $\cI'$, has no critical points along $\d \Sigma$,
but is allowed to have critical points of index 0, 1, and 2. Assuming $(f,v)$ is also Morse--Smale,
we can construct a graph $\Gamma \subset \Sigma$ whose vertices are the index~0 critical points of $f$
and the points of $\cI$ that flow to index~1 critical points along $v$,
and whose edges are the stable manifolds of the index~1 critical points.
The graph $\Gamma$ intersects $\d \Sigma$ along a collection of valence~1 vertices in $\cI$.

With this in mind, we say a graph $\Gamma\subset \Sigma$ is a \emph{decomposing graph}
of $(\Sigma,\cI)$ if $\Gamma$ intersects $\d \Sigma$ in a collection of valence 1
vertices in $\cI$, and  the interior of each component of $\Sigma\setminus \Gamma$ is
homeomorphic to an open 2-ball that contains at most one component of $\d \Sigma\setminus \cI$.
Note that, for a decomposing graph of an orientable surface,
Definition~\ref{def:allowablearcslice} modifies easily to describe an arc slide
of two edges of $\Gamma$ that meet at a vertex in $\Int(\Sigma)$, and are consecutive with
respect to the cyclic ordering of the edges adjacent to the vertex.

By interpreting decomposing graphs in terms of Morse functions and gradient-like vector fields,
and considering the bifurcations occurring in generic 1-parameter families of smooth functions,
it follows that any two decomposing graphs can be
connected by a sequence of the following moves and their inverses:
\begin{enumerate}[label=($G$-\arabic*)]
\item\label{move:G1} (Index 0/1 births) Adding a vertex $v\in \Int(\Sigma)\setminus \Gamma$,
  as well as a new edge $e$ connecting $v$ to an existing vertex of
  $\Gamma\setminus \d \Sigma$, or to a point in $\cI\setminus \Gamma$.
\item\label{move:G2} (Index 1/2 births) Adding a new edge to $\Gamma$,
 whose interior is contained in $\Sigma\setminus \Gamma$,
 and which has both endpoints on the same vertex of $\Gamma\setminus \d \Sigma$.
\item\label{move:G3} (Arc slides) An allowable arc slide of two adjacent
  edges along $\cI$, or an arc slide of a pair of edges
  that meet at a vertex of $\Gamma \cap \Int (\Sigma)$, and are consecutive with
  respect to the cyclic ordering of the edges adjacent to the vertex.
 \end{enumerate}
Note that, if an index 1/2 birth occurs in a generic 1-parameter family of Morse functions
and gradient-like vector fields, then the corresponding decomposing graph $\Gamma$ changes
by adding an edge $e$ to a component $B$ of $\Sigma \setminus \Gamma$ with both endpoints
on $\d B \setminus (\d \S \setminus \cI)$. By assumption, $B$ is an open 2-ball
and $\d B \cap (\d \S \setminus \cI)$ is either empty or a single arc.
We can add $e$ using moves~\ref{move:G1}--\ref{move:G3}.
Indeed, using move~\ref{move:G1}, we add an edge $e'$ to $\Gamma$ with one endpoint along $\d B \cap \cI$
on the subarc of $\d B \setminus (\d \S \setminus \cI)$ between the endpoints of $e$,
and one endpoint at a new interior vertex $v \in B$. We then add a loop edge $e''$ at $v$ using move~\ref{move:G2}.
We slide the endpoints of $e''$ along the two sides of $e'$ to $\cI$, and finally
slide them to the endpoints of $e$ using move~\ref{move:G3}.

If $\ve{\Gamma} = (\Gamma_1,\dots, \Gamma_n)$ is a sequence of decomposing graphs
such that consecutive terms differ up to
isotopy by an instance of moves~\ref{move:G1}--\ref{move:G3} and their inverses,
then we say that $\ve{\Gamma}$ is a \emph{Cerf sequence}.
We define $n_1(\ve{\Gamma})$ to be the number of moves~\ref{move:G1},
and $n_2(\ve{\Gamma})$ to be the number of moves~\ref{move:G2} that occur in $\ve{\Gamma}$.
Note that, if $\Gamma_1$ and $\Gamma_n$ are arc bases, then $n_1(\ve{\Gamma}) = n_2(\ve{\Gamma}) = 0$
if an only if $\ve{\Gamma}$ consists only of allowable arc slides.

We will now show that if $\ve{\Gamma} = (\Gamma_1,\dots, \Gamma_n)$ is
a Cerf sequence  of decomposing graphs connecting two arc bases
$\ve{a}$ and $\ve{a}'$, and $n_1(\ve{\Gamma}) > 0$,
then there is a Cerf sequence $\ve{\Gamma}'$ connecting $\ve{a}$ and $\ve{a}'$ with
\begin{equation}
n_1(\ve{\Gamma}') = n_1(\ve{\Gamma}) - 1 \quad \text{and} \quad
n_2(\ve{\Gamma}') = n_2(\ve{\Gamma}).
\label{eq:inductioncontraction}
\end{equation}
Such a sequence $\ve{\Gamma}'$ will be constructed  by contacting
certain edges of the $\Gamma_i$.

We say  that an edge $e$ of $\Gamma_i$ is \emph{special} if $\d e$
consists of two distinct points, at least one of which
is contained in $\Int (\Sigma)$.
It is easy to see that if $\Gamma$ is a decomposing graph of $(\Sigma,\cI)$ with at least
one vertex in $\Int (\Sigma)$, then $\Gamma$ has
at least one special edge. If $e$ is a special edge of
$\Gamma$, we construct a new decomposing graph $C_e(\Gamma)$ of $\Sigma$,
which we call the \emph{contraction} of $\Gamma$ along $e$.
Outside a small neighborhood of $e$, we define
the graph $C_e(\Gamma)$ to be $\Gamma$.
If $e$ has two vertices in $\Int(\Sigma)$,
then $C_e(\Gamma)$ is formed by collapsing $e$ and its two
endpoints into a single vertex. If instead $e$ has one
endpoint on $\d \Sigma$, then $C_e(\Gamma)$ is formed by deleting $e$,
and performing an isotopy of the edges previously
incident to $e$ until they hit $\d \Sigma$.
The contraction operation is shown Figure~\ref{fig::78}.

\begin{figure}[ht!]
 \centering
 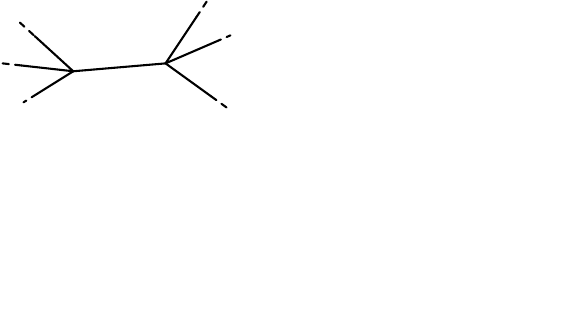
 \caption{Contracting a decomposing graph along a special edge.
 On the top is when $e$ has boundary equal to two
 vertices in $\Int (\Sigma)$. On the bottom is the
 case when $\d e$ has a vertex on $\d \Sigma$.\label{fig::78}}
\end{figure}

Now suppose $\ve{\Gamma}$ is a Cerf sequence connecting $\ve{a}$ and $\ve{a}'$ with
$n_1(\Gamma) > 0$. Let $k$ be the first index where $\Gamma_{k}$ is obtained from $\Gamma_{k-1}$
by move \ref{move:G1} (an index 0/1 birth).  We construct a
sequence of special edges $e_k,\dots, e_l$ in $\Gamma_k,\dots, \Gamma_l$,
respectively, for some $l\le n$. We define the first special edge, $e_{k}$,
to be the edge which is added to $\Gamma_{k-1}$
to form $\Gamma_k$. We construct the remaining special edges $e_i$ recursively.
Supposing $e_i$ has already been chosen,
we define $e_{i+1}$ as follows, depending on which move
is used to form $\Gamma_{i+1}$ from $\Gamma_i$:
\begin{enumerate}
\item $\Gamma_{i+1}$ is obtained  by move~\ref{move:G1} or its inverse:
  If $e_{i}$ is  deleted by this move, we declare $e_i$
  to be the final special edge in our sequence and set $l=i$.  If $e_i$ is unchanged, we set $e_{i+1}=e_i$.
\item $\Gamma_{i+1}$ is obtained by move~\ref{move:G2} or its inverse:
   Since $e_i$ is special, it cannot be deleted by this move. Noting this, we set $e_{i+1}=e_i$.
\item $\Gamma_{i+1}$ is obtained by move~\ref{move:G3}: There are two subcases:
\begin{enumerate}
\item \label{case:speciala}An edge $e$ is slid over another edge, and $e\neq e_i$:
  Noting that $e_i$ remains special, we set $e_{i+1}=e_i$.
\item \label{case:specialb} The edge $e_i$ is slid over another edge $e$:
  Write $e_i'$ for the edge obtained by arc sliding $e_i$ over $e$.
  If $e_i'$ is not special, then its easy to check that
  $e$ must be special, and we set $e_{i+1}=e$.  If $e_i'$ is special, we set $e_{i+1}=e_i'$.
\end{enumerate}
\end{enumerate}

We now demonstrate that consecutive terms in the sequence
\[
\ve{\Gamma}':=(\Gamma_1,\dots, \Gamma_{k-1}, C_{e_k}(\Gamma_k),
\dots,C_{e_l}(\Gamma_l), \Gamma_{l+1},\dots, \Gamma_n)
\]
are related by either a single instance of moves~\ref{move:G1},
\ref{move:G2}, their inverses,  by multiple instances
of move~\ref{move:G3}, or by no moves. Furthermore,
we will show that $\ve{\Gamma}'$ satisfies equation~\eqref{eq:inductioncontraction}.

We first consider the case when $\Gamma_{i+1}$ is obtained
from $\Gamma_i$ by adding or removing an edge $e$ via an instance of move~\ref{move:G1},
\ref{move:G2}, or their inverses. If $e = e_i$ is removed,
then, by definition, $e_i$ is the last special edge (i.e., $i = l$), and clearly
$C_{e_i}(\Gamma_i) = \Gamma_{i+1}$. If $e \neq e_i$,
then $C_{e_{i+1}}(\Gamma_{i+1})$ is also obtained from $C_{e_i}(\Gamma_i)$
by adding or removing the edge $e$ via a move of the same type as
the one relating $\Gamma_i$ and $\Gamma_{i+1}$.

We now consider the case when $\Gamma_{i+1}$ is obtained
from $\Gamma_i$ by an instance of move~\ref{move:G3}.
As indicated above, we break the argument into two cases.
The first case, labeled \eqref{case:speciala} above, occurs
when we slide an edge $e$ over another edge $e'$, and $e \neq e_i$.
Since $e_i$ is unchanged, it remains special.
If $e_i \neq e'$, then it is straightforward to see that
$C_{e_{i+1}}(\Gamma_{i+1})$ is also obtained from $C_{e_i}(\Gamma_i)$
by an arc slide. If $e_i = e'$, then, in fact, $C_{e_{i+1}}(\Gamma_{i+1})$
and $C_{e_i}(\Gamma_i)$ are isotopic.

We finally consider the most complicated case, labeled~\eqref{case:specialb} above,
when $\Gamma_{i+1}$ is obtained by sliding $e_i$
over another edge $e$. We write $e_i'$ for the edge resulting
from arc sliding $e_i$ over $e$. There are two further subcases,
depending on whether $e_i'$ is special or not.

Suppose first that $e_i'$ is not special, so $e_{i+1}=e$. By definition,
either $e_i'$ has both endpoints  on $\d \Sigma$, or $e_i'$ has both endpoints on the same vertex.
We claim that in both cases $C_{e_{i+1}}(\Gamma_{i+1})$
differs from $C_{e_i}(\Gamma_i)$ by a sequence of arc slides.
See Figure~\ref{fig::79} for an illustration when $e_i'$ has both endpoints on $\d \Sigma$.
A similar argument holds when $e_i'$ has both endpoints on the same vertex of $\Gamma_{i+1}$.

The other subcase of \eqref{case:specialb} occurs when $e_i'$ is special. Write
$\{v_1,v_2\}=\d e_i$ and $\{v_2,v_3\}=\d e$. Since $e_i$ and
$e_{i}'$ are special, it follows that all $v_1 \neq v_2$ and
$v_1 \neq v_3$. The argument can be further divided into the
cases that $v_2=v_3$ and $v_2\neq v_3$. In both cases, a
manipulation similar to the one shown in Figure~\ref{fig::79}
shows that $C_{e_{i+1}}(\Gamma_{i+1})$ is obtained from
$C_{e_i}(\Gamma_i)$ by a sequence of allowable arc slides.

\begin{figure}[ht!]
 \centering
 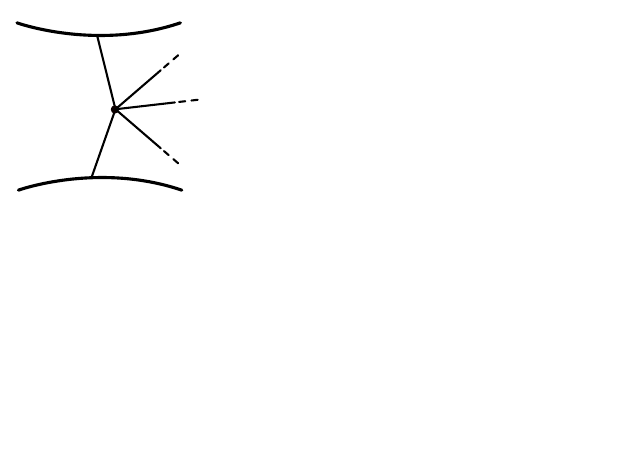
 \caption{A illustration of the fact that
 $C_{e_{i+1}}(\Gamma_{i+1})$ is obtained from $C_{e_i}(\Gamma_i)$ by
 a sequence of arc slides when
 $\Gamma_{i+1}$ is obtained from $\Gamma_i$ by an arc slide
 of the special edge $e_i$ across another edge $e$, and the
 resulting arc $e_i'$ is no longer special.}
 \label{fig::79}
\end{figure}

By the above, for $i \in \{k, \dots, l-1\}$, the graph $C_{e_{i+1}}(\Gamma_{i+1})$
is obtained from $C_{e_i}(\Gamma_i)$ using the same type of moves as
the one relating $\Gamma_{i+1}$ and $\Gamma_i$, and exactly one move is used
in case of a move of type~\ref{move:G1}.
Noting that $\Gamma_{k-1} = C_{e_k}(\Gamma_k)$ and $C_{e_l}(\Gamma_l) = \Gamma_{l+1}$,
equation~\eqref{eq:inductioncontraction} is immediate.

Repeating this procedure, starting with a fixed $\ve{\Gamma}$, we can construct a
Cerf sequence $\ve{\Gamma}''$ such that
\[
n_1(\ve{\Gamma}'') = 0 \quad \text{and} \quad n_2(\ve{\Gamma}'') = n_2(\ve{\Gamma}).
\]
By turning Morse functions corresponding to each decomposing graph
upside down (switching the roles of the index 0 and 2 critical
points) and  repeating the above procedure to the induced dual
decomposing graphs, we obtain a sequence of decomposing graphs
from $\ve{a}$ to $\ve{a}'$ which differ only by a sequence of
allowable arc slides, completing the proof.
\end{proof}

\subsection{Doubled diagrams}
\label{sec:doubleddiagrams}

Suppose that $\cH=(\S,\as,\bs)$ is a sutured Heegaard diagram for
the balanced sutured manifold $(M,\g)$. There is a special way of constructing a new diagram of $(M,\g)$ from
$\cH$, which will be important for computing the cobordism map
$F_{\cW_{\as,\bs,\gs}}$. Let us first pick two collections of subintervals
$\cI_0$ and $\cI_1$ of $\d \S$ such that each component of $\d \S$ contains
exactly one subinterval from $\cI_0$ and one subinterval from $\cI_1$ that are disjoint.

We now define the \emph{Heegaard surface that is doubled along $R_{\bs}$} to be
\[
D_{\bs}(\S):=\S\,\natural_{\cI_0} \bar{\S}\,\natural_{\cI_1} R_{\bs},
\]
where $\natural$ denotes the boundary connected sum operation.
Here $\bar{\S}$ denotes a pushoff of $\S$ into $U_{\bs}$,
with the opposite orientation.

Before we define the attaching curves, let us first describe how
$D_{\bs}(\S)$ is embedded in $M_{\as,\bs}$. A schematic is shown in
Figure~\ref{fig::21}. Strictly speaking, we have changed the sutures. An
isotopy supported in a neighborhood of the original sutures moves the new
sutures to the original sutures.

\begin{figure}[ht!]
 \centering
 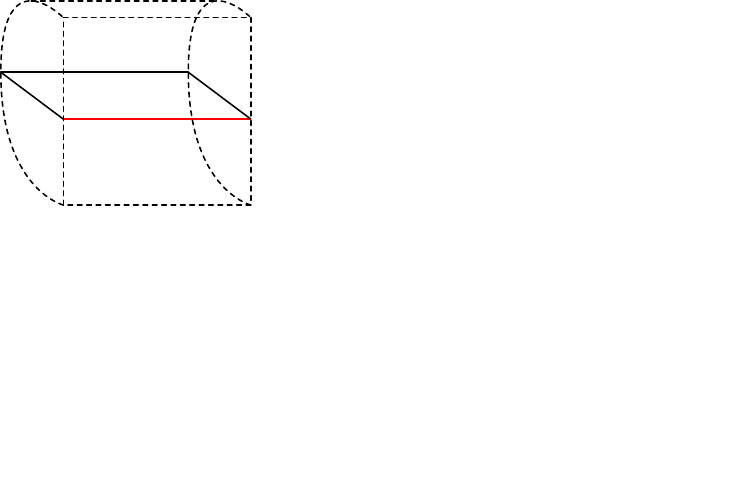
 \caption{The doubled Heegaard surface $D_{\bs}(\S)$.
 A neighborhood of a portion of the sutures $\g$ in $M$ is shown.
 The sutures are drawn in red. Strictly speaking, the new sutures $\d D_{\bs}(\S)$ are
 different from $\g$; however, an isotopy supported in a neighborhood of $\g$ moves
 $\d D_{\bs}(\S)$ back to $\g$.\label{fig::21}}
\end{figure}

We now describe compressing curves on $D_{\bs}(\S)$.
First, pick a collection of pairwise disjoint arcs on $\S$
with boundary on $\cI_0$ that form a basis of $H_1(\S, \cI_0)$.
One then doubles these across $\cI_0$, and obtains a collection of curves
$\Ds_{\S} \subset \S \,\natural_{\cI_0} \bar{\S} \subset D_{\bs}(\S)$.
Similarly, one picks a collection of pairwise disjoint arcs on $R_{\bs}$
(or equivalently, a collection of arcs on $\S$ that avoid the $\bs$ curves,
up to handle slides across $\bs$)
with boundary on $\cI_1$ that form a basis of $H_1(R_{\bs}, \cI_1)$.
Doubling these curves across $\cI_1$ yields closed curves
$\ds_{\bs} \subset \bar{\S}\,\natural_{\cI_1} R_{\bs} \subset D_{\bs}(\S)$.

Write $\bar{\bs}$ for the images of the $\bs$ curves on $\bar{\S}$.
Note that the $\as$, $\bar{\bs}$, and $\ds_{\bs}$ curves are all disjoint.
Since $(M,\g)$ is balanced, $|\as| = |\bs|$, and an easy computation shows that
\[
|\as|+|\bs|+|\ds_{\bs}|=|\Ds_{\S}|.
\]
The doubled Heegaard diagram is now defined as
\[
D_{\bs}(\cH) = (D_{\bs}(\S), D_{\bs}(\as), \Ds_\S) =
(\S\,\natural_{\cI_0} \bar{\S}\,\natural_{\cI_1} R_{\bs},
\as\cup \bar{\bs}\cup \ds_{\bs}, \Ds_{\S}).
\]

\begin{rem}\label{rem:alphadoubleddiagrams}
Note that there is some asymmetry in the above construction, since we took
$\S$ and connected it along $\cI_0$ to a surface $\bar{\S}\,\natural_{\cI_1}
R_{\bs}$ that was in the $U_{\bs}$ handlebody. We could instead connect
$\S$ along $\cI_0$ to the surface $R_{\as}\,\natural_{\cI_1}
\bar{\S}$ in the $U_{\as}$ handlebody, and construct analogous
attaching curves $\Ds_\S$ and $\ds_{\as} \cup \bar{\as} \cup \bs$ for the Heegaard surface
$D_{\as}(\S) := R_{\as}\,\natural_{\cI_1} \bar{\S}\,\natural_{\cI_0} \S$. We will write
\[
D_{\as}(\cH) = (D_{\as}(\S), \Ds_\S, \ds_{\as} \cup \bar{\as} \cup \bs)
\]
for this Heegaard diagram of $(M,\g)$. If there is any ambiguity, we
will call the Heegaard diagram $D_{\bs}(\cH)$ the \emph{$\b$-double}, and
$D_{\as}(\cH)$ the \emph{$\a$-double}.
\end{rem}

\subsection{Weakly conjugated diagrams}
\label{sec:weaklyconjugated}

Given a Heegaard diagram $(\S,\as,\bs,w)$ of a based 3-man\-i\-fold, we can consider
the conjugate diagram $(\bar{\S},\bs,\as,w)$ that represents the
same based 3-manifold. This was described by Ozsv\'ath and Szab\'o~\cite{OSDisks},
and was explored further by Hendricks and Manolescu~\cite{HMInvolutive}.
Given a sutured diagram $(\S,\as,\bs)$ for $(M,\g)$, one can consider the sutured
diagram $(\bar{\S},\bs,\as)$; however, this is now a diagram for $(M,-\g)$.
So, unlike in the case of closed 3-manifolds, this operation
does not induce a conjugation action on $\SFH(M,\g)$.

Nonetheless, a similar diagrammatic manipulation appears when we compute the
cobordism map for $\cW_{\as,\bs,\gs}$.  In analogy to the terminology for the
conjugation action on Heegaard diagrams for closed 3-manifolds, we will say
that the diagrams that appear are \emph{weakly conjugated.} We describe the
construction of weakly conjugated diagrams in this section.

As with the doubled diagrams, we pick collections of subintervals $\cI_0$ and
$\cI_1$ in $\d \S$ such that each component of $\d \S$ contains exactly one
subinterval from $\cI_0$ and from $\cI_1$ that are disjoint. We can then form
the weakly conjugated Heegaard surface
\[
C(\S) := R_{\as}\,\natural_{\cI_0}\bar{\S}\,\natural_{\cI_1} R_{\bs}.
\]
This is shown in Figure~\ref{fig::22}. As described,
$\d C(\S)$ is different from $\d \S$, but an isotopy supported in a
neighborhood of $\d \S$ moves $\d C(\S)$ to $\d \S$.

\begin{figure}[ht!]
 \centering
 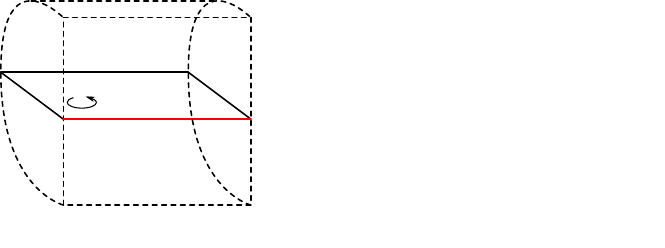
 \caption{The weakly conjugated Heegaard surface $C(\S)$.
 A neighborhood of a portion of the sutures is shown.
 The sutures are drawn in red. An isotopy supported in a neighborhood of the sutures moves the boundary
 of the new Heegaard surface, as we have drawn it,
 to the position of $\d \S$, the boundary of the old Heegaard surface.}
 \label{fig::22}
\end{figure}

We now describe compressing curves on $C(\S)$. Note that $\as$ and $\bs$
still bound compressing disks on~$\S$. As curves on $\bar{\S}$, we
denote them by $\bar{\as}$ and $\bar{\bs}$. However, these are not complete
collections of compressing curves, as we have increased the genus of the
Heegaard surface by attaching $R_{\as}$ and $R_{\bs}$.
Hence, we pick a collection of pairwise disjoint arcs on $R_{\as}$ that
form a basis of $H_1(R_{\as}, \cI_0)$, and double them across $\cI_0$, to get
a collection of curves $\ds_{\as}$ on $R_{\as}\,\natural_{\cI_0}
\bar{\S}$. Similarly, we pick a collection of pairwise disjoint arcs on
$R_{\bs}$ that form a basis of $H_1(R_{\bs}, \cI_1)$, and double them across
$\cI_1$ to get a collection of curves $\ds_{\bs}$ on
$\bar{\S}\,\natural_{\cI_1} R_{\bs}$. We define the weakly conjugated
Heegaard diagram of $\cH$ to be
\[
C(\cH) = (C(\S), C(\bs), C(\as)) :=
(R_{\as}\,\natural_{\cI_0} \bar{\S} \,\natural_{\cI_1}
R_{\bs},\bar{\bs}\cup \ds_{\bs},\bar{\as}\cup \ds_{\as}).
\]
Then $C(\cH)$ is also a diagram of $(M,\g)$.

\subsection{The change of diagrams map from \texorpdfstring{$D_{\b}(\cH)$}{D(H)} to \texorpdfstring{$\cH$}{H}}

In this section, we prove a relatively simple formula for the change of diagrams
map from the $\b$-double of a diagram $D_{\bs}(\cH)$ to the original
diagram $\cH = (\S,\as,\bs)$. Recall that
\[
D_{\bs}(\cH) = (\S\,\natural_{\cI_0} \bar{\S}\,
\natural_{\cI_1} R_{\bs}, \as\cup \bar{\bs} \cup \ds_{\bs}, \Ds_{\S}).
\]
We will write $\bs' := \bar{\bs} \cup \ds_{\bs}$. Note that
$(D_{\bs}(\S), \Ds_{\S}, \bs \cup \bs')$ is the $\a$-double of
the diagram $(\S,\bs,\bs)$ (see Remark~\ref{rem:alphadoubleddiagrams}),
which represents
\[
(M_{\bs,\bs}, \gamma_{\bs,\bs}),
\]
which we note is the sutured manifold obtained by
gluing two copies of the sutured handlebody $U_{\bs}$ together along $\Sigma$.

\begin{lem}\label{lem:topgradedgenerator}
There is a relatively graded isomorphism
\[
\SFH(M_{\bs,\bs},\gamma_{\bs,\bs}) \iso
\bigotimes_{i=1}^{|\bs|}\left((\bF_2)_{\tfrac{1}{2}}\oplus (\bF_2)_{-\tfrac{1}{2}}\right).
\]
In particular, there is a top-graded element
$[\Theta_{\Ds_{\S},\bs\cup \bs'}] \in \SFH(D_{\bs}(\cH))$.
\end{lem}

\begin{proof}
Let $\gs$ be small Hamiltonian translates
of the $\bs$ curves, such that $|\beta_i \cap \gamma_j| = 2\delta_{ij}$.
Then $(\Sigma,\bs,\gs)$ is an admissible diagram for
$(M_{\bs,\bs}, \gamma_{\bs,\bs})$ whose sutured Floer complex contains
$2^{|\bs|}$ generators. Furthermore, since every component of
$\Sigma \setminus \bs$ contains a component of $\d \Sigma$,
it is straightforward to see that the only homology classes
of disks on $(\Sigma,\bs,\gs)$ have domains which are
supported in the small bigons between $\beta_i$ and $\gamma_i$. From these considerations,
the only index 1 holomorphic disks are the classes whose domain consists of a bigon connecting
the higher graded point of $\beta_i \cap \gamma_i$ to the lower graded point.
Hence, modulo 2, the differential on $\CF(\Sigma,\bs,\gs)$ vanishes,
and as a relatively graded group, we have
\[
\SFH(\Sigma,\bs,\gs) = \CF(\Sigma,\bs,\gs) \iso
\bigotimes_{i=1}^{|\bs|}\left((\bF_2)_{\tfrac{1}{2}}\oplus (\bF_2)_{-\tfrac{1}{2}}\right),
\]
completing the proof.
\end{proof}

Using the holomorphic triangle map and the class
 $[\Theta_{\Ds_{\Sigma},\bs\cup \bs'}]$, we construct a map
\[
F_2 \colon \SFH(D_{\bs}(\S), \as\cup \bs', \Ds_\S)) \to
\SFH(D_{\bs}(\S), \as\cup \bs', \bs\cup \bs')
\]
using the formula
\[
F_2 :=  F_{\as\cup \bs',\Ds_{\S}, \bs\cup \bs'}(-, \Theta_{\Ds_{\S},\bs\cup \bs'}).
\]
We can also define a 3-handle map
\[
F_3 := F_3^{\bs',\bs'} \colon  \SFH(D_{\bs}(\S), \as \cup \bs', \bs \cup \bs') \to \SFH(\S,\as,\bs)
\]
(here, the second copy of $\bs'$ is a small Hamiltonian translate of $\bs'$,
though we omit this from the notation).

\begin{lem}\label{lem:changeofdiagramsdoubleddiagram}
The composition $F_3\circ F_2$ is chain homotopic to the change of diagrams map
from $\CF(D_{\bs}(\cH))$ to $\CF(\cH)$.
\end{lem}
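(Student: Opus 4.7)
The plan is to factor the naturality map from $\CF(D_{\bs}(\cH))$ to $\CF(\cH)$ as a composition of two naturality maps — a handleslide part followed by a destabilization part — and identify them with $F_2$ and $F_3$ respectively.

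For the destabilization part, I would observe that in the intermediate diagram $(D_{\bs}(\S), \as \cup \bs', \bs \cup \bs')$, each element of $\bs'$ appears both as an $\as$-curve and (up to a small Hamiltonian translate) as a $\bs$-curve. The 3-handle map $F_3 = F_3^{\bs', \bs'}$ is defined as the iterated 3-handle cancellation on these parallel pairs, and by the construction of Section~\ref{sec:3-handlemap} each such cancellation is the naturality map for destabilizing one such canceling pair. Successively destabilizing all of $\bs'$ removes the $\bar{\S}$ and $R_{\bs}$ summands from $D_{\bs}(\S) = \S\,\natural_{\cI_0}\bar{\S}\,\natural_{\cI_1} R_{\bs}$, returning us to $(\S,\as,\bs)$.

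For the handleslide part, I would use the fact that $(D_{\bs}(\S), \Ds_{\S}, \bs\cup \bs')$ is the $\a$-double of the trivial diagram $(\S, \bs, \bs)$, and hence represents the balanced sutured manifold $(S^1\times S^2)^{|\bs|}(|\d\S|) \# (R_{\bs}\times I)$; its sutured Floer homology has a distinguished top-graded generator $\Theta_{\Ds_{\S},\bs\cup \bs'}$. The curves $\Ds_\S$ and $\bs\cup \bs'$ are related on $D_{\bs}(\S)$ by a sequence of handleslides and isotopies, and, by the standard identification of handleslide maps with triangle maps paired against top generators (cf.\ the argument of Ozsv\'ath–Szab\'o for handleslide invariance in \cite{OSDisks}, adapted to the sutured setting as in \cite{JCob}*{Section~7}), the triangle map $F_2 = F_{\as\cup \bs',\Ds_{\S}, \bs\cup \bs'}(-,\Theta_{\Ds_{\S},\bs\cup \bs'})$ is chain homotopic to the associated handleslide naturality map.

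Combining these two identifications, $F_3\circ F_2$ realizes the composition of naturality maps for a specific sequence of elementary Heegaard moves from $D_{\bs}(\cH)$ to $\cH$, so naturality of sutured Floer homology (\cite{JTNaturality}*{Theorem~1.9}) yields $F_3\circ F_2 \simeq \Phi_{D_{\bs}(\cH)\to \cH}$. The main technical obstacle will be confirming that the classical identification of the triangle map $F_2$ with the handleslide naturality map carries over to the sutured doubled diagram: because $D_{\bs}(\S)$ is built from boundary connect sums, one must check that the relevant small-area holomorphic triangles on $(D_{\bs}(\S), \as\cup \bs', \Ds_\S, \bs\cup \bs')$ stay away from $\d D_{\bs}(\S)$, so that the model triangle count (which determines both the chain homotopy and the formula for $F_2$) reduces to the standard one in a neighborhood of $\bs\cup \bs'$. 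This should follow from admissibility together with a boundary-multiplicity argument analogous to the ones used in Lemmas~\ref{lem:compoundtrianglemap1} and~\ref{lem:compoundtrianglemap2}.
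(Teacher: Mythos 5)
There is a genuine gap: both of your identifications of $F_2$ and $F_3$ with naturality (Heegaard-move) maps are incorrect, because the intermediate diagram $(D_{\bs}(\S), \as\cup\bs', \bs\cup\bs')$ is not a diagram for $(M,\g)$ at all. Each curve of $\bs'$ appears in $\as\cup\bs'$ and again (as a small Hamiltonian translate) in $\bs\cup\bs'$, and two such parallel copies intersect in \emph{two} points, which is the configuration of an $S^1\times S^2$ connected summand, not of a stabilization (where the new $\a$- and $\b$-curves meet in one point). So the intermediate diagram represents $M$ surgered along a framed link --- roughly $M\,\#\,(S^1\times S^2)^{|\bs'|}$ --- and $F_3 = F_3^{\bs',\bs'}$ is the 4-dimensional 3-handle cobordism map killing those summands, not a destabilization transition map. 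Likewise, $\Ds_\S$ and $\bs\cup\bs'$ are \emph{not} related by handleslides on $D_{\bs}(\S)$: if they were, $(D_{\bs}(\S),\Ds_\S,\bs\cup\bs')$ would represent a connected sum of $|\Ds_\S| = 2|\bs|+|\ds_{\bs}|$ copies of $S^1\times S^2$, whereas it in fact represents $(S^1\times S^2)^{|\bs|}(|\d\S|)\,\#\,(R_{\bs}\times I)$, with only $|\bs|$ such summands. Hence $F_2$ cannot be a handleslide naturality map; it is the 2-handle (surgery) cobordism map for the framed link $\bL$ obtained by pushing the $\bs$ and $\ds_{\bs}$ curves into the $\bs'$-handlebody with surface framing.

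The correct route, which is the one the paper takes, is to interpret $F_2$ as the cobordism map for surgery on $\bL$ (after checking that $(D_{\bs}(\S),\as\cup\bs',\Ds_\S,\bs\cup\bs')$ can be connected by handleslides and isotopies to a triple subordinate to a bouquet for $\bL$), and $F_3$ as the 3-handle map for a collection of $|\bs'|$ two-spheres in $M(\bL)$ that topologically cancel $\bL$. The composite 4-manifold is then a product, so the well-definedness of the sutured cobordism maps of~\cite{JCob} identifies $F_3\circ F_2$ with the transition map from naturality. Your closing concern about boundary multiplicities of holomorphic triangles is not where the difficulty lies; the issue is the topological identification of the two maps, and once that is corrected no model triangle count is needed beyond what is packaged into the invariance of the 2-handle maps.
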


\begin{proof}
The idea is simple: The map $F_2$ can be interpreted as a composition of
2-handle maps, and the map $F_3$ can be interpreted as a composition of
3-handle maps, for a collection of 4-dimensional 2-handles and 3-handles that
topologically cancel. Hence their composition represents the transition map from naturality,
by the well-definedness of the sutured cobordism maps~\cite{JCob}.

We now explain the technical details. Let us first describe
the framed link along which surgery induces $F_2$. Let $U_{\bs}'$ denote the
$\b$-handlebody of the diagram $D_{\bs}(\cH)$ (i.e., $\Ds_{\S}$
bounds compressing disks in $U_{\bs}'$). For each curve in $\bs$ and each
curve in $\ds_{\bs}$, we will construct a component of $\bL$. For a curve
$\b_i\in \bs$, there is a knot $K_{\b_i}\subset U_{\bs}'$, obtained by
pushing $\b_i$ into $U_{\bs}'$ slightly. We choose the framing of
$K_{\b_i}$ to be parallel to $\S$. Similarly, given $\delta_k\in
\ds_{\bs}$, we can construct a framed knot $K_{\delta_k}$ by pushing
$\delta_k$ into $U_{\bs}'$, and take the framing induced by the tangent space
of the surface $D_{\bs}(\S)$. The construction is illustrated in
Figure~\ref{fig::49}. We define the framed link
\[
\bL := \bigcup_{\tau\in \bs\cup \ds_{\bs}} K_\tau.
\]

\begin{figure}[ht!]
 \centering
 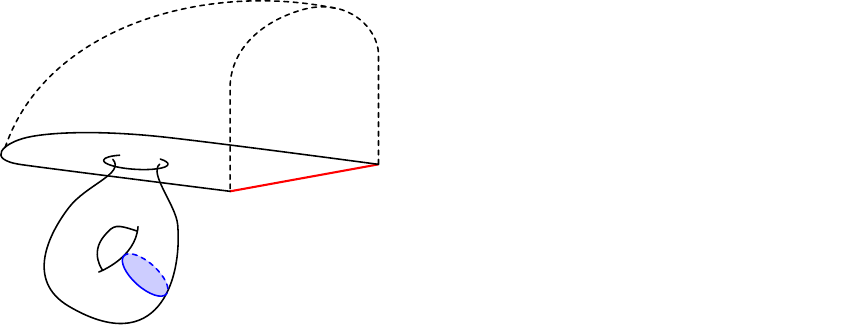
 \caption{Constructing the link $\bL \subset U_{\bs}$. The left shows
 a part of the handlebody $U_{\bs}$ for a diagram $(\S,\as,\bs)$,
 together with a curve $\b_i \in \bs$.
 The right depicts the $\b$-double $D_{\bs}(\cH)$ of $\cH$.
 The corresponding curve $\bar{\b}_i \in \bar{\bs}$ and a curve
 $\delta_k \in \ds_{\bs}$ are shown, as well as the corresponding components
 $K_{\b_i}$ and a portion of $K_{\delta_k}$ of~$\bL$. The
 shaded regions denote compressing disks bounded by the curves
 $\b_i$, $\bar{\b}_i$, and $\delta_k$. \label{fig::49}}
\end{figure}

The map $F_3$ is clearly the 3-handle map for a collection
of $|\bs'|$  2-spheres in $M(\bL)$ that topologically cancel the link $\bL$.
In the subsequent Lemma~\ref{lem:Heegaardtripleissubordinatetobouquet}, we will
show that the triple $(D_{\bs}(\Sigma), \as\cup \bs',\Ds_{\Sigma},\bs\cup \bs')$
can be related to a triple subordinate to a bouquet for $\bL$ by
a sequence of handle slides and isotopies of the $\Ds_{\Sigma}$
and $\bs\cup \bs'$ curves. Assuming this topological fact
for the moment, we now show that this implies that $F_2$ is the
2-handle map for surgery on $\bL$. Suppose
$(D_{\bs}(\Sigma),\as\cup\bs', \xis,\zetas)$ is a Heegaard triple
which is subordinate to a bouquet for $\bL$, and $\xis$ and
$\zetas$ are related to $\Ds_{\Sigma}$ and $\bs\cup \bs'$,
respectively, by a sequence of handleslides and isotopies.
To show that $F_2$ is the 2-handle map, it is sufficient to show that
\begin{equation}
F_{\as\cup \bs',\Ds_{\S}, \bs\cup \bs'}(-, \Theta_{\Ds_{\S},\bs\cup \bs'})\simeq \Psi_{\as\cup \bs'}^{\zetas\to \bs\cup \bs'}\circ  F_{\as\cup \bs', \xis,\zetas} \left(\Psi_{\as\cup \bs'}^{\Ds_{\Sigma}\to \xis}(-),\Theta_{\xis,\zetas}\right),\label{eq:F2=2-handle}
\end{equation}
the left-hand side being $F_2$, and the right-hand side being the
definition of the 2-handle map pre-composed with the
transition map for changing $\Ds_{\Sigma}$ to $\xis$,
and post-composed with the transition map for changing $\zetas$ to $\bs\cup \bs'$. We compute
\begin{equation}
\begin{split}
\Psi_{\as\cup \bs'}^{\zetas\to \bs\cup \bs'}\circ F_{\as\cup \bs', \xis,\zetas}
\left(\Psi_{\as\cup \bs'}^{\Ds_{\Sigma}\to \xis}(-),\Theta_{\xis,\zetas}\right)
&\simeq  \Psi_{\as\cup \bs'}^{\zetas\to \bs\cup \bs'}\circ F_{\as\cup \bs', \Ds_{\Sigma},\zetas}
\left( -,\Psi^{\zetas}_{\xis\to \Ds_{\Sigma}}(\Theta_{\xis,\zetas})\right)\\
 & \simeq F_{\as\cup \bs', \Ds_{\Sigma},\zetas} \left( -,\Psi_{\Ds_{\Sigma}}^{\zetas\to \bs\cup \bs'}\left(\Psi^{\zetas}_{\xis\to \Ds_{\Sigma}}(\Theta_{\xis,\zetas})\right)\right).
\end{split}
\label{eq:associativityfor2-handlemap}
\end{equation}
The first chain homotopy of equation~\eqref{eq:associativityfor2-handlemap}
follows from associativity for the 4-tuple $(\as\cup \bs', \Ds_{\Sigma},\xis, \zetas)$,
and the second follows from associativity applied to the
4-tuple $(\as\cup \bs',\Ds_{\Sigma}, \zetas, \bs\cup \bs')$.
Using the fact that the top-graded generator is well defined
on the level of homology, as in Lemma~\ref{lem:topgradedgenerator},
it follows that $\Psi_{\Ds_{\Sigma}}^{\zetas\to \bs\cup \bs'}
\left(\Psi^{\zetas}_{\xis\to \Ds_{\Sigma}}(\Theta_{\xis,\zetas})\right)$
and $\Theta_{\Ds_{\Sigma},\bs\cup \bs'}$ differ by a boundary, so the last line of
equation~\eqref{eq:associativityfor2-handlemap} is
chain homotopic to $F_2$, establishing equation~\eqref{eq:F2=2-handle}.

Using the fact that $F_2$ is chain homotopic
to the 2-handle map for $\bL$, it follows that the composition
$F_3\circ F_2$ is chain homotopic to the map from naturality.
\end{proof}

To finish the proof of Lemma~\ref{lem:changeofdiagramsdoubleddiagram},
it is sufficient to show the following:

\begin{lem}\label{lem:Heegaardtripleissubordinatetobouquet}
The Heegaard triple $(D_{\bs}(\Sigma), \as\cup \bs', \Ds_{\Sigma}, \bs\cup \bs')$
is related to a triple subordinate to the framed link
$\bL$ by a sequence of handle slides and isotopies of the
$\Ds_{\Sigma}$ and $\bs\cup \bs'$ curves.
\end{lem}

\begin{proof}
We recall that the $\Ds_{\Sigma}$ curves were constructed
by picking a set of arcs $\ve{s}=\{s_1,\dots, s_n\}$ forming
a basis of $H_1(\Sigma,\cI_0)$, and then doubling the arcs of
$\ve{s}$ across $\cI_0$ onto $\bar{\Sigma}$ to obtain a
collection of $n := 2(g(\Sigma) + |\d \Sigma| - |\Sigma|)$ closed curves.
By Lemma~\ref{lem:allowablearcslides}, any two such bases $\ve{s}$
of $H_1(\Sigma, \cI_0)$ can be connected by a sequence of
allowable arc slides. An allowable arc slide of two arcs
in $\ve{s}$ induces a handle slide of the corresponding
doubled curves in $\Ds_{\Sigma}$. Consequently, we can assume
that $\Ds_{\Sigma}$ is constructed using any convenient basis of $H_1(\Sigma,\cI_0)$.

The curves $\ds_{\bs}$ are obtained by picking a set of
\[
k:=\rank H_1(R_{\bs}, \cI_1)=2(g(R_{\bs}) + |\d \Sigma| - |R_{\bs}|)
\]
arcs $d_1,\dots, d_k$ which form a basis of $H_1(R_{\bs},\cI_1)$.
Noting that we view $R_{\bs}$ as the result of surgering
$\Sigma$ on $\bs$, we can assume that $\Ds_\Sigma$ and $\ds_{\bs}$
are constructed using  arcs $\ve{s}$ and $\ve{d}$ satisfying the following:
\begin{enumerate}
\item For each $\beta\in \bs$, there is a pair of arcs
$s$, $s'\in \ve{s}$ such that $|s\cap \beta|=1$, and $s$ is disjoint
from all the other $\bs$ and $\ve{d}$ curves. The curve $s'$
is obtained by taking $\beta$, and isotoping a neighborhood
of the point $\beta\cap s$ along $s$ until it intersects
$\d \Sigma$. In particular, $s'$ is disjoint from $\ve{d}\cup \bs$.
Furthermore, we can handle slide $\beta$ across its image $\bar{\beta}$ on $\bar{\Sigma}$
along the arc $s$ to obtain a curve isotopic to $s' \cup \bar{s}'$, where $\bar{s}'$ is the image
of $s'$ on $\bar{\Sigma}$.
\item For each $d\in \ve{d}$, there is a corresponding arc $s\in \ve{s}$
satisfying $|s\cap d|=1$. Furthermore, $s$ is disjoint from $(\ve{d}\setminus \{d\})\cup \bs$.
\end{enumerate}
See Figure~\ref{fig::74} for an example of the arcs $\ve{s}$ and $\ve{d}$, and the resulting attaching curves
$\Ds_{\Sigma}$ and $\ds_{\bs}$.

\begin{figure}[ht!]
 \centering
 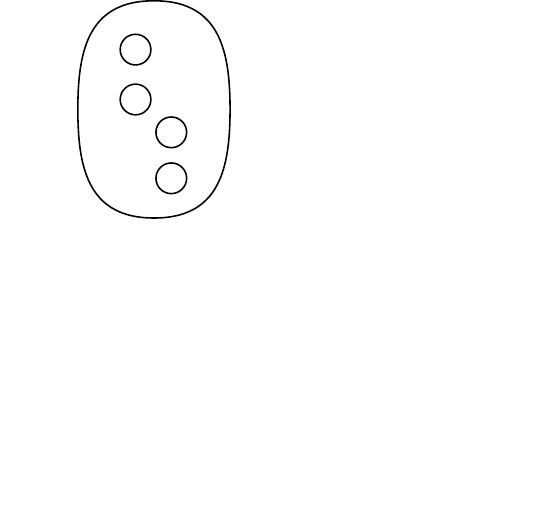
 \caption{On the top left is an example of a monodiagram
 $(\Sigma,\bs)$ with $|\bs|=1$, $g(\Sigma)=2$, and $|\d \Sigma|=1$.
 On the top right are the arcs $\ve{s}$ used to form $\Ds_{\Sigma}$,
 as well as the arcs $\ve{d}$ used to form $\ds_{\bs}$.
 On the bottom is the diagram
 $(D_{\bs}(\Sigma), \Ds_\Sigma, \bs \cup \bs')$.}
 \label{fig::74}
\end{figure}

We handle slide each $\beta\in \bs$ across the corresponding
$\bar{\beta}\in \bar{\bs}$ along the arcs $\ve{s}$ to form the curve $\beta^H$,
and let us call the resulting set of curves $\bs^{H}$.
We note  that each curve in $\bar{\bs} \cup \ds_{\bs}$
is a longitude of a component of $\bL$, and each curve in
$\Ds_{\Sigma}$ is either a meridian, or is a small Hamiltonian
isotopy of a curve in $\bs^H$. It follows that the triple
\[
(D_{\bs}(\Sigma), \as\cup \bar{\bs}\cup \ds_{\bs}, \Ds_{\Sigma}, \bs^H\cup \bar{\bs}\cup \ds_{\bs})
\]
is subordinate to a bouquet for $\bL$.
\end{proof}

\subsection{The change of diagrams map from \texorpdfstring{$C(\cH)$}{C(H)}
to \texorpdfstring{$D_{\b}(\cH)$}{D(H)}.}
Analogously to the formula in Lemma~\ref{lem:changeofdiagramsdoubleddiagram}, we can
 describe the change of diagrams map from $C(\cH)$ to
$D_{\bs}(\cH)$ in concrete terms.

Pick a collection of pairwise disjoint arcs on $R_{\as}$ with endpoints in
$\cI_0$ that form a basis of  $H_1(R_{\as}, \cI_0)$. These induce arcs on
$\S$ disjoint from $\as$, well defined up to handle slides across $\as$, and let $\Ds_{\as} \subset
\S\,\natural_{\cI_0} \bar{\S}$ be the curves formed by doubling them
across $\cI_0$. By construction, the curves in $\Ds_{\as}$ are disjoint from
$\as$ and $\bar{\as}$. Also, let $\ds_{\as}$ denote the images of the curves $\Ds_{\as}$ on
$R_{\as}\natural_{\cI_0} \bar{\Sigma}$. Let $\as' := \bar{\as}\cup \Ds_{\as}$
and $\bs' := \bar{\bs} \cup \ds_{\bs}$.

Recall that the surface $D_{\bs}(\S)$ is $\S\,\natural_{\cI_0}\,
\bar{\S}\,\natural_{\cI_1} R_{\bs}$, and $C(\S)$ is defined as
$R_{\as}\,\natural_{\cI_0} \bar{\S}\, \natural_{\cI_1} R_{\bs}$. Since
surgering the surface $\S$ along the $\as$ curves yields $R_{\as}$, we see
that there is a 1-handle map
\[
G_1 := F_1^{\as,\as} \colon \CF(C(\cH)) =
\CF(C(\S),\bs',\bar{\as} \cup \ds_{\as}) \to \CF(D_{\bs}(\S),\as \cup \bs',\as \cup \as').
\]

\begin{lem}\label{lem:diagramforS1S2}
The diagram $(D_{\bs}(\S), \as \cup \as', \Ds_{\S})$ represents the sutured manifold
\[
(S^1\times S^2)^{\# n}(|\d\S|)
\,\natural_{\cI_1}\,  (R_{\bs} \times I, \d R_{\bs} \times I)
\]
for some $n$, where $(S^1\times S^2)^{\# n}(|\d\S|)$
denotes the sutured manifold obtained by removing $|\d \S|$ balls from $(S^1\times S^2)^{\# n}$
and adding a connected suture to each boundary component.
Furthermore, $\natural_{\cI_1}$ denotes the boundary connected sum taken along $\cI_1$
by adding $|\cI_1|$ product 1-handles.
\end{lem}

\begin{proof}
All the attaching curves are disjoint from
$R_{\bs}$. If we cut $R_{\bs}$ off of $D_{\bs}(\S)$, we are left with the
diagram $(\S \, \natural_{\cI_0} \bar{\S}, \as \cup \as',
\Ds_{\S})$. Recall that $\as'=\bar{\as}\cup \Ds_{\as}$, where $\Ds_{\as}$ is
obtained by choosing a basis of arcs for $H_1(R_{\as},\cI_0)$, and doubling
the induced curves on $\S$ onto $\S\, \natural_{\cI_0}  \bar{\S}$.
We can assume that $\Ds_\Sigma$ is constructed by starting with small Hamiltonian
isotopes of the curves in $\Ds_{\as}$, and then adjoining additional $2|\as|$ curves that are disjoint from
$\Ds_{\as}$. Then the curves in $\Ds_{\as}$ together with
their isotopes in $\Ds_\Sigma$ determine $|\Ds_{\as}|$
embedded 2-spheres, which may or may not be separating. After surgering these out,
we are left with the double of a diagram for a disjoint union of some number of copies of
$(S^1\times S^2)^{\# l}(m)$ for various $l$ and $m$.
It follows that $(D_{\bs}(\S), \as \cup \as', \Ds_{\S})$ represents
\[
(S^1\times S^2)^{\# n}(|\d\S|)
\, \natural_{\cI_1}\,  (R_{\bs} \times I, \d R_{\bs} \times I)
\]
for some $n$.
\end{proof}

As a consequence of Lemma~\ref{lem:diagramforS1S2}, there is a top-graded generator
\[
\Theta_{\as \cup \as', \Ds_{\S}} \in \SFH(D_{\bs}(\S), \as \cup \as', \Ds_{\S}),
\]
and hence we can also define a triangle map
\[
G_2 := F_{\as \cup \bs', \as \cup \as', \Ds_{\S}}(-, \Theta_{\as\cup \as', \Ds_{\S}}).
\]

\begin{lem}\label{lem:changeofdiagramsdoubletoconj}
The composition $G_2 \circ G_1$ is chain homotopic to the change of diagrams map from $C(\cH)$ to
$D_{\bs}(\cH)$.
\end{lem}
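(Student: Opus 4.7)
The plan is to recognize $G_1$ as a compound stabilization map and $G_2$ as a triangle map that computes a sequence of handleslides, and then invoke Proposition~\ref{prop:compoundstabilizationmap} together with the standard associativity properties of the holomorphic triangle count to identify the composition with the naturality map.

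First, I would verify that the intermediate diagram $(D_{\bs}(\S),\, \as\cup\bs',\, \as\cup\as')$ is a compound stabilization of $C(\cH)$. Destabilizing along each of the $|\as|$ canceling pairs formed by the $\as$ subsets of the two attaching systems removes $|\as|$ handles from $D_{\bs}(\S)$, converting $\S$ back to $R_{\as}$ and thereby recovering $C(\S)$. Under this destabilization, the remaining $\a$-curves $\bs' = \bar{\bs}\cup\ds_{\bs}$ are unaltered, while the remaining $\b$-curves $\bar{\as}\cup\Ds_{\as}$ become $\bar{\as}\cup\ds_{\as}$, since $\Ds_{\as}$ and $\ds_{\as}$ are doubles of the same basis arcs, differing only in whether the arcs are viewed on the $\S$ side or the $R_{\as}$ side. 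Moreover, each new $\a$-curve is supported in the $\S$ summand of $D_{\bs}(\S)$ and is disjoint from the $\b$-curves $\bar{\as}\cup\ds_{\as}$, so each individual stabilization is a simple $(0,0)$-stabilization in the sense of Proposition~\ref{prop:compoundstabilizationmap}. Iterating that proposition gives the chain homotopy
\[
G_1 = F_1^{\as,\as} \simeq \Phi_{C(\cH)\to(D_{\bs}(\S),\,\as\cup\bs',\,\as\cup\as')}.
\]

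Second, I would argue that $G_2$ is chain homotopic to the naturality map $\Phi_{(D_{\bs}(\S),\,\as\cup\bs',\,\as\cup\as')\to D_{\bs}(\cH)}$. The intermediate diagram and $D_{\bs}(\cH)$ share the same Heegaard surface $D_{\bs}(\S)$ and the same $\a$-curves $\as\cup\bs'$, and both represent the same balanced sutured manifold $(M,\g)$; consequently the $\b$-curve collections $\as\cup\as'$ and $\Ds_{\S}$ bound the same handlebody in $D_{\bs}(\S)$, and by a standard result of Waldhausen they are related by a sequence of handleslides and isotopies. Each elementary handleslide is realized by a triangle map paired with the top generator, and these triangle maps compose associatively, with compositions of top generators remaining top generators. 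Hence the single triangle map $G_2 = F_{\as\cup\bs',\,\as\cup\as',\,\Ds_{\S}}(-,\,\Theta_{\as\cup\as',\,\Ds_{\S}})$ is chain homotopic to the composition of the intermediate handleslide maps, and therefore to the naturality map.

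Composing the two steps yields $G_2\circ G_1 \simeq \Phi_{C(\cH)\to D_{\bs}(\cH)}$, as required. The main technical obstacle will be the destabilization identification in the first step: verifying carefully that $\Ds_{\as}$ reduces to $\ds_{\as}$ after the $|\as|$ destabilizations and that the local configurations near the neck $\cI_0$ — where the $R_{\as}$ summand of $C(\S)$ is being replaced by the $\S$ summand of $D_{\bs}(\S)$ — genuinely match the model hypotheses of Proposition~\ref{prop:compoundstabilizationmap}. This is a careful local check at each of the $|\as|$ canceling pairs, analogous in spirit to the local framing analysis in Lemma~\ref{lem:changeofdiagramsdoubleddiagram}.
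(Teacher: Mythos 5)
There is a genuine gap here: both halves of your argument rest on misidentifications of what $G_1$ and $G_2$ actually are. The map $G_1$ is defined in the paper as the 4-dimensional 1-handle map $F_1^{\as,\as}$, not a compound stabilization. For each $\a \in \as$, the tube attached to turn $R_{\as}$ into $\S$ carries a new $\a$-curve and a new $\b$-curve that are parallel translates of one another and intersect in \emph{two} points $\theta^{\pm}$, with $G_1(\xs) = \xs \times \theta^+$; a $(0,0)$-stabilization in the sense of Proposition~\ref{prop:compoundstabilizationmap} instead adds a pair of curves meeting in a \emph{single} point $c$. Consequently the intermediate diagram $(D_{\bs}(\S), \as\cup\bs', \as\cup\as')$ is not obtained from $C(\cH)$ by Heegaard moves at all: it represents the sutured manifold obtained from $(M,\g)$ by attaching $|\as|$ 4-dimensional 1-handles (introducing $S^1\times S^2$ summands), so $G_1$ cannot be chain homotopic to a naturality map. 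This also invalidates your second step: since the intermediate diagram and $D_{\bs}(\cH)$ represent different 3-manifolds, the attaching systems $\as\cup\as'$ and $\Ds_{\S}$ do \emph{not} bound the same compression body and are not related by handleslides and isotopies; indeed the paper records that $(D_{\bs}(\S), \as\cup\as', \Ds_{\S})$ is a diagram for $(S^1\times S^2)^{|\as|}(|\d\S|)\,\#\,(R_{\bs}\times I)$ rather than for a product, which is exactly why $G_2$ paired with $\Theta_{\as\cup\as',\Ds_{\S}}$ is a surgery (2-handle) map rather than a handleslide map.

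The intended argument, as in Lemma~\ref{lem:changeofdiagramsdoubleddiagram}, is that neither $G_1$ nor $G_2$ is individually a change-of-diagrams map: $G_1$ is the cobordism map for $|\as|$ 4-dimensional 1-handles, $G_2$ is the cobordism map for surgery on a framed $|\as\cup\as'|$-component link that cancels them topologically (one checks that the triple $(D_{\bs}(\S),\as\cup\bs',\as\cup\as',\Ds_{\S})$ is subordinate, after handleslides and isotopies, to a bouquet for this link), and only the \emph{composition} represents the trivial cobordism. The chain homotopy to $\Phi_{C(\cH)\to D_{\bs}(\cH)}$ then follows from the well-definedness of the sutured cobordism maps of~\cite{JCob}, not from Proposition~\ref{prop:compoundstabilizationmap} or from a handleslide-equivalence of attaching curves.
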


\begin{proof}
The proof is similar to the proof of Lemma~\ref{lem:changeofdiagramsdoubleddiagram}.
We can interpret the composition $G_2\circ G_1$ as the cobordism map
for a canceling collection of $|\as|$ pairs of 4-dimensional
1- and 2-handles. We now describe the attaching spheres of the
1-handles and 2-handles (see also \cite{ZemDualityMappingTori}*{Section~7.2}
for a detailed account of a similar topological manipulation
in the setting of closed 3-manifolds). Let $D_1,\dots, D_{|\as|}$ denote
compressing disks attached along the curves $\as\subset \Sigma$,
where $\Sigma\subset M$ denotes the original Heegaard surface.

The two feet of a 1-handle are obtained by pushing off
the center point of the disk $D_i$ in both normal directions.
The canceling 2-handle for this 1-handle is attached along the core
of the 1-handle concatenated with an arc connecting the two feet that intersects the center
of $D_i$ transversely. By adapting the proof of
Lemma~\ref{lem:Heegaardtripleissubordinatetobouquet}, it is
not hard to see that, after a sequence of handle slides, the
triple $(D_{\bs}(\Sigma), \as\cup \bs', \as\cup \as', \Ds_{\Sigma})$
becomes subordinate to the link described above.
\end{proof}

\subsection{A diagram for \texorpdfstring{$M_{\a,\b} \cup_{R_\b} M_{\b,\g}$}{M},
and a formula for the special cobordism map \texorpdfstring{$(W_{\a,\b,\g})^s$}{W}}
\label{sec:glueddiagram}

Let $\cT = (\S, \as, \bs, \gs)$ be an admissible sutured triple diagram. We
now describe a diagram $A(\cT)$ for the sutured manifold
$(M_{{\as},{\bs}}\cup_{R_{\bs}} M_{{\bs},{\gs}}, \g_{\as,\gs})$,
which has sutures along $\d R_{\bs}$, where the two manifolds are glued together.

Analogous to the doubled diagram, we let $\cI_0$, $\cI_1 \subset \d \S$
be disjoint collections of subintervals, such that each component of $\d \S$
contains exactly one subinterval from $\cI_0$ and $\cI_1$. We form the diagram
\[
A(\cT) = (D_{\gs}(\S), D_{\gs}(\as), A(\bs)) =
(\S\,\natural_{\cI_0} \bar{\S}\,\natural_{\cI_1} R_{\gs},\as \cup \bar{\gs} \cup
\ds_{\gs},\bs \cup \bar{\bs} \cup \Ds_{\bs}),
\]
where the component $\S$ of $D_{\gs}(\S)$ is embedded in $M_{\as,\bs}$ and
the component $\bar{\S}$ in $M_{\bs,\gs}$. We call $A(\cT)$ the
\emph{amalgamation of $\cT$ along $R_{\bs}$}. Here
$\ds_{\gs} \subset \bar{\S}\,\natural_{\cI_1} R_{\gs}$ is obtained by
doubling a collection of arcs forming a basis of $H_1(R_{\gs}, \cI_1)$.
Similarly, $\Ds_{\bs} \subset \S\,\natural_{\cI_0} \bar{\S}$ is
obtained by choosing a basis of arcs in $H_1(R_{\bs}, \cI_0)$,
and doubling a lift of them to $\S$ across $\cI_0$.

Note that the doubling construction can be viewed as an instance of amalgamation,
in the sense that
\[
D_{\bs}(\S,\as,\bs) = A(\S,\as,\emptyset,\bs).
\]
Here we interpret $R_{\emptyset}$
as $\Sigma$, so that $\Ds_{\bs}$ is the collection
$\Ds_{\Sigma}$ defined in the construction of a doubled diagram.

\begin{lem}
The diagram $A(\cT)$ is a diagram for $(M_{{\as},{\bs}}\cup_{R_{\bs}} M_{{\bs},{\gs}}, \g_{\as,\gs})$.
\end{lem}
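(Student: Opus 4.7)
The plan is to embed $D_\gs(\Sigma) = \Sigma \natural_{\cI_0} \bar\Sigma \natural_{\cI_1} R_\gs$ into $M_{\as,\bs} \cup_{R_\bs} M_{\bs,\gs}$ so that it separates the manifold into two compression bodies, and then to verify that the specified curves compress them. I would place $\Sigma$ as the standard Heegaard surface of $(M_{\as,\bs},\g_{\as,\bs})$ and $\bar\Sigma$ as an orientation-reversed pushoff of the Heegaard surface of $(M_{\bs,\gs},\g_{\bs,\gs})$, and let $R_\gs$ be a pushoff into the interior of $M_{\bs,\gs}$ of the boundary surface $R_\gs \subset \d M_{\bs,\gs}$. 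The two boundary-connected-sum arcs lie in the sutured boundary region $\d\Sigma \times I$: the $\cI_0$-arc passes across the gluing surface $R_\bs$ joining $M_{\as,\bs}$ to $M_{\bs,\gs}$, while the $\cI_1$-arc is contained in $M_{\bs,\gs}$. With this embedding, the $\alpha$-side comprises $U_\as$ (above $\Sigma$), $-U_\gs$ (below $\bar\Sigma$), and the product collar between the pushoff $R_\gs$ and $\d M_{\bs,\gs}$, all joined through a tube in $\d\Sigma \times I$; the $\beta$-side is the remainder, centered on the doubled compression body $-U_\bs \cup_{R_\bs} U_\bs$ between $\Sigma$ and $\bar\Sigma$.

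It remains to check that the prescribed compressing systems cut each side into balls. The curves $\as$, $\bs$, $\bar\bs$, $\bar\gs$ clearly compress their standard compression bodies $U_\as$, $-U_\bs$, $U_\bs$, $-U_\gs$ inside the respective pieces. The main obstacle is the doubled systems $\Ds_\bs$ and $\ds_\gs$. For a basis arc $\lambda$ of $H_1(R_\bs,\cI_0)$, the deformation retraction $U_\bs \to R_\bs$ sweeps $\lambda$ into a properly embedded disk in $U_\bs$ whose boundary consists of $\lambda$, an arc on $\bar\Sigma$ disjoint from $\bar\bs$, and two arcs in $\d\Sigma \times I$; mirroring gives a disk in $-U_\bs$ bounded by $\lambda$, an arc on $\Sigma$ disjoint from $\bs$, and two arcs in $\d\Sigma \times I$. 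Since $-U_\bs$ and $U_\bs$ meet along $R_\bs$ in the glued manifold, these two disks glue along $\lambda$ into a single embedded disk in the $\beta$-side whose boundary is exactly the corresponding curve of $\Ds_\bs$ on $\Sigma \natural_{\cI_0} \bar\Sigma$—the four boundary arcs in $\d\Sigma \times I$ match up under the $\cI_0$-identification. The resulting complete disk system cuts the doubled middle region apart, and a standard count of handles shows that it reduces the $\beta$-side to a union of balls.

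The argument for $\ds_\gs$ on the $\alpha$-side is analogous but easier: after the $\bar\gs$-compressions, the region bounded by $\bar\Sigma \natural_{\cI_1} R_\gs$ becomes a product shell around the pushoff of $R_\gs$, and the doubled basis arcs of $H_1(R_\gs,\cI_1)$ trivially cut this shell into balls. Finally, I would track $\d D_\gs(\Sigma)$ through the two boundary identifications to confirm that the induced sutures on the boundary of the splitting coincide with $\g_{\as,\gs}$, thereby identifying $A(\cT)$ as a Heegaard diagram for $(M_{\as,\bs} \cup_{R_\bs} M_{\bs,\gs},\g_{\as,\gs})$.
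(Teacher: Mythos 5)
Your strategy---embed $D_{\gs}(\S)$ in $M_{\as,\bs}\cup_{R_{\bs}}M_{\bs,\gs}$ and check directly that the two complementary regions are compression bodies with the stated compressing systems---is legitimate and differs from the paper's route, which builds $A(\cT)$ in stages from the weakly conjugated diagram $C(\cH_{\bs,\gs})$: replacing the $R_{\bs}$ summand by $\S$ and adding the $\bs$ curves corresponds to regluing $-U_{\bs}$ along $R_{\bs}$, and adding $\as$ corresponds to gluing $U_{\as}$ along $\S$. Your construction of the compressing disks for $\Ds_{\bs}$, by doubling the swept-out disks through the gluing surface $R_{\bs}$, is correct and is the essential point on the $\beta$-side.

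However, your decomposition of the complement of $D_{\gs}(\S)$ assigns the collar between the pushoff $R_{\gs}'$ and $R_{\gs}\subset\d M_{\bs,\gs}$ to the wrong side. Trace the tubes: the $\cI_1$-tube runs through the copy of $-U_{\gs}$ lying between $\bar\S$ and $R_{\gs}'$, so its interior connects the \emph{opposite} sides of $\bar\S$ and $R_{\gs}'$, namely the middle region $-U_{\bs}\cup_{R_{\bs}}U_{\bs}$ and the collar. Hence the collar lies on the $A(\bs)$-side, together with the doubled compression body, while the $\alpha$-side consists only of $U_{\as}$ and the copy of $-U_{\gs}$ between $\bar\S$ and $R_{\gs}'$ (the latter is your ``product shell,'' and it is there---not in the collar---that the $\ds_{\gs}$ disks live). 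This is not a cosmetic slip: with your assignment the $\beta$-side would compress to balls with no free boundary on the boundary of the glued manifold, while the $\alpha$-side would claim both $\bar R_{\as}$ and $R_{\gs}$ as its outer boundary; no sutured Heegaard splitting of a manifold with $R_+\neq\emptyset\neq R_-$ can look like this, and your final check that the induced sutures and $R_{\pm}$ come out as those of $\g_{\as,\gs}$ would fail. The correct picture is that the $A(\bs)$-side compresses to balls \emph{plus} the untouched collar $R_{\gs}\times I$, which is exactly what realizes $R_{A(\bs)}\cong R_{\gs}$ as one of $R_{\pm}(\g_{\as,\gs})$, while the $D_{\gs}(\as)$-side compresses to $\bar R_{\as}\times I$ plus balls.
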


\begin{proof}
The diagram $A(\cT)$ is obtained by first replacing $R_{\bs}$
with $\S$ in
\[
C(\cH_{\bs,\gs}) = (R_{\bs}\,\natural_{\cI_0}
\bar{\S}\, \natural_{\cI_1} R_{\gs}, \bar{\gs}\cup \ds_{\gs}, \bar{\bs}
\cup \ds_{\bs}),
\]
after which $\ds_{\bs}$ becomes $\Ds_{\bs}$. As compressing $\S$ along $\bs$
gives $R_{\bs}$, if we add $\bs$, the resulting diagram
\[
(\S \,\natural_{\cI_0} \bar{\S}\, \natural_{\cI_1} R_{\gs}, \bar{\gs}\cup \ds_{\gs},
\bs \cup \bar{\bs} \cup \Ds_{\bs})
\]
represents $(-U_{\bs} \cup_{R_{\bs}} U_{\bs} \cup_{\S} -U_{\gs}, \d \S)$.
Finally, we add $\as$, which amounts to gluing $U_{\as}$ to
$-U_{\bs} \cup_{R_{\bs}} U_{\bs} \cup_{\S} -U_{\gs}$
by identifying $\S \subset U_{\as}$ with $\bar{\S} \subset \d (-U_{\bs})$.
\end{proof}

By Lemma~\ref{lem:Wabgspecialcobordism2handle}, the special cobordism
\[
(W_{\as,\bs,\gs})^s \colon (M_{\as,\bs} \cup_{R_{\bs}} M_{\bs,\gs}, \g_{\as,\gs})
\to (M_{\as,\gs}, \g_{\as,\gs})
\]
is a 2-handle cobordism, for surgery on a framed link $\bL \subset
M_{\as,\bs} \cup_{R_{\bs}} M_{\bs,\gs}$. We recall the description of the
framed link $\bL$. One takes a Morse function $f_{\bs}$ on $U_{\bs}$ that
is $0$ on $\bar{R}_{\bs}$ and $1$ on $\S$, and has $\bs$ as the intersection of
the ascending manifolds of the critical points of $f_{\bs}$ with $\S$.
Then the descending manifolds of the critical points of $f_{\bs}$ determine a
collection of $|\bs|$ properly embedded arcs $\lambda_i \subset U_{\bs}$
that have both ends on $R_{\bs}$. The link $\bL$ is obtained by taking the
union of the arcs $\lambda_i \subset U_{\bs} \subset M_{\bs,\gs}$, together with
their images in $-U_{\bs} \subset M_{\as,\bs}$.

Let $\Ds_{\S}\subset \S\,\natural_{\cI_0} \bar{\S}$ be a
collection of curves obtained by picking arcs forming a basis of $H_1(\S,
\cI_0)$, and then doubling them across $\cI_0$. One can assume that the chosen
basis of $H_1(\S, \cI_0)$ extends the lift of the basis of $H_1(R_{\bs},\cI_0)$ to $\S$ we
chose in the construction of the $\Ds_{\bs}$ curves, so that
$\Ds_{\bs}\subset \Ds_{\S}$, though this is not essential.

We note that the diagram
\[
(D_{\gs}(\S), A(\bs), \Ds_{\S}) =
(\S\,\natural_{\cI_0} \bar{\S} \, \natural_{\cI_1} R_{\gs},
\bs \cup \bar{\bs} \cup \Ds_{\bs}, \Ds_{\S})
\]
represents
\[
(S^1\times S^2)^{\# n}(|\d \Sigma|)
\,\natural_{\cI_1}\, (I \times R_{\gs}, I \times \d R_{\gs})
\]
for some $n$ by Lemma~\ref{lem:diagramforS1S2}. Consequently, there is a top-graded generator
\[
\Theta_{A(\bs), \Ds_{\S}}\in \SFH(D_{\gs}(\S), A(\bs), \Ds_{\S}) =
\SFH(\S\,\natural_{\cI_0} \bar{\S} \, \natural_{\cI_1} R_{\gs},
\bs \cup \bar{\bs} \cup \Ds_{\bs}, \Ds_{\S}).
\]
Note that $(D_{\gs}(\S), D_{\gs}(\as), \Ds_{\S})$
is a doubled diagram for $(M_{\as,\gs}, \g_{\as,\gs})$. We have the following:

\begin{lem}
The special cobordism map
\[
F_{(W_{\as,\bs,\gs})^s} \colon \CF(M_{\as,\bs} \cup_{R_{\bs}} M_{\bs,\gs}, \g_{\as,\gs})
\to \CF(M_{\as,\gs}, \g_{\as,\gs})
\]
is chain homotopic to the triangle map
\[
F_{D_{\gs}(\as), A(\bs), \Ds_{\S}}(-,\Theta_{A(\bs), \Ds_{\S}}).
\]
\end{lem}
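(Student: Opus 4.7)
My plan is to invoke the surgery description from Lemma~\ref{lem:Wabgspecialcobordism2handle} and identify the Heegaard triple $(D_{\gs}(\S), D_{\gs}(\as), A(\bs), \Ds_{\S})$ as one subordinate to a bouquet for the framed link $\bL$ used to build $(W_{\as,\bs,\gs})^s$. Once this identification is in place, the definition of the sutured Floer 2-handle cobordism map from \cite{JCob}*{Section~8} will directly yield the claimed triangle formula, using the invariance of the 2-handle map under the choice of subordinate triple.

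The first step is topological: I will verify the three faces of the triple represent the expected sutured manifolds. By the construction of the amalgamation diagram, $(D_{\gs}(\S), D_{\gs}(\as), A(\bs))$ represents $(M_{\as,\bs}\cup_{R_{\bs}} M_{\bs,\gs}, \g_{\as,\gs})$, and $(D_{\gs}(\S), D_{\gs}(\as), \Ds_{\S})$ is the doubled diagram $D_{\gs}(\cH_{\as,\gs})$, which represents $(M_{\as,\gs}, \g_{\as,\gs})$. For the third face $(D_{\gs}(\S), A(\bs), \Ds_{\S})$, since none of the attaching curves in $A(\bs)$ or $\Ds_{\S}$ meets $R_{\gs}$, cutting $D_{\gs}(\S)$ along $\d R_{\gs}$ reduces it to a doubled diagram on $\S\,\natural_{\cI_0}\bar{\S}$ representing $(S^1\times S^2)^{|\bs|}(|\d\S|)\,\#\,(R_{\gs}\times I,\d R_{\gs}\times I)$. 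The element $\Theta_{A(\bs), \Ds_{\S}}$ is the canonical top-graded generator of its sutured Floer homology, just as required by the definition of the 2-handle map.

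Next I will check subordination to a bouquet for $\bL$. Recall that each component of $\bL$ has the form $K_{\b_i}=\lambda_i\cup\bar\lambda_i$, where $\lambda_i\subset U_{\bs}$ is the descending arc of the critical point of $f_{\bs}$ corresponding to $\b_i\in\bs$, and $\bar\lambda_i$ is its mirror in $-U_{\bs}\subset M_{\bs,\gs}$. On the amalgamation surface the curves $\Ds_{\bs}$ (common to $A(\bs)$ and $\Ds_{\S}$) form the ``trivial'' part of the surgery, while the remaining $2|\bs|$ curves of $A(\bs)$, namely $\bs\cup\bar{\bs}$, and the remaining curves of $\Ds_{\S}$ (the doubles of arcs completing a basis of $H_1(R_{\bs},\cI_0)$ to a basis of $H_1(\S,\cI_0)$) pair up one-to-one. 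After handlesliding across the $\Ds_{\bs}$ curves, each pair bounds an embedded annulus on $D_{\gs}(\S)$ whose core is isotopic to $K_{\b_i}$, which is precisely the subordination condition. Admissibility of the triple can be arranged by the usual winding procedure.

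The main obstacle will be matching the framings. The framing induced on $K_{\b_i}$ by the Heegaard surface $D_{\gs}(\S)$ must equal the framing specified in Lemma~\ref{lem:Wabgspecialcobordism2handle}, namely a framing on $\lambda_i$ concatenated with its mirror on $\bar\lambda_i$. I will verify this from a local model near $\lambda_i$: the portion of $D_{\gs}(\S)$ containing $K_{\b_i}$ consists of a disk neighborhood of $\lambda_i$ in $\S\subset U_{\bs}$, joined through a strip in $R_{\bs}$ to the mirror disk neighborhood of $\bar\lambda_i$ in $\bar{\S}\subset -U_{\bs}$. The surface tangent framing on each half agrees with the chosen framing of $\lambda_i$, and the gluing across $R_{\bs}$ is orientation-reversing, producing exactly the mirrored concatenation. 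With the framings matched, $(D_{\gs}(\S), D_{\gs}(\as), A(\bs), \Ds_{\S})$ is subordinate to a bouquet for $\bL$, and the conclusion follows.
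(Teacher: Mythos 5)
Your proposal follows the paper's proof exactly: invoke Lemma~\ref{lem:Wabgspecialcobordism2handle} to realize $(W_{\as,\bs,\gs})^s$ as surgery on the framed link $\bL$, relate the triple $(D_{\gs}(\S), D_{\gs}(\as), A(\bs), \Ds_{\S})$ by handleslides and isotopies to one subordinate to a bouquet for $\bL$, and conclude from the definition and well-definedness of the 2-handle cobordism maps --- the paper leaves the subordination step as ``it is not hard to see,'' which you attempt to flesh out. One caveat in your added detail: since $\bL$ has only $|\bs|$ components, of the $2|\bs|$ non-$\Ds_{\bs}$ curve pairs only $|\bs|$ can encode the surgeries $K_{\b_i}$, while the remaining $|\bs|$ pairs must become Hamiltonian translates of one another after the handleslides, so it is not the case that ``each pair'' bounds an annulus with core isotopic to some $K_{\b_i}$.
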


\begin{proof}
By Lemma~\ref{lem:Wabgspecialcobordism2handle}, the special cobordism
$(W_{\as,\bs,\gs})^s$ is a 2-handle cobordism, for a framed link $\bL \subset
-U_{\bs} \cup_{R_{\bs}} U_{\bs}$ described above. By adapting the proof of
Lemma~\ref{lem:Heegaardtripleissubordinatetobouquet}, we see that the triple
\[
(D_{\gs}(\S), D_{\gs}(\as), A(\bs), \Ds_{\S})
\]
can be related by a sequence of handle slides and isotopies to a triple that
is subordinate to a bouquet for $\bL$. Consequently,  $F_{(W_{\as,\bs,\gs})^s}$
is chain homotopic to the triangle map above.
\end{proof}

\subsection{A holomorphic triangle description of the gluing map}

Let $\cT = (\S,\as,\bs,\gs)$ be a sutured Heegaard triple. In this
section, we present a natural candidate for the map for gluing
$(Z_{\as,\bs,\gs},-\xi_{\as,\bs,\gs})$ to $(-M_{\as,\bs},-\g_{\as,\bs}) \sqcup (-M_{\bs,\gs},-\g_{\bs,\gs})$,
and prove that it is indeed the gluing map.

Let
\[
C(\cH_{\bs,\gs}) =
(R_{\bs}\,\natural\bar{\S}\,\natural R_{\gs}, \bar{\gs}\cup \ds_{\gs},\bar{\bs}\cup \ds_{\bs})
\]
be a weak conjugate of $\cH_{\bs,\gs} = (\S,\bs,\gs)$,
as described in Section~\ref{sec:weaklyconjugated}. We now construct our candidate
\[
\begin{split}
\Phi\colon \CF(\cH_{\as,\bs})\otimes \CF(C(\cH_{\bs,\gs}))
\to \CF(A(\cT))
\end{split}
\]
for the gluing map. Notice that the domain of the map $\Phi$ is $\CF(M_{\as,\bs}, \g_{\as,\bs}) \otimes
\CF(M_{\bs,\gs}, \g_{\bs,\gs})$, and its range is $\CF(M_{{\as},{\bs}} \cup_{R_{\bs}}
M_{{\bs},{\gs}}, \g_{\as,\gs})$.

The definition of the map $\Phi$ is formally similar to the map for connected
sums due to Ozsv\'ath and Szab\'o~\cite{OSProperties}. We will call $\Phi$
the \emph{amalgamation map}, since its image is in the Floer homology of the
amalgamated diagram from the previous section. We also remark that the
Ozsv\'{a}th--Szab\'{o} maps that inspire the construction of $\Phi$ were
called the \emph{intertwining maps} in~\cite{ZemDualityMappingTori},
where they played a similar role as in our present context.

Note that there is a 1-handle map
\[
F_1^{\bar{\gs} \cup \ds_{\gs}, \bar{\gs}\cup \ds_{\gs}} \colon  \CF(\cH_{\as,\bs}) \to
\CF(D_{\gs}(\S), D_{\gs}(\as), D_{\gs}(\bs)),
\]
where $D_{\gs}(\as) = \as \cup \bar{\gs} \cup \ds_{\gs}$ and $D_{\gs}(\bs) =
\bs \cup \bar{\gs} \cup \ds_{\gs}$.

Recall that the underlying Heegaard surface of
$C(\cH_{\bs,\gs})$ is $R_{\bs}\,\natural \bar{\Sigma}\, \natural R_{\gs}$,
and the Heegaard surface $D_{\gs}(\Sigma)$ is
$\Sigma\, \natural \bar{\Sigma}\,  \natural R_{\gs}$.
Since the surface $R_{\bs}$ is the result of surgering
$\Sigma$ along the $\bs$ curves, it follows that there is a 1-handle map
\[
F_1^{\bs,\bs} \colon \CF(C(\cH_{\bs,\gs}))
\to \CF(D_{\gs}(\S), D_{\gs}(\bs), A(\bs)),
\]
obtained by adding the $\bs$ curves back into $R_{\bs}\subset C(\Sigma)$.
Note that, after we add these 1-handles, turning $R_{\bs}$ into $\S$,
the curves $\ds_{\bs}$ become $\Ds_{\bs}$.

Finally, we define
\[
\Phi := F_{D_{\gs}(\as), D_{\gs}(\bs), A(\bs)} \circ
\left(F_1^{\bar{\gs} \cup \ds_{\gs}, \bar{\gs}\cup \ds_{\gs}} \otimes F_1^{\bs,\bs} \right).
\]

A key ingredient in our analysis of the sutured cobordism $\cW_{\as,\bs,\gs}$ is the following:

\begin{prop}\label{prop:gluingmapformula}
The amalgamation map
\[
\Phi \colon \SFH(M_{\as,\bs}, \g_{\as,\bs}) \otimes \SFH(M_{\bs,\gs}, \g_{\bs,\gs}) \to
\SFH(M_{\as,\bs} \cup_{R_{\bs}} M_{\bs,\gs}, \g_{\as,\gs})
\]
defined above is chain homotopic
to the contact gluing map for gluing $(Z_{\as,\bs,\gs}, -\xi_{\as,\bs,\gs})$ to
\[
(-M_{\as,\bs}, - \g_{\as,\bs}) \sqcup (-M_{\bs,\gs}, -\g_{\bs,\gs}).
\]
\end{prop}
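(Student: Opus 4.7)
The plan is to compute the contact gluing map $\Phi_{-\xi_{\as,\bs,\gs}}$ from a contact cell decomposition of $(Z_{\as,\bs,\gs}, -\xi_{\as,\bs,\gs})$ adapted to the decomposition of Lemma~\ref{lem:xi-abg-istwoIinvariantgluedtogether}, and to match the result term by term with the formula defining $\Psi$. By that lemma, $(Z_{\as,\bs,\gs}, -\xi_{\as,\bs,\gs})$ is obtained from the two I-invariant pieces $(R_{\as,\bs} \times I, -\xi_{\as,\bs})$ and $(R_{\bs,\gs} \times I, -\xi_{\bs,\gs})$ by gluing them along the convex surfaces $\bar R_\bs'$ and $R_\bs'$ with Legendrian boundary. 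By the functoriality of the gluing map (Proposition~\ref{prop:compositionlaw}), I may treat the two I-invariant attachments and the identification along $R_\bs$ separately.

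First, for the I-invariant attachments, I would apply Proposition~\ref{prop:glueinproduct} to identify each gluing map with a diffeomorphism map. With compatible choices of Heegaard diagrams, these diffeomorphism maps take $\CF(\cH_{\as,\bs})$ and $\CF(C(\cH_{\bs,\gs}))$ to chain complexes on Heegaard surfaces that have been pushed out through the I-invariant collars so as to contain parallel copies of $\bar R_\bs'$ and $R_\bs'$, along which the final identification will be performed. This uses the concrete description of the diffeomorphism map following Proposition~\ref{prop:glueinproduct}, which says that on the level of complexes it is just the identity on intersection points once an appropriate diagram is chosen.

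Next, for the identification piece, I would construct an explicit contact cell decomposition whose Legendrian 1-skeleton projects onto a basis of arcs for $H_1(R_\bs,\cI_0)$ (the same arcs used to define $\ds_\bs$ and $\Ds_\bs$), and whose convex 2-cells have attaching curves that, on the diagram obtained after the preceding diffeomorphism maps, coincide with the $\bs$ curves and the $\bar{\gs}\cup\ds_\gs$ curves added by $F_1^{\bar{\gs}\cup \ds_\gs,\bar{\gs}\cup \ds_\gs}$ and $F_1^{\bs,\bs}$ in the definition of $\Psi$. The contact 0- and 1-handles of the decomposition then implement exactly these two 1-handle maps. The remaining composition of contact 2- and 3-handle maps should produce the triangle map $F_{D_{\gs}(\as), D_{\gs}(\bs), A(\bs)}$. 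To establish this, I would use Lemma~\ref{lem:EH} to identify the $\EH$ class of the tight piece being glued in with the top-graded generator $\Theta_{A(\bs),\Ds_\S}$, and combine it with Proposition~\ref{prop:1handletrianglecount} to commute triangle counts past the previously attached 1-handles.

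The hard part will be the last matching step: showing that the composition of contact 2- and 3-handle maps in the identification piece is chain homotopic to the triangle map with the correct $\Theta$ insertion. This will require an analogue of Proposition~\ref{prop:1handletrianglecount} that handles all of the stabilizing tubes simultaneously, together with a model computation identifying the composite of 2-handle maps inside the $R_\bs$-neighborhood with the triangle count on the amalgamated diagram. The boundary-adjacency of the newly added tubes should force the relevant holomorphic triangles to split off model disks at each tube, as in Lemmas~\ref{lem:compoundtrianglemap1} and~\ref{lem:compoundtrianglemap2} and in the proof of Proposition~\ref{prop:compoundstabilizationmap}, so that the argument reduces to a local count rather than a genuine neck-stretching analysis.
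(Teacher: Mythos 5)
Your setup matches the paper's: the proof does start from Lemma~\ref{lem:xi-abg-istwoIinvariantgluedtogether} and builds a contact handle decomposition of $(Z_{\as,\bs,\gs},-\xi_{\as,\bs,\gs})$ relative to $R_{\as,\bs}\sqcup R_{\bs,\gs}$ whose 1-handles sit over $\cI_0\times I$ and whose 2-handles sit over $\lambda_i\times I$ for a basis of arcs of $H_1(R_\bs,\cI_0)$, and the compound stabilization results and Proposition~\ref{prop:1handletrianglecount} are indeed the main tools. But your final matching step is misconceived. First, this decomposition has no contact 3-handles, and the gluing map $\Psi$ contains no $\Theta$ insertion at all: it is a triangle map applied to the \emph{two} Floer classes (each pushed through a 1-handle map), whereas the top-graded generator $\Theta_{A(\bs),\Ds_\S}$ and the $\EH$-type identification you invoke belong to the \emph{subsequent} 4-dimensional 2-handle cobordism $(W_{\as,\bs,\gs})^s$, not to the gluing. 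Second, the contact 2-handle maps are tautological maps $\x\mapsto\x\times\ve{c}$ on intersection points; they do not "produce" the triangle map. The triangle map $F_{D_\gs(\as),D_\gs(\bs),A(\bs)}$ in $\Psi$ arises because the diagram $G(\cT)$ output by the contact handle attachments (with the curves $\es$ and $\bar{\ds}_\bs$) is \emph{not} the amalgamated diagram $A(\cT)$, and one must compute the change-of-diagrams map between them.

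That computation is the real content of the proof and is absent from your plan: one must isotope $\es$ to curves $\es'$ dual to the $\ds_\bs$ and $\bar{\ds}_\bs$ curves, handleslide $\ds_\bs$ over $\bar{\ds}_\bs$, and then compound destabilize the $(\es',\bar{\ds}_\bs)$ pairs, tracking each naturality map via Proposition~\ref{prop:compoundstabilizationmap}, associativity of triangle maps, the stabilization of 1-handle maps (an analogue of equation~\eqref{eq:1--handlemapwelldefined}), and a functoriality argument identifying one loop of diagrams with the identity. Separately, your appeal to Proposition~\ref{prop:compositionlaw} to peel off "the identification along $R_\bs$" as its own gluing is not available as stated: identifying two boundary components of a sutured manifold is not a Honda--Kazez--Mati\'c gluing of a nested pair of sutured submanifolds; in the paper this identification is exactly what the connecting 1- and 2-handles accomplish, so it cannot be separated from them.
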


\begin{proof}
First, we describe a contact handle decomposition of
$(Z_{\as,\bs,\gs}, -\xi_{\as,\bs,\gs})$, relative to $R_{\as,\bs} \sqcup
R_{\bs,\gs}$.
We pick a collection of subintervals $\cI_0\subset \d R_{\bs}$
such that each component of $\d R_{\bs}$ contains exactly one subinterval.
We pick an arc basis $\lambda_1,\dots, \lambda_n$ for $(R_{\bs},\cI_0)$.
We claim that a contact handle decomposition of
$(Z_{\as,\bs,\gs},\xi_{\as,\bs,\gs})$, relative to $R_{\as,\bs}\sqcup
R_{\bs,\gs}$, can be constructed as follows:

\begin{enumerate}
\item The contact 1-handles are the components of $N(I\times \cI_0)$ (in particular, we have one 1-handle
for each component of $\d R_{\bs}$);
\item The contact 2-handles are $N(I\times \lambda_i)$ for $i \in \{\, 1, \dots, n\,\}$.
\end{enumerate}
The 1-handles are simply added with feet along the
corresponding subintervals of $\cI_0 \subset R_{\bs}\subset \d M_{\as,\bs}$
and $\bar{\cI}_0 \subset \bar{R}_{\bs}\subset \d M_{\bs,\gs}$.
The 2-handles are attached along curves
obtained by concatenating an arc $\lambda_i$
on $R_{\bs}$ with its mirror on $\bar{R}_{\bs}$.

To show that the above description determines a contact handle decomposition,
we use Lemma~\ref{lem:xi-abg-istwoIinvariantgluedtogether},
 which allows us to decompose $Z_{\as,\bs,\gs}$ along the convex surface
 obtained by extending $\{\tfrac{1}{2}\}\times R_{\bs}\subset Z_{\bs}$
across the solid tori $Z_0$ to get a decomposing surface which is diffeomorphic to $R_{\bs}$.
See Figures~\ref{fig::51} and~\ref{fig::60} for schematics of the decomposing surface.
Phrased another way, we can write
 $(Z_{\as,\bs,\gs}, -\xi_{\as,\bs,\gs})$
 as $(I\times R_{\as,\bs}, -\xi_{\as,\bs})$ and $(I\times R_{\bs,\gs}, -\xi_{\bs,\gs})$ glued together
along two subsurfaces $\bar{R}_{\bs}' \subset \{0\}\times  R_{\as,\bs}$ and
$R_{\bs}' \subset \{0\}\times R_{\bs,\gs}$ that are small perturbations of
$\bar{R}_{\bs} \subset \{0\}\times \bar{R}_{\as,\bs} $ and
$R_{\bs} \subset\{0\}\times  R_{\bs,\gs} $, respectively.
We pick $R_{\bs}'$ and $\bar{R}_{\bs}'$ such that
$\cI_0 \subset R_{\bs}'$ and $\bar{\cI}_0 \subset \bar{R}_{\bs}'$. We identify
$R_{\bs}'$ and $\bar{R}_{\bs}'$ with their images on $\d M_{\as,\bs}$ and
$\d M_{\bs,\gs}$, respectively.

The above description implies that the cores of
the 1-handles in our contact handle decomposition are Legendrian, and the
attaching circles of the 2-handles cross the dividing set exactly twice.
Using Legendrian realization after attaching the contact 1-handles, we can
assume the attaching curves of the 2-handles are Legendrian, and hence have
$\tb = -1$. It follows that the above description determines a contact handle
decomposition of $Z_{\as,\bs,\gs}$, starting at $R_{\as,\bs} \sqcup
R_{\bs,\gs}$ and ending at $R_{\as,\gs}$.
Using the above contact handle decomposition of
$(Z_{\as,\bs,\gs}, -\xi_{\as,\bs,\gs})$, we can now give a description of the
gluing map associated to $(Z_{\as,\bs,\gs}, -\xi_{\as,\bs,\gs})$ on the level
of Heegaard diagrams.

The contact 1-handle maps have a simple description in terms of diagrams; one
simply adds a band with ends at the feet of the 1-handle. There are two
dividing arcs on the boundary of the band, and they are distinguished: One intersects
$I\times R_{\bs}\subset Z_{\as,\bs,\gs}$ nontrivially, while the other  is
disjoint from $I\times R_{\bs}$; see Figure~\ref{fig::52}. Let us call the
edge that intersects $I\times R_{\bs} $ the ``interior'' dividing arc of the
1-handle. The other edge we call the ``exterior'' dividing arc. We will write
$E_{\Int}$ for the interior arcs, and $E_{\ext}$ for the exterior arcs.

\begin{figure}[ht!]
 \centering
 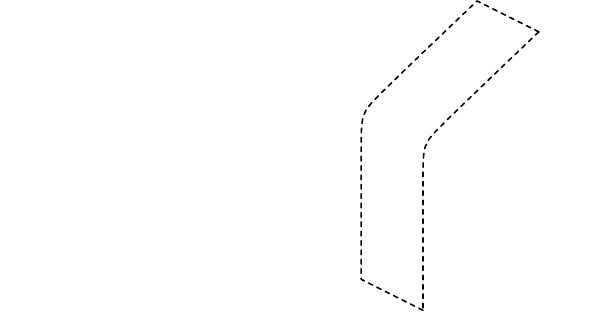
 \caption{A contact 1-handle added to $-M_{\as,\bs} \sqcup -M_{\bs,\gs}$.
 On the 1-handle, the interior edge of the dividing set (in $E_{\Int}$) is
 contained in $I\times R_{\bs}$, and the exterior edge
(in  $E_{\ext}$) is disjoint from $I\times R_{\bs}$. \label{fig::52}}
\end{figure}

Note that the attaching circles of the 2-handles only intersect the dividing
set along the contact 1-handles, and only along the interior arcs $E_{\Int}$.
Hence, on the level of diagrams, the contact 2-handle map
involves adding a band to $E_{\Int}$,
and also adding an $\a$-curve and a $\b$-curve.

We will write $\Sigma_0$ for the new portion of the Heegaard surface, which
is added by the contact 1-handles and 2-handles. We will shortly see that
$\Sigma_0$ can be identified with $\bar{R}_{\bs}$; see
equation~\eqref{eq:phidiffeomorphism} below. We call the collection of
$\b$-curves that we add with the contact 2-handles $\bar{\ds}_{\bs}$
(the notation will be justified below; see equation~\eqref{eq:phidiffeomorphism}),
and we call the $\a$-curves that we add $\es$.

By construction, the diagram for the domain of the amalgamation map is the disjoint union of
$\cH_{\as,\bs}=(\S,\as,\bs)$ and a weak conjugate
\[
C(\cH_{\bs,\gs}) = (R_{\bs}\,\natural \bar{\S}\,\natural
R_{\gs}, \bar{\gs}\cup \ds_{\gs},\bar{\bs}\cup \ds_{\bs})
\]
of $\cH_{\bs,\gs} = (\S,\bs,\gs)$.
After adding all the 1-handles and 2-handles, we obtain the Heegaard diagram
\[
G(\cT) = (\S(\cT), \as(\cT), \bs(\cT)) :=
(\S\,\natural \Sigma_0 \,\natural R_{\bs}\,\natural \bar{\S}\,\natural R_{\gs},\as
\cup \es \cup \bar{\gs} \cup \ds_{\gs},
\bs \cup \bar{\ds}_{\bs} \cup \bar{\bs} \cup \ds_{\bs})
\]
for $M_{\as,\bs}\cup_{R_{\bs}} M_{\bs,\gs}$.
Note that the diagram $G(\cT)$ is similar to, but not equal to, the Heegaard diagram
\[
A(\cT) =
(\S\,\natural \bar{\S}\,\natural R_{\gs},\as \cup \bar{\gs} \cup
\ds_{\gs},\bs \cup \bar{\bs} \cup \Ds_{\bs}),
\]
of $M_{\as,\bs} \cup_{R_{\bs}} M_{\bs,\gs}$ that appears in the target of the map in
Proposition~\ref{prop:gluingmapformula}. We now describe the curves
$\ds_{\bs}$, $\es$, and $\bar{\ds}_{\bs}$, and the surface $\Sigma_0$
more explicitly. For each contact 1-handle, there is a corresponding 0-handle of the
surface $\Sigma_0$. For each contact 2-handle, we attach a 1-handle to $\Sigma_0$,
along the interior edges $E_{\Int}$ of the portion of $\Sigma_0$ built when attaching the
contact 1-handles. For each contact 2-handle, we also add a
curve in~$\es$ and a curve in~$\bar{\ds}_{\bs}$.

Let $c_i\subset \Sigma_0$ be the core of the
band of the contact 2-handle associated to the arc $\lambda_i$ on
$R_{\bs}$. We extend the curves $c_i$ across the bands of the 1-handles
such that they have both ends on $E_{\ext}$; see the top left and top right of
Figure~\ref{fig::23}. If we isotope each $c_i$ near $E_{\ext}$ such that its ends
are in $\cI_0$, and then double it across $\cI_0$ onto $R_{\bs}\subset -M_{\bs,\gs}$,
in a sense specified in equation \eqref{eq:epsdiffeo} below, then we get
the $\es$ curves. If we isotope each $c_i$ near $E_{\ext}$ in the opposite
direction until its ends are in $\bar{\cI}_0$, and then we double it across
$\bar{\cI}_0$ onto $\S\subset -M_{\as,\bs}$,
in a sense specified in equation~\eqref{eq:deltadiffeo} below, then we get the $\bar{\ds}_{\bs}$ curves.
Examples of the curves $\bar{\ds}_{\bs}$ and $\es$ on $\bar{R}_{\bs}$
are shown in Figure~\ref{fig::23}.

Let $\cI_1$ denote the collection of subarcs of $\d R_{\bs}$
that are used to connect $R_{\bs}$ to $\bar{\S}$ in the weakly
conjugated diagram $C(\cH_{\bs,\gs})$.  Since the curves $\es$
were added by the contact 2-handles, and the 2-handles have attaching circles
constructed using the basis of arcs $\lambda_1,\dots, \lambda_n$
for $(R_{\bs},\cI_0)$, the arcs $\es\cap R_{\bs}$ cut $R_{\bs}$
into a collection of disks, each of which contains a
single component of $\cI_1$. Similarly, since $\es\cap \Sigma_0$
are the cores of the 1-handles used to build $\Sigma_0$,
the arcs $\es\cap \Sigma_0$ cut $\Sigma_0$ into a
collection of disks, each of which contains exactly
one arc of $\bar{\cI}_0$ along its boundary. Consequently, there is
an orientation reversing diffeomorphism
\begin{equation}
\phi\colon \Sigma_0\to R_{\bs},\label{eq:phidiffeomorphism}
\end{equation}
specified up to isotopy by the property that
\begin{equation}\label{eq:epsdiffeo}
\phi(\es\cap \Sigma_0)=\es\cap R_{\bs},
\end{equation}
and also that  $\cI_0\subset \Sigma_0$ is
mapped to $\cI_0 \subset R_{\bs}$, and $\bar{\cI}_0 \subset \Sigma_0$ is mapped to
$\cI_1 \subset R_{\bs}$.

Since we have freedom to assume that the curves $\ds_{\bs}$ are constructed
with any convenient basis of $H_1(R_{\bs}, \cI_1)$, we can assume that they
are constructed using the images of the arcs $\bar{\ds}_{\bs}\cap
\Sigma_0$ under $\phi$. Consequently, we assume
\begin{equation}\label{eq:deltadiffeo}
\phi(\bar{\ds}_{\bs}\cap \Sigma_0)=\ds_{\bs}\cap R_{\bs}.
\end{equation}
In light of equation~\eqref{eq:phidiffeomorphism},
we will henceforth write $\bar{R}_{\bs}$ for $\Sigma_0$.

\begin{figure}[ht!]
 \centering
 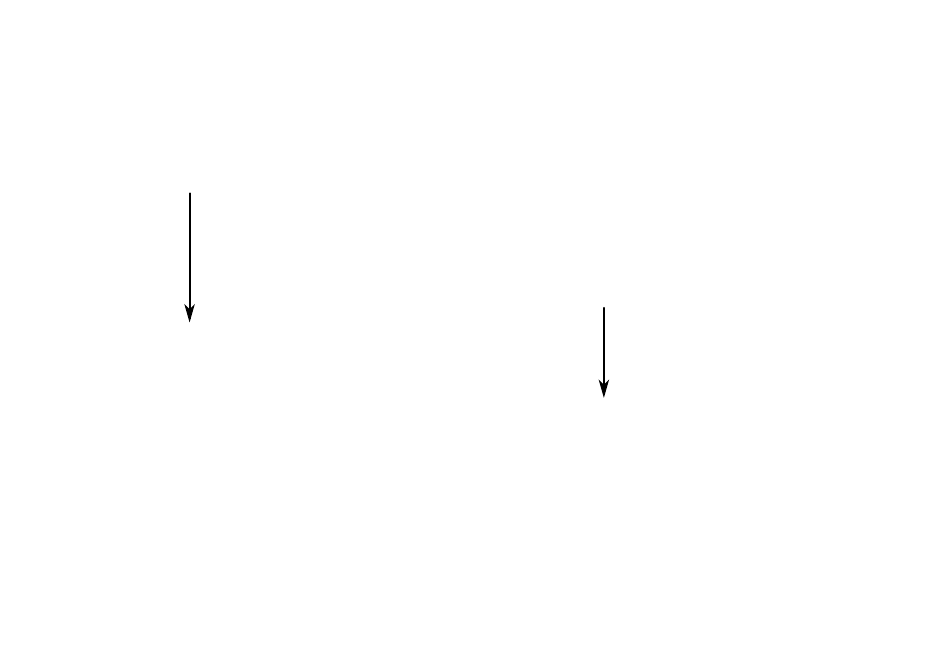
 \caption{The effect on the Heegaard diagram of attaching a single contact 2-handle in our decomposition of
 $Z_{\as,\bs,\gs}$. The regions shown consist of a band for a contact
 2-handle with both ends attached to a single contact 1-handle (left) or a
 pair of contact 1-handles (right). Viewing the band from the contact
 2-handle as a 1-handle added to the Heegaard surface, the core $c_i$ is
 shown in the top row. Isotoping the ends of $c_i$ in a neighborhood of $E_{\ext}
 \cup \cI_0$ so they lie in $\cI_0$, and then doubling across $\cI_0$, we get
 $\es$. Isotoping the ends of $c_i$ in a neighborhood of $E_{\ext} \cup
 \bar{\cI}_0$ so they lie in $\bar{\cI}_0$, we get $\ds_{\bs}$.  The
 orientation of the Heegaard surfaces for $M_{\as,\bs}$ and $M_{\bs,\gs}$ are
 shown.
 \label{fig::23}}
\end{figure}

The next step to understanding the gluing map is to destabilize the region
$\bar{R}_{\bs}\natural R_{\bs}$. Unfortunately, the curves $\bar{\ds}_{\bs}$
and $\es$ are not suitable for this (even if we use the compound
destabilization operation from Section~\ref{sec:compoundstabilization}). In
order to present the curves in a reasonable manner, we need to do some
handle slides amongst the $\ds_{\bs}$, $\bar{\ds}_{\bs}$, and $\es$ curves, as
we now describe.

We perform two moves. First, we modify the $\es$ curves, as follows. Recall
that we obtained $\bar{\ds}_{\bs}$ and $\es$ by isotoping the cores of the
2-dimensional 1-handles near the $E_{\ext}$ boundary arcs of the contact 1-handles. Let us write
$c_i^*$ for the co-core of the handle with core $c_i$. By construction,
$|c_i^* \cap c_j| = \delta_{ij}$.

Since the bands associated to the contact 2-handles
are all attached to $E_{\Int}$, we can subdivide each
component of $\d \bar{R}_{\bs}$ into four subarcs, which we
label as $\cI_0$, $E_{\ext}$, $\bar{\cI}_0$, or $E_{\Int}$,
in a way which is compatible with the analogous designation of the boundaries
of the contact 1-handle bands.

We now isotope each $c_i^*$ in a neighborhood of $E_{\Int}\cup \cI_0$ such that
its ends lie in $\cI_0$. Let $\es'$ denote the closed curves obtained by
doubling the resulting curves across $\cI_0$, onto $R_{\bs}$. Note that we
perform this isotopy after \emph{all} the 2-handle bands have been added (not
after just the corresponding 2-handle has been attached). An example is shown
in Figure~\ref{fig::24}.

\begin{figure}[ht!]
 \centering
 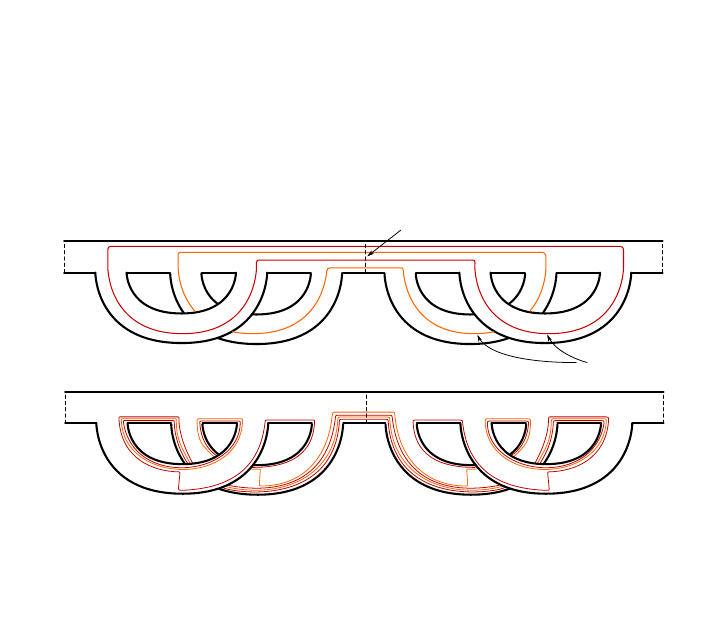
 \caption{The portion of the Heegaard surface identified with $\bar{R}_{\bs}\natural R_{\bs}$.
 In this case, $R_{\bs}$ is a genus 1 surface with one boundary component.
 The curves $\es$ are shown in the second row. The curves $\es'$ are shown in the third.
 The curves $\ds_{\bs}$ and $\bar{\ds}_{\bs}$ are shown in the last row. \label{fig::24}}
\end{figure}

Since
\[
|(\ds_{\bs})_i\cap \es'_j| = |(\bar{\ds}_{\bs})_i\cap \es'_j|=\delta_{ij},
\]
we can handle slide $\ds_{\bs}$ over $\bar{\ds}_{\bs}$ along $\es'$ in such a way
that the resulting curves $\hat{\ds}_{\bs}$ do not intersect the $\es'$ curves.
With this configuration, we note that $\es'$ intersects only $\bar{\ds}_{\bs}$,
and, furthermore, the two sets of curves come in pairs.
A sequence of destabilizations can then be used to surger out the $\es'$ curves,
while removing the $\bar{\ds}_{\bs}$ curves. Once we do this,
we are left with the diagram $A(\cT)$ of $M_{\as,\bs}\cup_{R_{\bs}} M_{\bs,\gs}$
described in Section~\ref{sec:glueddiagram}.

Note that $\es$ and $\es'$ are both obtained by
picking a set of arcs which form a basis of
$H_1(\bar{R}_{\bs},\cI_0)$, and then doubling them onto
$\bar{R}_{\bs}\natural R_{\bs}$. By Lemma~\ref{lem:allowablearcslides},
two such collections of arcs can be connected by a
sequence of allowable arc slides (Definition~\ref{def:allowablearcslice}).
An allowable arc slide with respect to $\cI_0$ induces
a handle slide of the corresponding curves obtained
by doubling across $\cI_0$. Consequently, it follows that
$\es$ and $\es'$ can be connected by a sequence of handle
slides on $\bar{R}_{\bs}\natural R_{\bs}$, though
the particular sequence of handle
slides is not of importance for us.

We now describe the effect of these Heegaard moves on the sutured Floer complexes,
and relate them to the desired triangle map formula.
We write
\[
C^{\es,\bar{\ds}_{\bs}} \colon \CF(\cH_{\as,\bs}) \otimes \CF(C(\cH_{\bs,\gs})) \to \CF(G(\cT))
\]
for the contact gluing map, obtained by adding the surface $\bar{R}_{\bs}$ to
$\Sigma \sqcup (R_{\bs} \,\natural \bar{\Sigma} \,\natural R_{\gs})$ to get
$\S(\cT) = \S\,\natural \bar{R}_{\bs} \,\natural R_{\bs}\,\natural \bar{\S}\,\natural R_{\gs}$,
and then adding in the curves $\es$ and $\bar{\ds}_{\bs}$. If
$\xs\in \CF(\cH_{\as,\bs})$ and $\ys\in \CF(C(\cH_{\bs,\gs}))$, the map
$C^{\es,\bar{\ds}_{\bs}}$ is defined by the formula $\xs\times \ys\mapsto
\xs\times \ve{c}\times \ys$, where $\ve{c}$ is the distinguished intersection
point of $\bT_{\es}\cap \bT_{\bar{\ds}_{\bs}}$.

On the other hand, we can precompose the map $C^{\es,\bar{\ds}_{\bs}}$ with a
transition map for a small isotopy of the curves $\bs$ on $\cH_{\as,\bs}$,
and of the curves $\bar{\gs} \cup \ds_{\gs}$ on $C(\cH_{\bs,\gs})$.
By a small abuse of notation, we also write $\bs$ and $\bar{\gs} \cup \ds_{\gs}$ for the
translates. Hence
\begin{equation}
\Phi_{-\xi}(\xs\times \ys) := C^{\es,\bar{\ds}_{\bs}} \circ \left(\Psi_{\as}^{\bs \to \bs}(\xs) \otimes
\Psi_{\bar{\gs} \cup \ds_{\gs} \to \bar{\gs} \cup \ds_{\gs}}^{\bar{\bs} \cup \ds_{\bs}}(\ys)\right).
\label{eq:first-formula-gluing-map-glued-diagram}
\end{equation}
The transition maps in equation~\eqref{eq:first-formula-gluing-map-glued-diagram} can be computed
using holomorphic triangle maps. We note that, even though we are doing a move
of the curves $\bs$ on $\cH_{\as,\bs}$ and a move of the curves $\bar{\gs} \cup \ds_{\gs}$ on
$C(\cH_{\bs,\gs})$, since the two diagrams are disjoint, we can compute the
holomorphic triangles simultaneously to arrive at the formula
\begin{equation}
\Phi_{-\xi}(\xs\times \ys) = C^{\es,\bar{\ds}_{\bs}} \circ
F_{\as\cup \bar{\gs}\cup \ds_{\gs}, \bs\cup \bar{\gs} \cup \ds_{\gs}, \bs \cup \bar{\bs} \cup \ds_{\bs}}
\left(\xs\times \Theta_{\bar{\gs} \cup \ds_{\gs}, \bar{\gs} \cup \ds_{\gs}}, \ys\times \Theta_{\bs,\bs}\right).
\label{eq:second-formula-gluing-map-glued-diagram}
\end{equation}

Using the local computation shown in Figure~\ref{fig::50}, we can commute the
map $C^{\es,\bar{\ds}_{\bs}}$ to the right of the triangle map. When we do this,
the top-graded generators $\Theta_{\bar{\gs} \cup \ds_{\gs}, \bar{\gs}\cup \ds_{\gs}}$
and $\Theta_{\bs,\bs}$ can be rewritten as the images of sequences of 1-handle maps.
Hence, we can write equation~\eqref{eq:second-formula-gluing-map-glued-diagram} as
\begin{equation}
\Phi_{-\xi}(\xs\times \ys) =
F_{\as\cup \es\cup \bar{\gs}\cup \ds_{\gs}, \bs\cup\es\cup \bar{\gs}\cup \ds_{\gs}, \bs\cup\bar{\ds}_{\bs} \cup \bar{\bs} \cup \ds_{\bs}}
\left(F_1^{\es\cup \bar{\gs}\cup \ds_{\gs}, \es\cup \bar{\gs}\cup \ds_{\gs}}(\xs),
F_1^{\bs,\bs}\left(C^{\es,\bar{\ds}_{\bs}}(\ys)\right)\right)
\label{eq:third-formula-gluing-map-glued-diagram}
\end{equation}
To compactify the notation, we rewrite Equation~\eqref{eq:third-formula-gluing-map-glued-diagram} as
\begin{equation}
\Phi_{-\xi} = F_{\as(\cT),\bs \cup \gs', \bs(\cT)} \circ
\left(F_1^{\gs',\gs'} \otimes \left(F_1^{\bs,\bs} \circ C^{\es,\bar{\ds}_{\bs}}\right)\right),\label{eq:gluingmaptrianglesv1}
\end{equation}
where
\begin{align*}
\as(\cT)&:=\as\cup \es\cup \bar{\gs}\cup \ds_{\gs}\\
\bs(\cT)&:=\bs\cup \bar{\ds}_{\bs} \cup \bar{\bs} \cup \ds_{\bs}, \text{ and}\\
\gs'&:=\es\cup \bar{\gs}\cup \ds_{\gs}.
\end{align*}
In the above expression, the contact handle map $C^{\es,\bar{\ds}_{\bs}}$ is
defined by adding the surface $R_{\bs} \,\natural \bar{R}_{\bs}$ to $R_{\bs}
\,\natural \bar{\S} \,\natural R_{\gs}$ to get $R_{\bs} \,\natural \bar{R}_{\bs} \,\natural R_{\bs}
\,\natural \bar{\S} \,\natural R_{\gs}$, and then adding in the curves $\es$
and $\bar{\ds}_{\bs}$. On the level of complexes, it is defined by the
formula $\ve{y} \mapsto \ve{c} \times \ve{y}$, where $\ve{c}$ is the
distinguished intersection point of $\bT_{\es} \cap \bT_{\bar{\ds}_{\bs}}$.
The copy of $R_{\bs}$ that is added should be thought of as $\S$ surgered
along the $\bs$ curves. Note that $C^{\es, \bar{\ds}_{\bs}}$ can be described
by attaching contact 0-handles and 1-handles to add $R_{\bs}$, and then
attaching contact 2-handles to add $\bar{R}_{\bs}$ and the curves $\es$ and
$\bar{\ds}_{\bs}$.

We now change $\es$ to $\es'$. Define
\[
\gs'' := \es'\cup \bar{\gs}\cup \ds_{\gs}.
\]
Applying associativity to the 4-tuple $(\as\cup \gs'', \as(\cT), \bs\cup \gs', \bs(\cT))$, we conclude from equation~\eqref{eq:gluingmaptrianglesv1} that
\begin{equation}
\begin{split}
\Psi_{\as(\cT) \to \as \cup \gs''}^{\bs(\cT)} \circ \Phi_{-\xi}&\simeq \Psi_{\as(\cT) \to \as \cup \gs''}^{\bs(\cT)}\circ F_{\as(\cT),\bs \cup \gs', \bs(\cT)} \circ
\left(F_1^{\gs',\gs'} \otimes \left(F_1^{\bs,\bs} \circ C^{\es,\bar{\ds}_{\bs}}\right)\right)\\
&\simeq F_{\as\cup \gs'', \bs\cup \gs',\bs(\cT)}\circ \left(\left(\Psi_{\as(\cT)\to \as\cup \gs''}^{\bs\cup \gs'}\circ F_1^{\gs',\gs'}\right)\otimes \left(F_1^{\bs,\bs}\circ C^{\es,\bar{\ds}_{\bs}}\right)\right).
\end{split}
\label{eq:gluingmaptrianglesv2}
\end{equation}

Applying naturality of sutured Floer homology as well as associativity for the 4-tuple $(\as\cup \gs'', \bs\cup \gs', \bs\cup \gs'', \bs(\cT))$ to equation~\eqref{eq:gluingmaptrianglesv2}, we conclude that
\begin{equation}
\begin{split}
& \,F_{\as\cup \gs'', \bs\cup \gs',\bs(\cT)}\circ \left(\left(\Psi_{\as(\cT)\to \as\cup \gs''}^{\bs\cup \gs'}\circ F_1^{\gs',\gs'}\right)\otimes \left(F_1^{\bs,\bs}\circ C^{\es,\bar{\ds}_{\bs}}\right)\right)\\
\simeq & \,F_{\as\cup \gs'', \bs\cup \gs',\bs(\cT)}\circ \left(\left(\id \circ \Psi_{\as(\cT)\to \as\cup \gs''}^{\bs\cup \gs'}\circ F_1^{\gs',\gs'}\right)\otimes \left(F_1^{\bs,\bs}\circ C^{\es,\bar{\ds}_{\bs}}\right)\right)\\
\simeq & \,F_{\as\cup \gs'', \bs\cup \gs',\bs(\cT)}\circ \left(\left( \left(\Psi_{\as\cup \gs''}^{\bs\cup \gs''\to \bs\cup \gs'}\circ \Psi_{\as\cup \gs''}^{\bs\cup \gs'\to \bs\cup \gs''}\right)\circ \Psi_{\as(\cT)\to \as\cup \gs''}^{\bs\cup \gs'}\circ F_1^{\gs',\gs'}\right)\otimes \left(F_1^{\bs,\bs}\circ C^{\es,\bar{\ds}_{\bs}}\right)\right)\\
\simeq&\,F_{\as \cup \gs'', \bs \cup \gs'', \bs(\cT)}
\circ \left(
\left(\Psi_{\as(\cT) \to \as \cup \gs''}^{\bs \cup \gs' \to \bs \cup \gs''} \circ
F_1^{\gs',\gs'} \right) \otimes
\left(\Psi_{\bs \cup \gs' \to \bs \cup \gs''}^{\bs(\cT)} \circ
F_{1}^{\bs,\bs} \circ C^{ \es, \bar{\ds}_{\bs}} \right)\right)
\end{split}
\label{eq:gluingmaptrianglesv3}
\end{equation}

Combining equations~\eqref{eq:gluingmaptrianglesv1}, \eqref{eq:gluingmaptrianglesv2},
and~\eqref{eq:gluingmaptrianglesv3}, we conclude
\[
\Psi_{\as(\cT) \to \as \cup \gs''}^{\bs(\cT)} \circ \Phi_{-\xi} \simeq
F_{\as \cup \gs'', \bs \cup \gs'', \bs(\cT)}
\circ \left(
\left(\Psi_{\as(\cT) \to \as \cup \gs''}^{\bs \cup \gs' \to \bs \cup \gs''} \circ
F_1^{\gs',\gs'} \right) \otimes
\left(\Psi_{\bs \cup \gs' \to \bs \cup \gs''}^{\bs(\cT)} \circ
F_{1}^{\bs,\bs} \circ C^{ \es, \bar{\ds}_{\bs}} \right)\right).
\]
We claim that
\begin{equation}
\Psi_{\as(\cT) \to \as \cup \gs''}^{\bs \cup \gs' \to \bs \cup \gs''} \circ
F_1^{\gs',\gs'} \simeq F_1^{\gs'',\gs''}.
\label{eq:1--handlemapwelldefined}
\end{equation}

To establish equation~\eqref{eq:1--handlemapwelldefined}, first note that the domain of both maps is
$\SFH(\S,\as,\bs)$. The 1-handle maps $F_1^{\gs',\gs'}$ and
$F_1^{\gs'',\gs''}$ are defined by taking the boundary connected sum of $\S$ with
$\S' := \bar{R}_{\bs}\natural R_{\bs} \natural \bar{\S} \natural R_{\gs}$. The boundary
connected sum operation yields $|\d \S|$ arcs on $\S(\cT) =
\S\natural \S'$ that separate $\S$ from $\S'$, and intersect none of
the attaching curves in $\as$, $\bs$, $\gs'$, or $\gs''$. Hence, the change of
diagrams map appearing on the left-hand side of
equation~\eqref{eq:1--handlemapwelldefined} involves only counting
holomorphic triangles with image on the disjoint union of $\S$ and
$\S'$. Since there is a unique top-graded generator of
$\SFH(\S', \gs', \gs')$, and $\gs''$ is obtained from $\gs'$ by a sequence of
handle slides, that generator will be preserved by the change of
diagrams map from $\SFH(\S', \gs', \gs')$ to $\SFH(\S',\gs'',\gs'')$.
Hence, the 1-handle map will be preserved.
Equation~\eqref{eq:1--handlemapwelldefined} now follows.

We obtain that the gluing map is chain homotopic to
\[ F_{\as \cup \gs'', \bs \cup \gs'', \bs(\cT)}
\circ \left(
F_1^{\gs'',\gs''} \otimes
\left(\Psi_{\bs \cup \gs' \to \bs \cup \gs''}^{\bs(\cT)} \circ
F_{1}^{\bs,\bs} \circ C^{ \es, \bar{\ds}_{\bs}} \right)\right).
\]
Note that, on the diagram $(\S(\cT), \bs \cup \gs'', \bs(\cT))$, the $\bar{\ds}_{\bs}$
curves each have only one intersection point, which occurs with an $\es'$ curve.
The $\es'$ curves still intersect both $\ds_{\bs}$ and
$\bar{\ds}_{\bs}$.
Further, each $\es'$ curve intersects exactly one $\bar{\ds}_{\bs}$ curve.
Hence, we can consider the compound stabilization map $\sigma^{\es',\bar{\ds}_{\bs}}$ defined in
Section~\ref{sec:compoundstabilization}.
It agrees with the map from naturality by Proposition~\ref{prop:compoundstabilizationmap}.

We claim that
\[
\Psi_{\bs \cup \gs' \to \bs \cup \gs''}^{\bs(\cT)} \circ
F_{1}^{\bs,\bs} \circ C^{ \es, \bar{\ds}_{\bs}} \simeq F_1^{\bs,\bs} \circ \sigma^{\es',\bar{\ds}_{\bs}}
\colon \SFH(C(\cH_{\bs,\gs})) \to \SFH(\S(\cT), \bs \cup \gs'', \bs(\cT)).
\]
By Proposition~\ref{prop:1handletrianglecount}, it suffices to show the claim with the
$\bs$ curves surgered out, and with no 1-handle maps (note that it is easy to show that
on $\S$, there is a path from each $\bs$ curve to $\d \S$ that avoids $\bar{\ds}_{\bs}\cap \S$,
so the hypotheses of the previously mentioned proposition are satisfied).
Therefore, it suffices to show that
\[
\sigma^{\es',\bar{\ds}_{\bs}} \simeq
\Psi_{\gs' \to \gs''}^{\bar{\ds}_{\bs} \cup \bar{\bs} \cup \ds_{\bs}} \circ C^{ \es, \bar{\ds}_{\bs}},\]
or, equivalently, that
\[
\id \simeq \left(\sigma^{\es',\bar{\ds}_{\bs}}\right)^{-1} \circ
\Psi_{\gs' \to \gs''}^{\bar{\ds}_{\bs} \cup \bar{\bs} \cup \ds_{\bs}} \circ C^{ \es, \bar{\ds}_{\bs}}.
\]
However, this holds because of the functoriality of the gluing map; i.e.,
because the right-hand side represents the map
induced by gluing a trivial a copy of $I\times R_{\bs}'$ to $-M_{\bs,\gs}$.

Hence, we conclude that the gluing map $\Phi_{-\xi}$ is chain homotopic to
\[ F_{\as \cup \gs'', \bs \cup \gs'', \bs(\cT)}
\circ \left(
F_1^{\gs'',\gs''} \otimes \left(F_1^{\bs,\bs} \circ \sigma^{\es',\bar{\ds}_{\bs}}\right)
\right).
\]
We still need to handle slide the $\ds_{\bs}$ over $\bar{\ds}_{\bs}$ to become $\hat{\ds}_{\bs}$,
and then compound destabilize. That is, the gluing map is chain homotopic to
\[
\left( \sigma^{\es', \bar{\ds}_{\bs}} \right)^{-1} \circ
\Psi^{\bs(\cT) \to \bs(\cT)'}_{\as \cup \gs''} \circ
F_{\as \cup \gs'', \bs \cup \gs'', \bs(\cT)}
\circ \left(
F_1^{\gs'',\gs''} \otimes \left(F_1^{\bs,\bs} \circ \sigma^{\es',\bar{\ds}_{\bs}}\right)
\right),
\]
where $\bs(\cT)' = \bs \cup \bar{\ds}_{\bs} \cup \bar{\bs} \cup \hat{\ds}_{\bs}$.
By associativity applied to the 4-tuple $(\as \cup \gs'', \bs \cup \gs'', \bs(\cT), \bs(\cT)')$,
this is chain homotopic to
\[
\left(\sigma^{\es',\bar{\ds}_{\bs}}\right)^{-1} \circ
\left(F_{\as \cup \gs'', \bs \cup \gs'', \bs(\cT)'} \circ
\left(F_1^{\gs'', \gs''} \otimes
\left(\Psi_{\bs \cup \gs''}^{\bs(\cT) \to \bs(\cT)'} \circ
F_1^{\bs,\bs} \circ  \sigma^{\es',\bar{\ds}_{\bs}} \right)
\right)\right).
\]
By a simple generalization of Lemma~\ref{lem:compoundtrianglemap1}
to deal with multiple compound stabilizations of the Heegaard triple simultaneously,
 this now becomes
\[
F_{D_{\gs}(\as),D_{\gs}(\bs), A(\bs)} \circ
\left(
F_1^{C(\gs), C(\gs)} \otimes
\left(\left(\sigma^{\es',\bar{\ds}_{\bs}} \right)^{-1} \circ
\Psi_{\bs \cup \gs''}^{\bs(\cT) \to \bs(\cT)'} \circ
F_1^{\bs,\bs} \circ \sigma^{\es',\bar{\ds}_{\bs}}
\right)\right),
\]
where $C(\gs) = \bar{\gs}\cup \ds_{\gs}$.
Applying the triangle counts from Proposition~\ref{prop:1handletrianglecount}
to the map  $F_1^{\bs,\bs}$ for 1-handles added near the boundary,
and also commuting the map $F_1^{\bs,\bs}$ with the destabilization map, this becomes
\[
F_{D_{\gs}(\as), D_{\gs}(\bs), A(\bs)} \circ
\left(
F_1^{C(\gs), C(\gs)} \otimes
\left(
F_1^{\bs,\bs} \circ
\left(\sigma^{\es',\bar{\ds}_{\bs}} \right)^{-1} \circ
\Psi_{\gs''}^{\bs(\cT) \setminus \bs \to \bs(\cT)' \setminus \bs} \circ \sigma^{\es',\bar{\ds}_{\bs}}
\right)\right),
\]
We claim that
\[
\left(\sigma^{\es',\bar{\ds}_{\bs}} \right)^{-1} \circ
\Psi_{\gs''}^{\bs(\cT) \setminus \bs \to \bs(\cT)' \setminus \bs} \circ
\sigma^{\es',\bar{\ds}_{\bs}} \simeq \id,
\]
which follows simply from naturality, as it is a loop in the space of Heegaard diagrams.
We have now arrived at our desired formula, concluding the proof of Proposition \ref{prop:gluingmapformula}.
\end{proof}

\subsection{Computation of the triangle cobordism map}

We now prove that the sutured cobordism map for $\cW_{\as,\bs,\gs}$ is chain homotopic to the map that
counts holomorphic triangles on a single Heegaard triple, by using the formula
from the previous section for the contact gluing map.

\begin{thm}
If $(\S,\as,\bs,\gs)$ is a sutured Heegaard triple, then the sutured cobordism map
\[
F_{\cW_{\as,\bs,\gs}}\colon \CF(\S,\as,\bs)\otimes \CF(\S,\bs,\gs)\to \CF(\S,\as,\gs)
\]
is chain homotopic to the map that counts holomorphic triangles on the triple $(\S,\as,\bs,\gs)$.
\end{thm}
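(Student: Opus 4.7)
The plan is to take the cobordism $\cW_{\as,\bs,\gs}$ apart into its boundary and special pieces, insert the two explicit formulas that have just been established, and then repeatedly invoke associativity of the holomorphic triangle maps together with the handle/triangle compatibility from Proposition~\ref{prop:1handletrianglecount} to reduce the resulting expression to $F_{\as,\bs,\gs}$. First I would write
\[
F_{\cW_{\as,\bs,\gs}} \simeq \Phi_{(D_{\gs}(\S),D_{\gs}(\as),\Ds_\S)\to (\S,\as,\gs)} \circ F_{(W_{\as,\bs,\gs})^s} \circ \Phi_{-\xi_{\as,\bs,\gs}} \circ \bigl(\id\otimes \Phi_{\cH_{\bs,\gs}\to C(\cH_{\bs,\gs})}\bigr),
\]
then substitute the amalgamation formula of Proposition~\ref{prop:gluingmapformula} for the gluing map, the triangle description $F_{D_{\gs}(\as),A(\bs),\Ds_\S}(-,\Theta_{A(\bs),\Ds_\S})$ of the special cobordism map, and the formula $F_3\circ F_2$ of Lemma~\ref{lem:changeofdiagramsdoubleddiagram} for the final change-of-diagrams map. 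This presents $F_{\cW_{\as,\bs,\gs}}$ as a composition of three triangle maps, sandwiched between 1-handle maps (from $\Psi$ and from Lemma~\ref{lem:changeofdiagramsdoubletoconj}) and 3-handle maps (from $F_3$).

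The next step is to collapse the three triangle maps. By applying associativity of triangle maps (the $A_\infty$ relation for the Fukaya-type structure on sutured Heegaard multi-diagrams) twice, I can rewrite the iterated triangle map as a single triangle map on a quadruple of the form $(D_{\gs}(\S),D_{\gs}(\as),D_{\gs}(\bs),\Ds_\S)$ or $(D_{\gs}(\S),D_{\gs}(\as),D_{\gs}(\bs),\gs\cup\bar{\gs}\cup\ds_\gs)$ with a higher term (rectangle) as the error, and a standard argument using the presence of $\d \S$ ensures that the rectangle terms supported on the stabilization region contribute nothing. The generators $\Theta_{A(\bs),\Ds_\S}$ and $\Theta_{\Ds_\S,\gs\cup\bar{\gs}\cup\ds_\gs}$ are top-graded on standardly-stabilized pieces, so the small triangle count on the doubling region yields a top-graded generator on the combined piece, exactly as in equation~\eqref{eqn:triangle}. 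After this collapse, the triangle map is supported on a triple of the form $(D_{\gs}(\S),D_{\gs}(\as),D_{\gs}(\bs))$ paired against a canonical top generator on the $R_{\gs}$ and $\bar{\S}$ connect-summed regions. By Proposition~\ref{prop:1handletrianglecount}, this is precisely $F_{\as,\bs,\gs}$ stabilized by the 1-handle maps added in the construction of $\Psi$, while the surrounding 3-handle maps from $F_3$ (together with the auxiliary 1-handle map from $G_1$ in Lemma~\ref{lem:changeofdiagramsdoubletoconj}) cancel those 1-handle maps by Lemma~\ref{lem:disjointhandlescommute} and the index $0/1$ and $2/3$ cancellation computations from the proof of Theorem~\ref{thm:invarianceofcontacthandlemap}. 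What remains after these cancellations is exactly $F_{\as,\bs,\gs}$.

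The main obstacle is the combinatorial bookkeeping: one must check that the higher associativity terms produced by the two applications of triangle associativity are supported on domains that must cross one of the new boundary components inserted by the doubling or amalgamation, and hence vanish by the same boundary-multiplicity arguments used throughout Section~\ref{sec:compoundstab}. Equivalently, one must verify that, after performing the allowed handleslides of $\ds_\bs$ over $\bar{\ds}_\bs$ along $\es'$ appearing in the proof of Proposition~\ref{prop:gluingmapformula}, all relevant top generators identify correctly under the compound destabilization maps governed by Proposition~\ref{prop:compoundstabilizationmap}. Once these model computations are aligned, the equality of $F_{\cW_{\as,\bs,\gs}}$ with $F_{\as,\bs,\gs}$ is a direct consequence of the constructions already in place.
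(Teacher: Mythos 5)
Your proposal follows essentially the same route as the paper: decompose $\cW_{\as,\bs,\gs}$ into its boundary and special parts, substitute the amalgamation formula for the gluing map (Proposition~\ref{prop:gluingmapformula}), the triangle description of the special cobordism, and the change-of-diagrams formulas of Lemmas~\ref{lem:changeofdiagramsdoubleddiagram} and~\ref{lem:changeofdiagramsdoubletoconj}, then collapse via two applications of triangle associativity and Proposition~\ref{prop:1handletrianglecount}, finishing with naturality of the resulting loop of change-of-diagrams maps. This matches the paper's argument in both structure and the key lemmas invoked.
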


\begin{proof}
We first compose with the change of diagrams map
$\id \otimes \Psi_{\cH_{\bs,\gs} \to C(\cH_{\bs,\gs})}$.
The next step is to use the gluing map to glue the two copies of $R_{\bs}$ together.
Then we performs surgery on a $|\bs|$-component framed link.
See Section~\ref{sec:glueddiagram} for a description of the
triple diagram
\[
(D_{\gs}(\S), D_{\gs}(\as), A(\bs), \Ds_{\S})
\]
used for computing the 2-handle cobordism map.
This yields (omitting writing the first change of diagrams map)
\[F_{D_{\gs}(\as), A(\bs), \Ds_{\S}}\circ
\left(\left(
F_{D_{\gs}(\as), D_{\gs}(\bs), A(\bs)} \circ
\left(
F_1^{\bar{\gs}\cup\ds_{\gs},\bar{\gs}\cup\ds_{\gs}}\otimes F_1^{\bs,\bs}
\right)\right) \otimes
\Theta_{A(\bs), \Ds_{\S}}
\right).
\]
We now use associativity applied to the 4-tuple
$(D_{\gs}(\as), D_{\gs}(\bs), A(\bs), \Delta_{\Sigma})$
to see that this is chain homotopic to
\[
F_{D_{\gs}(\as), D_{\gs}(\bs), \Ds_{\S}} \circ
\left(
F_1^{\bar{\gs}\cup\ds_{\gs},\bar{\gs}\cup\ds_{\gs}} \otimes
\left(
F_{D_{\gs}(\bs), A(\bs), \Ds_\S} \circ
\left(
F_1^{\bs,\bs} \otimes \Theta_{A(\bs), \Ds_{\S}}
\right)
\right)\right).
\]

Note that
\[
\Psi := F_{D_{\gs}(\bs), A(\bs), \Ds_\S} \circ
\left(
F_1^{\bs,\bs} \otimes \Theta_{A(\bs), \Ds_{\S}}
\right)
\]
is the change of diagrams map $\Psi_{C(\cH_{\bs,\gs})\to D_{\gs}(\cH_{\bs,\gs})}$,
from a weakly conjugated diagram to a doubled diagram, by Lemma~\ref{lem:changeofdiagramsdoubletoconj}.
Thus, the cobordism map is
\begin{equation} \label{eq:cobordismmapeq1}
F_{D_{\gs}(\as), D_{\gs}(\bs), \Ds_{\S}} \circ
\left(
F_1^{\bar{\gs}\cup\ds_{\gs},\bar{\gs}\cup\ds_{\gs}} \otimes \Psi
\right).
\end{equation}
The range of this map is not $\SFH(\cH_{\as,\gs})$, but instead a double of this diagram along $R_{\gs}$,
so we must compose with the change of diagrams map
 from $D_{\gs}(\cH_{\as,\gs})$ to $\cH_{\as,\gs}$,
which is
\[
F_3^{\bar{\gs}\cup \ds_{\gs}, \bar{\gs}\cup \ds_{\gs}} \circ
\left(
F_{D_{\gs}(\as), \Ds_{\S}, D_{\gs}(\gs)} \circ
\left(- \otimes \Theta_{\Ds_{\S}, D_{\gs}(\gs)}
\right)\right),
\]
by Lemma~\ref{lem:changeofdiagramsdoubleddiagram}.
By post composing equation~\eqref{eq:cobordismmapeq1} with this expression,
and applying associativity of the 4-tuple
$(D_{\gs}(\as), D_{\gs}(\bs), \Delta_{\Sigma}, D_{\gs}(\gs)$, see conclude
that the composition is chain homotopic to
\[
F_3^{\bar{\gs}\cup \ds_{\gs}, \bar{\gs}\cup \ds_{\gs}} \circ
\left(
F_{D_{\gs}(\as), D_{\gs}(\bs), D_{\gs}(\gs)} \circ
\left(
F_1^{\bar{\gs}\cup\ds_{\gs},\bar{\gs}\cup \ds_{\gs}} \otimes
\left(
F_{D_{\gs}(\bs), \Ds_{\S}, D_{\gs}(\gs)} \circ
\left(\Psi \otimes \Theta_{\Ds_{\S}, D_{\gs}(\gs)}
\right)\right)\right)\right).
\]

Using the 3-handle and triangle map computation from Proposition~\ref{prop:1handletrianglecount}
(note that it is an easy exercise to verify that the hypotheses of that proposition are satisfied),
this is equal to
\[
F_{\as, \bs, \gs} \circ
\left(- \otimes F_3^{\bar{\gs}\cup \ds_{\gs}, \bar{\gs}\cup \ds_{\gs}} \circ
\left(
F_{D_{\gs}(\bs), \Ds_{\S}, D_{\gs}(\gs)} \circ
\left(\Psi \otimes \Theta_{\Ds_{\S}, D_{\gs}(\gs)}
\right)\right)\right).
\]
We note that
\[
F_3^{\bar{\gs}\cup \ds_{\gs}, \bar{\gs}\cup \ds_{\gs}} \circ
\left(
F_{D_{\gs}(\bs), \Ds_{\S}, D_{\gs}(\gs)} \circ
\left(\Psi \otimes \Theta_{\Ds_{\S}, D_{\gs}(\gs)}
\right)\right)
\]
is just a change of diagrams map. Writing $\Psi'$ for the compositions of all
the three change of diagrams maps (the initial one from $\cH_{\bs,\gs}$ to a
weakly conjugated diagram $C(\cH_{\bs,\gs})$, which we have been omitting
writing, and then the last two, going back to $\cH_{\bs,\gs}$ through a
doubled diagram), the composition is thus equal to
\[
F_{\as,\bs,\gs}(-\otimes \Psi')\simeq F_{\as,\bs,\gs}(-,-),
\]
since $\Psi'\simeq \id_{(\S,\bs,\gs)}$ by naturality.
\end{proof}

\section{Trace and cotrace cobordisms}

Let $\xi_{I \times \d M}$ denote the $I$-invariant contact structure
on $-I \times \d M$ that induces the dividing set~$\g$ on~$\d M$.
In this section, we consider the trace cobordism
\[
\Lambda_{(M,\g)} = (I \times M, -I \times \d M, [\xi_{I \times \d M}])
\]
from $(M,\g) \sqcup (-M,\g)$ to $\emptyset$,
and the cotrace cobordism
\[
V_{(M,\g)} = (I \times M, -I \times \d M, [\xi_{I \times \d M}])
\]
from $\emptyset \to (-M,\g) \sqcup (M,\g)$.
The main result of this section is the following:

\begin{thm}\label{thm:traceandcotracecobordismmaps}
The trace cobordism $\Lambda_{(M,\g)} \colon (M,\g)\sqcup (-M,\g) \to \emptyset$
induces the canonical trace map
\[
\tr \colon \SFH(M,\g) \otimes \SFH(-M,\g) \to \bF_2.
\]
The cotrace cobordism $V_{(M,\g)} \colon
\emptyset \to (-M,\g) \sqcup(M,\g)$ induces the canonical cotrace map
\[
\cotr \colon \bF_2 \to \SFH(-M,\g) \otimes \SFH(M,\g).
\]
\end{thm}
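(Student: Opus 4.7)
The plan is to first reduce the cotrace identity to the trace identity via duality, and then compute the trace cobordism map using the triangle cobordism theorem.

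For the reduction, $V_{(M,\g)}$ is naturally identified with the turned-around cobordism $\Lambda_{(M,\g)}^\vee$: both have underlying data $(I\times M,-I\times\d M,[\xi_{I\times\d M}])$, differing only in which ends are viewed as incoming versus outgoing, with orientations reversed accordingly. Theorem~\ref{thm:firstduality} then gives $F_{V_{(M,\g)}}=(F_{\Lambda_{(M,\g)}})^\vee$, and under the identification $\SFH(-M,\g)\cong\SFH(M,\g)^\vee$ of Section~\ref{sec:tracepairing}, the dual of $\tr$ is exactly $\cotr$. It therefore suffices to prove $F_{\Lambda_{(M,\g)}}=\tr$.

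For the trace computation, I would fix a balanced admissible diagram $\cH=(\S,\as,\bs)$ for $(M,\g)$ together with a small generic Hamiltonian translate $\as'$ of $\as$ making $(\S,\as,\bs,\as')$ admissible, and consider the sutured triple cobordism $\cW_{\as,\bs,\as'}$ of Section~\ref{sec:trianglecobordisms}. Its source is $(M_{\as,\bs},\g_{\as,\bs})\sqcup(M_{\bs,\as'},\g_{\bs,\as'})\cong(M,\g)\sqcup(-M,\g)$ (using Proposition~\ref{prop:glueinproduct} to absorb the isotopy from $\as'$ back to $\as$), and its target is the model sutured manifold $(M_{\as,\as'},\g_{\as,\as'})$, obtained by doubling the sutured compression body $U_{\as}$ along $\S$, whose sutured Floer homology admits a canonical top-graded generator $\Theta_{\as,\as'}$. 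The plan is to exhibit $\Lambda_{(M,\g)}$ as equivalent to $\cW_{\as,\bs,\as'}$ composed with a ``cap'' cobordism $\cW^{\mathrm{cap}}\colon(M_{\as,\as'},\g_{\as,\as'})\to\emptyset$ whose induced map sends $\Theta_{\as,\as'}$ to $1\in\bF_2$.

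Granted this identification, the computation is straightforward: Theorem~\ref{thm:trianglemapiscobmap} identifies $F_{\cW_{\as,\bs,\as'}}$ with the holomorphic triangle map $F_{\as,\bs,\as'}$, and the standard small-triangles (``nearest-point'') model computation yields $F_{\as,\bs,\as'}(\x\otimes\y)=\delta_{\x,\y}\,\Theta_{\as,\as'}$ under the canonical bijection $\T_{\as}\cap\T_{\bs}=\T_{\bs}\cap\T_{\as'}$; composing with $F_{\cW^{\mathrm{cap}}}$ then gives $F_{\Lambda_{(M,\g)}}(\x\otimes\y)=\delta_{\x,\y}=\tr(\x\otimes\y)$.

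The main obstacle will be the geometric identification of $\Lambda_{(M,\g)}$ with $\cW^{\mathrm{cap}}\circ\cW_{\as,\bs,\as'}$, including matching contact structures, and it is genuinely subtle because the naive decomposition $\Lambda_{(M,\g)}=\Lambda^s\circ\Lambda^b$ passes through the closed (hence not balanced) intermediate manifold $D(M)=M\cup_{\d M}(-M)$. An alternative approach is to realize $\Lambda_{(M,\g)}$ directly through a contact cell decomposition (Section~\ref{sec:definingthegluingmap}) and compute the resulting contact gluing map using the amalgamation formula of Proposition~\ref{prop:gluingmapformula} together with the triangle-map calculations of Section~\ref{sec:compoundstab}, sidestepping the unbalanced-intermediate issue altogether.
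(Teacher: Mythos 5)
Your overall strategy matches the paper's: the cotrace statement is deduced from the trace statement via the duality theorem (Theorem~\ref{thm:firstduality}), and the trace cobordism is decomposed as the triangle cobordism $\cW_{\as,\bs,\as}$ followed by a cap $\cW_{\as}\colon (M_{\as,\as'},\g_{\as,\as'})\to\emptyset$, with Theorem~\ref{thm:trianglemapiscobmap} converting the first piece into a holomorphic triangle count. Incidentally, the step you flag as the main obstacle is not where the difficulty lies: the paper's decomposition has the balanced sutured manifold $(M_{\as,\as},\g_{\as,\as})$ as its intermediate level rather than the closed double of $M$, and the identification $\Lambda_{(M,\g)}\simeq \cW_{\as}\circ\cW_{\as,\bs,\as}$ follows from \cite{JCob}*{Proposition~6.6}; the unbalanced-manifold issue only surfaces inside the computation of $F_{\cW_{\as}}$, where it is handled by puncturing $Z_{\as}$ with $3$-balls.

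The genuine gap is in your final computation, and it is twofold. First, the cap does not evaluate against the top-graded generator: Proposition~\ref{prop:Xascobordismmap} shows that $F_{\cW_{\as}}$ sends the \emph{bottom}-graded generator $\Theta^-_{\as,\as'}$ to $1$ and every other generator to $0$ (establishing this requires the Morse-theoretic analysis of Lemmas~\ref{lem:somemoreMorsetheory} and~\ref{lem:somemoreMorsetheory2} together with a contact $2$-handle/$3$-handle model computation, which you treat as a black box). Second, the formula $F_{\as,\bs,\as'}(\x\otimes\y)=\delta_{\x,\y}\,\Theta_{\as,\as'}$ is not the standard nearest-point computation and is unjustified: the nearest-point argument applies when one input is a top-graded cycle, whereas here both inputs are arbitrary generators, the small triangles contribute $\delta_{\x,\y}\cdot\theta_{\x}$ for some generator $\theta_{\x}\in\T_{\as}\cap\T_{\as'}$ depending on $\x$, and nothing rules out further contributions. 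Worse, your two claims are mutually inconsistent: if the triangle map really landed on the top-graded generator while the cap annihilates everything except $\Theta^-_{\as,\as'}$, the composite would be zero. What is actually needed is $\bigl\langle F_{\as,\bs,\as'}(\x,\y'),\Theta^-_{\as,\as'}\bigr\rangle=\langle\x,\y\rangle$, and the paper obtains this not by a direct count but by a symmetry trick: the triangles counted by $F_{\as,\bs,\as'}$ coincide with those counted by $F_{\as',\as,\bs}$, and $\Theta^-_{\as,\as'}$ is the same intersection point as $\Theta^+_{\as',\as}$, so the coefficient in question equals $\bigl\langle F_{\as',\as,\bs}(\Theta^+_{\as',\as},\x),\y'\bigr\rangle=\bigl\langle\Phi^{\bs}_{\as\to\as'}(\x),\y'\bigr\rangle$, where $\Phi^{\bs}_{\as\to\as'}$ is the naturality transition map; the trace pairing then drops out from $\bigl\langle\Phi(\x),\Phi(\y)\bigr\rangle=\langle\x,\y\rangle$. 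You need this adjunction step (or an equivalent area-filtration argument) to close the proof.
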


To prove the Theorem~\ref{thm:traceandcotracecobordismmaps},
we first need a convenient topological description of the trace cobordism.
Suppose that $(\S,\as,\bs)$ is a diagram for $(M,\g)$. We will write $(M_{\as,\as}, \g_{\as,\as})$
for the sutured manifold constructed from the diagram $(\S,\as,\as)$, and we note that
$(M_{\as,\as}, \g_{\as,\as})$ can be obtained by surgering $(I\times R_{\as}, I\times \d R_{\as})$
along $k$ 0-spheres. We consider the triangular sutured manifold cobordism
\[
\cW_{\as,\bs,\as} = (W_{\as,\bs,\as}, Z_{\as,\bs,\as}, [\xi_{\as,\bs,\as}])
\]
from $(M,\g) \sqcup (-M,\g)$ to $(M_{\as,\as}, \g_{\as,\as})$,
defined in Section~\ref{sec:trianglecobordisms}.

There is also a cobordism $\cW_{\as} = (W_{\as} , Z_{\as} , [\xi_{\as}])$ from
$(M_{\as,\as}, \g_{\as,\as})$ to $\emptyset$. The 4-manifold $W_{\as}$ is  defined as
\[
W_{\as} = (P_1 \times \S) \cup_{e_{\as} \times \S} (e_{\as} \times U_{\as}).
\]
Here $P_1$ denotes a monogon, viewed as having a single boundary edge $e_{\as}$.

\begin{lem}
The cobordism $\Lambda_{(M,\g)}$ is equivalent to the composition
$\cW_{\as} \circ \cW_{\as,\bs,\as}$.
\end{lem}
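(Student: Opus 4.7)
\emph{Plan.} The approach is to exhibit a diffeomorphism between the 4-manifold underlying $\cW_{\as} \circ \cW_{\as,\bs,\as}$ and $I \times M$, then verify that the sutured regions and contact structures correspond under this diffeomorphism.

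For the topology, the composition 4-manifold $W_{\as,\bs,\as} \cup_{M_{\as,\as}} W_{\as}$ is built from the fiber pieces of $\cW_{\as,\bs,\as}$ (over the triangle $\Delta$, with $U_{\as}$ over $e_{\as}^{(1)}$ and $e_{\gs}$, and $U_{\bs}$ over $e_{\bs}$) together with those of $\cW_{\as}$ (over the monogon $P_1$, with $U_{\as}$ over $e_{\as}^{mono}$). The gluing along $M_{\as,\as}$ at the outgoing vertex $v_{\as,\gs}$ of $\Delta$ identifies this vertex with the monogon vertex, and topologically produces a combined 2-complex which is a disk $B$ (a ``bigon'') with boundary arcs $e_{\as}^{(1)}$ and $e_{\bs}$. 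Foliating $B$ by arcs transverse to $e_{\as}^{(1)}$ and $e_{\bs}$, parametrized by $s \in I$, provides a product decomposition of the 4-manifold: each slice over $s$ has fiber $U_{\as} \cup_\Sigma U_{\bs} = M$, identifying the composition with $I \times M$. Under this diffeomorphism, the incoming ends $M_{\as,\bs} = (M,\g)$ (at $v_{\as,\bs}$) and $M_{\bs,\gs} = (-M,\g)$ (at $v_{\bs,\gs}$) correspond to $\{0\} \times M$ and $\{1\} \times M$, and the sutured region $Z_{\as,\bs,\as} \cup Z_{\as}$ matches $-I \times \d M$ with dividing set $\g$ on each $\d M$ end.

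For the contact structures, Lemma~\ref{lem:xi-abg-istwoIinvariantgluedtogether} together with Lemma~\ref{lem:xi-ab-isproduct} show that $\xi_{\as,\bs,\as}$ is constructed by gluing $I$-invariant contact structures, and analogously $\xi_{\as}$ on $Z_{\as}$ is built from tight contact structures on product sutured manifold pieces. The resulting glued contact structure on $Z \cong -I \times \d M$ is tight with dividing set $\g$ on each $\d M$ end, and by uniqueness of such tight contact structures up to equivalence (Definition~\ref{def:equivalence}), it is equivalent to the $I$-invariant contact structure $\xi_{I \times \d M}$. The main technical obstacle is the topological identification: verifying that the interior $U_{\as}$ structure coming from the glued edges $e_{\gs} = e_{\as}^{mono}$ integrates into the product $I \times M$ without introducing extra topology. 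This can be made precise via a Morse-theoretic argument, or by an explicit handle-cancellation between the two $U_{\as}$ pieces meeting at the interior edge, exploiting the fact that $\cW_{\as}$ acts as a topological cap on the outgoing $M_{\as,\as}$.
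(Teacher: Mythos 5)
Your topological identification of $W_{\as,\bs,\as} \cup_{M_{\as,\as}} W_{\as}$ with $I \times M$ --- merging the triangle and the monogon into a bigon and foliating it by arcs meeting $e_{\as}$ and $e_{\bs}$ once each, so that each leaf's preimage is $U_{\as} \cup_{\S} U_{\bs} \iso M$ --- is essentially the paper's argument (the paper outsources it to \cite{JCob}*{Proposition~6.6} and Figure~\ref{fig::31}, which depicts exactly this bigon decomposition). That part is fine.

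The contact-structure step, however, has a genuine gap. You assert that the glued contact structure on $Z_{\as,\bs,\as} \cup Z_{\as} \iso -I \times \d M$ is tight with dividing set $\g$ on each end, and then conclude it must be the $I$-invariant structure ``by uniqueness of such tight contact structures up to equivalence.'' No such uniqueness holds: for a surface $F$ of positive genus, $F \times I$ with convex boundary and prescribed dividing sets carries in general many inequivalent tight contact structures (this is the whole content of bypass theory and of Honda's classification results), and Definition~\ref{def:equivalence} is merely the definition of equivalence, not a uniqueness statement. The uniqueness arguments in the paper are applied only to the individual pieces $Z_0$, $Z_{\as}$, $Z_{\bs}$, which are \emph{product disk decomposable} and therefore do carry a unique tight structure; the sutured manifold $(\d M \times I, \g \sqcup \g)$ is not product disk decomposable in general. (Tightness of the glued-up structure is also not automatic from tightness of the pieces and would itself need justification.) The correct route --- the one the paper takes by ``adapting Lemma~\ref{lem:xi-abg-istwoIinvariantgluedtogether}'' --- runs in the opposite direction: start from the $I$-invariant contact structure $\xi_{I \times \d M}$ on $\d M \times I$, exhibit an explicit convex decomposition along annuli of the form $s \times I$ (with $s$ a Legendrian copy of the dividing set) into the product-disk-decomposable pieces $Z_0$, $Z_{\as}$, $Z_{\bs}$ used to build $\xi_{\as,\bs,\as}$ and $\xi_{\as}$, and invoke uniqueness piece by piece. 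Replacing your global uniqueness claim with this decomposition argument would close the gap.
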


\begin{proof}
It follows from \cite{JCob}*{Proposition~6.6} that $W_{\as} \cup
W_{\as,\bs,\as}$ is diffeomorphic to $I \times M_{\as,\bs}$, and furthermore,
that $Z_{\as,\bs,\as} \cup Z_{\as}$ is diffeomorphic to $-I \times \d M$. The
fact that $\xi_{\as}$ and $\xi_{\as,\bs,\as}$ glue up to give $\xi_{I \times \d M}$
can be proven by adapting Lemma~\ref{lem:xi-abg-istwoIinvariantgluedtogether}.
Schematically, the decomposition of $\Lambda_{(M,\g)}$ into $\cW_{\as,\bs,\as}$
and $\cW_{\as}$ is shown in Figure~\ref{fig::31}.
\end{proof}

\begin{figure}[ht!]
 \centering
 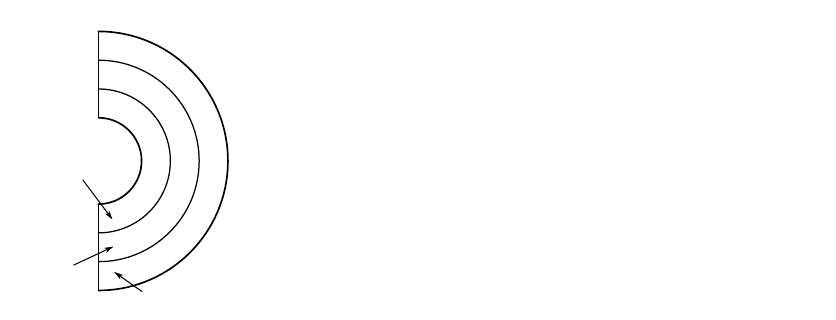
 \caption{The decomposition $I \times M \iso W_{\as,\bs,\as} \cup_{M_{\as,\as}} W_{\as}$.
 It is convenient to view the polygons $P_1$, $P_2$, and $P_3$ as having fattened vertices. \label{fig::31}}
\end{figure}

Let $\as'$ denote be a small Hamiltonian translate of $\as$.
Note that, of course, $(M_{\as,\as}, \g_{\as,\as})$ and $(M_{\as,\as'},\g_{\as,\as'})$ are homeomorphic.
It follows from Theorem~\ref{thm:trianglemapiscobmap} and the composition law that the map
\[
F_{\Lambda_{(M,\g)}} \colon \CF(\S,\as,\bs) \otimes \CF(\S,\bs,\as) \to \bF_2
\]
is equal to the composition
\begin{equation}
F_{\cW_{\as}} \circ F_{\as,\bs,\as'} \circ (\id_{\CF(\S,\as,\bs)} \otimes \Psi_{(\S,\bs,\as) \to (\S,\bs,\as')}),\label{eq:tracecobordismcomposition}
\end{equation}
where $F_{\as,\bs,\as'}$ is the map that counts holomorphic triangles on $(\S,\as,\bs,\as')$.

It remains to compute the cobordism map for $\cW_{\as}$.
Note that, on $(\S,\as,\as')$, there is a canonical bottom-graded intersection point $\Theta_{\as,\as'}^-$.

\begin{prop}\label{prop:Xascobordismmap}
The cobordism $\cW_{\as}$ from $(M_{\as,\as'}, \g_{\as,\as'})$ to $\emptyset$ induces the map
\[
\ve{x} \mapsto \begin{cases}1& \text{if } \ve{x} = \Theta_{\as,\as'}^-\\
0& \text{otherwise}.
\end{cases}
\]
\end{prop}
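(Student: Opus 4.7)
The plan is to decompose $\cW_{\as}$ into a boundary cobordism and a special cobordism, following the template from the proof of Theorem~\ref{thm:trianglemapiscobmap}. Since the outgoing end is empty, $Z_{\as}$ is an isolated component of $Z$; following \cite{JCob}*{Definition~10.1}, I first pass to $\cW'_{\as} \colon (M_{\as,\as'},\g_{\as,\as'}) \to (B,\delta_B)$ by removing a standard contact ball $B \subset \Int(Z_{\as})$, using $F_{\cW_{\as}} = F_{\cW'_{\as}}$ under the canonical identification $\SFH(B,\delta_B) \cong \bF_2$. I then write $\cW'_{\as} = (\cW'_{\as})^s \circ (\cW'_{\as})^b$ and compute each factor.

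For the boundary part $(\cW'_{\as})^b$, an argument parallel to Lemma~\ref{lem:xi-ab-isproduct} identifies $(Z_{\as},\xi_{\as})$ with $R_{\as,\as'} \times I$ carrying the $I$-invariant contact structure inducing the dividing set $\g_{\as,\as'}$. By Proposition~\ref{prop:glueinproduct}, combined with the contact 0-handle contribution coming from $B$, the resulting gluing map is a diffeomorphism map, which on the diagram level is tautological.

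For the special part $(\cW'_{\as})^s$, I equip it with a handle decomposition consisting of $|\as|$ four-dimensional 3-handles, one for each pair $(\a_i,\a'_i)$. This is derived from a Morse-theoretic analysis of $W_{\as}$, analogous in flavor to Lemma~\ref{lem:Wabgspecialcobordism2handle} but dual to it (3-handles rather than 2-handles, since we are reducing rather than building topology). The 3-handles commute with one another since they are attached along disjoint 2-spheres. Each 3-handle acts on the Heegaard diagram $(\S, \as, \as')$ exactly as described in Section~\ref{sec:3-handlemap}: if $\a_i \cap \a'_i = \{\theta^+_i, \theta^-_i\}$ with $\theta^+_i$ having the larger relative grading in $(\bar{\S}, \as, \as')$, then the handle map sends $\x \times \theta^+_i$ to $0$ and $\x \times \theta^-_i$ to $\x$. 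Composing all $|\as|$ such maps, the only surviving generator is $\Theta^-_{\as,\as'} = \theta^-_1 \times \cdots \times \theta^-_{|\as|}$, which is sent to $1 \in \bF_2 \cong \SFH(B, \delta_B)$. Combined with the tautological boundary map, this yields the claimed formula.

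The main obstacle is the identification in the third paragraph: precisely verifying that $(\cW'_{\as})^s$ admits the claimed 3-handle decomposition with the right attaching data. This requires constructing a suitable Morse function on $W_{\as}$ whose critical points correspond to the curves $\a_i \in \as$, and verifying that the corresponding attaching 2-spheres in the intermediate 3-manifold $M_{\as,\as'} \cup (Z_{\as} \setminus \Int(B))$ sit on the Heegaard surface $\S$ in the configuration required by the 3-handle map formula in Section~\ref{sec:3-handlemap}. The analysis is analogous in spirit to Lemma~\ref{lem:Morsetheoryturningaroundsuturedcobordism}, where a related Morse-theoretic construction dualized critical points of index $k$ to index $4-k$ under a turning-around operation.
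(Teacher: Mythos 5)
Your overall skeleton (remove balls from the isolated component $Z_{\as}$, split into boundary and special parts, reduce the special part to $3$-handle maps) matches the paper's strategy, and your analysis of the $|\as|$ spheres obtained by doubling the compressing disks $D_{\a_i}$ — surgering out each pair $(\a_i,\a_i')$ via the $\theta_i^{\pm}$ dichotomy so that only $\Theta^-_{\as,\as'}$ survives — is exactly the paper's reduction step. But there is a genuine gap in your second paragraph, and it propagates into an incorrect handle count in the third.

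The boundary part is \emph{not} a product gluing, and Proposition~\ref{prop:glueinproduct} does not apply. You have conflated $Z_{\as}$ (the piece associated to the monogon $P_1$) with $Z_{\as,\as'}$ (the piece associated to a bigon). The latter is a collar $\d M \times I$ attached along $\d M$ and does induce a diffeomorphism map; the former is $Z_{\as} \cong R_{\as} \times I$ glued along its \emph{entire} boundary $R_{\as} \cup \bar{R}_{\as} \cup (\d R_{\as} \times I)$ to the entire boundary $R_{\as,\as'} \cong R_{\as} \cup_{\d \S} \bar{R}_{\as}$ of $M_{\as,\as'}$, i.e., it caps the boundary off and (before ball removal) closes the manifold up. In particular there is no Morse function on $Z_{\as}$ that is $0$ on $\d M_{\as,\as'}$ and $1$ on an outgoing boundary, which is the hypothesis of Proposition~\ref{prop:glueinproduct}. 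The gluing map for $Z_{\as}'$ is therefore not tautological: after reducing to the diagram $(R_{\as},\emptyset)$ it is realized by $b_1(R_{\as})$ contact $2$-handles attached along the curves $s_i = (c_i \times \{0,1\}) \cup (\d c_i \times I)$ coming from a handle decomposition of $R_{\as}$. Correspondingly, the special part does not consist of only $|\as|$ three-handles: the double of the genus $|\as| - \chi(R_{\as}) + 1$ handlebody $U_{\as}$ is a connected sum of that many copies of $S^1 \times S^2$, so one needs $|\as| + b_1(R_{\as})$ three-handles and $b_0(R_{\as})$ four-handles (Lemma~\ref{lem:somemoreMorsetheory2}); your count omits the $b_1(R_{\as})$ three-handles attached along the doubled disks $D_{c_i^*}$ as well as the four-handles. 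The missing final step is the model computation pairing each contact $2$-handle with the corresponding $4$-dimensional $3$-handle along $S_{c_i^*}$, which shows that the residual composition sends $1 \in \SFH(R_{\as}\times I, \d R_{\as}\times I)$ to $1 \in \bigotimes_{i=1}^{b_0(R_{\as})}\SFH(B^3,\g_0)$. Without this, the argument only works when $R_{\as}$ is a disjoint union of disks.
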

Before we prove the above result, we need to find a convenient Morse function
for the cobordism $\cW_{\as}$. As a first step, we prove the following:

\begin{lem}\label{lem:somemoreMorsetheory}
Let $U_{\as}$ be the sutured compression body formed by attaching
3-dimensional 2-handles to $I\times \S$ along the curves $\{0\}\times \as$.
After rounding corners,
we can view $U_{\as}$ as a (non-sutured) handlebody of genus $|\as| -
\chi(R_{\as}) + 1$ and boundary
\[
\d U_{\as} = (\{1\}\times \S) \cup \bar{R}_{\as},
\]
where $R_{\as}$ is the surface obtained by surgering $\S$ along
the $\as$ curves. Furthermore, a (possibly overcomplete) set of compressing
disks for $U_{\as}$ can be obtained by taking $|\as|$ compressing disks
$D_\a$ with boundary $\{1\}\times \a$ for $\a \in \as$, as well as disks of
the form $D_{c_i^*} := I\times c_i^*$, for pairwise disjoint, embedded arcs
$c_1^*, \dots, c_{b_1(R_{\as})}^*$ in $\S$ that avoid the $\as$ curves, and
form a basis of $H_1(R_{\as}, \d R_{\as})$. These cut $U_{\as}$ into
$b_0(R_{\as})$ 3-balls.
\end{lem}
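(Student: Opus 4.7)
The plan is to exhibit $U_{\as}$ as a handlebody by dualizing its construction, compute its genus via Euler characteristics, and then identify the proposed compressing disks as cocores and cross-sections in that dual decomposition.

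First, I would reverse the roles of cores and cocores and view $U_{\as}$ as being built from $\bar{R}_{\as} \times I$ by attaching $|\as|$ three-dimensional $1$-handles, one for each $\a \in \as$, with feet in small disks around the two surgery points in $R_{\as} \times \{0\}$. Because $R_{\as}$ has no closed components (a standing assumption for sutured Heegaard diagrams), each component of $R_{\as} \times I$ is a handlebody. Attaching $1$-handles to a disjoint union of handlebodies produces a handlebody, and connectedness of the output is automatic from connectedness of $\S$. Hence, after rounding corners, $U_{\as}$ is a handlebody.

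For the genus, the cleanest route is via Euler characteristic: the $|\as|$ two-handles contribute $|\as|$ to $\chi(U_{\as})$, so $\chi(U_{\as}) = \chi(\S) + |\as|$, and for a handlebody of genus $g$ one has $\chi = 1-g$, giving $g = 1 - \chi(\S) - |\as|$. Since each surgery along a curve in $\as$ raises the Euler characteristic by $2$, $\chi(R_{\as}) = \chi(\S) + 2|\as|$. Substituting yields the stated formula $g = |\as| - \chi(R_{\as}) + 1$.

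For the compressing disks, the annulus $\a \times I \subset \S \times I$ together with the core of the $2$-handle glued along $\a \times \{0\}$ assembles to a disk $D_\a$ with $\d D_\a = \a \times \{1\}$; this is precisely the cocore of the dual $1$-handle associated to $\a$. Cutting $U_{\as}$ along $\bigcup_{\a \in \as} D_\a$ therefore returns $\bar{R}_{\as} \times I$. The arcs $c_i^*$ live in $\S \setminus \as$, so they descend to properly embedded arcs in $R_{\as}$; because they form a basis of $H_1(R_{\as}, \d R_{\as})$, they cut $R_{\as}$ into exactly $b_0(R_{\as})$ disks. Consequently the disks $c_i^* \times I$ cut $\bar{R}_{\as} \times I$ into $b_0(R_{\as})$ blocks of the form $(\text{disk}) \times I$, each a $3$-ball, completing the count.

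The main obstacle is bookkeeping at the sutured corners: one must verify that each $D_{c_i^*}$ has a single connected boundary curve in $\d U_{\as}$ once corners are rounded, and that the arcs $c_i^*$ in $\S$ pass unambiguously to arcs in $R_{\as}$ through the surgery. Beyond this careful unwrapping of the sutured structure, every step is a standard handlebody and surface-with-boundary computation.
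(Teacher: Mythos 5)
Your proof is correct and follows essentially the same route as the paper's: the paper attaches the $2$-handles determined by $\as$ first (viewing $U_{\as}$ as a cobordism to $\emptyset$) and reduces to the product case $R_{\as}\times I$, which is exactly your dual description of $U_{\as}$ as $\bar R_{\as}\times I$ with $1$-handles attached. You simply spell out the Euler-characteristic bookkeeping for the genus and the cut-into-disks count that the paper dismisses as straightforward.
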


\begin{rem}
A basis of arcs $c_1^*,\dots, c_{b_1(R_{\as})}^*$ can be obtained by picking a Morse function
$g \colon R_{\as} \to I$  (viewing $R_{\as}$ as a cobordism from $\emptyset$ to $\d R_{\as}$)
that has $b_0(R_{\as})$ local minima, and $b_1(R_{\as})$ index 1 critical points.
The Morse function $g$ determines a handle decomposition of $R_{\as}$ with $b_0(R_{\as})$
0-handles (i.e., disks) and $b_1(R_{\as})$ 1-handles. The arcs $c_1^*, \dots, c_{b_1(R_{\as})}^*$
can be taken as the co-cores of the 1-handles in this decomposition.
\end{rem}

\begin{proof}[Proof of Lemma \ref{lem:somemoreMorsetheory}]
View $U_{\as}$ as a cobordism from $\overline{\d U}_{\as} = (\{1\}\times \bar{\S}) \cup
R_{\as}$ to the empty set. The $\as$ curves determine 3-dimensional 2-handles
in $U_{\as}$. After attaching these 2-handles, the remaining cobordism is
homeomorphic to $I\times R_{\as}$ (with corners smoothed), viewed as a
cobordism from $R_{\as} \cup_{\d R_{\as}} \bar{R}_{\as}$ to $\emptyset$. In
such a way, we reduce the argument to the case when there are no $\as$
curves, where it is straightforward.
\end{proof}

We need an additional Morse theory argument:

\begin{lem}\label{lem:somemoreMorsetheory2}
Suppose that $U_{\as}$ is the sutured compression body induced by the sutured
monodiagram $(\S,\as)$. Then  $I \times U_{\as}$ can be viewed, after
smoothing corners, as a (non-sutured) cobordism from the closed manifold
$U_{\as} \cup_{\d U_{\as}}  -U_{\as}$ to $\emptyset$. Furthermore, there
is a Morse function $F \colon I \times U_{\as} \to I$ such that
\begin{itemize}
\item $F^{-1}(0) = U_{\as} \cup_{\d U_{\as}} -U_{\as}$,
\item $F$ has no index 0, 1, or 2 critical points,
\item $F$ has $|\as| + b_1(R_{\as})$ index 3 critical points, and
\item $F$ has $b_0(R_{\as})$ index 4 critical points.
\end{itemize}
The attaching spheres of the 3-handles are obtained taking the union of the disks $D_{\a_i}$
and $D_{c_i^*}$ (defined in Lemma~\ref{lem:somemoreMorsetheory}) in $U_{\as}$,
together with their images in $-U_{\as}$.
\end{lem}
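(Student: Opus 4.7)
The plan is to mimic the construction of Lemma~\ref{lem:Morsetheoryturningaroundsuturedcobordism}, using an auxiliary function on $I \times [-1, 2]$ together with a carefully chosen Morse function on $U_{\as}$. First, using Lemma~\ref{lem:somemoreMorsetheory}, I view $U_\as$ as a handlebody (with corners smoothed) admitting a handle decomposition with $b_0(R_{\as})$ 0-handles and $|\as| + b_1(R_{\as})$ 1-handles, where the 1-handles are dual to the compressing disks $D_{\a_i}$ and $D_{c_i^*}$. Equivalently, I choose a Morse function $f \colon U_\as \to I$ with $f^{-1}(0) = \d U_\as$, such that $f$ has $|\as| + b_1(R_{\as})$ index 1 critical points whose descending manifolds intersect $\d U_\as$ along $\d D_{\a_i}$ and $\d D_{c_i^*}$, and $b_0(R_{\as})$ index 2 critical points, and no other critical points. (Such an $f$ is obtained by inverting the handlebody Morse function coming from the decomposition.)

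Next I extend $f$ to a function on an enlarged domain and compose with an auxiliary function $G$. Specifically, I view $I \times U_{\as}$ as
\[
(U_\as) \cup (\d U_\as \times [-1,0]) \cup (-U_\as) \cup (\d U_\as \times [1,2])/{\sim},
\]
extend $f$ so that $f|_{\d U_\as \times [-1,0]}(x,s) = -s$, $f|_{\d U_\as \times [1,2]}(x,s) = s-1$, and $f$ on $-U_\as$ is the mirror of $f$ on $U_\as$. Then I set $F(t,x) := G(t, f(x))$ for an auxiliary function $G \colon I \times [-1,2]/(I \times \{2\}) \to I$ chosen as in Lemma~\ref{lem:Morsetheoryturningaroundsuturedcobordism}, but with the roles of the ends interchanged so that $F^{-1}(0) = U_{\as} \cup_{\d U_\as} -U_\as$. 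A direct computation of $\nabla F$, identical in form to that in Lemma~\ref{lem:Morsetheoryturningaroundsuturedcobordism}, shows that the critical points of $F$ are in bijection with those of $f$, and that a critical point of index $k$ of $f$ becomes a critical point of index $4-k$ of $F$. Since $f$ has only index 1 and 2 critical points, $F$ has only index 3 and 2 critical points... wait, index $4-1 = 3$ and $4-2 = 2$.

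Hmm, I need the Morse function on $U_\as$ to have $b_0(R_\as)$ index 0 critical points and $|\as| + b_1(R_\as)$ index 1 critical points in order to get index 4 and index 3 critical points for $F$, respectively. So I take $f \colon U_\as \to I$ with $f^{-1}(1) = \d U_\as$ and view $U_\as$ as built up from critical points of index 0 (the $b_0(R_\as)$ 3-balls obtained after cutting along the compressing disks) and index 1 (the compressing disks $D_{\a_i}$ and $D_{c_i^*}$). This yields the claimed count of index 3 and index 4 critical points of $F$.

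Finally, I identify the attaching spheres of the 3-handles, and this is the main content of the lemma. For each index 1 critical point $p$ of $f$ associated to a compressing disk $D \in \{D_{\a_i}\} \cup \{D_{c_i^*}\}$, the ascending manifold of $p$ in $U_\as$ is a properly embedded disk in $U_\as$ with $\d D \subset \d U_\as$; by construction, this disk is isotopic to $D$ itself. By the same analysis as in Lemma~\ref{lem:Morsetheoryturningaroundsuturedcobordism} (where the intersection of the descending manifold of a critical point of $F$ with the 0-level set is the union of the ascending flowlines in $Z$ and their images in $-Z$), the descending manifold of the corresponding index 3 critical point of $F$ intersects $F^{-1}(0) = U_\as \cup_{\d U_\as} -U_\as$ along the union of $D$ with its image in $-U_\as$. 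Since these disks share boundary along $\d U_\as$, their union is a 2-sphere, which is precisely the attaching sphere of the 3-handle. The main obstacle is just bookkeeping: verifying the index shift $k \mapsto 4-k$ carefully in the presence of the boundary extensions, and checking that the ascending manifolds of the index 1 critical points of $f$ are isotopic to the stated compressing disks; both amount to direct computations analogous to those already carried out in Lemma~\ref{lem:Morsetheoryturningaroundsuturedcobordism}.
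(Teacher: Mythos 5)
Your argument is essentially the paper's: compose a Morse function $f$ on $U_{\as}$ realizing the handle decomposition of Lemma~\ref{lem:somemoreMorsetheory} with a two-variable auxiliary function $G$, locate the critical points of $F(t,y)=G(t,f(y))$ at $t=\tfrac12$ over the critical points of $f$, and read off the indices and attaching spheres from the signs of $\d G/\d t$ and $\d G/\d s$; the paper merely uses the dual convention ($f^{-1}(0)=\d U_{\as}$ with index $2$ and $3$ critical points whose \emph{descending} manifolds are $D_{\a_i}$ and $D_{c_i^*}$, and $\d G/\d s>0$, so the index shifts by $+1$ rather than $k\mapsto 4-k$), which is equivalent to your final choice after replacing $f$ by $1-f$. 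One correction: the displayed decomposition of $I\times U_{\as}$ as a union of $3$-manifolds with collar pieces does not parse dimensionally and is unnecessary here — unlike in Lemma~\ref{lem:Morsetheoryturningaroundsuturedcobordism}, where $f$ had to be extended from $Z$ over all of $M'$, here $f$ is already defined on all of $U_{\as}$, so one simply takes the model $X=I\times U_{\as}/{\sim}$ with $(t,x)\sim(t',x)$ for $x\in\d U_{\as}$ and defines $G$ on $(I\times I)/(I\times\{1\})$, quotienting at the level corresponding to $\d U_{\as}$.
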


\begin{proof}
A model for the 4-manifold obtained by rounding the corners of $I \times U_{\as}$
can be taken to be $X := I \times U_{\as}/{\sim}$, where
$(t,x) \sim (t',x)$ if $x \in \d U_{\as}$.
Using Lemma~\ref{lem:somemoreMorsetheory}, we can construct a Morse function $f \colon U_{\as} \to I$
that has $f^{-1}(0) = \d U_{\as}$, such that $f$ has no index 0 or 1 critical points,
$|\as| + b_1(R_{\as})$ index 2 critical points, and $b_0(R_{\as})$ index 3 critical points.
Furthermore, the descending manifolds of the index 2 critical points in $U_{\as}$
are the disks $D_{\a_i}$ and $D_{c_i^*}$.

To construct a Morse function on $X$ with the stated critical points,
the argument is now a modification of Lemma~\ref{lem:Morsetheoryturningaroundsuturedcobordism}.
More precisely, we will construct an auxiliary function
\[
G \colon (I \times I)/(I \times \{0\}) \to I.
\]
We view $(I \times I)/(I \times \{0\})$ as having the same smooth structure at
the point $I \times \{0\}$ as the upper half-plane; see Figure~\ref{fig::53}.
Furthermore, we assume that
\begin{itemize}
\item $G|_{I \times \{0\}} \equiv G|_{\{0\} \times I} \equiv G|_{\{1\} \times I} \equiv 0$,
\item $\d G/ \d s > 0$ in $(0,1) \times I$,
\item $(\d G /\d t)(t,s) = 0$ if and only if $(t,s) \in \{\tfrac{1}{2}\} \times I$.
\end{itemize}

An example of such a function $G$ is shown in Figure~\ref{fig::53}.
We then consider the function $F \colon X \to I$ defined by
\[
F(t,y) = G(t,f(y)).
\]
It is straightforward to verify that $F$ on $I \times U_{\as}$ with its corners rounded
is Morse, and has critical points with attaching spheres as stated.
\end{proof}

\begin{figure}[ht!]
 \centering
\begingroup%
  \makeatletter%
  \providecommand\color[2][]{%
    \errmessage{(Inkscape) Color is used for the text in Inkscape, but the package 'color.sty' is not loaded}%
    \renewcommand\color[2][]{}%
  }%
  \providecommand\transparent[1]{%
    \errmessage{(Inkscape) Transparency is used (non-zero) for the text in Inkscape, but the package 'transparent.sty' is not loaded}%
    \renewcommand\transparent[1]{}%
  }%
  \providecommand\rotatebox[2]{#2}%
  \newcommand*\fsize{\dimexpr\f@size pt\relax}%
  \newcommand*\lineheight[1]{\fontsize{\fsize}{#1\fsize}\selectfont}%
  \ifx\svgwidth\undefined%
    \setlength{\unitlength}{211.37383976bp}%
    \ifx\svgscale\undefined%
      \relax%
    \else%
      \setlength{\unitlength}{\unitlength * \real{\svgscale}}%
    \fi%
  \else%
    \setlength{\unitlength}{\svgwidth}%
  \fi%
  \global\let\svgwidth\undefined%
  \global\let\svgscale\undefined%
  \makeatother%
  \begin{picture}(1,0.62836145)%
    \lineheight{1}%
    \setlength\tabcolsep{0pt}%
    \put(0,0){\includegraphics[width=\unitlength,page=1]{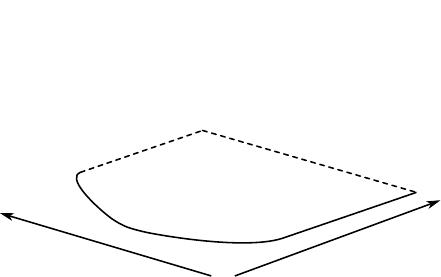}}%
    \put(0.76771578,0.05179151){\color[rgb]{0,0,0}\makebox(0,0)[lt]{\lineheight{1.25}\smash{\begin{tabular}[t]{l}$s$\end{tabular}}}}%
    \put(0.2423788,0.02665833){\color[rgb]{0,0,0}\makebox(0,0)[rt]{\lineheight{1.25}\smash{\begin{tabular}[t]{r}$t$\end{tabular}}}}%
    \put(0,0){\includegraphics[width=\unitlength,page=2]{fig53.pdf}}%
    \put(0.05269827,0.41605594){\color[rgb]{0,0,0}\makebox(0,0)[lt]{\lineheight{1.25}\smash{\begin{tabular}[t]{l}$G(t,s)$\end{tabular}}}}%
    \put(0,0){\includegraphics[width=\unitlength,page=3]{fig53.pdf}}%
  \end{picture}%
\endgroup%

 \caption{An example of an auxiliary function $G \colon (I \times I)/(I \times \{0\}) \to I$
 in Lemma~\ref{lem:somemoreMorsetheory2}. \label{fig::53}}
\end{figure}

We are now ready to prove Proposition~\ref{prop:Xascobordismmap}:

\begin{proof}[Proof of Proposition \ref{prop:Xascobordismmap}]
The sutured cobordism $\cW_{\as} = (W_{\as}, Z_{\as}, [\xi_{\as}])$
from $(M_{\as,\as'}, \g_{\as,\as'})$ to $\emptyset$ is the composition of the boundary
cobordism $\cW_{\as}^b$ obtained by gluing $Z_{\as} := I\times  R_{\as} $ to $\d
M_{\as,\as'} \iso R_{\as} \cup _{\d R_{\as}} \bar{R}_{\as}$, and the special cobordism
$\cW_{\as}^s$ (between closed 3-manifolds) diffeomorphic to $I\times U_{\as} $ from
$U_{\as} \cup -U_{\as}$ to $\emptyset$. We use
Lemma~\ref{lem:somemoreMorsetheory2} to give a handle decomposition of the
cobordism $\cW_{\as}^s$ consisting of $|\as| - \chi(R_{\as}) + 1$ 3-handles and
$b_0(R_{\as})$ 4-handles.

Note that this description of the cobordism $\cW_{\as}$ does not quite allow
us to compute the cobordism map $F_{\cW_{\as}}$, because when we glue
$Z_{\as} = I\times R_{\as} $ to $M_{\as,\as'}$, we obtain a closed (i.e.,
non-sutured manifold). The necessary modification is to instead remove
$b_0(R_{\as})$ 3-balls from $Z_{\as}$, to obtain a sutured maifold cobordism from
$(M_{\as,\as'}, \g_{\as,\as'})$ to $\bigsqcup_{i=1}^{b_0(R_{\as})} (B^3,\g_{0})$ (where
$\g_0 \subset \d B^3$ is a simple closed curve), and then compose
with the natural isomorphism
\[
\bigotimes_{i=1}^{b_0(R_{\as})} \SFH(B^3, \g_0) \iso \bF_2.
\]

Let us write $Z_{\as}'$ for $Z_{\as}$ with $b_0(R_{\as})$ 3-balls removed,
and $\cW'$ for the induced special cobordism from $\left(M_{\as,\as'} \cup
Z_{\as}', \bigsqcup_{i=1}^{b_0(R_{\as})} \g_{0}\right)$ to
$\bigsqcup_{i=1}^{b_0(R_{\as})} (B^3, \g_0)$. Note that $\cW'$ can be given a
handle decomposition that is the same as the handle decomposition for
$\cW_{\as}^s$ with the 4-handles removed.

We write $S_\a \subset M_{\as,\as'} \cup Z_{\as}'$ for the 2-spheres
obtained by doubling the compressing disk $D_\a$ for $\a \in \as$,
and $S_{c_i^*} \subset M_{\as,\as'} \cup Z_{\as}'$
for the 2-spheres obtained by doubling the compressing disk $D_{c_i^*} \subset U_{\as}$.
We recall that the curves $c_i^*$ were obtained by picking a Morse function on $R_{\as}$
that had $b_0(R_{\as})$ index 0 critical points and $b_1(R_{\as})$ index 1 critical points.
The index 1 critical points each determine a 2-dimensional 1-handle,
added to the handles of index 0. The curves $c_i^*$ are then the co-cores of these handles.

Note that, by the composition law for sutured cobordisms, we can commute the 3-handle maps
for the spheres $S_\a$ with the contact gluing map for gluing in $Z_{\as}'$.
As the composition of the 3-handle maps for the spheres $S_\a$ has the same formula
as the one in the statement of the proposition, under the identification of
\[
\SFH(I\times R_{\as} , I\times \d R_{\as}) \iso \bF_2,
\]
we thus reduce to the case when there are no $\as$ curves; i.e.,
when $(\S,\as)$ is the diagram $(R_{\as},\emptyset)$.

To see the claim when there are no $\as$ curves,
we note that the cobordism map is obtained by first gluing $Z'_{\as}$ to the product sutured manifold
$(I\times R_{\as} , I\times \d R_{\as})$,
and then attaching $b_1(R_{\as})$ 4-dimensional 3-handles. The contact manifold $Z'_{\as}$
is obtained by attaching $b_1(R_{\as})$ contact 2-handles.
If $c_i^*$ denotes the arc on $R_{\as}$ from Lemma~\ref{lem:somemoreMorsetheory},
obtained as the co-core of a handle decomposition of $R_{\as}$, as above,
then the attaching 1-spheres $s_i$ for the contact 2-handles forming $Z'_{\as}$ are given by
\[
s_i = (\{0,1\}\times c_i ) \cup (I\times \d c_i).
\]
Note that $s_i$ bounds the disk $D_{c_i^*}$, described in Lemma~\ref{lem:somemoreMorsetheory},
which is the descending manifold of the Morse function $f$,
constructed on $U_{\as}$ in the proof of Lemma~\ref{lem:somemoreMorsetheory2}.
The 2-sphere $S_{c_i^*}$ along which we attach the
$b_1(R_{\as})$ 3-handles are then equal to the union of $D_{c_i^*}$,
together with the core of the corresponding contact 2-handle.
Now an easy model computation shows that the composition of these
$b_1(R_{\as})$ contact 2-handle maps, and the $b_1(R_{\as})$ 3-handle maps,
sends $1 \in \SFH(I\times R_{\as} , I\times \d R_{\as} )$ to
$1 \in \bigotimes_{i=1}^{b_0(R_{\as})} \SFH(B^3, \g_0)$.
This model computation is shown in Figures~\ref{fig::33} and~\ref{fig::32}.

Using the composition law for sutured cobordisms, the formula for
the cobordism map $F_{\cW_{\as}}$ now follows.
\end{proof}

\begin{figure}[ht!]
 \centering
 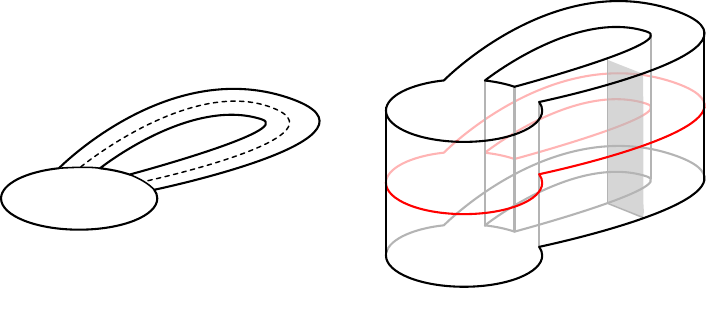
 \caption{$R_{\as}$ and $I\times R_{\as}$. On the left, a 1-handle $h_i$ from a handle decomposition
 of $R_{\as}$ is shown, as well as the core $c_i$ and the co-core $c_i^*$.
 On the right is the product manifold $(I\times R_{\as}, I\times \d R_{\as} )$,
 as well as the closed curve $s_i$, along which we attach a contact 2-handle.
 The red lines on the right indicate the sutures of $I\times R_{\as} $. \label{fig::33}}
\end{figure}

\begin{figure}[ht!]
 \centering
 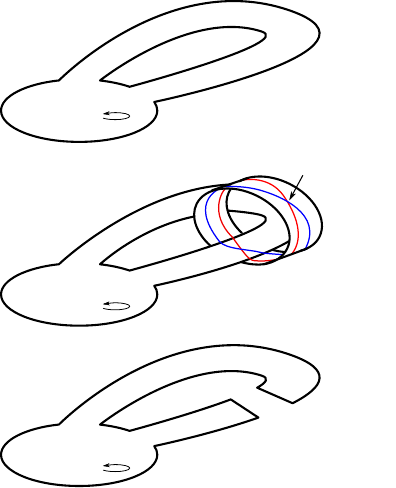
 \caption{The model computation from Proposition~\ref{prop:Xascobordismmap}.
 The contact 2-handle map takes the form $1 \mapsto \theta^-$,
 and the 3-handle map sends $\theta^-$ to $1$. \label{fig::32}}
\end{figure}

We can now prove the main theorem of this section:

\begin{proof}[Proof of Theorem~\ref{thm:traceandcotracecobordismmaps}]
Let us consider the trace cobordism map.
Recall from equation~\eqref{eq:tracecobordismcomposition} that $\Lambda_{(M,\g)}$
can be written as the composition of $\cW_{\as}$ with $\cW_{\as,\bs,\as}$.
Noting that $\cW_{\as,\bs,\as}$  is equivalent to $\cW_{\as,\bs,\as'}$
(where $\as'$ is a small Hamiltonian translate of $\as$),
and using the formula for $F_{\cW_{\as}}$ from Proposition~\ref{prop:Xascobordismmap},
we know that
\[
F_{\Lambda_{(M,\g)}} \colon  \CF(\S,\as,\bs) \otimes \CF(\S,\bs,\as) \to \bF_2
\]
takes the form
\[
F_{\Lambda_{(M,\g)}}(\xs \otimes \ys) =
\left\langle\, F_{\as,\bs,\as'}\left(\xs,\Psi_{\bs}^{\as\to \as'}(\ys)\right),
\Theta_{\as,\as'}^- \,\right\rangle,
\]
where
\[
\langle\, \zs,\zs' \,\rangle := \begin{cases} 1& \text{if } \zs = \zs', \text{ and} \\
0& \text{otherwise}.
\end{cases}
\]
Note, however, that the triangle map $F_{\as,\bs,\as'}$ counts the same triangles
as the triangle map $F_{\as',\as,\bs}$, and that $\Theta_{\as,\as'}^-\in \CF(\S,\as,\as')$
is the same intersection point as $\Theta_{\as',\as}^+ \in \CF(\S,\as',\as)$.
Hence
\[
\left\langle\, F_{\as,\bs,\as'}\left(\xs,\Psi_{\bs}^{\as\to \as'}(\ys)\right),
\Theta_{\as,\as'}^- \,\right\rangle =
\left\langle\, F_{\as',\as,\bs}\left(\Theta_{\as',\as}^+, \xs\right),
\Psi_{\bs}^{\as \to \as'}(\ys) \,\right\rangle.
\]
However, $F_{\as',\as,\bs}(\Theta_{\as',\as}^+, -)$ is the transition map
$\Psi_{\as\to \as'}^{\bs}(-)$, from naturality.
Hence the cobordism map becomes simply the composition
\[
\left\langle\, \Psi^{\bs}_{\as\to \as'}(\xs), \Psi_{\bs}^{\as\to \as'}(\ys) \,\right\rangle,
\]
which is easily seen to be $\left\langle\, \Psi_{\as'\to \as}^{\bs} \circ
\Psi_{\as\to \as'}^{\bs} (\xs), \ys \,\right\rangle =
\langle\, \xs, \ys \,\rangle$.

The formula for the cotrace cobordism map $F_{V_{(M,\g)}}$ follows from the above formula
for the trace cobordism map $F_{\Lambda_{(M,\g)}}$, combined with Theorem~\ref{thm:firstduality}.
\end{proof}

\section{Equivalence of two link cobordism map constructions}

In this section, we describe an application of the techniques of this paper to link Floer homology.

\subsection{Background on link Floer homology}

Knot Floer homology is an invariant of  knots embedded in 3-manifolds
constructed by Ozsv\'{a}th and Szab\'{o}~\cite{OSKnots},
and independently by Rasmussen~\cite{RasmussenKnots}.
Link Floer homology is a generalization to links,
constructed by Ozsv\'{a}th and Szab\'{o}~\cite{OSLinks}.

\begin{define} A \emph{multi-based link} $\bL = (L,\ws,\zs)$ in a 3-manifold $Y$ is an oriented link
$L\subset Y$, together with two disjoint collections of basepoints $\ws$, $\zs \subset L$ such that
\begin{enumerate}
\item each component of $L$ has at least two basepoints,
\item the basepoints along a link component of $L$ alternate between $\ws$ and $\zs$, as one traverses the link.
\end{enumerate}
\end{define}

To a multi-based link $\bL$ in $Y$, link Floer homology associates an $\bF_2$-module
\[
\hat{\HFL}(Y,\bL).
\]
To construct the modules, one picks a Heegaard diagram for the pair
$(Y,\bL)$, in the following sense:

\begin{define}A \emph{Heegaard diagram} $(\S,\as,\bs,\ws,\zs)$
\emph{for an oriented multi-based link} $(Y,(L,\ws,\zs))$ consists of the following:
\begin{enumerate}
\item A Heegaard diagram $(\S,\as,\bs)$ for $Y$, such that $Y\setminus \S$
is the union of two handlebodies $U_{\as}$ and $U_{\bs}$ that meet along $\S$.
\item $\S \cap L = \ws\cup \zs$.
\item Each component of $\S \setminus \as$ and $\S\setminus \bs$ contains exactly one $\ws$ basepoint and one $\zs$ basepoint.
\item $L \cap U_{\as}$ is isotopic in $U_{\as}$,
relative to $\d(L \cap U_{\as})$, to a collection of arcs in $\S \setminus \as$.
Similarly, $L \cap U_{\bs}$ is isotopic in $U_{\bs}$, relative to $\d(L \cap U_{\bs})$,
to a collection of arcs in $\S \setminus \bs$.
\item The link $L$ intersects $\S$ positively at the $\zs$ basepoints, and negatively at the $\ws$ basepoints.
\end{enumerate}
\end{define}

A somewhat more concise way of defining a Heegaard diagram for a multi-based links
is as a diagram for the sutured manifold $Y(\bL)$, obtained by removing a neighborhood of the link
$L$ from $Y$, and adding sutures to $\d (Y \setminus N(L))$ that are
positively oriented meridians of $L$ over the $\ws$ basepoints and negatively oriented meridians
of $L$ over the $\zs$ basepoints.
Link Floer homology, as defined by Ozsv\'{a}th and Szab\'{o}~\cite{OSLinks},
is easily seen to satisfy the isomorphism
\[
\hat{\HFL}(Y,\bL)\iso \SFH(Y(\bL)).
\]

If $(\S,\as,\bs,\ws,\zs)$ is a diagram for $(Y,\bL)$,
then the link Floer complex $\hat{\CFL}(\S,\as,\bs,\ws,\zs)$ is generated over
$\bF_2$ by intersection points $\xs\in \bT_{\as}\cap \bT_{\bs}$,
and the differential counts Maslov index~1
pseudo-holomorphic discs that go over none of the $\ws$ or $\zs$ basepoints.

For links in $S^3$ or null-homologous links in $(S^1\times S^2)^{\# n}$,
there is some additional structure on $\hat{\HFL}(S^3,\bL)$.
If $(\S,\as,\bs,\ws,\zs)$ is a diagram for a link in $S^3$,
one can define three relative gradings, $\gr_{\ws}$, $\gr_{\zs}$, and $A$,
on $\hat{\CFL}(\S,\as,\bs,\ws,\zs)$. If $\xs$ and $\ys$ are two intersection points,
then the gradings $\gr_{\ws}$ and $\gr_{\zs}$ are defined by picking a homology class
$\phi \in\pi_2(\xs,\ys)$ and setting
\[
\gr_{\ws}(\ve{x},\ve{y}) := \mu(\phi) - 2 n_{\ws}(\phi) \text{ and } \gr_{\zs}(\ve{x},\ve{y})
:= \mu(\phi) - 2n_{\zs}(\phi),
\]
where $n_{\ws}(\phi)$ and $n_{\zs}(\phi)$ denote the sum of the multiplicities of $\phi$
over the $\ws$ or $\zs$ basepoints, respectively. It is easy to see that the formulas
for $\gr_{\ws}$ and $\gr_{\zs}$ are independent of the choice of homology class $\phi$.
An absolute lift of the relative grading $\gr_{\ws}$ can be fixed
by requiring that $\hat{\HF}(\S,\as,\bs,\ws)$,
which is isomorphic as a relatively graded group to
$\bigotimes^{|\ws|-1}\left((\bF_2)_{-\frac{1}{2}} \oplus (\bF_2)_{\frac{1}{2}}\right)$,
have top-graded generator in grading $(|\ws|-1)/2$.
An absolute lift of the grading $\gr_{\zs}$ can be specified similarly.
Finally, the Alexander grading $A$ can be defined as
\[
A := \tfrac{1}{2}(\gr_{\ws} - \gr_{\zs}).
\]

\subsection{The link Floer homology TQFT}

In \cite{JCob}, the first author provided a construction of cobordism maps for decorated link cobordisms.
The construction used the following notion of cobordism between multi-based links:

\begin{define}
Let $Y_1$ and $Y_2$ be 3-manifolds containing multi-based links $\bL_1 = (L_1, \ws_1, \zs_1)$
and $\bL_2 = (L_2, \ws_2, \zs_2)$, respectively. A \emph{decorated link cobordism} from
$(Y_1, \bL_1)$ to $(Y_2, \bL_2)$ is a triple $(X,S,\cA)$, where
\begin{enumerate}
\item $X$ is an oriented cobordism from $Y_1$ to $Y_2$,
\item $S$ is a properly embedded oriented surface in $X$ with $\d S = -L_1 \cup L_2$, and
\item $\cA$ is a properly embedded 1-manifold in $S$
that divides $S$ into two subsurfaces $S_{\ws}$ and $S_{\zs}$
that meet along $\cA$, such that $\ws_1$, $\ws_2 \subset S_{\ws}$
and $\zs_1$, $\zs_2 \subset S_{\zs}$.
\end{enumerate}
\end{define}

Note that the above definition is slightly different from \cite[Definition~4.5]{JCob},
and follows \cite{ZemCFLTQFT}. The equivalence of the two definitions is
explained in ~\cite[Section~2.3]{JMComputeCobordismMaps}.

If $\cX = (X,S,\cA)$ from $(Y_1,\bL_1)$ to $(Y_2,\bL_2)$ is a decorated link cobordism,
then there is a well-defined cobordism $\cW(\cX) = (W,Z,[\xi])$ of sutured manifolds
from $Y(\bL_1)$ to $Y(\bL_2)$, as we now describe.
The 4-manifold $W$ is defined by the formula
\[
W := X \setminus N(S),
\]
where $N(S)$ denotes a regular neighborhood of $S$,
viewed as the unit normal disk bundle of $S$. The set $Z$ is defined as
the unit normal circle bundle of $S$, oriented as a submanifold of $\d W$.
Since $S$ is an oriented surface, $Z$ is a principal $S^1$-bundle over $S$.
According to Lutz~\cite{LutzS1InvariantContactStructures} and Honda~\cite{HondaClassII},
the dividing set $\cA$ uniquely determines an $S^1$-invariant contact structure on $Z$
with dividing set $\cA$ on $S$, up to isotopy.
The contact structure $\xi$ is defined to be this $S^1$-invariant contact structure.
The link cobordism map
\[
F^J_{\cX} \colon \HFLh(Y_1, \bL_1) \to \HFLh(Y_2, \bL_2)
\]
is defined to be the sutured cobordism map
\[
F_{\cW(\cX)} \colon \SFH(Y_1(\bL_1)) = \HFLh(Y_1, \bL_1) \to \SFH(Y_2(\bL_2)) = \HFLh(Y_2, \bL_2).
\]

The second author~\cite{ZemCFLTQFT} constructed another link cobordism map $F^Z_{\cX, \frs}$,
where $\frs \in \Spin^c(X)$ is a $\Spin^c$ structure on $X$,
that did not use the Honda--Kazez--Mati\'c gluing map.
It instead was defined by writing a link cobordism as a composition of elementary link cobordisms.
The map $F^Z_{\cX, \frs}$ is defined on a more general version of link Floer homology
than $F^J_{\cX}$, though it induces a map $\hat{F}_{\cX, \frs}^Z$ on the hat version.
Let
\[
\hat{F}_{\cX}^Z := \sum_{\frs \in \Spin^c(X)} \hat{F}_{\cX,\frs}^Z.
\]
It is not obvious that the maps $F^J_{\cX}$ and $\hat{F}^Z_{\cX}$ agree.

The first author and Marengon~\cite{JMComputeCobordismMaps} made some steps
towards computing the maps $F^J_{\cX}$ when $\cX$ is an elementary link
cobordism, though most of the computational results from
\cite{JMComputeCobordismMaps}  are still in terms of the
Honda--Kazez--Mati\'c gluing map, and hence it is challenging to directly
compare the maps $F^J_{\cX}$ and $\hat{F}^Z_{\cX}$. Nonetheless, combining
several results from \cite{JMComputeCobordismMaps} with the results of this
paper, we are able to prove the following:

\begin{thm}\label{thm:cobordismmapsagree}
Given a decorated link cobordism $\cX$, we have $F_{\cX}^J = \hat{F}_{\cX}^Z$.
\end{thm}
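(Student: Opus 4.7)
My plan is to reduce to elementary link cobordisms, and then compare both constructions handle-by-handle using the diagrammatic formulas for the contact gluing map developed in this paper. Both $F_\cX^J$ and $\hat{F}_\cX^Z$ are functorial under composition of decorated link cobordisms (the former because the sutured cobordism functor is, and because the assignment $\cX \mapsto \cW(\cX)$ respects composition up to the equivalence relation on contact structures; the latter by construction in \cite{ZemCFLTQFT}). So it suffices to fix a generating set of elementary decorated link cobordisms -- births and deaths of unknotted arcs, saddles (band surgeries) compatible with the dividing set, 4-dimensional handle attachments in the complement of the surface, isotopies/diffeomorphisms of the triple $(X, S, \cA)$, and stabilizations of the basepoint configuration on a fixed link -- and verify the equality for each.

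For the purely 4-dimensional handles disjoint from a collar of $S$, both maps are defined as the ordinary Ozsváth--Szabó 4-dimensional handle maps applied to a diagram of the link complement, so the statement is tautological. For isotopies and diffeomorphisms of $\cX$, both sides reduce to the naturality map on $\SFH$ via \cite{JTNaturality}, so agreement follows from Theorem~\ref{thm:invarianceofcontacthandlemap} together with the composition law of Proposition~\ref{prop:compositionlaw}. For the basepoint-stabilization cobordisms (where one inserts or removes a $w$-$z$ pair on a link component via a short decorated bigon), the associated sutured cobordism $\cW(\cX)$ is a Morse-type contact handle of index $0$ or $1$ together with a canceling $4$-dimensional handle; by Proposition~\ref{prop:computemorsetypehandlemap} and the model computations in Section~\ref{sec:contacthandlemap}, the resulting map agrees with Zemke's free and quasi-stabilization formulas from \cite{ZemCFLTQFT}.

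The substantive steps are the saddle and the birth/death cobordisms. For a birth or death of an unknot with a single dividing arc, the associated $\cW(\cX)$ consists of a Morse-type contact $0$- or $3$-handle together with a $4$-dimensional $1$- or $3$-handle; applying Proposition~\ref{prop:computemorsetypehandlemap} and the formula for $C_{h^0}$ or $C_{h^3}$ from Section~\ref{sec:contacthandlemap}, one reads off the same formula (up to the canonical identification $\SFH(D^2\times I, S^1\times\{0\})\iso\bF_2$) as the birth/death map in \cite{ZemCFLTQFT}. For a band cobordism, the sutured cobordism $\cW(\cX)$ is the composition of a Morse-type contact $2$-handle (corresponding to the band) with a single $4$-dimensional $2$-handle attached along the resulting curve; Proposition~\ref{prop:computemorsetypehandlemap} computes the contact $2$-handle map as an intersection-point multiplication by $c$, and Theorem~\ref{thm:trianglemapiscobmap} identifies the resulting $4$-dimensional $2$-handle map with a holomorphic triangle count. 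Composing and simplifying via Proposition~\ref{prop:1handletrianglecount} and the computations in \cite{JMComputeCobordismMaps}*{Sections~5--6}, the output is precisely Zemke's band-map formula.

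The main obstacle is the band case: one must match our triangle description of $F_{\cW(\cX)}$ (which a priori lives on a stabilized diagram doubled along $R_\bs$, following Section~\ref{sec:glueddiagram}) with Zemke's band map, which is defined directly on an unstabilized Heegaard diagram adapted to the band. The bridge is provided by the destabilization arguments of Section~\ref{sec:trianglecobordisms}: after handleslides and compound destabilization (Proposition~\ref{prop:compoundstabilizationmap}), the amalgamation map $\Psi$ of Proposition~\ref{prop:gluingmapformula} simplifies to the tautological insertion of the intersection point $c$ that Zemke's band map uses. Once this translation is carried out, the equality $F_\cX^J = \hat{F}_\cX^Z$ follows on elementary pieces, and functoriality concludes the proof.
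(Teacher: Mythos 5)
Your overall architecture matches the paper's: decompose $\cX$ into elementary decorated link cobordisms, invoke the composition law, and observe that the only substantive cases are the saddle and the birth/death/stabilization pieces, with the saddle ultimately reduced to Theorem~\ref{thm:trianglemapiscobmap}. However, your treatment of the saddle case has two genuine gaps. First, Theorem~\ref{thm:trianglemapiscobmap} computes the map of the sutured cobordism $\cW_{\as,\bs,\bs'}$ equipped with the contact structure $\xi_{\as,\bs,\bs'}$ built by convex gluing in Section~\ref{sec:xi-abgconstruction}, whereas $F^J$ of the decorated cobordism $(X_{\as,\bs,\bs'},S_{\as,\bs,\bs'},\cA_{\as,\bs,\bs'})$ is defined using the $S^1$-invariant contact structure on the circle bundle over $S_{\as,\bs,\bs'}$ determined by the dividing set. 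These must be shown to be equivalent before the theorem applies; this is exactly Proposition~\ref{prop:contactstructuresagree}, a nontrivial convex-decomposition argument that your proposal never addresses. Second, even granting the triangle description, you must identify \emph{which} top-graded generator ($\Theta^{\ws}_{\bs,\bs'}$ versus $\Theta^{\zs}_{\bs,\bs'}$) is inserted for each of the two dividing sets $\cA_{\ws}$, $\cA_{\zs}$ on the saddle. In the paper this is Lemma~\ref{lem:computesimplecobordismmaps}: one fills in the $Y_{\bs,\bs'}$ end with a cobordism of disks carrying an extra dividing arc, and pins down its $F^J$-image by a nonvanishing-plus-grading argument (via \cite{JMComputeCobordismMaps}*{Theorem~5.18}) applied to a stabilization cobordism on the unknot. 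Your vague appeal to ``computations in \cite{JMComputeCobordismMaps}*{Sections~5--6}'' does not supply this step, and getting the $\ws$/$\zs$ labels backwards would make the two constructions disagree.

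Two further points. Your topological description of the saddle complement as ``a Morse-type contact 2-handle composed with a single 4-dimensional 2-handle'' is asserted without justification and is not how the paper proceeds: the paper instead identifies $\cW(\cX_{\ws})$ as the triangle cobordism $\cW_{\as,\bs,\bs'}$ (whose special part consists of $|\bs|$ two-handles, by Lemma~\ref{lem:Wabgspecialcobordism2handle}, not one) precomposed with the disk-filling cobordism $(X_{\bs},S_0)$. Similarly, your separate case analysis for basepoint stabilizations via Proposition~\ref{prop:computemorsetypehandlemap} and a match with the quasi-stabilization formula is an unverified model computation; the paper sidesteps it entirely by Remark~\ref{rem:stabilization=composition}, writing a stabilization as a composition of a 0-handle, a 1-handle, and a saddle (resp.\ a saddle, a 3-handle, and a 4-handle), so that no independent verification is needed once the saddle case is done. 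You should either carry out your model computations or adopt the paper's reductions; as written, the proof is incomplete at the points named above.
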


\subsection{Elementary link cobordisms}

In this section, we provide the following definition:
\begin{define}\label{def:elementarycobordisms}
We say a decorated link cobordism $\cX = (X,S,\cA) \colon (Y_1,\bL_1)\to (Y_2,\bL_2)$
is an \emph{elementary link cobordism} if one of the following is satisfied:
\begin{enumerate}
\item (\emph{Identity cobordism}) $(Y_1,\bL_1) = (Y_2,\bL_2) = (Y,\bL)$ and
  $(X,S,\cA) = (I \times Y, I \times L, I \times \ve{p})$, where $\ve{p} \subset L$
  consists of exactly one point in each component of $L \setminus (\ws\cup \zs)$.
\item (\emph{1-, 2-, or 3-handle attachment}) The cobordism $(X,S,\cA)$ is obtained
  by attaching a 4-dimensional 1-, 2-, or 3-handle, with framed attaching sphere disjoint from $\bL_1$.
\item (\emph{0- or 4-handle attachment}) The cobordism $\cX$ is obtained by attaching a
  4-dimensional 0-handle or 4-handle, viewed as a smooth 4-ball,
  that contains a standard disk intersecting the boundary of the 4-ball in an unknot,
  with dividing set consisting of a single arc on the disk.
\item (\emph{Saddle cobordism}) The cobordism $\cX$ has underlying 4-manifold $X = I \times Y$,
  and a surface $S$ such that projection to $I$ induces a Morse function that has a single index 1 critical point
  that occurs in a $\ws$-region or a $\zs$-region,
  and such that the dividing arcs all travel from $\bL_1$ to $\bL_2$.
\item (\emph{Stabilization cobordism}) The cobordism $\cX$ has underlying 4-manifold $X = I \times Y$
  and surface $S = I \times L$. Furthermore, exactly one arc of $\cA$ goes from $\bL_1$
  to $\bL_1$ or from $\bL_2$ to $\bL_2$. All other arcs are of the form $I \times \{p\}$
  for various $p \in L$. A stabilization cobordism is \emph{positive} if it adds
  two basepoints, and is \emph{negative} if it removes two basepoints.
\end{enumerate}
\end{define}

A schematic of a saddle cobordism can be found in Figure~\ref{fig::36}.
Examples of stabilization cobordisms are shown in Figure~\ref{fig::55}.

\begin{figure}[ht!]
 \centering
 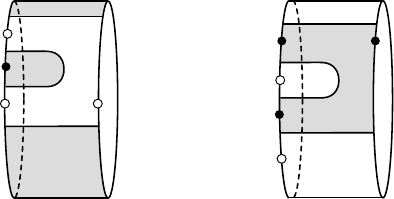
 \caption{Two examples of stabilization cobordisms. The two surfaces are each
 of the form $I \times L$, and sit inside $I \times Y$. The shaded regions
 are the $\ws$ regions, and the unshaded regions are the $\zs$
 regions.\label{fig::55}}
\end{figure}

It is straightforward to see that an arbitrary link cobordism $\cX$, such
that $\pi_0(S) \to \pi_0(X)$ is a surjection, can be decomposed into a
sequence of link cobordisms that are each diffeomorphic to one of the
elementary link cobordisms in the above list. We remark that the above list
is over-complete, in the following sense:

\begin{rem}\label{rem:stabilization=composition}
An elementary positive stabilization cobordism can be written as a
composition of a 0-handle cobordism (adding an unknot with two basepoints)
followed by a 1-handle, followed by a saddle cobordism. Similarly an
elementary negative stabilization cobordism can be written as a composition
of a saddle cobordism, followed by a 3-handle and a 4-handle.
\end{rem}

\subsection{Link triple diagrams and contact structures}

\begin{define}
A \emph{link triple diagram}
\[
(\S,\as,\bs,\gs,\ve{w},\zs)
\]
is a Heegaard triple $(\S,\as,\bs,\gs)$ with $2(n-g(\S) + 1)$ basepoints
$\ws \cup \zs \subset \S$, where $n = |\as| = |\bs| = |\gs|$,
such that each component of $\S \setminus \ve{\tau}$ for $\ve{\tau} \in \{\as,\bs,\gs\}$
is planar and contains exactly one $\ve{w}$ basepoint and exactly one $\zs$ basepoint.
\end{define}

Given a link triple diagram $\cT = (\S,\as,\bs,\gs,\ve{w},\ve{z})$,
we can construct a decorated link cobordism
\[
\cX_\cT = (X_\cT, S_\cT, \cA_\cT),
\]
as follows.
The 4-manifold $X_\cT$ is constructed as in \cite{OSTriangles}, by the formula
\[
X_\cT := (\Delta \times \S) \cup (e_{\as}\times U_{\as})
\cup (e_{\bs} \times U_{\bs}) \cup (e_{\gs} \times U_{\gs})/{\sim}.
\]
Here, the handlebody $U_{\ts}$ for $\ts \in \{\,\as,\bs,\gs\,\}$ is obtained by
attaching 3-dimensional 2-handles to $\S \times I$ along $\ts \times \{0\}$,
and filling in the resulting sphere boundary components with 3-dimensional 3-handles.

We obtain the surface $S_\cT$ as follows.
Pick Morse functions $f_{\as}$, $f_{\bs}$, and $f_{\gs}$ on $U_{\as}$, $U_{\bs}$, and $U_{\gs}$
that induce the attaching curves $\as$, $\bs$, and $\gs$, respectively.
By concatenating the ascending flow lines passing through the basepoints in $\ve{w}$ and $\ve{z}$,
we get a collection of $|\ve{w}|$ arcs $K_{\as}$, $K_{\bs}$, and $K_{\gs}$
in each of $U_{\as}$, $U_{\bs}$, and $U_{\gs}$, respectively.
Each arc has exactly one endpoint in $\ve{w}$ and one endpoint in $\ve{z}$,
and we orient it from $\ve{w}$ to $\ve{z}$.
Then the surface $S_\cT$ is defined as
\[
S_\cT := (\Delta \times (-\ve{w} \cup \ve{z})) \cup (e_{\as} \times K_{\as})
\cup (e_{\bs} \times K_{\bs}) \cup (e_{\gs} \times K_{\gs}).
\]

Finally, we describe the dividing set $\cA_\cT$ on $S_\cT$.
For $\ts \in \{\as, \bs, \gs\}$, let $p_{\ts} \subset K_{\ts}$
denote a collection of points obtained by picking a
single point in each arc of $K_{\ts}$. Then
\[
\cA_\cT := (e_{\as} \times p_{\as}) \cup (e_{\bs} \times p_{\bs}) \cup (e_{\gs} \times p_{\gs})
\]
is a dividing set on $S_\cT$.

Given a link triple diagram $\cT = (\S,\as,\bs,\gs,\ws,\zs)$,
we naturally obtain a sutured Heegaard triple $\cT_0 = (\S_0,\as,\bs,\gs)$, where
$\S_0 = \S \setminus N(\ws \cup \zs)$.
Let $\cW_{\cT_0} = (W_{\cT_0}, Z_{\cT_0}, [\xi_{\cT_0}])$
denote the associated sutured cobordism.

\begin{prop}\label{prop:contactstructuresagree}
Let $\cT = (\S, \as, \bs, \gs, \ws, \zs)$ be a link triple diagram,
and $\cT_0 = (\S_0, \as, \bs, \gs)$ the corresponding sutured triple diagram.
Then $Z_{\cT_0}$ is the unit normal circle bundle of $S_\cT$,
and there is a projection map $\pi \colon Z_{\cT_0} \to S_\cT$ with fiber $S^1$.
The contact structure $\xi_{\cT_0}$ on $Z_{\cT_0}$ described
in Section~\ref{sec:trianglecobordisms} is equivalent to the $S^1$-invariant contact structure
on $Z_{\cT_0}$ with respect to the dividing set $\cA_\cT$
and the projection map $\pi$.
\end{prop}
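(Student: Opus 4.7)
The plan is to identify the decomposition of $Z_{\cT_0}$ from Section~\ref{sec:xi-abgconstruction} with the pullback under $\pi$ of a decomposition of $S_\cT$, and then to match the constituent tight contact structures piece by piece, using the uniqueness of the tight contact structure on each product sutured manifold together with the Lutz--Honda classification of $S^1$-invariant contact structures on circle bundles with convex boundary.

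The first step is to verify that $W_{\cT_0} = X_\cT \setminus N(S_\cT)$: since $\Sigma_0 = \Sigma \setminus N(\ws \cup \zs)$ and $U_\ts^0 = U_\ts \setminus N(K_\ts)$ (using that a tubular neighborhood of the arc $K_\ts$ in the closed handlebody $U_\ts$ meets the filled 3-balls in standard disks), the pieces $\Delta \times \Sigma_0$ and $e_\ts \times U_\ts^0$ are precisely the complements of $N(\Delta \times (\ws \cup \zs))$ and $N(e_\ts \times K_\ts)$ in $\Delta \times \Sigma$ and $e_\ts \times U_\ts$. Hence $Z_{\cT_0} = \d N(S_\cT) \cap \d W_{\cT_0}$ is the unit normal circle bundle of $S_\cT$ in $X_\cT$, and the decomposition
\[
Z_{\cT_0} = (\d \Sigma_0 \times \Delta) \cup \bigcup_\ts (R_\ts \times e_\ts)
\]
is the pullback under $\pi$ of the decomposition $S_\cT = \Delta \times (\ws \cup \zs) \cup \bigcup_\ts e_\ts \times K_\ts$. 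Each connected component of each piece is a trivialized circle bundle $S^1 \times F$ over a component $F$ of the corresponding piece of $S_\cT$.

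For the contact-topological comparison, I apply the Lutz--Honda principle: on an oriented circle bundle $Z \to S$ with convex boundary, a positive contact structure is equivalent (in the sense of Definition~\ref{def:equivalence}) to the $S^1$-invariant one with dividing set $\cA \subset S$ provided the structure is tight on each piece of a fine enough product decomposition and the boundary dividing data match. By construction, $\xi_{\cT_0}$ is glued from the unique tight contact structures on the product sutured pieces $(Z_0, \xi_0)$ and $(Z_\ts, \xi_\ts)$. Under the identification above, the corresponding $S^1$-invariant pieces are the circle bundle over a component of $\Delta \times (\ws \cup \zs)$ with empty interior dividing set, and over a component of $e_\ts \times K_\ts$ with a single interior dividing arc $e_\ts \times \{p_\ts\}$. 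Each of these $S^1$-invariant pieces is tight and therefore is the unique tight contact structure with its given convex boundary dividing set; one checks by inspection that this boundary data matches what was used in defining $\xi_0$ and $\xi_\ts$ in Section~\ref{sec:xi-abgconstruction} (two longitudes of $S^1 \times \d \Delta$ winding counterclockwise in the triangle pieces, and the standard product dividing set crossed by the single interior divider in the rectangle pieces).

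Finally, I must verify that the gluing instructions match. The gluing surfaces in $Z_{\cT_0}$ lie over the shared edges of triangles and rectangles in $S_\cT$ (i.e., over portions of $e_\ts \times (\ws \cup \zs)$); on each such convex annular surface, both structures induce Legendrian boundary and a dividing set of the same slope, because the dividing set of $\xi_{\cT_0}$ was arranged to round, after corner-smoothing, to $\gamma_{\as,\bs,\gs}$, which under the identification equals the pullback of $\cA_\cT \cap \partial S_\cT$. Giroux flexibility then lets us isotope the pieces so that the gluing surfaces agree, and Honda's edge-rounding procedure identifies the two contact structures on each glued piece. The main obstacle will be the combinatorial bookkeeping: carefully tracking which circle fibers in each $R_\ts \times e_\ts$ correspond to which arcs of $K_\ts$, and which meridians of $\d \Sigma_0$ correspond to which basepoints of $\ws \cup \zs$, so that the dividing sets induced by $\xi_{\cT_0}$ on the convex gluing annuli match those predicted by the $S^1$-invariant construction from $\cA_\cT$. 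Once this identification is made, the actual contact-topological matching on each piece follows immediately from the uniqueness of tight contact structures on product sutured manifolds.
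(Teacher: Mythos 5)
Your overall architecture coincides with the paper's: cut $Z_{\cT_0}\cong S_\cT\times S^1$ along surfaces lying over the edges of the decomposition of $S_\cT$, observe that the resulting pieces are product disk decomposable sutured manifolds carrying unique tight contact structures, and conclude by matching them with the pieces $(Z_0,\xi_0)$, $(Z_{\ts},\xi_{\ts})$ used to build $\xi_{\as,\bs,\gs}$. However, there is a genuine gap at the step you dispatch with ``one checks by inspection that this boundary data matches'' and ``both structures induce \dots a dividing set of the same slope.'' The dividing set on a convex annulus in the interior is \emph{not} determined by the boundary dividing data of the ambient contact manifold: a priori it could contain closed components or boundary-parallel arcs, and if it did, cutting and rounding would not produce the sutured manifolds $(Z_0,\g_0)$ and $(Z_{\ts},\g_{\ts})$ at all. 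Computing these interior dividing sets is the technical heart of the paper's proof: one first observes that the natural annuli $e_{\ts}\times\{x\}\times S^1$ cannot even be made convex with Legendrian boundary, because their boundaries are disjoint from the dividing set on $\d Z_{\cT_0}$, so one must perform finger moves to obtain annuli $\ve{A}$ meeting the dividing set; one then pins down the dividing set on each $A\in\ve{A}$ by embedding a neighborhood into an explicit $e_{\ts}$-invariant model (built from a concrete $S^1$-invariant contact form $\b+f\,d\theta$) and comparing $A$ to $e_{\ts}\times s$ via a convex-disk replacement and edge rounding. None of this is ``combinatorial bookkeeping''; it is contact-geometric input without which the decomposition argument does not close.

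Two smaller points. First, the ``Lutz--Honda principle'' you invoke is stated as a recognition criterion for $S^1$-invariant structures; the actual Lutz--Honda result used in the paper is only the existence and uniqueness up to isotopy of the $S^1$-invariant contact structure with prescribed dividing set, so your criterion is essentially a restatement of what must be proved. Second, before applying uniqueness of tight structures on the product disk decomposable pieces, you need to know that the restriction of the $S^1$-invariant structure to each piece is tight; in the paper this follows from global tightness of $\xi_{S^1}$, which holds because the dividing set $\cA_\cT$ has no contractible components. That justification should be made explicit rather than assumed.
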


\begin{proof}
Let us write $\xi_{S^1}$ for the $S^1$-invariant contact structure on the unit
normal circle bundle $SN(S_{\cT})$ of $S_{\cT}$.
The proof of the proposition will be to describe a convex decomposition of
$(SN(S_{\cT}),\xi_{S^1})$ into the disjoint union of the contact manifolds
$(Z_0,\xi_0)$, $(Z_{\as}, \xi_{\as})$, $(Z_{\bs}, \xi_{\bs})$, and $(Z_{\gs}, \xi_{\gs})$,
whose union is $(Z_{\cT_0}, \xi_{\cT_0})$, by definition.
Note that $S_{\cT}$ has no closed components, so $SN(S_{\cT})$ is diffeomorphic
to $S_{\cT}\times S^1$, with the map $\pi$ given by projection onto the first
factor.

We will decompose $SN(S_{\cT}) \approx S_{\cT} \times S^1$ along $3|\d \S_0 |$ convex annuli. To
construct the annuli, it is convenient to view $\Delta$ as a smooth 2-disk,
and view the edges $e_{\as},$ $e_{\bs}$, and $e_{\gs}$ as being closed,
disjoint subintervals of $\d \Delta$. We let $\ve{A}_0$ denote the union of
annuli of the form $e_{\ts}\times \{w\}\times S^1$ and $e_{\ts}\times
\{z\}\times S^1$ inside $S_{\cT}\times S^1$ for $\ts \in \{\as, \bs, \gs\}$. Note that we cannot
decompose along the annuli in $\ve{A}_0$, since their boundaries are disjoint
from the dividing set, and hence we cannot use Legendrian
realization to ensure that $\d \ve{A}_0$ is Legendrian and $\ve{A}_0$ is convex.
Instead, we perform a finger move along each boundary component of each annulus in $\ve{A}_0$
until it intersects the dividing set. Let $\ve{A}$ denote the resulting collection of annuli.
The configuration of the annuli $\ve{A}_0$ and $\ve{A}$ are shown in Figure~\ref{fig::62}.

\begin{figure}[ht!]
 \centering
 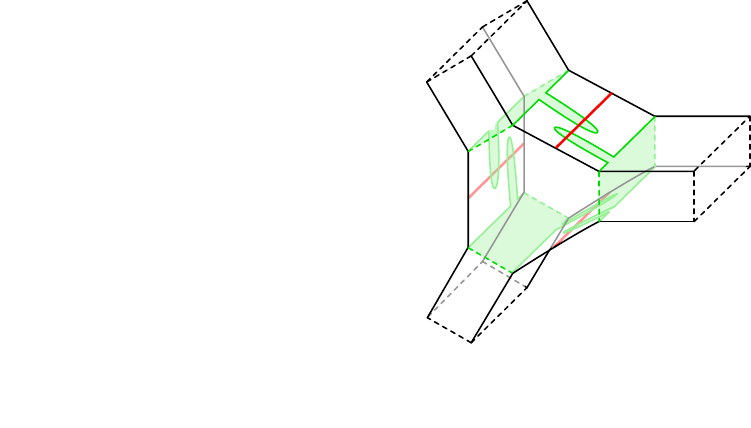
 \caption{The annuli $\ve{A}_0$ in $S_{\cT}\times S^1$ (left), and
 the annuli $\ve{A}$ obtained by performing finger moves along the boundaries
 toward the dividing set on $\d S_{\cT}\times S^1$ (right). We prove
 that the dividing set on each annulus in $\ve{A}$ is as shown in the bottom.
 The hexagonal region in the middle corresponds to a component of
 $\Delta\times \{x\}$, for a basepoint $x\in \ws\cup \zs$. The orientation of
 $\Delta\times \{x\}$ is shown, and we take the product orientation on
 $S_{\cT}\times S^1$ in the picture. \label{fig::62}}
\end{figure}

We can perturb the annuli in $\ve{A}$ so that they are convex with Legendrian
boundary (note this requires using Legendrian realization along $\d
S_{\cT} \times S^1$, and hence involves replacing $\xi_{S^1}$ with an
equivalent contact structure that is no longer $S^1$-invariant near the
boundary).

We now claim that the dividing sets on the annuli in $\ve{A}$ are as in the
bottom of Figure~\ref{fig::62}, consisting of two arcs that go from one
boundary component of the annulus to the other, and which do not wind around
the annulus. To see this, we first claim that, for an appropriately chosen
$S^1$-invariant contact 1-form, we can embed a neighborhood of each annulus
inside an $e_{\ts}$-invariant contact structure on $e_{\ts} \times I \times
S^1$ that has dividing set on $(\d e_{\ts}) \times I \times S^1$ equal to
$(\d e_{\ts}) \times \{\tfrac{1}{2}\} \times S^1$. To do this, we recall that
an $S^1$-invariant contact 1-form on $S_{\cT} \times S^1$ can be
defined as $\b + f \cdot d\theta$, where $\b$ is a 1-form on
$S_{\cT}$ and $f \colon S_{\cT}\to \R$ is a function that is zero
exactly on $\cA_{\cT}$. It is straightforward to write down conditions for
such a 1-form to be a contact form on $S_{\cT} \times S^1$. Since the
contact form $\b + f \cdot d\theta$ is essentially determined by the
characteristic foliation $\ker \b$ on $S_{\cT} \times \{p\}$, we
will focus on constructing a singular foliation with the appropriate dividing
set that is invariant under translation along a non-vanishing vector field
(thought of as $\d/ \d e_{\ts}$) in a neighborhood of each annulus in
$\ve{A}$. It is an elementary, though somewhat tedious, exercise to
explicitly construct an appropriate contact 1-form by picking $\b$ and $f$
appropriately, so we will leave that step to the reader.
An example of an appropriately chosen characteristic foliation on
$S_{\cT}\times \{p\}$ is shown in Figure~\ref{fig::61}.

\begin{figure}[ht!]
 \centering
 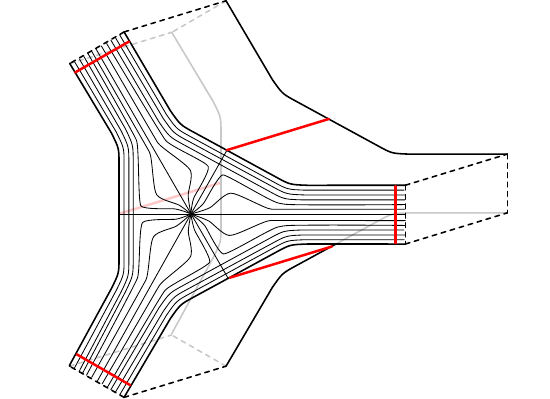
 \caption{The characteristic foliation of $S_{\cT} \times \{p\}$ of an $S^1$-invariant
 contact 1-form on $S_{\cT} \times S^1$ with dividing set $\cA_{\cT}$. \label{fig::61}}
\end{figure}

\begin{figure}[ht!]
 \centering
 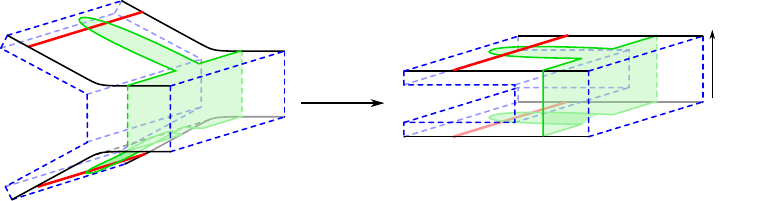
 \caption{A neighborhood of an annulus  $A \in \ve{A}$ in $S_{\cT} \times S^1$,
 which embeds into an $e_{\ts}$-invariant contact structure on
 $e_{\ts} \times I \times S^1$. \label{fig::63}}
\end{figure}

We can choose a neighborhood of an annulus in $\ve{A}$ that embeds into an
$e_{\ts}$-invariant contact structure on $e_{\ts} \times I \times S^1$
as in Figure~\ref{fig::63}.
Note that, inside $e_{\ts} \times I \times S^1$, the annulus $A$ pictured on
the right of Figure \ref{fig::63} is isotopic, relative to its boundary, to a
surface of the form $e_{\ts} \times s$, for a Legendrian $s$ in
$(\d e_{\ts}) \times I \times S^1$. The characteristic foliation and dividing set on
$e_{\ts} \times s$ are the same as on the annulus shown in
Figure~\ref{fig::59}. Namely, the characteristic foliation consists of
horizontal leaves lying in $\{t\} \times s$, as well as two vertical
singular sets of the form $e_{\ts} \times \{p\} $, for two points $p$ in $s$.
The dividing set on $e_{\ts} \times s$ consists of two vertical arcs, as well.

Note that, if we could show that $A$ and $e_{\ts} \times s$ were isotopic
through convex surfaces, we would be done, since the dividing set on $A$
would be isotopic to the one on $e_{\ts} \times s$, which is what we are trying to show.
This is somewhat geometrically hard to prove, so we argue as follows. Perturb
$e_{\ts} \times s$ slightly, such that there is a Legendrian loop $\ell$
intersecting one of the dividing arcs twice. This does not change the isotopy
type of the dividing set. Let $D_0 \subset e_{\ts} \times s$ be the disk bounded
by $\ell$. Now take a convex disk $D$ in $e_{\ts} \times I \times S^1$
with $D \cap (e_{\ts} \times s) = \ell$.
The dividing set on $D$ consists of a single arc, since we can assume
$D$ lies in a tight contact ball. We can replace
$e_{\ts} \times s$ by $((e_{\ts} \times s) \setminus D_0) \cup D$, and then
rounding along the Legendrian corner $\ell$. When we do this, we do not change
the isotopy type of the dividing set, and we can move $e_{\ts} \times s$ to
$A$ via a sequence of such moves. The move is shown in Figure~\ref{fig::64}.
This establishes that the annulus $A$ has a dividing set isotopic to the one
shown in Figure~\ref{fig::62}.

\begin{figure}[ht!]
  \centering
  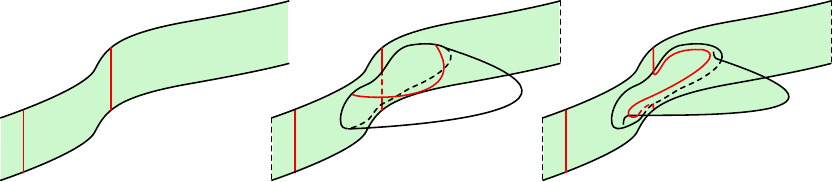
  \caption{Attaching a convex disk to move the annulus $e_{\ts} \times
  s \subset e_{\ts} \times I \times S^1$ to the position of an annulus $A \in
  \ve{A}$. On the left is (a $C^0$-small perturbation of) the annulus
  $e_{\ts} \times s$. In the middle is a convex disk with Legendrian boundary
  $\ell$ with $\tb(\ell) = -1$. On the right is a convex annulus that we
  can take to be $A$, obtained by edge rounding along $\ell$.\label{fig::64}}
\end{figure}

Having determined the dividing sets along the convex annuli in $\ve{A}$,
we can cut along the annuli in $\ve{A}$, then round the Legendrian corners.
The dividing set on the sutured manifold corresponding to $\Delta \times \d \S$
is shown in Figure~\ref{fig::65}. As a sutured manifold, this is the
same as $(Z_0,\g_0)$. Similarly, it is easy to see that rounding the
Legendrian corners on the other 3 pieces yields the sutured manifolds
$(Z_{\as},\g_{\as})$, $(Z_{\bs},\g_{\bs})$, and $(Z_{\gs}, \g_{\gs})$.
It follows that $SN(S_{\ts}) \approx Z_{\cT_0}$.
On the other hand, we note that $\xi_{S^1}$ is tight, since
the dividing set on $S_{\cT}$ contains no contractible components.
It follows that $\xi_{S^1}$ restricts to tight contact structures on
$(Z_0,\g_0)$, $(Z_{\as}, \g_{\as})$, $(Z_{\bs}, \g_{\bs})$, and $(Z_{\gs}, \g_{\gs})$.
However, each of these four sutured manifolds are product disk decomposable,
so, up to equivalence, there is a unique tight contact structure on each one.
Hence the restrictions of $\xi_{S^1}$  are equivalent to $\xi_0$,
$\xi_{\as}$, $\xi_{\bs}$, and $\xi_{\gs}$, respectively. Since $\xi_{\as,\bs,\gs}$ is
constructed by gluing together $\xi_0$, $\xi_{\as}$, $\xi_{\bs}$, and
$\xi_{\gs}$,  it follows that $\xi_{S^1}$ and $\xi_{\as,\bs,\gs}$ are
equivalent on~$Z_{\cT_0}$.
\end{proof}

\begin{figure}[ht!]
  \centering
  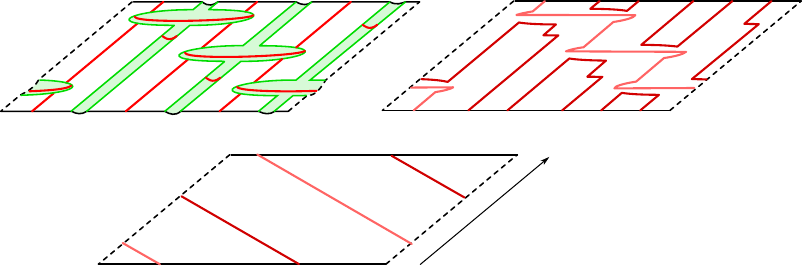
  \caption{Rounding corners after cutting $S_{\cT}\times S^1$ along
  the annuli in $\ve{A}$. Shown is the dividing set on the boundary of a
  solid torus component of $(S_{\cT}\times S^1)\setminus \ve{A}$
  corresponding to $\Delta\times \{x\}\times S^1$, for a basepoint $x\in
  \ws\cup \zs$. On the top left, we show the dividing before rounding the
  Legendrian corners. On the top right, we show the result of rounding corners.
  On the bottom, we show the result of isotoping the dividing set. We view
  $Z_0$ as being ``below'' the surface shown.\label{fig::65}}
\end{figure}

\subsection{Saddle cobordisms and link triple diagrams}

In this section, we review the construction of saddle cobordism
maps~\cite{ZemCFLTQFT}*{Section~6}. As an important step towards proving
Theorem~\ref{thm:cobordismmapsagree}, we show that the maps from
\cite{ZemCFLTQFT} and \cite{JCob} agree for such cobordisms.

\begin{define}
Suppose that $Y$ is a 3-manifold containing an oriented multi-based link $\bL=(L,\ws,\zs)$.
We say that an oriented square $B\subset Y$ is a \emph{$\b$-band}
for a multi-based link $\bL$ in $Y$ if $B$  is smoothly embedded in $Y$,
it is identified with $[-1,1] \times [-1,1]$, and
\begin{enumerate}
\item $B \cap L=[-1,1]\times \{-1, 1\}$,
\item the boundary orientation of $B$ agrees with the orientation of $-L$,
\item $B \cap (\ws \cup \zs)=\emptyset$, and both ends of $B$ are in regions of
$L \setminus (\ws \cup \zs)$ that go from $\zs$ to $\ws$.
\end{enumerate}
\end{define}

Note that if $B$ is a $\b$-band for the link $\bL$ in $Y$, then
there is a well-defined multi-based link
\[
\bL(B) = (L(B),\ve{w},\ve{z})
\]
obtained by band surgery on $B$.
The following is \cite{ZemCFLTQFT}*{Definition~6.4}:

\begin{define}
We say the link triple diagram
\[
(\S,\a_1,\dots, \a_n,\b_1,\dots, \b_n,\b'_1,\dots, \b_n',\ws,\zs),
\]
is \emph{subordinate to the $\b$-band} $B$ if the following hold:
\begin{enumerate}
\item $(\S_0,\a_1,\dots, \a_{n}, \b_1,\dots, \b_{n-1})$
is a diagram for the sutured manifold $Y(\bL)\setminus N(B)$,
where $\S_0 = \S \setminus N(\ws \cup \zs)$,
\item $(\S,\a_1,\dots, \a_n,\b_1,\dots, \b_n,\ws,\zs)$ is a diagram for $(Y,\bL)$,
\item the curves $\b_1',\dots, \b_{n-1}'$ are small Hamiltonian translates of the curves
$\b_1,\dots, \b_{n-1}$,
\item the curve $\b_{n}'$ is induced by the band $B$,
and  $(\S,\a_1,\dots, \a_n,\b_1',\dots, \b_n', \ws, \zs)$
is a diagram for $(Y,\bL(B))$.
\end{enumerate}
\end{define}

Notice that, if we ignore the basepoints $\ws$ and $\zs$,
then the curve $\b_n'$ is related to $\b_n$
by a sequence of handle slides and isotopies. It follows that the 4-manifold $X_{\as,\bs,\bs'}$
induced by a Heegaard triple subordinate to a $\b$-band is diffeomorphic to
$I \times Y$ with a neighborhood of $\{\tfrac{1}{2}\} \times U_{\bs}$ removed.

The band $B$ induces a saddle cobordism $S(B) \subset I \times Y$ from $L$ to $L(B)$.
The surface $S(B)$ is obtained by rounding the corners of the surface
\[
S_0(B) := \left([0,\tfrac{1}{2}] \times L\right) \cup
\left(\{\tfrac{1}{2}\} \times B \right) \cup \left([\tfrac{1}{2}, 1] \times L(B) \right).
\]
We note that $S(B)$ has two natural choices of dividing sets.
To construct them, pick points $\ve{p} \subset L \setminus (\ws \cup \zs)$
and $\ve{q} \subset L(B) \setminus (\ws \cup \zs)$, such that the following hold:
\begin{enumerate}
\item Each component of $L \setminus (\ws \cup \zs)$ contains exactly one point of $\ve{p}$,
and each component of $L(B) \setminus (\ws \cup \zs)$ contains exactly one point of $\ve{q}$.
\item If $C \subset L \setminus (\ve{w} \cup \ve{z})$ is a component disjoint from $B$,
then $\ve{p} \cap C = \ve{q} \cap C$.
\item If $C$ is a component of $L\setminus (\ve{w} \cup \ve{z})$ that intersects $B$,
and $\{p\} = C\cap \ve{p}$, then $p \in B$. Similarly, if $C'$ is a component of
$L(B) \setminus (\ve{w} \cup \ve{z})$ that intersects $B$, and $\{q\} = C'\cap \ve{q}$, then $q \in B$.
\end{enumerate}

A dividing $\cA_0$ on $S(B) \setminus \left(\{\tfrac{1}{2}\} \times B \right)$
is then specified by the equation
\[
\cA_0 := \left([0,\tfrac{1}{2}] \times \ve{p} \right) \cup
\left([\tfrac{1}{2}, 1] \times \ve{q} \right).
\]
There are two natural ways to extend the dividing set $\cA_0$ to $B$.
We write $\cA_{\ws}$ and $\cA_{\zs}$ for the two possible extensions, as shown in Figure~\ref{fig::36}.

\begin{figure}[ht!]
 \centering
 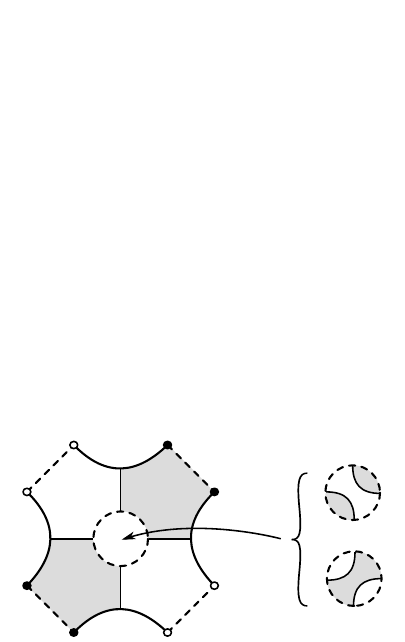
 \caption{A portion of the surface $S(B)\subset I \times Y$, as well as the two dividing sets
 $\cA_{\ws}$ and $\cA_{\zs}$ on $S(B)$. The $\ws$ basepoints are shown as solid dots,
 while the $\zs$ basepoints are open dots. The $\ws$ regions are shown as shaded,
 the $\zs$ regions are unshaded. \label{fig::36}}
\end{figure}

\begin{lem}\label{lem:topdegreegenerators}
If $(\S,\as,\bs,\bs',\ws,\zs)$ is a triple subordinate to a $\b$-band $B$,
then $(\S,\bs,\bs',\ws,\zs)$ represents an unlink $\bU$ in $(S^1\times S^2)^{\# g(\S)}$,
where all components of $\bU$ have two basepoints, except for one component
that has four basepoints. With respect to each of the Maslov gradings $\gr_{\ws}$ and $\gr_{\zs}$,
there is a top-graded generator of $\hat{\HFL}(\S, \bs, \bs',\ws,\zs)$,
for which we write $\Theta_{\bs,\bs'}^{\ws}$ and $\Theta_{\bs,\bs'}^{\zs}$.
\end{lem}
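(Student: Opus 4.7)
The plan is to verify the topological identification in two steps---first the ambient 3-manifold, then the link itself---and then derive the top-graded Floer generators from a computation on the resulting standard diagram.

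First, I would show that $(\S, \bs, \bs')$ is a Heegaard diagram for $\#^{g(\S)}(S^1 \times S^2)$. Since $\b_1', \dots, \b_{n-1}'$ are small Hamiltonian isotopes of $\b_1, \dots, \b_{n-1}$ and the band $B$ lies in the $\b$-handlebody $U_{\bs}$, the curve $\b_n'$ is obtained from $\b_n$ by a band surgery realized inside $U_{\bs}$. Band surgery of a compressing curve along a band that sits in the corresponding handlebody amounts to a sequence of handleslides at the level of compressing disks, so $\bs'$ is related to $\bs$ by handleslides and isotopies, making $(\S, \bs, \bs')$ the standard Heegaard diagram for $\#^{g(\S)}(S^1 \times S^2)$.

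Next, I would identify the multi-based link $\bU$ represented by $(\S, \bs, \bs', \ws, \zs)$. Away from a neighborhood of $B$, the curves $\bs$ and $\bs'$ coincide up to small isotopy, so each pair $(w, z) \in \ws \times \zs$ in a matched region gives a small unknotted component bounding a disjoint embedded disk and carrying exactly those two basepoints. Near $B$, the four basepoints flanking the two ends of the band (two inherited from $\bL$ and two from $\bL(B)$) lie on a single component: its arcs in $U_{\bs}$ follow the $\bL$-configuration, while its arcs in $U_{\bs'}$ follow the $\bL(B)$-configuration. Inspection of a local model of the band surgery then shows that this component is an unknot bounding an embedded disk in one of the summands, disjoint from the other components, confirming that $\bU$ is the claimed unlink with exactly one 4-basepoint component.

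Finally, for the grading statement, I would invoke the K\"unneth-type splitting of $\hat{\HFL}$ under disjoint union of links together with the isomorphism $\hat{\HFL}(\bU) \iso \SFH(Y(\bU))$. Since each component of $\bU$ is a multi-based unknot, whose link Floer complex is (as a bigraded vector space) a tensor power of a two-dimensional graded piece that admits a top-graded generator with respect to each of $\gr_{\ws}$ and $\gr_{\zs}$, tensoring these generators with the canonical top-graded class of $\hat{\HF}(\#^{g(\S)}(S^1 \times S^2))$ yields the desired elements $\Theta_{\bs,\bs'}^{\ws}$ and $\Theta_{\bs,\bs'}^{\zs}$. The main obstacle is the middle step: confirming that the four basepoints near $B$ really assemble into a single unknotted component, which requires a careful local analysis of how $\b_n$ and $\b_n'$ differ inside the band region.
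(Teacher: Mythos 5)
Your argument is correct and follows essentially the same route as the paper: both reduce to the observation that the $n-1$ Hamiltonian-translate pairs $\b_i,\b_i'$ contribute doubly based unknots while the pair $\b_n,\b_n'$ contributes a single 4-basepoint unknot, and then conclude via the tensor decomposition of $\hat{\HFL}$ into pieces each carrying unique top $\gr_{\ws}$- and $\gr_{\zs}$-graded generators. The paper phrases the reduction as surgering out the $n-1$ two-spheres to land in disjoint copies of $S^3$ and then re-attaching 1-handles (which preserve top-graded generators), whereas you identify the unlink directly and invoke the K\"unneth splitting, but this is only a cosmetic difference; the local analysis near the band that you flag as the delicate point is treated with the same brevity in the paper.
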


\begin{proof}
For $i \in \{\, 1, \dots, n-1 \,\}$, the curves $\b_i$ and $\b_i'$
are Hamiltonian translates of each other, and hence determine a 2-sphere in the manifold
$U_{\bs} \cup_{\S} U_{\bs'}$. After surgering all of these out,
we are left with $|\ws| - 2$ copies of $S^3$, each containing a doubly based unknot,
as well as one copy of $S^3$ with an unknot containing four basepoints.
After surgering all of these 2-spheres out, it is easy to see $\hat{\HFL}$ is generated by two elements,
one of which is in $(\gr_{\ws}, \gr_{\zs})$-grading $(+\tfrac{1}{2}, -\tfrac{1}{2})$,
and one of which is in grading $(-\tfrac{1}{2}, +\tfrac{1}{2})$.
The effect of undoing the surgeries we did on the 2-spheres corresponds to
adding back in a collection of 1-handles, which clearly preserves the property
of having a top $\gr_{\ws}$-graded element, and a top $\gr_{\zs}$-graded element.
\end{proof}

\begin{lem}\label{lem:computesimplecobordismmaps}
Consider the link cobordism $\cX=(X,S,\cA)$ from the empty set to an unlink
in $(S^1\times S^2)^{\# k}$, constructed by setting $X$ to be a 4-dimension
genus $k$ handlebody, $S$ to be a collection of $n$ standardly embedded disks
in $X$ intersecting $\d X\iso (S^1\times S^2)^{\# k}$ in $n$ disjoint
unknots, and letting $\cA$ consist  of a single arc on each component of $S$,
except for one component of $S$, where $\cA$ consists of two arcs. Then
\[
F_{\cX}^J(1)=\hat{F}_{\cX}^Z(1) =
\begin{cases}
\Theta^{\ws}& \text{ if } \chi(S_{\ws})=\chi(S_{\zs})+1,\\
\Theta^{\zs} & \text{ if } \chi(S_{\ws})=\chi(S_{\zs})-1,
\end{cases}
\]
where $1$ denotes the generator of $\hat{\HFL}(\emptyset,\emptyset)\iso \bF_2$.
\end{lem}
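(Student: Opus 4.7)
My plan is to reduce the computation to elementary link cobordisms and exploit the agreement of various elementary handle maps already established in this paper.

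The first step is to decompose $\cX$ into elementary link cobordisms (Definition~\ref{def:elementarycobordisms}) by writing it as the composition of (i)~$n$ 0-handle cobordisms, each producing a copy of $(S^3,U)$ where $U$ is an unknot carrying two basepoints and a single dividing arc; (ii)~$(n-1)+k$ topological 4-dimensional 1-handle cobordisms, with attaching spheres disjoint from the link, which build up from $(S^3)^{\sqcup n}$ to $(S^1\times S^2)^{\# k}$; and (iii)~a single positive stabilization cobordism along the distinguished link component, adding the two extra basepoints and introducing the second dividing arc. The stabilization is performed so that the new dividing region lies in $S_{\ws}$ exactly when $\chi(S_{\ws})=\chi(S_{\zs})+1$, and in $S_{\zs}$ in the opposite case. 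One verifies that this composition is diffeomorphic to $\cX$ as a decorated cobordism.

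Next, I would compute $\hat{F}_{\cX}^Z$ and $F_{\cX}^J$ on each elementary piece. For the 0-handle cobordisms, both maps are the canonical identification $\bF_2\to\hat{\HFL}(S^3,U)$ sending $1\mapsto 1$, since the image lies in the unique nonzero graded summand. For the topological 1-handle cobordisms, both maps are the ordinary sutured 1-handle maps attached to a Heegaard diagram away from the link, and they agree tautologically as in the proof of Theorem~\ref{thm:Contacthandlemaps=HKMmaps}. The sole nontrivial piece is the positive stabilization. For $\hat{F}^Z$, this is defined directly in \cite{ZemCFLTQFT} by a formula on a model stabilized diagram. For $F^J$, the associated sutured cobordism is a boundary cobordism $(I\times Y,Z,[\xi])$, where $\xi$ is an $S^1$-invariant contact structure on the normal circle bundle of the product surface, by Proposition~\ref{prop:contactstructuresagree} applied locally near the added arc. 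The gluing $\Phi_{-\xi}$ admits a contact handle decomposition consisting of a single contact 1-handle cancelling against a contact 2-handle at the new dividing arc, which is precisely a compound stabilization; Proposition~\ref{prop:compoundstabilizationmap} identifies the resulting composition with the naturality transition map from a stabilized diagram. A direct comparison on a model Heegaard diagram around the added arc shows that this agrees with the explicit $\hat{F}^Z$ formula for the stabilization.

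Composing the pieces yields a cycle in $\hat{\HFL}$ of the target unlink which is nonzero and has maximal grading with respect to $\gr_{\ws}$ when the added strip lies in $S_{\ws}$ and maximal $\gr_{\zs}$ in the opposite case. Since Lemma~\ref{lem:topdegreegenerators} ensures that the top-graded generator in the appropriate grading is unique, the image is necessarily $\Theta^{\ws}$ or $\Theta^{\zs}$ as claimed.

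The main obstacle is the identification of the sutured cobordism map for the single positive stabilization cobordism with the formula in \cite{ZemCFLTQFT}. While the 0- and 1-handle pieces agree more-or-less tautologically, the stabilization requires carefully recognizing the $S^1$-invariant contact structure on the boundary region of $\cW(\cX_{\mathrm{stab}})$ and exhibiting a contact handle decomposition whose associated map, via Proposition~\ref{prop:compoundstabilizationmap}, coincides with the naturality-based formula used in \cite{ZemCFLTQFT}. Once this identification is in place, the grading-based argument makes the conclusion automatic; the rest of the proof is routine bookkeeping.
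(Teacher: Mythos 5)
Your decomposition into elementary pieces and your treatment of the $0$- and $1$-handle cobordisms match the paper's proof. Where you diverge is on the one genuinely nontrivial piece, the stabilization cobordism $\cX_2$ that adds the two extra basepoints. You propose to compute $F^J_{\cX_2}$ directly: identify the contact structure on the relevant layer of $Z$, exhibit a cancelling contact $1$-handle/$2$-handle pair, invoke Proposition~\ref{prop:compoundstabilizationmap}, and then do a model Heegaard-diagram comparison with the quasi-stabilization formula of \cite{ZemCFLTQFT}. The paper instead argues indirectly: it observes that $\hat{\HFL}(S^3,\bU')\iso\bF_2\oplus\bF_2$ with the two generators $\Theta^{\ws}$ and $\Theta^{\zs}$ separated by the bigrading $(\gr_{\ws},\gr_{\zs})$, shows $F^J_{\cX_2}(1)\neq 0$ by a functoriality argument, and then cites \cite{JMComputeCobordismMaps}*{Theorem~5.18} to pin down the bigrading of $F^J_{\cX_2}(1)$, which forces $F^J_{\cX_2}=\hat{F}^Z_{\cX_2}$. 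The grading argument buys a very short proof at the cost of an external reference; your route would be more self-contained and would generalize beyond the hat flavor, but it is also the harder computation.

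As written, that key step is the one place where your argument has a real gap. Proposition~\ref{prop:contactstructuresagree} only identifies the $S^1$-invariant contact structure for the $Z$ of a \emph{triple diagram}; it does not directly cover the layer $Z\iso T^2\times I$ arising from a stabilization cobordism, whose dividing set on $I\times L$ contains a turnback arc (this layer is a bypass-type attachment, not a product). You would need to separately verify that this piece admits the cancelling contact $1$-/$2$-handle decomposition you describe, and then actually carry out the model computation showing the resulting map equals the quasi-stabilization map; asserting that "a direct comparison shows" agreement is precisely the content to be proved. If you prefer not to do that computation, note that your own final paragraph already contains the fix the paper uses: since the target is two-dimensional with generators distinguished by $(\gr_{\ws},\gr_{\zs})$ (Lemma~\ref{lem:topdegreegenerators}), it suffices to show $F^J_{\cX_2}(1)\neq 0$ and that it lies in the correct bigrading, the latter being supplied by \cite{JMComputeCobordismMaps}*{Theorem~5.18}.
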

\begin{proof}We decompose the cobordism $\cX$ into a composition of elementary link cobordisms (Definition~\ref{def:elementarycobordisms}). Write $\cX = \cX_4\circ \cX_3\circ \cX_2\circ \cX_1$ where
\begin{itemize}
\item $\cX_1$ is a 0-handle cobordism, which adds a doubly based unknot in $S^3$;
\item $\cX_2$ is a stabilization cobordism, which adds two basepoints to the unknot in $S^3$;
\item $\cX_3$ consists of $(n-1)$ 0-handles, each adding a doubly based unknot;
\item $\cX_4$ consists of $(k+n-1)$ 1-handles.
\end{itemize}
If $\bU\subset S^3$ is a doubly-based unknot, then
$\hat{\HFL}(S^3,\bU)\iso \bF_2$. Noting that the 0-handle maps are nonzero,
since they can be canceled topologically, using multiplicitivity of link
Floer homology under disjoint unions, it follows that
\[
F^J_{\cX_i} = \hat{F}^Z_{\cX_i}
\]
for $i=1,3$.

If $\bU'$ is an unknot in $S^3$ with four basepoints, then,
by~Lemma~\ref{lem:topdegreegenerators}, we have $\hat{\HFL}(S^3,\bU') \iso \bF_2 \oplus
\bF_2$. Furthermore, $\hat{\HFL}(S^3,\bU')$ is generated by two elements,
$\Theta^{\ws}$ and $\Theta^{\zs}$, which are distinguished by grading. The
element $\Theta^{\ws}$ has $(\gr_{\ws},\gr_{\zs})$ bigrading
$(+\tfrac{1}{2},-\tfrac{1}{2})$, while $\Theta^{\zs}$ has bigrading
$(-\tfrac{1}{2}, +\tfrac{1}{2})$.

It follows immediately from the definition of the maps in \cite{ZemCFLTQFT} that
\[
\hat{F}_{\cX_2}^Z(1) =
\begin{cases}
\Theta^{\ws}& \text{ if } \chi(S_{\ws})=\chi(S_{\zs})+1,\\
\Theta^{\zs} & \text{ if } \chi(S_{\ws})=\chi(S_{\zs})-1,
\end{cases}
\]
since the cobordism map for a stabilization cobordism is defined using the
quasi-stabilization map \cite{ZemCFLTQFT}*{Section~3.2}. On the other hand, a
straightforward functoriality argument shows that $F_{\cX_2}^J$ must be
nonzero. By \cite{JMComputeCobordismMaps}*{Theorem~5.18}, it follows that
$F_{\cX_2}^J(1)$ has the same $\gr_{\ws}$ and $\gr_{\zs}$ grading as
$\hat{F}_{\cX_2}^Z(1)$. Since $\hat{\HFL}(S^3,\bU')$ is a 2-dimensional
vector space over $\bF_2$, it follows that
$F^{J}_{\cX_2}=\hat{F}^{Z}_{\cX_2}$.

Finally, the two cobordism map constructions use the same 4-dimensional handle attachment
maps, so \[F_{\cX_4}^J=\hat{F}_{\cX_4}^Z.\] Furthermore, as in
Lemma~\ref{lem:topdegreegenerators}, the 1-handle maps preserve the top
graded elements $\Theta^{\ws}$ and $\Theta^{\zs}$. Composing all the maps,
the claim now follows.
\end{proof}

In \cite{ZemCFLTQFT}, the link cobordism maps for $\cX_{\ws} = (I \times Y, S(B), \cA_{\ws})$
and $\cX_{\zs} = (I \times Y, S(B), \cA_{\zs})$
are defined to be
\begin{equation}
\hat{F}^Z_{\cX_{\ws}}(-) := F_{\as,\bs,\bs'}(- \otimes \Theta_{\bs,\bs'}^{\zs}) \text{ and }
\hat{F}^Z_{\cX_{\zs}}(-) := F_{\as,\bs,\bs'}(- \otimes \Theta_{\bs,\bs'}^{\ws}).\label{eq:bandmapsdef}
\end{equation}

\begin{lem}\label{lem:saddlecobordismmapsagree}
For the decorated saddle cobordisms $\cX_{\ws}$ and $\cX_{\zs}$, defined above,
we have $F_{\cX_{\ws}}^J = \hat{F}_{\cX_{\ws}}^Z$
and $F_{\cX_{\zs}}^J = \hat{F}_{\cX_{\zs}}^Z$.
\end{lem}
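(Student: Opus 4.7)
The strategy is to decompose the sutured manifold cobordism $\cW(\cX_{\ws})$ underlying $F^J_{\cX_{\ws}}$ into (a) the sutured cobordism that creates $(M_{\bs,\bs'},\g_{\bs,\bs'})$ from the empty set, followed by (b) the triangle sutured cobordism $\cW_{\as,\bs,\bs'}$ of Section~\ref{sec:trianglecobordisms}. The map induced by (a) is computed by Lemma~\ref{lem:computesimplecobordismmaps}, and the map induced by (b) is identified with a holomorphic triangle count via Theorem~\ref{thm:trianglemapiscobmap}; the composition then precisely matches~\eqref{eq:bandmapsdef}, and symmetrically for $\cX_{\zs}$.

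Concretely, fix a Heegaard triple $\cT = (\S,\as,\bs,\bs',\ws,\zs)$ subordinate to the $\b$-band $B$. The associated triangle 4-manifold $X_{\cT}$ is diffeomorphic to $I\times Y$ with a neighborhood of a $\bs$-handlebody removed from a middle level, and the surface $S_{\cT}\subset X_{\cT}$ is (after rounding corners) isotopic to the saddle surface $S(B)$. Applying Proposition~\ref{prop:contactstructuresagree}, the contact structure $\xi_{\as,\bs,\bs'}$ on $Z_{\cT_0}$ is equivalent to the $S^1$-invariant contact structure on the unit normal circle bundle of $S_{\cT}$ with dividing set $\cA_{\cT}$. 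By choosing the auxiliary points $\ve{p}\subset L\setminus(\ws\cup\zs)$ and $\ve{q}\subset L(B)\setminus(\ws\cup\zs)$ defining $\cA_{\cT}$ to lie in the $\ws$--subregions, one arranges $\cA_{\cT}\cong \cA_{\ws}$, and analogously with the $\zs$--subregions for $\cA_{\zs}$. This identifies the sutured cobordism underlying $\cX_{\ws}$, precomposed with the creation cobordism for $(M_{\bs,\bs'},\g_{\bs,\bs'})$, with $\cW_{\cT_0}$ up to equivalence.

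By Theorem~\ref{thm:trianglemapiscobmap}, $F_{\cW_{\cT_0}}$ is chain homotopic to the holomorphic triangle map $F_{\as,\bs,\bs'}$ on $\cT_0$, which extends tautologically to the link Floer triangle map on $\cT$. The creation cobordism for $(M_{\bs,\bs'},\g_{\bs,\bs'})$ is precisely of the type treated in Lemma~\ref{lem:computesimplecobordismmaps}: it is a genus $g(\S)$ handlebody containing a standard system of disks bounding an unlink in $(S^1\times S^2)^{\#g(\S)}$, whose components are doubly-based except for one carrying four basepoints. Lemma~\ref{lem:computesimplecobordismmaps} shows this map sends $1$ to the appropriate top-graded generator, which by the composition law and functoriality of the sutured TQFT yields
\[
F^J_{\cX_{\ws}}(-) = F_{\as,\bs,\bs'}\bigl(-\otimes \Theta_{\bs,\bs'}^{\zs}\bigr) = \hat{F}^Z_{\cX_{\ws}}(-),
\]
and analogously for $\cX_{\zs}$.

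The principal technical obstacle is verifying the correct pairing of dividing-set types with top-graded generators: the dividing set $\cA_{\ws}$ must be matched with $\Theta_{\bs,\bs'}^{\zs}$ rather than $\Theta_{\bs,\bs'}^{\ws}$, reflecting the fact that the surviving side of the band contributes to the Euler characteristic on the opposite side from what one might naively expect. This matching is pinned down by tracking the Euler characteristics $\chi(S_{\ws})$ and $\chi(S_{\zs})$ across the local model of the band, together with the grading computation inside Lemma~\ref{lem:computesimplecobordismmaps}; the remaining identifications are routine applications of the composition law (Proposition~\ref{prop:compositionlaw}) and Theorem~\ref{thm:Contacthandlemaps=HKMmaps}.
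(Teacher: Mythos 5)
Your proposal is correct and follows essentially the same route as the paper: decompose the decorated saddle cobordism as the filling cobordism $(X_{\bs},S_0)$ (computed by Lemma~\ref{lem:computesimplecobordismmaps}) composed with the triangle cobordism (identified with $F_{\as,\bs,\bs'}$ via Theorem~\ref{thm:trianglemapiscobmap} and Proposition~\ref{prop:contactstructuresagree}), then apply the composition law; you also correctly flag the crossover pairing of $\cA_{\ws}$ with $\Theta_{\bs,\bs'}^{\zs}$. The only imprecision is that the $\ws$/$\zs$ distinction between the two dividing sets lives on the capping disks in $X_{\bs}$ (where the paper sets $\cA_{\ws,0}=S_0\cap\cA_{\ws}$), not in the choice of the points $\ve{p},\ve{q}$ defining $\cA_0$ on the triangle piece, but this does not affect the argument.
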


\begin{proof}
The key observation is that if $(\S,\as,\bs,\bs',\ws,\zs)$ is a Heegaard triple
subordinate to a $\b$-band, then the 4-manifold $X_{\as,\bs,\bs'}$ is equal to $I \times Y$
with a neighborhood of $\{\tfrac{1}{2}\} \times U_{\bs}$ removed, and the surface with divides
\[
(S_{\as,\bs,\bs'}, \cA_{\as,\bs,\bs'}) =
\left(S(B) \setminus \left(\{\tfrac{1}{2}\} \times B \right), \cA_0\right).
\]
There is a cobordism $X_{\bs} \colon \emptyset \to Y_{\bs,\bs'}$
consisting of 0-handles and 1-handles. Inside $X_{\bs}$, there is a surface $S_0$
that consists of $|\ws| - 1$ disks. We note that $|\ws| - 2$ of the disks have boundary equal
to a doubly based unknot in $Y_{\bs,\bs'}$, but one disk has boundary equal to an unknot with four basepoints.
It is clear that if we fill in the $Y_{\bs,\bs'}$ boundary of the link cobordism
$(X_{\as,\bs,\bs'}, S_{\as,\bs,\bs'})$ with $(X_{\bs}, S_0)$, then we obtain the (undecorated)
link cobordism $(I \times Y, S(B))$. On the other hand, as described above,
there are two natural dividing sets on $S_0$, shown in Figure~\ref{fig::36}.
Define
\[
\cA_{\ws,0} = S_0 \cap \cA_{\ws} \text{ and } \cA_{\zs,0} = S_0 \cap \cA_{\zs}.
\]
Using Lemma~\ref{lem:computesimplecobordismmaps}, we have that
\[
F^J_{X_{\bs}, S_0, \cA_{\ws,0}}(1) = \Theta_{\bs,\bs'}^{\zs} \text{ and }
F^J_{X_{\bs}, S_0, \cA_{\zs,0}}(1) = \Theta_{\bs,\bs'}^{\ws},
\]
as maps from $\hat{\HFL}(\emptyset,\emptyset) \iso \bF_2$ to
$\hat{\HFL}(\S, \bs, \bs', \ws, \zs)$.

Using Theorem~\ref{thm:trianglemapiscobmap} and Proposition~\ref{prop:contactstructuresagree},
we know that the sutured link cobordism map $F^J_{\cX_{\as,\bs,\bs'}}$
for the decorated link cobordism
\[
\cX_{\as,\bs,\bs'} = (X_{\as,\bs,\bs'}, S_{\as,\bs,\bs'}, \cA_{\as,\bs,\bs'})
\]
is the map $F_{\as,\bs,\bs'}$ that counts holomorphic triangles on
the Heegaard triple $(\S,\as,\bs,\bs')$.
It follows from the composition law that
\[
F^J_{\cX_{\zs}}(-) = F_{\as,\bs,\bs'}(- \otimes F_{X_{\bs}, S_0, \cA_{\zs,0}}(1)) =
F_{\as,\bs,\bs'}(- \otimes \Theta_{\bs,\bs'}^{\ws}) = \hat{F}^Z_{\cX_{\zs}}(-).
\]
The result for $F^J_{\cX_{\ws}}$ follows similarly.
\end{proof}

\subsection{Proof of Theorem~\ref{thm:cobordismmapsagree}}

We can now prove that the link cobordism map constructions from \cite{JCob}
and \cite{ZemCFLTQFT} agree: \begin{proof}[Proof of
Theorem~\ref{thm:cobordismmapsagree}] Given an arbitrary decorated link
cobordism $\cX=(X,S,\cA)$, it is easy to see that one can decompose $\cX$ into
a sequence of link cobordisms which are each diffeomorphic to an elementary link
cobordism (Definition~\ref{def:elementarycobordisms}). Using the composition
law, it remains to verify the claim for each type of elementary link
cobordism. The maps obviously agree for identity cobordisms, and 4-dimensional 0-, 1-, 2-,
3- or 4-handle cobordisms. By Lemma~\ref{lem:saddlecobordismmapsagree}, they
agree for decorated saddle cobordisms. Finally, as in
Remark~\ref{rem:stabilization=composition}, a stabilization cobordism can be
decomposed into a composition of the other elementary cobordisms, and hence
having established the claim for the other types of elementary cobordisms, it
follows as well for stabilization cobordisms.
\end{proof}

\bibliographystyle{custom}
\bibliography{biblio}
\end{document}